\def\a{{\boldsymbol a}}
\def\n{{\boldsymbol n}}
\def\x{{\boldsymbol x}}
\def\u{{\boldsymbol u}}
\def\v{{\boldsymbol v}}
\def\L{{\boldsymbol L}}
\def\R {{\mathds R}}
\def\dx     {{\rm d}{\boldsymbol x}}
\definecolor{mygreen}{RGB}{36, 158, 81}
\definecolor{myyellow}{RGB}{222, 203, 0}
\newtheorem{theorem}{Theorem}[section]
\newtheorem{corollary}{Corollary}
\newtheorem{lemma}[theorem]{Lemma}
\newtheorem{proposition}[theorem]{Proposition}
\newtheorem{remark}[theorem]{Remark}
\begin{document}

\title[Numerical blow-up for the Keller--Segel--Navier--Stokes equations]{Exploring numerical blow-up phenomena for the  Keller--Segel--Navier--Stokes equations}

\author[J. Bonilla]{Jesús Bonilla$^\ddag$}
\address{$\ddag$ Los Alamos National Laboratory, Los Alamos, NM 87545, USA. E-mail: {\tt \href{mailto:jbonilla@lanl.gov}{jbonilla@lanl.gov}}}
\thanks{$\ddag$ Los Alamos National Laboratory, an affirmative action/equal opportunity employer, is operated by Triad National Security, LLC for the National Nuclear Security
Administration of U.S. Department of Energy under contract 89233218CNA000001. Los Alamos National Laboratory strongly supports academic freedom and a researcher's right to publish; as an institution, however, the Laboratory does not endorse the viewpoint of a publication or guarantee its technical correctness. LA-UR-23-20373}

\author[J. V. Gutiérrez-Santacreu]{Juan Vicente Gutiérrez-Santacreu$^\S$}
\address{$\S$Dpto. de Matemática Aplicada I\\
         E. T. S. I. Informática\\
         Universidad de Sevilla\\
         Avda. Reina Mercedes, s/n.\\
         E-41012 Sevilla\\
         Spain\\
         E-mail: {\tt \href{mailto:juanvi@us.es}{juanvi@us.es}}}

\thanks{$^\S$ JVGS was partially supported by the Spanish Grant No. PGC2018-098308-B-I00 from Ministerio de Ciencias e Innovación - Agencia Estatal de Investigación with the participation of FEDER and by the Andalusian Grant No. P20\_01120 from Junta de Andalucía (Consejería de Economía, Conocimiento, Empresas y Universidad)}

\date{\today}

\begin{abstract} The Keller-Segel-Navier-Stokes system governs chemotaxis in liquid environments. This system is to be solved for the organism and chemoattractant densities and for the fluid velocity and pressure. It is known that if the total initial cell density mass is below $2\pi$ there exist globally defined generalised solutions, but what is less understood is whether there are blow-up solutions beyond such a threshold and its optimality. 

Motivated by this issue, a numerical blow-up scenario is investigated. Approximate solutions computed via a stabilised finite element method founded on a shock capturing technique are such that they satisfy \emph{a priori} bounds as well as lower and $L^1(\Omega)$ bounds for the cell and chemoattractant densities. In particular, this latter properties are essential in detecting numerical blow-up configurations, since the non-satisfaction of these two requirements might trigger numerical oscillations leading to non-realistic finite-time collapses into persistent Dirac-type measures. 

Our findings show that the existence threshold value $2\pi$ encountered for the cell density mass may not be optimal and hence it is conjectured that the critical threshold value $4\pi$ may be inherited from the fluid-free Keller-Segel equations. Additionally it is observed that the formation of singular points can be neglected if the fluid flow is intensified.          
\end{abstract}
\maketitle
{\bf 2010 Mathematics Subject Classification.}  35Q35, 65N30, 92C17. 

{\bf Keywords.} Keller--Segel equations; Navier--Stokes equations; stabilized finite-element approximation; shock detector; lower and a priori bounds; blowup.

\tableofcontents
\section{Introduction}
\subsection{The model} 
Keller and  Segel \cite{Keller_Segel_1970, Keller_Segel_1971} brought in the early 70's the first model governing chemotaxis growth at macroscopic level. It consists of a system of partial differential equations of parabolic type, which is to be solved for the organism and chemoattractant densities as follows.  Let $\Omega\subset \mathds{R}^2$ be an open, bounded domain, with $\boldsymbol{n}$ being its outward-directed unit normal vector to $\Omega$, and let $[0,T]$ be a time interval. Take $Q=(0,T]\times \Omega$ and $\Sigma=(0,T]\times\partial\Omega$. Then find  $n: \bar Q\to (0,\infty)$, the organism density, and $c:\bar Q \to [0,\infty)$, the chemoattractant density,  satisfying 
\begin{equation}\label{KS}
\left\{
\begin{array}{rcll}
\partial_t n-\Delta n&=&-\nabla\cdot(n\nabla c)&\mbox{ in } Q,
\\
\partial_t c -\Delta c&=&n-c&\mbox{ in }Q,
\end{array}
\right.
\end{equation}
subject to the no-flux boundary conditions
\begin{equation}\label{BC_KS}
\nabla n\cdot \boldsymbol{n}=0\quad\mbox{ and }\quad \nabla c\cdot\boldsymbol{n}=0\quad\mbox{ on }\quad \Sigma,
\end{equation} 
and the initial conditions 
\begin{equation}\label{IC_KS}
n(0)=n_0\quad\mbox{ and }\quad c(0)=c_0\quad\mbox{ in }\quad \Omega.
\end{equation}

This model exhibits many interesting properties. For instance, solutions to this model can be found, whenever $\int_\Omega n_0(\x)\,\dx\in(0,4\pi)$, that remain uniformly bounded for all time \cite{Nagai_Senba_Yoshida_1997}. On the contrary, if $\int_\Omega n_0(\x)$ is larger than such a threshold value, the situation changes drastically; there exists solutions blowing up either in finite or infinite time \cite{Horstmann_Wang_2001, Senba_Suzuki_2001}\footnote{Here one needs $\Omega$ to be simply connected.}.  Furthermore, this threshold turns out to be critical in the sense that, for each $\varepsilon>0$, there exist $n_0$ with $\int_\Omega n_0(\x)\, \dx>4\pi +\varepsilon$ that develop a finite-time collapse into persistent  Dirac-type measures.

When chemotaxis occurs in a fluid, the original Keller--Segel equations need to be coupled with a Navier--Stokes type of equations. In this context, the fluid flow will be influenced by the self-enhanced chemotactic motion inducing a velocity profile; and more importantly, the converse is further true, a change in the velocity profile will accordingly alter the chemotactic growth. This implies that, if the initial fluid velocity is zero, the evolution of the chemotactic growth will induce a velocity, and this velocity will in turn affect the behaviour of the 
chemotactic growth. 

The  Keller--Segel--Navier-Stokes equations for governing the chemotaxis of unicellular organisms under the influence of a fluid flow are written as 

\begin{equation}\label{KSNS}
\left\{
\begin{array}{rclcc}
\displaystyle
\partial_t n+\u\cdot\nabla n-\Delta n+ \nabla\cdot(n\nabla c)&=&0&\mbox{ in }& Q ,
\\
\displaystyle
\partial_t c+\u\cdot\nabla c-\Delta c+ c&=&n&\mbox{ in }& Q,
\\
\partial_t\u+(\u\cdot\nabla)\u-\Delta\u+\nabla p-n\nabla \Phi&=&\boldsymbol{0}&\mbox{ in }& Q,
\\
\nabla\cdot\u&=&0&\mbox{ in }& Q.
\end{array}
\right.
\end{equation}
Here $n:\overline{\Omega\times (0,T]}\to (0,\infty)$ represents the organism density, $c: \overline{\Omega\times(0,T)}\to [0,\infty) $ represents the chemoattractant density, $\u:\overline{\Omega\times(0,T]}\times\to\mathds{R}^2$ is the fluid velocity, and $p:\overline{\Omega\times(0,T]}\times\to\mathds{R}$ is the fluid pressure; furthermore, $\Phi$ is a gravitational force.

These equations are supplemented with homogeneous Neumann boundary conditions for the Keller-Segel subsystem and homogeneous Dirichlet boundary conditions for the Navier-Stokes equations, i.e.,
\begin{equation}\label{BC_KSNS}
\partial_\n n=0, \quad\partial_\n c =0 \mbox{ and } \quad \u=0 \mbox { on } \quad  \partial \Omega\times (0,T),
\end{equation}
and with initial conditions
\begin{equation}\label{IC_KSNS}
n(0)=n_0,\quad
c(0)=c_0\mbox{ and } \u(0)=\u_0\quad\mbox{ in }\quad \Omega.
\end{equation}

As far as we are concerned, the work of Winkler \cite{Winkler_2020} is the only mathematical analysis available in the literature for system \eqref{KSNS}, where a similar mass threshold phenomenon is put forward possibly deciding between the boundedness and unboundedness of solutions. More precisely,  he found that, if $\int_\Omega n_0(\x)\,\x\in(0,2\pi)$, problem \eqref{KSNS}-\eqref{IC_KSNS} possesses generalised solutions globally in time. Therefore, the theory in studying mathematical properties of solutions to problem \eqref{KSNS}-\eqref{IC_KSNS} seems to be in a very primitive stage in comparison with those for problem \eqref{KS}-\eqref{IC_KS}. Thus, at this point, two accordingly feasible scenarios can be conjectured. On the one hand, it might occur that the chemotaxis-fluid system inherits the mathematical properties of solutions in a consistent fashion from the fluid-free system thereof, since the influence of the fluid through the transport and gravitational effect is essentially dominated by the cross-diffusion one. On the other hand, the fluid mechanism might interact with the cross-diffusion principle causing that the mass critical takes a certain value between $2\pi$ and $4\pi$. This might be due to the creation of extremely large values of density gradients by means of amplifying the nonlinear cross-diffusion process.

Detecting blow-up configurations via the numerical solution to problem \eqref{KSNS}-\eqref{IC_KSNS} is extremely difficult because the growth of densities in time is an ubiquitous phenomenom in smooth solutions to problem \eqref{KSNS}-\eqref{IC_KSNS} (and \eqref{KS}-\eqref{IC_KS}), which must be treated with caution. Therefore, in an evolving smooth solution with a point of actively growing gradients, we must decide whether there is merely a huge growth of the density or actually a finite-time singularity development. Furthermore, it is pointed out in \cite{GS_RG_2021} that discretising directly without enforcing lower bounds for the chemoattractant and organism densities will lead to unstable numerical solutions.

Inspired on \cite{Badia_Bonilla_GS_2022}, we develop a numerical scheme founded on a finite element method stabilised via adding nonlinear diffusion combined with an Euler time-stepping integrator. The nonlinear diffusion draw on a graph-Laplacian operator together with a shock detector in order to minimise the amount of numerical diffusion introduced in the system. This approach has been turned out efficient in the fluid-free chemotaxis \cite{Badia_Bonilla_GS_2022}. 

The determination of potential candidate singular solutions from numerical simulation presents a variety of challenging issues inherited from the mathematical analysis that we need to cope with thoroughly. These numerical issues are: 
\begin{itemize}
\item lower bounds: positivity for the chemoattractant density and nonnegativity for the organism density;
\item time-independent integrability bounds: particularly mass conservation for the chemoattractant density; and
\item time-independent square integrability bounds and time-dependent square integrability for the gradient of the organism density and the fluid velocity.  
\end{itemize} 

Problem \eqref{KS}--\eqref{IC_KS} has already studied in the numerical literature with different techniques \cite{Saito_2012, GS_RG_2021, Strehl_Sokolov_Kuzmin_Turek_2010, Strehl_Sokolov_Kuzmin_Horstmann_Turek_2013, Li_Shu_Yang_2017, Chertock_Epshteyn_Hu_Kurganov_2018, Chertock_Kurganov_2008}, but these have not been applied to contexts of self-enhanced chemotactic motion in the presence of a fluid flow.

\subsection{Notation} Throughout, we adopt the standard notation for Sobolev spaces. Let $\mathcal{O}\subset \mathds{R}^2$ be an open, bounded domain.  For $p\in[1,\infty]$, we denote by $L^p(\Omega)$ the usual Lebesgue space, i.e.,
$$
L^p(\mathcal{O}) = \{v : \Omega \to \R\, :\, v \mbox{ Lebesgue-measurable}, \int_\Omega |v(\x)|^p {\rm d}\x<\infty \}.
$$
or
$$
L^\infty(\mathcal{O}) = \{v : \Omega \to \R\, :\, v \mbox{ Lebesgue-measurable}, {\rm ess}\sup_{\x\in \Omega} |v(\x)|<\infty \}.
$$
This space is a Banach space endowed with the norm
$\|v\|_{L^p(\Omega)}=(\int_{\Omega}|v(\x)|^p\,{\rm d}\x)^{1/p}$ if $p\in[1, \infty)$ or $\|v\|_{L^\infty(\Omega)}={\rm ess}\sup_{\x\in \Omega}|v(\x)|$ if $p=\infty$. In particular, when $p=2$, $L^2(\Omega)$ is a Hilbert space.  We shall use $(u,v)=\int_{\Omega}u(\x)v(\x){\rm d}\x$ for its inner product

Let $\alpha = (\alpha_1, \alpha_2)\in \mathds{N}^2$ be a multi-index with $|\alpha|=\alpha_1+\alpha_2$, and let
$\partial^\alpha$ be the differential operator such that
$$\partial^\alpha=
\Big(\frac{\partial}{\partial{x_1}}\Big)^{\alpha_1}\Big(\frac{\partial}{\partial{x_2}}\Big)^{\alpha_2}.$$
For $m\ge 0$ and $p\in[1, \infty)$, we shall consider $W^{m,p}(\Omega)$ to be the Sobolev space of all functions whose derivatives up to order $m$ are in $L^p(\Omega)$, i.e.,
$$
W^{m,p}(\Omega) = \{v \in L^p(\Omega)\,:\, \partial^k v \in L^2(\Omega)\ \forall ~ |k|\le m\},
$$ associated to the norm
$$
\|f\|_{W^{m,p}(\Omega)}=\left(\sum_{|\alpha|\le m} \|\partial^\alpha f\|^p_{L^p(\Omega)}\right)^{1/p} \quad \hbox{for} \ 1 \leq p < \infty,
$$
and
$$
\|f\|_{W^{m,p}(\Omega)}=\max_{|\alpha|\le m} \|\partial^\alpha f\|_{L^\infty(\Omega)} \quad  \hbox{for} \  p = \infty.
$$
For $p=2$, we denote $W^{m,2}(\Omega)=H^m(\Omega)$.

Let $\mathcal{D}(\Omega)$ be the space of infinitely times
differentiable functions with compact support on $\Omega$. 
The closure of $\mathcal {D}(\Omega)$ in
$H^{m}(\Omega)$ is denoted by $H^{m}_0(\Omega)$. 
We will also make use of the following space of vector fields:
$$
\boldsymbol{\mathcal{V}}=\{\v\in \boldsymbol{\mathcal{D}}(\Omega): 
\nabla\cdot\v=0 \mbox{ in } \Omega \}. 
$$
The closure of $\boldsymbol{\mathcal{V}}$ in the $\L^2(\Omega)$ norm is denoted by  $\boldsymbol{H}$ and  is characterised \cite{Temam2001} (for $\Omega$ being Lipschitz-continuous) by 
$$
\boldsymbol{H}= \{ \u \in \L^2(\Omega) : \nabla\cdot\u =0 \mbox{ in } 
\Omega, \u\cdot\boldsymbol{n} = 0 \hbox{ on }
\partial\Omega \},
$$
where $\n$ is the outward normal to $\Omega$ on $\partial \Omega$.    
Finally, we consider 
$$
L^2_0(\Omega)= \{ p \in L^2(\Omega) : \ \int_\Omega
p(\x)\, d\x =0 \}.
$$

Distinction is made between scalar- or vector-valued functions, so spaces of vector-valued functions and their elements are identified with bold font. 

For any sequence $\{\eta_h^m \}_{m=0}^M$, we use the notation $\delta_t \eta^{m+1}_h=\frac{\eta_h^{m+1}-\eta^{m}_h}{k}$. We set  $B(\x_0; r )=\{ \x\in\mathds{R}^2:  \|\x-\x_0\|_{E}<r \}$ and $B_+(\x_0; r)=\{ \x\in\mathds{R}^2:  \|\x-\x_0\|_{E}<r \mbox{ and } x_2\ge 0\}$ and furthermore  $\bar\eta =\frac{1}{|\Omega|}\int_\Omega \eta(\x)\, \dx$.

\subsection{Outline} After this introductory section, Section 2 introduces the numerical scheme developed as well as all required notation. Section 3 contains technical preliminaries needed for the subsequent Section 4, where the main theoretical properties of the scheme are discussed. Section~5 is devoted to numerical experiments. Finally, we draw some conclusions in Section 6.
\section{Finite element approximation}
\subsection{Finite element spaces}Let $\Omega\subset\mathds{R}^2$ be an open, bounded domain whose boundary $\partial\Omega$ is polygonal whose vertices and edges are denoted by $\mathcal{V}_h$ and $\mathcal{E}_h$, respectively. Then it is considered a quasi-uniform family $\{\mathcal{T}_h\}_h$ of conforming triangulation of $\bar\Omega$, i.e., the intersection of any two triangles $T, T'\in \mathcal{T}_h$ is empty, an edge or a vertex, such that  $\bar\Omega=\cup_{T\in\mathcal{T}_h } T$, where $h_T={\rm diam }\, T$ and $h=\max_{T\in\mathcal{T}_h} h_T$. Moreover, let  $\mathcal{N}_h=\{\boldsymbol{a}_i\}_{i=1}^{I}$ be the coordinates of all the vertices of $\mathcal{T}_h$. 

Associated to each $\mathcal{T}_h$, it is constructed $X_h$ consisting of continuous piecewise polynomials, i. e.,
$$
X_h^k=\{x\in C^0(\bar\Omega): x_h|_T\in\mathds{P}_k\, \forall T\in\mathcal{T}_h\}, 
$$
where $\mathds{P}_k$ is the set of all polynomials on $T$ of degree less than or equal to $k$. When $k=1$, we simply write $X_h$ and denote by $\{\varphi_{\a_i}\}_{i=1}^{I}$  its corresponding nodal basis, with $\Omega_{\a_i}={\rm supp }\,\varphi_{\a_i}$ being the macro-element associated to each $\varphi_{\a_i}$.  For each $\a_i\in\mathcal{N}_h$, we choose $\a_{ij}^{\rm sym}$ to be the point at the intersection between the line that passes through $\a_i$ and $\a_j$ and $\partial\Omega_{\a_i}$ not being $\a_j$. Thus the set of all the symmetric nodes for $\a_i$ is denoted by $\mathcal{N}_h^{\rm sym}(\Omega_{\a_i})$. If $x_h\in X_h$, we write $x(\a_i)=x_i$ for all $\a_i\in\mathcal{N}_h$.

\begin{figure}
    \centering
    \includegraphics[width=0.3\textwidth]{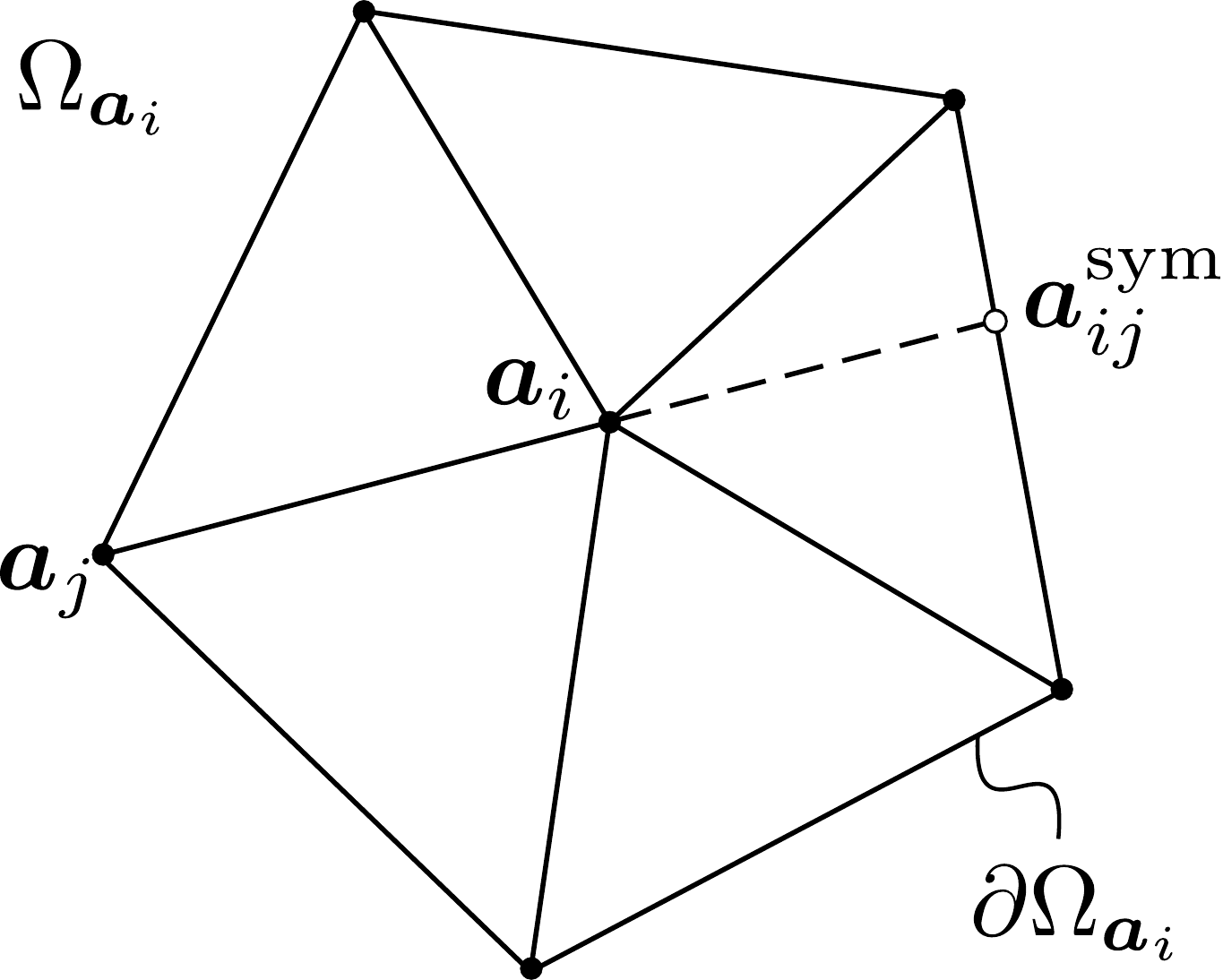}
    \caption{Construction of the symmetric node $\a_{ij}^{\rm sym}$}
    \label{fig:Sym_node}
\end{figure}

Our choice for the cell and chemoattractant spaces is  $N_h=X_h$ and $C_h=X_h$, respectively. Instead, for the velocity and pressure, we take the Taylor-Hood space pair, i.e.,  $\boldsymbol{U}_h= \boldsymbol{X}^2_h\cap \boldsymbol{H}^1_0(\Omega)$ and $Q_h=X_h\cap L^2_0(\Omega)$. Observe that we have selected velocity/pressure finite element spaces that do satisfy the LBB inf-sup condition.

We introduce $i_h: C(\bar\Omega)\to X_h$, the nodal interpolation operator, such that $i_h x(\a_i)=x(\a_i)$ for $i\in I$. A discrete inner product on $C(\bar\Omega)$ is then defined
$$
(x_h, \bar x_h )_h=\int_\Omega i_h(x_h(\x)\bar x_h(\x))\,\dx.
$$ 
We also introduce the following averaged interpolation operator $\mathcal{I}_h: L^p(\Omega)\to X_h$ defined as follows.  Take, for each node $\a\in\mathcal{N}_h$, a triangle $T_\a$ such that $\a\in T$. Thus one regards:  
$$
\mathcal{I}_h \eta =\sum_{i\in I} \left(\frac{1}{|T_{\a_i}|}\int_{T_{\a_i}} \eta(\x)\,\dx\right)\varphi_{\a_i}.
$$ 
We know \cite{Girault_Lions_2001, Scott_Zhang_1990} that there exists $C_{\rm sta}>0$, independent of $h$, such that, for all $\eta\in L^p(\Omega)$,   
\begin{equation}
\|\mathcal{I}_h \eta\|_{L^p(\Omega)}\le C_{\rm sta} \| \eta \|_{L^p(\Omega)}\quad \mbox{for }  1\le p\le\infty,
\end{equation}
and 
\begin{equation}
\mathcal{I}_h \psi \ge \mbox{or} > 0\quad\mbox{ if }\quad \psi\ge \mbox{or} >0. 
\end{equation}
In addition to the above interpolation operators, we consider the Ritz-Darcy projection operator $\mathcal{RD}_h : \boldsymbol{H}\to \boldsymbol{U}_h $ defined as: given $\u\in \boldsymbol{H}$, find $\mathcal{RD}_h \u$ such that, for all $(\bar\u_h, \bar p_h)\in \boldsymbol{U}_h\times P_h$, 
\begin{equation}
\left\{
\begin{array}{rcl}
(\mathcal{RD}_h \u, \bar \u_h)+(\nabla p_h, \bar\u_h )&=&(\u, \bar\u_h ),
\\
(\nabla\cdot\mathcal{RD}_h \u, \bar p_h)&=&0
\end{array}
\right.
\end{equation}
It is readily to prove that 
$\|\mathcal{RD}_h\u\|_{\L^2(\Omega)}\le \|\u\|_{\L^2(\Omega)}$ holds.
\subsection{Heuristics} It will be next proceeded without regards to rigour to develop the finite element formulation for approximating system \eqref{KSNS}--\eqref{IC_KSNS}. We take as our starting point the standard finite element formulation of system \eqref{KSNS}--\eqref{IC_KSNS} combined with a time-stepping integration, which is implicit with respect to the linear terms and semi-implicit with respect to the nonlinear terms, except for the chemotaxis term being implicit. This method it is then read as follows:

Let $\Phi\in W^{1,\infty}(\Omega)$ and assume that $(n_0, c_0,\u_0)\in L^1(\Omega)\times L^2(\Omega)\times \boldsymbol{H}$ with $n_0>0$ and $c_0\ge0$. Then consider  $(n_{0h}, c_{0h}, \u_{0h})\in N_h\times C_h\times \boldsymbol{H}^1_0(\Omega)$, where $n_{0h}=\mathcal{I}_h n_0$, $c_{0h}=\mathcal{I}_h c_0$, and $\u_{0h}=\mathcal{RD}_h\u_0$. 

Let $\{t_n \}_{n=0}^M$ be a sequence of points  partitioning $[0,T]$ into subintervals of the same length $k=\frac{T}{M}$ with $M\in\mathds{N}$ and select  $n_h^0=n_{0h}$, $c^0_h=c_{0h}$, and $\u^0_h=\u_{0h}$. Given $(n^m_h, c^m_h, \u^m_h)\in N_h\times C_h\times \boldsymbol{U}_h$, find  \linebreak  $(n^{m+1}_h, c^{m+1}_h, \u^{m+1}_h, p^{m+1}_h)\in N_h\times C_h\times \boldsymbol{U}_h\times Q_h$ such that, for all $(\bar n_h, \bar c_h,\bar\u_h, \bar p_h)\in N_h\times C_h\times U_h\times Q_h$, 
\begin{equation}\label{eq_aux:n_h}
(\delta_t n^{m+1}_h, \bar n_h)-(\u^m_h n^{m+1}_h, \nabla\bar n_h)+(\nabla n^{m+1}_h,\nabla\bar n_h)-(n^{m+1}_h\nabla c^{m+1}_h,\nabla\bar n_h)=0,
\end{equation}
\begin{equation}\label{eq_aux:c_h}
\begin{array}{rcl}
\displaystyle
(\delta_t c^{m+1}_h, \bar c_h)+(\u^m_h\cdot\nabla c^{m+1}_h, \bar c_h)+\frac{1}{2} (\nabla\cdot\u^{m+1}_h c^{m+1}_h, \bar c_h)&&
\\[5pt]
+(\nabla c^{m+1}_h,\nabla\bar c_h)+ (c^{m+1}_h,\bar c_h)
-(n^{m+1}_h, \bar c_h)&=&0,
\end{array}
\end{equation}
\begin{equation}\label{eq_aux:u_h}
\begin{array}{rcl}
\displaystyle
(\delta_t \u^{m+1}_h, \bar\u_h)+
(\u^m_h\cdot\nabla\u^{m+1}_h,\bar\u_h)+\frac{1}{2}(\nabla\cdot\u^m_h\, \u^{m+1}_h, \bar \u_h)&&
\\
+(\nabla \u^{m+1}_h,\nabla\bar \u_h)+(\nabla p^{m+1}_h, \bar\u_h)-(n^{m+1}_h\nabla\Phi, \bar\u_h)&=&0,
\end{array}
\end{equation}
and
\begin{equation}\label{eq_aux:p_h}
(\nabla\cdot\u^{m+1}_h,\bar p_h)=0. 
\end{equation}

The obtainment of the underlying properties that lead to deriving a priori estimates for the discrete solutions to  \eqref{eq_aux:n_h}-\eqref{eq_aux:p_h} is by no means a direct computation.  For equation  \eqref{eq_aux:n_h}, we need to control the quantity $\sum_{m=0}^{M-1} k \|\nabla\log(n^{m+1}_h+1)\|_{\L^2(\Omega)}$,  which is deduced by testing it against the nonlinear test function $\frac{1}{n^{m+1}_h}$. Nevertheless, it does not seem evident how to do it directly from  \eqref{eq_aux:n_h}. Positivity and mass conservation are as well required from \eqref{eq_aux:n_h}. For equation  \eqref{eq_aux:c_h},  a uniform-in-time $L^2(\Omega)$-bound is sought, but it is apparently connected to  $\sum_{m=0}^{M-1} k \|\nabla\log(n^{m+1}_h+1)\|_{L^2(\Omega)}$ for \eqref{eq_aux:n_h}. Furthermore, the stabilising term $\frac{1}{2} (\nabla\cdot\u^{m+1}_h c^{m+1}_h, \bar c_h)$ has added to rule out the convective term from \eqref{eq_aux:c_h}  when tested with $c^{m+1}_h$. Nonnegativy and an $L^1(\Omega)$-bound  are additionally needed from  \eqref{eq_aux:c_h}.
For equation \eqref{eq_aux:u_h}, we wish a uniform-in-time $\L^2(\Omega)$-bound. In doing so,  the stabilising term  $\frac{1}{2}(\nabla\cdot\u^{n}_h\, \u^{m+1}_h, \bar \u_h)$ has been incorporated to deal with the convective term. 

It is interesting to note that $L^1(\Omega)$ bounds -- in particular, mass conservation -- are straightforwardly derived from \eqref{eq_aux:n_h} and \eqref{eq_aux:c_h}; therefore it is desirable to  keep them. 


In what follows we set forth some modifications for scheme \eqref{eq_aux:n_h}--\eqref{eq_aux:p_h} in the forthcoming subsections in order for the above-mentioned properties to hold.    

\subsubsection{Chemotaxis term} The first modification is with regard to the chemotaxis term. It is to be recalled that we seek  an estimate for $\sum_{m=0}^{M-1} k \|\nabla\log(n^{m+1}_h+1)\|_{\L^2(\Omega)}$ from \eqref{eq_aux:n_h}, which stems from testing by $\frac{1}{n^{m+1}_h}$. We proceed in the spirit of \cite{Badia_Bonilla_GS_2022} by writing 
$$
\begin{array}{rcl}
( n_h\nabla  c_h,\nabla \bar n_h)&=&\displaystyle
\sum_{k,j,i\in I} n_k c_j  \bar n_i (\varphi_{\a_k} \nabla \varphi_{\a_j}, \nabla\varphi_{\a_i}) 
\\
&=&\displaystyle
\sum_{\tiny\begin{array}{c}k\in I \\i<j\in I\end{array}} n_k (c_j- c_i)  (\bar n_i-\bar n_j) (\varphi_{\a_k} \nabla \varphi_{\a_j}, \nabla\varphi_{\a_i}).
\end{array}
$$
Observe that $\boldsymbol{a}_k, \boldsymbol{a}_j,\boldsymbol{a}_i$ must belong to ${\rm supp }\, \varphi_{\a_k}\cap{\rm supp }\, \varphi_{\a_j}\cap{\rm supp }\, \varphi_{\a_i}$ so that $(\varphi_{\a_k} \nabla\varphi_{\a_j}, \nabla\varphi_{\a_i})\not=0$. That is,  $\boldsymbol{a}_k, \boldsymbol{a}_j,\boldsymbol{a}_i$ belong to the same triangle $T$. Thus we change the value $n_k$ to that of the geometric mean $\sqrt{n_i}\sqrt{n_j}$ corresponding to its neighbouring  nodes. Accordingly we have  
\begin{equation}\label{New_KS_term: not-truncated}
(n_h\nabla c_h,\nabla \bar n_h)\approx\displaystyle
\sum_{\tiny\begin{array}{c}i<j\in I\end{array}} \sqrt{n_i}\sqrt{n_j}  (c_j - c_i)  (\bar n_i-\bar n_j) (\nabla \varphi_{\a_j}, \nabla\varphi_{\a_i}).
\end{equation}
Here it should be pointed out that  the well-posedness of \eqref{New_KS_term: not-truncated} holds under the formal condition $n_h\ge0$, which is not known yet. So it is indispensable to consider 
\begin{equation}\label{def: gamma_ij_chem}
\gamma_{ij}^{ch}=\sqrt{[n_i]_+}\sqrt{[n_j]_+}
\end{equation} 
so that we are able to define the approximation of $(n_h\nabla c_h,\nabla \bar n_h)$ as 
\begin{equation}\label{New_KS_term}
(n_h\nabla c_h,\nabla \bar n_h)_*=\sum_{i<j\in I}\gamma_{ji}^{\rm ch} (c_j- c_i)  (\bar n_i-\bar n_j) (\nabla \varphi_{\a_j}, \nabla\varphi_{\a_i}),
\end{equation}
where  $[x]_+=\max\{0,x\}$ stands for the positivity part. Fortunately, the use of the truncating operator  will be superfluous once positivity for $n_h$ is proved.  
\subsubsection{Convective term}

For the convective term in \eqref{eq_aux:n_h}, we proceed as follows. Let $\u^m_h\in \boldsymbol{V}_h$ be given. Then, for $n_h, \bar n_h\in N_h$,  write 
$$
\begin{array}{rcl}
(n_h\u^m_h,\nabla \bar n_h)&=&((n_h+1)\u^m_h ,\nabla \bar n_h)
\\
&=&\displaystyle
\sum_{i,j\in I} (n_j+1)  (\bar n_i-\bar n_j) (\varphi_{\a_j} \u^m_h, \nabla\varphi_{\a_i}).
\end{array}
$$
To obtain the first line, the incompressibility condition \eqref{eq_aux:p_h} was used. We now replace $n_j+1$ by $\gamma^{\rm c}_{ji}$, where
\begin{equation}\label{def: gamma_ij_conv not extended}
\gamma^{\rm c}_{ji}=\left\{
\begin{array}{ccl}
\displaystyle
\frac{\log (n_j+1)-\log (n_i+1)}{\frac{1}{n_i+1}-\frac{1}{n_j+1}}&\mbox{ if }& n_j\not=n_i,
\\
n_j+1 &\mbox{ if }&n_j=n_i.
\end{array}
\right.
\end{equation}
Thus we arrive at  
\begin{equation}\label{New_Discrete_Conv: not-extended}
(n_h\u^m_h ,\nabla \bar n_h)\approx
\sum_{i,j\in I} \gamma^c_{ji}  (\bar n_i-\bar n_j) (\varphi_{\a_j}\u^m_h, \nabla\varphi_{\a_i}).
\end{equation}
As before the well-posedness of \eqref{New_Discrete_Conv: not-extended} through \eqref{def: gamma_ij_conv not extended}  is only entailed for $n_h+1>0$; nevertheless, we cannot ensure such a  condition. Consequently, we must allow for negative values of $n_h$. In doing so, the following extension of the logarithmic function to negative values is used. For $0<\varepsilon<1$, define
\begin{equation}\label{def:g_eps}
g_\varepsilon(s)=\left\{
\begin{array}{ccl}
\log s&\mbox{ if }& s>\varepsilon,
\\
\frac{s}{\varepsilon}+\log \varepsilon-1&\mbox{ if }& s\le \varepsilon. 
\end{array}
\right.
\end{equation}
Therefore it remains  
\begin{equation}\label{New_conv_term: extended}
(n_h \u^m_h,\nabla \bar n_h)_*=\sum_{i<j\in I}\gamma_{ji}^{\rm c}(\bar n_i-\bar n_j)(\varphi_{\a_j}\u^m_h, \nabla\varphi_{\a_i}),
\end{equation}
with
\begin{equation}\label{def: gamma_ij_conv extended}
\gamma^c_{ji}=\left\{
\begin{array}{ccl}
\displaystyle
\frac{g_\varepsilon(n_j+1)-g_\varepsilon(n_i+1)}{\frac{1}{n_i+1}-\frac{1}{n_j+1}}&\mbox{ if }& n_j\not=n_i,
\\
\left[n_i\right]_++1 &\mbox{ if }&n_j=n_i.
\end{array}
\right.
\end{equation}

\subsubsection{Stabilising terms}  As lower bounds are most likely to fail for \eqref{eq_aux:n_h} and \eqref{eq_aux:c_h} (see \cite{GS_RG_2021}), one needs to use some additional technique so as to derive them. We are particularly interested in developing two stabilising operators $B_n$ and $B_c$, for \eqref{eq_aux:n_h} and \eqref{eq_aux:c_h}, respectively, based on artificial diffusion, which depends on a shock detector and is defined through the graph-Laplacian operator ensuring that the minimal amount of numerical diffusion is introduced \cite{Badia_Bonilla_2017,Badia_Bonilla_GS_2022}. 

For $B_n$, we define
\begin{equation}\label{Bn}
(B_n(n_h, \u_h) \tilde n_h,  \bar n_h)=\sum_{i<j\in I}\nu_{ji}^n(n_h,\u_h) (\tilde n_j- \tilde n_i) (\bar n _j-\bar n_i),
\end{equation}
where
\begin{equation}\label{def:nu_n}
\nu_{ji}^n(n_h, \u_h)=\max\{\alpha_{\a_i}(n_h) f^n_{ij}, \alpha_{\a_j}(n_h)f^n_{ji}, 0\}\quad\mbox{ for }\quad i\not= j,
\end{equation}
with
$$
f^n_{ij}=\left\{
\begin{array}{rcl}
\displaystyle
-(n_j+1) (n_i+1)\frac{g_\varepsilon(n_j+1)-g_\varepsilon(n_i+1)}{(n_j-n_i)^2} (\varphi_{\a_j}\u^m_h, \nabla\varphi_{\a_i})+(\nabla \varphi_{\a_j}, \nabla \varphi_{\a_i})&\mbox{ if }& n_j\not=n_i,
\\
0&\mbox{ if }&n_j=n_i,
\end{array}
\right.
$$
and
\begin{equation}\label{def:nu^n_ii}
\nu_{ii}^n(w_h, v_h)=\sum_{j\in I(\Omega_{\a_i})\backslash\{i\}}\nu_{ij}^n(w_h, v_h).
\end{equation}
For $B_c$, we consider   
\begin{equation}\label{Bc}
(B_c(c_h,\u_h)\tilde c_h, \bar c_h)=\sum_{i\in I}\sum_{j\in I(\Omega_{\a_i})}\nu^c_{ij}(c_h, \u_h) \tilde c_j \bar c_i \ell(i,j),
\end{equation}
with 
\begin{equation}\label{def:nu^c_ij}
\nu_{ij}^c(c_h, \u_h)=\max\{\alpha_i(c_h) f^c_{ij}, \alpha_j(c_h)f^c_{ji}, 0\}\quad\mbox{ for }\quad i\not= j,
\end{equation}
and
\begin{equation}\label{def:nu^c_ii}
\nu_{ii}^c(c_h, \u_h)=\sum_{j\in I(\Omega_{\a_i})\backslash\{i\}}\nu_{ij}^c(c_h,\u_h),
\end{equation}
where $f^c_{ij}$ is given by 
$$
f^c_{ij}=k^{-1}(\varphi_{\a_j}, \varphi_{\a_i})+(\u^m_h\cdot\nabla\varphi_{\a_j}, \varphi_{\a_i})+\frac{1}{2}(\nabla\cdot \u^m_h \varphi_{\a_j}, \varphi_{\a_i})+(\nabla\varphi_{\a_j},\nabla\varphi_{\a_i}) + (\varphi_{\a_j}, \varphi_{\a_i}).
$$
Additionally, $\ell(i,j)= 2\delta_{ij}-1$ is the graph-Laplacian operator, where $\delta_{ij}$ is the Kronecker delta.

Let  $q \in \mathds{R}^+$ and $\eta_h\in X_h$. For each $\boldsymbol{a}_i \in \mathcal{N}_h$, the shock detector $\alpha_{\a_i}(\eta_h)$  reads:
\begin{equation}\label{def:alpha_min_max}
\alpha_{\a_i}(\eta_h) = \left\{
\begin{array}{cc}  
\left[
\frac{\left[{\sum_{j\in I(\Omega_{\a_i})} [\![\nabla \eta_h]\!]_{ij}}\right]_+}{\sum_{j\in I(\Omega_{\a_i})} 2\{\!\!\{|\nabla \eta_h \cdot \hat{\boldsymbol{r}}_{ij}\}\!\!\}|_{ij}}
\right]^q & \text{if } 
\sum_{j\in I(\Omega_{\a_i})} {{|\nabla \eta_h\cdot \hat{\boldsymbol{r}}_{ij}|}}_{ij} \neq 0, 
\\
0 & \text{otherwise},
\end{array}
\right. 
\end{equation}
where 
$$
[\![\nabla \eta_h]\!]_{ij} =\frac{\eta_j - \eta_i}{|\boldsymbol{r}_{ij}|} + \frac{\eta_j^{\rm sym} - \eta_i}{|\boldsymbol{r}_{ij}^{\rm sym}|}, 
$$
and
$$
\{\!\!\{|\nabla \eta_h\cdot \hat{\boldsymbol{r}}|_{ij}\}\!\!\}_{ij} =\frac{1}{2}
\left(\frac{|\eta_j-\eta_i|}{|\boldsymbol{r}_{ij}|}+\frac{|\eta_j^{\rm sym}-\eta_i|}{|\boldsymbol{r}_{ij}^{\rm sym}|}\right),
$$
with $\boldsymbol{r}_{ij} = \a_j - \a_i$, $\hat{\boldsymbol{r}}_{ij} = \frac{\boldsymbol{r}_{ij}}{|\boldsymbol{r}_{ij}|}$, and $\boldsymbol{r}_{ij}^{\rm sym} = \a_{ij}^{\rm sym} - \a_i$ for $\a^{\rm sym}_{ij}\in\mathcal{N}^{\rm sym}_h(\Omega_{\a_i})$.


A significant implication of the definition of $\alpha_{\a_i}$ is stated in the following lemma. See \cite[Lemma 3.1]{Badia_Bonilla_2017} for a proof.
\begin{lemma}\label{lm: alpha_i} Let $\eta_h\in X_h$. Then, for each $\a_i\in\mathcal{N}_h$, it follows that $0\le \alpha_{\a_i}(\eta_h)\leq 1$ and that $\alpha_{\a_i}(\eta_h)=1$ for any minimum value at $\a_i$.
\end{lemma}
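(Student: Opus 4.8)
The plan is to reduce all three assertions to a single application of the triangle inequality, after separating the signed and unsigned versions of the same collection of directional slopes. For a fixed node $\a_i$ and each $j\in I(\Omega_{\a_i})$, I would abbreviate
\[
a_j=\frac{\eta_j-\eta_i}{|\boldsymbol{r}_{ij}|},\qquad b_j=\frac{\eta_j^{\rm sym}-\eta_i}{|\boldsymbol{r}_{ij}^{\rm sym}|},
\]
so that the signed quantity inside the positive part becomes $N:=\sum_{j\in I(\Omega_{\a_i})}[\![\nabla\eta_h]\!]_{ij}=\sum_{j}(a_j+b_j)$, while the denominator is exactly $D:=\sum_{j\in I(\Omega_{\a_i})}2\{\!\!\{|\nabla\eta_h\cdot\hat{\boldsymbol r}|_{ij}\}\!\!\}_{ij}=\sum_{j}(|a_j|+|b_j|)$. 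In this notation $\alpha_{\a_i}(\eta_h)=([N]_+/D)^q$ whenever $D\neq0$, and $\alpha_{\a_i}(\eta_h)=0$ otherwise.

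For the bounds $0\le\alpha_{\a_i}(\eta_h)\le1$, the lower bound is immediate: $[N]_+\ge0$, $D\ge0$ and $q>0$, so the quotient and its $q$-th power are nonnegative, while the degenerate branch returns $0$. For the upper bound I would invoke the triangle inequality in the form $[N]_+\le|N|=\big|\sum_j(a_j+b_j)\big|\le\sum_j(|a_j|+|b_j|)=D$, which gives $0\le[N]_+/D\le1$ on the branch $D\neq0$. Since for $q>0$ the map $t\mapsto t^q$ is nondecreasing and sends $[0,1]$ into $[0,1]$, raising to the power $q$ preserves this inequality and yields $\alpha_{\a_i}(\eta_h)\le1$.

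For the last assertion, suppose $\a_i$ realises a minimum of $\eta_h$ over the macro-element $\Omega_{\a_i}$, so that $\eta_i\le\eta_j$ and $\eta_i\le\eta_j^{\rm sym}$ for every $j\in I(\Omega_{\a_i})$ (the latter because $\a_{ij}^{\rm sym}\in\partial\Omega_{\a_i}$ and $\eta_h$ is piecewise linear). Then every $a_j$ and every $b_j$ is nonnegative, whence $N=\sum_j(a_j+b_j)=\sum_j(|a_j|+|b_j|)=D$, so $[N]_+=N=D$ and $\alpha_{\a_i}(\eta_h)=(D/D)^q=1$. The only point demanding care is the degenerate case $D=0$, which forces $\eta_h$ to be constant on $\Omega_{\a_i}$; there $\a_i$ is at once a minimum and a maximum and the definition assigns $\alpha_{\a_i}(\eta_h)=0$, so the statement is to be read for minima at which the local slopes do not all vanish. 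I anticipate no genuine obstacle: the entire argument is the triangle inequality together with the monotonicity of $t\mapsto t^q$, and the sole subtlety is bookkeeping the branch $D=0$ consistently with the two cases of the definition.
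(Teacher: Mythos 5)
Your proof is correct and is essentially the canonical argument: the paper itself gives no proof of this lemma, deferring to \cite[Lemma 3.1]{Badia_Bonilla_2017}, and your reduction of the numerator and denominator to signed versus unsigned sums of the same directional slopes $a_j,b_j$, followed by the triangle inequality and the monotonicity of $t\mapsto t^q$ on $[0,1]$, is exactly the intended reasoning. Your flagging of the degenerate branch (all local slopes vanishing, where the definition returns $0$ rather than $1$) is a genuine edge case in the statement as written, but it is harmless for how the lemma is used in Section 4: at a locally constant minimum the set $I^*_c(\Omega_{\a_i})$ is empty, so the off-diagonal contributions that the value $\alpha_{\a_i}=1$ is meant to control vanish anyway.
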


\subsection{Finite element scheme} Here we announce our numerical algorithm that turns out from including \eqref{New_KS_term} and \eqref{New_conv_term: extended} instead of their original discretisation and from incorporating the stabilising terms \eqref{Bn} and \eqref{Bc} into equations \eqref{eq_aux:n_h} and \eqref{eq_aux:c_h}. We thus arrive at the following algorithm: 

Let $n_h^0=n_{0h}$, $c^0_h=c_{0h}$, and $\u^0_h=\u_{0h}$. Known $(n^m_h,c^m_h, \u^m_h)\in N_h\times C_h\times\boldsymbol{U}_h$, find \linebreak  $(n^{m+1}_h,  c^{m+1}_h, \u^{m+1}_h, p^{m+1}_h)\in N_h\times C_h\times \boldsymbol{U}_h\times Q_h$ such that, for all $(\bar n_h,\bar c_h, \bar{\boldsymbol{u}}_h, \bar p_h)\in N_h\times C_h\times \boldsymbol{U}_h\times Q_h$,  
\begin{equation}\label{eq:n_h}
\begin{array}{rcl}
(\delta_t n^{m+1}_h, \bar n_h)_h&-&(n^{m+1}_h\u^m_h, \nabla \bar n_h)_*+(\nabla n^{m+1}_h, \nabla \bar n_h)
\\
&-&(n^{m+1}_h\nabla c^{m+1}_h, \nabla \bar n_h)_*+(B_n(n^{m+1}_h, \u^m_h) n^{m+1}_h, \bar n_h)=0,
\end{array}
\end{equation}
\begin{equation}\label{eq:c_h}
\begin{array}{rcl}
(\delta_t c^{m+1}_h, \bar c_h)&+&\displaystyle
(\u^m_h\cdot\nabla c^{m+1}_h, \bar c_h)+\frac{1}{2}(\nabla\cdot \u^m_h c^{m+1}_h, \bar c_h)
\\
&+&(\nabla c^{m+1}_h,\nabla \bar c_h)+(c^{m+1}_h, \bar c_h)
\\
&+&(B_c(c^{m+1}_h, \u^m_h) c^{m+1}_h, \bar c_h)=(n^{m+1}_h, \bar c_h)_h,
\end{array}
\end{equation}
\begin{equation}\label{eq:u_h}
\begin{array}{rcl}
\displaystyle
(\delta_t \u^{m+1}_h, \bar\u_h)+
(\u^m_h\cdot\nabla\u^{m+1}_h,\bar\u_h)+\frac{1}{2}(\nabla\cdot\u^{m}_h\, \u^{m+1}_h, \bar \u_h)&&
\\
+(\nabla \u^{m+1}_h,\nabla\bar \u_h)+(\nabla p^{m+1}_h, \u_h)-(i_h(n^{m+1}_h\nabla\Phi), \bar\u_h)_h&=&0,
\end{array}
\end{equation}
and
\begin{equation}\label{eq:p_h}
(\nabla\cdot\u^{m+1}_h,\bar p_h)=0. 
\end{equation}

For the sake of numerical analysis,  we have applied the lumping procedure to some terms in \eqref{eq:n_h}-\eqref{eq:u_h}, which we recall that is denoted as $(\cdot,\cdot)_h$.

\section{Technical preliminaries}
Following is a collection of fundamental functional inequalities that will be employed throughout this work.

First of all, let us recall Moser--Trudinger's inequality \cite{Moser_1971, Trudinger_1967}.
\begin{theorem} Let $\Omega$ be bounded domain in $\mathds{R}^2$. Further assume $\eta\in H^1_0(\Omega)$. Then there exists $C>0$ such that 
\begin{equation}\label{ineq:Moser-Trudinger}
\int_{\Omega} e^{|\eta(\x)|}\dx\le C|\Omega| e^{\frac{1}{16\pi} \|\nabla \eta\|^2_{\L^2(\Omega)}}.
\end{equation}
\end{theorem}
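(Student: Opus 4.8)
The plan is to reduce the stated linearised inequality to the classical Moser--Trudinger inequality in its \emph{sharp} form, namely that there is a constant $c_0>0$ such that for every $v\in H^1_0(\Omega)$ with $\|\nabla v\|_{\L^2(\Omega)}\le 1$ one has $\int_\Omega e^{4\pi v^2(\x)}\,\dx\le c_0|\Omega|$; here $4\pi=\alpha_2$ is Moser's optimal exponent in two dimensions, with the exponential integrability going back to Trudinger \cite{Trudinger_1967} and the sharp constant to Moser \cite{Moser_1971}. Granting this, the passage from $e^{4\pi v^2}$ to $e^{|\eta|}$ is an elementary linearisation obtained by rescaling and a Young-type bound, which is why the statement is invoked as a citation.

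First I would dispose of the trivial case: if $\|\nabla\eta\|_{\L^2(\Omega)}=0$ then, since $\eta\in H^1_0(\Omega)$, the Poincaré inequality forces $\eta\equiv 0$ and the claim holds. Hence I assume $\|\nabla\eta\|_{\L^2(\Omega)}>0$ and set $v=\eta/\|\nabla\eta\|_{\L^2(\Omega)}$, so that $\|\nabla v\|_{\L^2(\Omega)}=1$ and $v$ is admissible for the sharp inequality.

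The key pointwise estimate is the bound $|\eta(\x)|\le \tfrac{a}{2}\,\eta^2(\x)+\tfrac{1}{2a}$, valid for every $a>0$ and following from $st\le\tfrac12 s^2+\tfrac12 t^2$ with $s=\sqrt a\,|\eta|$ and $t=1/\sqrt a$. Exponentiating yields $e^{|\eta(\x)|}\le e^{1/(2a)}\,e^{(a/2)\eta^2(\x)}$. I would then choose $a$ to match Moser's exponent on the rescaled function, i.e. $a=8\pi/\|\nabla\eta\|^2_{\L^2(\Omega)}$, for which $\tfrac{a}{2}\eta^2=4\pi v^2$ and $\tfrac{1}{2a}=\tfrac{1}{16\pi}\|\nabla\eta\|^2_{\L^2(\Omega)}$. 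Integrating over $\Omega$ and invoking the sharp bound for $v$ then gives
\[
\int_\Omega e^{|\eta(\x)|}\,\dx\le e^{\frac{1}{16\pi}\|\nabla\eta\|^2_{\L^2(\Omega)}}\int_\Omega e^{4\pi v^2(\x)}\,\dx\le c_0|\Omega|\,e^{\frac{1}{16\pi}\|\nabla\eta\|^2_{\L^2(\Omega)}},
\]
which is the asserted inequality with $C=c_0$.

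The only genuinely hard ingredient is the sharp inequality invoked at the outset: the optimal constant $4\pi$, equivalently the factor $1/16\pi$ in the exponent here, rests on Schwarz symmetrisation, which reduces the two-dimensional problem to a one-dimensional radial estimate where the extremal profile is analysed explicitly. By contrast the linearisation step is elementary. It is worth noting that any suboptimal constant in the exponential would propagate to a suboptimal factor in place of $1/16\pi$, and since $16\pi=4\cdot 4\pi$ this is precisely the quantity that ties the bound to the critical-mass thresholds discussed in the introduction.
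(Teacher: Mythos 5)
Your argument is correct. Note that the paper does not prove this statement at all --- it simply recalls it as a known result with citations to Moser and Trudinger --- so there is no internal proof to compare against; your derivation, which takes the sharp form $\int_\Omega e^{4\pi v^2}\,\dx\le c_0|\Omega|$ for $\|\nabla v\|_{\L^2(\Omega)}\le 1$ as the black box and linearises the exponent via $|\eta|\le\frac{a}{2}\eta^2+\frac{1}{2a}$ with $a=8\pi/\|\nabla\eta\|^2_{\L^2(\Omega)}$, is the standard and correct way to obtain exactly the constant $\frac{1}{16\pi}$ in the exponent, and your handling of the degenerate case $\nabla\eta=0$ via Poincar\'e is fine.
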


We next set forth the analogue of a variant of \eqref{ineq:Moser-Trudinger} given in \cite[Th. 2.2]{Nagai_Senba_Yoshida_1997} but for polygonal domains. 
\begin{theorem}\label{th:Moser-Trundinger}Assume that $\Omega$ is a bounded polygonal domain in $\R^2$ with $\alpha_\Omega$ being  the minimum interior angle at the vertices of $\mathcal{V}_h$.   Further suppose that $\eta\in H^1(\Omega)$ with $\eta>0$. Then there exists $\beta_\Omega\in [1, 2)$, depending upon $\Omega$, so that, for each $\lambda>0$, one can find $C_\lambda>0$ such that  
\begin{equation}\label{ineq:Trudinger-Moser_New}
\int_\Omega  e^{\eta(\x)}\dx\le C e^{\displaystyle\beta^2_\Omega\frac{1+\lambda}{8\alpha_\Omega} \|\nabla \eta\|^2_{\L^2(\Omega)}+C_\lambda\|\eta\|^2_{L^1(\Omega)}}.
\end{equation}
\end{theorem}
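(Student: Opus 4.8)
The plan is to reduce the polygonal, Neumann-type estimate \eqref{ineq:Trudinger-Moser_New} to the flat inequality \eqref{ineq:Moser-Trudinger} for $H^1_0$-functions by a localisation argument, paying a geometric price at the corners that is governed by the smallest interior angle $\alpha_\Omega$. First I would cover $\bar\Omega$ by finitely many charts of three types: interior balls $B\subset\Omega$, half-balls centred on the open edges, and circular sectors centred at the vertices of $\mathcal{V}_h$ (for a polygon the boundary is locally exactly two straight segments, so a small enough chart at a vertex is an \emph{exact} sector of opening $\theta_v\ge\alpha_\Omega$). Choosing cut-offs $\chi_k\in\mathcal{D}(\R^2)$ subordinate to this cover with $\chi_k\equiv1$ on a piece $\omega_k$ of a finite partition $\bar\Omega=\cup_k\bar\omega_k$, and using $\eta>0$, I would write $\int_\Omega e^{\eta}\,\dx=\sum_k\int_{\omega_k}e^{\eta}\,\dx\le\sum_k\int e^{\chi_k\eta}\,\dx$ and estimate each local integral separately. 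Since a finite sum of exponentials is bounded by its number of terms times the largest one, the combinatorial factor is harmless (it enters only the prefactor $C$) and \emph{only the worst local constant survives}.

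For an interior chart $\chi_k\eta\in H^1_0(B)$, so \eqref{ineq:Moser-Trudinger} applies verbatim. For an edge chart I would reflect $\chi_k\eta$ evenly across the straight edge; the reflected field lies in $H^1_0$ of the doubled ball and its Dirichlet energy is exactly twice that of $\chi_k\eta$, so \eqref{ineq:Moser-Trudinger} yields a constant worse by a factor two. For a corner of opening $\theta_v$ I would first straighten the sector by the conformal map $z\mapsto z^{\pi/\theta_v}$, which sends it onto a half-disc and, being conformal in two dimensions, leaves $\|\nabla(\chi_k\eta)\|_{L^2}$ invariant; a subsequent even reflection reduces the piece to $H^1_0$ of a disc. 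The one nonroutine point is that the change of variables multiplies the area element by the Jacobian $|z'(w)|^2\sim|w|^{2(\theta_v/\pi-1)}$, which is singular when $\theta_v<\pi$; I would absorb it by a Hölder splitting $\int e^{v}J\le(\int e^{pv})^{1/p}(\int J^{p'})^{1/p'}$ with $p$ taken just above $\pi/\theta_v$, so that $\int J^{p'}<\infty$. Feeding $pv$ into \eqref{ineq:Moser-Trudinger} and collecting powers produces a local coefficient of the form $\frac{1}{8\theta_v}$ up to the factor $(1+\lambda)$ coming from $p\downarrow\pi/\theta_v$ and up to the reflection doubling, all of which I would bound by $\beta_\Omega^2\,\frac{1+\lambda}{8\alpha_\Omega}$ using $\theta_v\ge\alpha_\Omega$.

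It remains to convert the localised gradient into the global one and to generate the $L^1$ term. By Young's inequality $\|\nabla(\chi_k\eta)\|_{L^2(\Omega)}^2\le(1+\lambda)\|\nabla\eta\|_{L^2(\Omega)}^2+C_\lambda\|\eta\|_{L^2(\Omega)}^2$, which fixes the $(1+\lambda)$ in front of the leading term, and the two-dimensional Gagliardo--Nirenberg interpolation $\|\eta\|_{L^2(\Omega)}^2\le\varepsilon\|\nabla\eta\|_{L^2(\Omega)}^2+C_\varepsilon\|\eta\|_{L^1(\Omega)}^2$ lets me reabsorb the resulting $L^2$-norm into an arbitrarily small multiple of $\|\nabla\eta\|_{L^2}^2$ plus the desired $C_\lambda\|\eta\|_{L^1}^2$. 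Summing the finitely many charts and retaining the maximal local coefficient then gives \eqref{ineq:Trudinger-Moser_New}. I expect the corner analysis of the previous paragraph to be the main obstacle: it is there that the interior angle enters the leading coefficient, and one must check that the Jacobian/Hölder loss together with the reflection doubling can be jointly accommodated within the stated range $\beta_\Omega\in[1,2)$ while keeping the $\alpha_\Omega$-dependence essentially sharp; everything else is bookkeeping with standard inequalities.
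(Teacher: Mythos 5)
Your proposal is correct and shares the paper's overall architecture---localise, reflect across straight edges, apply the $H^1_0$ Moser--Trudinger bound \eqref{ineq:Moser-Trudinger} chart by chart, and convert $\|\nabla(\chi_k\eta)\|_{L^2}^2$ into $(1+\lambda)\|\nabla\eta\|_{L^2}^2+C_\lambda\|\eta\|_{L^1(\Omega)}^2$ via Gagliardo--Nirenberg and Young---but it diverges at the one step that actually produces the constant, namely the corners. The paper flattens a sector of opening $\alpha_j$ onto a half-disc by a bi-Lipschitz angular rescaling read through polar coordinates, $(\tilde r,\tilde\beta)\mapsto(\tilde r\rho_j/\tilde\rho_j,\,\tfrac{\pi}{2}+\tfrac{\alpha_j}{\pi}(\tilde\beta-\tfrac{\pi}{2}))$; this has a bounded area Jacobian, so no H\"older step is needed, but it distorts the gradient by a factor $\beta_j$, and that distortion is exactly where the $\beta_\Omega^2$ in \eqref{ineq:Trudinger-Moser_New} comes from ($\beta_j<1$ at convex corners, $\beta_j\in[1,2)$ at reentrant ones). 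You instead use the conformal power map, which preserves the Dirichlet integral exactly but makes the area element singular at a convex corner; your H\"older splitting with $p$ just above $\pi/\theta_v$ is the right fix (the integrability condition $p'<\pi/(\pi-\theta_v)$ is precisely $p>\pi/\theta_v$), and the resulting loss is absorbed into the $(1+\lambda)$ that the statement already tolerates. The net effect is that your argument yields the theorem with $\beta_\Omega=1$ for every polygon, which is admissible---the theorem only asserts existence of some $\beta_\Omega\in[1,2)$---and in fact sharper than the paper's constant, so it would improve the quantity $\chi_\Omega=\alpha_\Omega/\beta_\Omega^2$ used downstream. A further small point in your favour: your localisation $\int_\Omega e^{\eta}\,\dx=\sum_k\int_{\omega_k}e^{\eta}\,\dx\le\sum_k\int e^{\chi_k\eta}\,\dx$, which uses only $\eta>0$ and $\chi_k\equiv1$ on $\omega_k$, is the clean form of the paper's first display \eqref{partition}, whose literal reading $e^{\sum_j\theta_j\eta}\le\sum_j e^{\theta_j\eta}$ is not a pointwise inequality. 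Do record explicitly that $\alpha_\Omega<\pi$ for any simple polygon, so the interior and edge charts (local coefficients $\tfrac{1}{16\pi}$ and $\tfrac{1}{8\pi}$) are indeed dominated by the worst corner.
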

\begin{proof} Let $\{U_j\}_{j=1}^J$ be a covering of $\partial\Omega$, i. e., $\partial\Omega\subset\bigcup_{j=1}^J U_j$, such that $U_j=B(\boldsymbol{p}_j; \rho_j)$, where $\{\boldsymbol{p}_j\}_{j=1}^J\subset\partial\Omega$ satisfying $\mathcal{V}_h\subset\{\boldsymbol{p}_j\}_{j=1}^J$ and $\rho_j>0$. In addition, if $\boldsymbol{p}_j\in\mathcal{V}_h$, then there exist $E, \tilde E\in\mathcal{E}_h $ such that $\boldsymbol{p}_j=E\cap\tilde E$ with $\rho_j=\min \{{\rm meas}(E), {\rm meas}(\tilde E)\}$; otherwise, if $\boldsymbol{p}_j\in {\rm int }\, E$ for some $E\in\mathcal{E}_h$, then $U_j\cap \partial\Omega\subset {\rm int}\, E$. 

It is well-known \cite{Adams_Fournier_2003} that there exist a family of functions $\{\theta_i\}_{j=0}^J\subset C^{\infty}(\R^2)$ such that $0\le\theta_j\le 1$ for all $j=0,\cdots, J$, with ${\rm supp}\, \theta_j\subset U_j$ if $j\not=0$ and ${\rm supp }\, \theta_0\subset \Omega$, and $\sum_{j=0}^J \theta_j=1$. As a result, one may write
\begin{equation}\label{partition}
\begin{array}{rcl}
\displaystyle
\int_\Omega  e^{\eta(\x)}\dx&=&\displaystyle
\int_\Omega  e^{\sum_{j=0}^J (\theta_j \eta)(\x)}\dx\le\sum_{j=0}^J \int_{U_j\cap\Omega}  e^{(\theta_j \eta)(\x)}\dx,
\end{array}
\end{equation}
where we denoted $U_0={\rm supp }\,\theta_0$. From inequality \eqref{ineq:Moser-Trudinger}, we bound
$$
\int_{U_0\cap \Omega} e^{(\theta_0 \eta)(\x)}\dx\le C |U_0\cap\Omega| e^{\frac{1}{16\pi}\|\nabla(\theta_0 \eta)\|^2_{\L^2(U_0\cap\Omega)}}.
$$
Now  Gagliardo--Nirenberg's interpolation and Young's inequality yield
$$
\begin{array}{rcl}
\|\nabla(\theta_0 \eta)\|_{\L^2(U_0\cap\Omega)}&\le&\|\nabla \eta\|_{\L^2(\Omega)}+ \|\nabla\theta_0\|_{\L^\infty(\R^2)}  \|\eta\|_{L^2(\Omega)}
\\
&\le&\|\nabla \eta\|_{\L^2(\Omega)}+ C_1 \|\nabla\theta_0\|_{\L^\infty(\R^2)} \|\nabla\eta\|^{\frac{1}{2}}_{\L^2(\Omega)}  \|\eta\|_{L^1(\Omega)}^\frac{1}{2}+C_2 \|\nabla\theta_0\|_{\L^\infty(\R^2)} \|\eta\|_{L^1(\Omega)}
\\
&\le&(1+\tilde\lambda)\|\nabla\eta\|_{\L^2(\Omega)}+ C_{\tilde\lambda} \|\eta\|_{L^1(\Omega)}
\end{array}
$$ 
and therefore
$$
\begin{array}{rcl}
\displaystyle
\int_{U_0\cap \Omega} e^{(\theta_0\eta)(\x)}\dx&\le& C |U_0\cap\Omega| e^{\frac{1}{16\pi}((1+  \tilde\lambda) \|\nabla \eta\|_{\L^2(\Omega)}+ C_{\tilde\lambda} \|\eta\|_{L^1(\Omega)})^2}
\\
&\le&\displaystyle
C |U_0\cap\Omega|e^{\frac{1}{16\pi}((1+3\tilde\lambda+\tilde\lambda^2)\|\nabla \eta\|^2_{\L^2(\Omega)}+ C_{\tilde\lambda} \|\eta\|^2_{L^1(\Omega)})}
\\
&\le&\displaystyle
C|U_0\cap\Omega|e^{\frac{1+\lambda}{16\pi}\|\nabla\eta\|^2_{\L^2(\Omega)}+ C_{\lambda} \|\eta\|^2_{L^1(\Omega)}},
\end{array}
$$
where we chose $\lambda=3\tilde\lambda+\tilde\lambda^2$. Let $\boldsymbol{p}_j\in\mathcal{V}_h$ and denote $\alpha_j$ to  be the interior angle at $\boldsymbol{p}_j$. Without loss of generality, one can assume that $\boldsymbol{p}_j=\boldsymbol{0}$ and $U_j\cap\Omega=S(\rho_j, \alpha_j)$, where
$$
S(\rho_j, \alpha_j)=\{\x=r e^{i \beta}: 0\le r\le \rho_j \mbox{ and } \frac{\pi}{2}-\frac{\alpha_j}{2}\le \beta\le \frac{\pi}{2}+\frac{\alpha_j}{2}\}.
$$
Consider $T_P:\R^2\to\R^2$ to be the polar coordinate mapping, i.e., $T_P( r,\beta)=(r\cos\beta, r\sin\beta)$ and  define the invertible mapping $H_j: [0,\tilde\rho_j]\times[0,\pi]\to[0,\rho_j]\times[ \frac{\pi}{2}-\frac{\alpha_j}{2}, \frac{\pi}{2}+\frac{\alpha_j}{2}] $, with $\tilde\rho_j>0$, as 
$$H_j(\tilde r, \tilde \beta)= (\tilde r\frac{\rho_j}{\tilde\rho_j}, \frac{\pi}{2}+\frac{\alpha_j}{\pi} (\tilde\beta-\frac{\pi}{2})),
$$
whose Jacobian determinant is $ {\rm det} J_H=\frac{\rho_j}{\tilde\rho_j}\frac{\alpha_j}{\pi}$.  Thus we write 
$$
\begin{array}{rcl}
\displaystyle
\int_{U_j\cap\Omega} e^{(\theta_j\eta)(\x)}\,\dx&=&\displaystyle
\int_{[0,\rho_j]\times[ \frac{\pi}{2}-\frac{\alpha_j}{2}, \frac{\pi}{2}+\frac{\alpha_j}{2}] }  r e^{(\theta_j \eta\circ T_P)(r,\beta)}\,dr d\beta
\\
&=&\displaystyle
\frac{\rho_j^2}{\tilde\rho_j^2}\frac{\alpha_j}{\pi}\int_{[0,\tilde\rho_j]\times[0,\pi]} \tilde re^{(\theta_j \eta\circ T_P\circ H_j)(\tilde r,\tilde \beta)}\,d\tilde rd\tilde\beta
\\
&=&\displaystyle
\frac{\rho_j^2}{\tilde\rho_j^2}\frac{\alpha_j}{\pi}\int_{B_+(\boldsymbol{0},\tilde\rho_j)} e^{(\theta_j \eta\circ T_P\circ H_j\circ T_P^{-1})(\x)}\,\dx.
\end{array}
$$
We know that $\theta_j \eta\circ T_P\circ H_j\circ T_P^{-1}\in H^{1}(B_+(\boldsymbol{0};\rho_j))$, since $T_P\circ H_j\circ T_P^{-1}\in C_{\rm Lip}(B_+(\boldsymbol{0}; \tilde\rho_j); S(\rho_j,\alpha_j) )$. Indeed, observe that 
\begin{align*}
T_P\circ &H_j\circ T_P^{-1}(x_1, x_2)
\\
&=(\frac{\rho_j}{\tilde\rho_j}\sqrt{x_1^2+x_2^2} \cos(\frac{\pi}{2}+\frac{\alpha_j}{\pi}(\arg(x_1,x_2)-\frac{\pi}{2})),\frac{\rho_j}{\tilde\rho_j}\sqrt{x_1^2+x_2^2} \sin(\frac{\pi}{2}+\frac{\alpha_j}{\pi}(\arg(x_1,x_2)-\frac{\pi}{2}))),
\end{align*}
where ${\rm arg}(\cdot,\cdot)\in [-\frac{\pi}{2}, \frac{3\pi}{2})$. It is clear that $T_P\circ H_j\circ T_P^{-1}\in C(B_+(\boldsymbol{0};\tilde\rho_j);S(\rho_j, \alpha_j))$ and  $\frac{\partial(T_P\circ H_j\circ T_P^{-1})_i}{\partial x_k}\in C(B_+(\boldsymbol{0};\tilde\rho_j); S(\rho_j, \alpha_j)\backslash\{\boldsymbol{0}\})$ for $i,k=1,2$. Moreover, there exists $\beta_j>0$ such that  $\|J_{T_P\circ H_j\circ T_P^{-1}}\|_{L^\infty(\Omega)}\linebreak\le\frac{\rho_j}{\tilde\rho_j} \beta_j$, where $J_{T_P\circ H_j\circ T_P^{-1}}$ is the Jacobian matrix. For instance, we have that
$$
\begin{array}{rcl}
\displaystyle
\frac{\partial (T_P\circ H_j\circ T_P^{-1})_1}{\partial x_1}(x_1,x_2)&=&\displaystyle
\frac{\rho_j}{\tilde\rho_j \pi} \frac{1}{\sqrt{x_1^2+x_2^2}} \Big( \pi x_1 \sin (\frac{\alpha_j}{\pi} \arg(x_1,x_2)-\frac{1}{2})
\\
&&\displaystyle
-\alpha_j x_2\cos (\frac{\alpha_j}{\pi} \arg(x_1,x_2)-\frac{1}{2}) \Big);
\end{array}
$$ 
thereby using polar coordinates $\lim_{r\to 0}\Big(\frac{\partial (T_P\circ H_j\circ T_P^{-1})_1}{\partial x_1}\circ T^{-1}_P\Big)(r,\beta)$ does not exist, but 
\begin{align*}
|\lim_{r\to 0}\Big(\frac{\partial (T_P\circ H_j\circ T_P^{-1})_1}{\partial x_1}&\circ T^{-1}_P\Big)(r,\beta)|
\\
&=|\frac{\rho_i}{\tilde\rho_j \pi} \Big( \pi \cos(\beta) \sin (\frac{\alpha_j}{\pi} (\beta-\frac{\pi}{2}))
-\alpha_j \sin(\beta)\cos (\frac{\alpha_j}{\pi} (\beta-\frac{\pi}{2})) \Big)|
\\
&\le \frac{\rho_j}{\tilde\rho_j}\beta^{11}_j,
\end{align*}
where $\beta^{11}_j=\max_{\beta\in[0,\pi]} \frac{1}{\pi}| \pi \cos(\beta) \sin (\frac{\alpha_j}{\pi}(\beta-\frac{\pi}{2}))
-\alpha_j \sin(\beta)\cos (\frac{\alpha_j}{\pi} (\beta-\frac{\pi}{2})|$ and hence take $\beta_j=max_{i,k=1,2} \beta^{ik}_j$.  

Next define the extension on $B(\boldsymbol{0};\tilde\rho_j)$ by reflection of $\theta_j n\circ T_P\circ H_j\circ T_P^{-1}$, which is denoted by $(\theta_j \eta\circ T_P\circ H_j\circ T_P^{-1})^*$. We thus have that $(\theta_j \eta\circ T_P\circ H_j\circ T_P^{-1})^*\in H^1_0(B(\boldsymbol{0};\tilde\rho_j))$. In view of \eqref{ineq:Moser-Trudinger}, we find that 
$$
\begin{array}{rcl}
\displaystyle
\int_{U_j\cap\Omega} e^{(\theta_j \eta)(\x)}\dx&=&\displaystyle
\frac{\rho_j^2}{2\tilde\rho_j^2}\frac{\alpha_j}{\pi}\int_{B(\boldsymbol{0},\tilde\rho_j)} e^{(\theta_j\eta\circ T_P\circ H_j\circ T_P^{-1})^*(\x)}\dx
\\
&\le&\displaystyle C \frac{\rho_j^2}{2\tilde\rho_j^2}\frac{\alpha_j}{\pi} |B(\boldsymbol{0},\tilde\rho_j)|
e^{\frac{1}{16\pi}\|\nabla(\theta_j \eta\circ T_P\circ H_j\circ T_P^{-1})^*\|^2_{\L^2(B(\boldsymbol{0},\tilde\rho_j))}}
\\
&\le&\displaystyle C  |U_j\cap \Omega|
e^{\frac{1}{8\pi}\|\nabla(\theta_j \eta\circ T_P\circ H_j\circ T_P^{-1})\|^2_{\L^2(B_+(\boldsymbol{0},\tilde\rho_j))}}
\\
&\le&\displaystyle C |U_j\cap \Omega|
e^{\beta_j^2\frac{1}{8\alpha_j}\|\nabla(\theta_j \eta)\|^2_{\L^2(U_j\cap\Omega)}}
\\
&\le&\displaystyle C  |U_j\cap \Omega|
e^{\beta_j^2\frac{1+\lambda}{8\alpha_\Omega}\|\nabla \eta\|^2_{L^2(\Omega)}+  C_{\lambda} \|\eta\|^2_{L^1(\Omega)}},
\end{array}
$$
where we used the fact that 
$$
\begin{array}{rcl}
\|\nabla(\theta_j \eta\circ T_P\circ H_j\circ T_P^{-1})\|_{\L^2(B_+(\boldsymbol{0},\tilde\rho_j))}&\le&\displaystyle
 \|J_{T_P\circ H_j\circ T_P^{-1}}\|_{\L^\infty(\Omega)}\| \|\nabla(\theta_j \eta)(T_P\circ H_j\circ T_P^{-1})\|_{\L^2(B_+(\boldsymbol{0};\tilde\rho_j))}
\\
&\le&\displaystyle
\beta_j \frac{\rho_j}{\tilde\rho_j} \|\nabla(\theta_j \eta)(T_P\circ H_j\circ T_P^{-1})\|_{\L^2(B_+(\boldsymbol{0};\tilde\rho_j))}
\\
&\le &\beta_j \sqrt{\frac{\pi}{\alpha_j}} \|\nabla(\theta_j \eta)\|_{\L^2(U_j\cap\Omega)}.
\end{array}
$$
If $0<\alpha_j< \pi $, one can check that $0<\beta_j<1$; otherwise, if $\pi\le\alpha_j < 2 \pi$, one has $1\le\beta_j<2$. 

When $\boldsymbol{p}_j\not\in\mathcal{V}_h$, one does select $\alpha_j=\pi$ to define $H_j\equiv Id$.  Thus 
$$
\begin{array}{rcl}
\displaystyle
\int_{U_i\cap\Omega} e^{(\theta_i\eta)(\x)}\dx&=&\displaystyle
\frac{1}{2}\int_{B(\boldsymbol{p}_j,\tilde\rho_j)} e^{(\theta_j \eta)^*(\x)}\,\dx
\\
&\le&\displaystyle C  |U_j\cap \Omega|
e^{\frac{1}{16\pi}\|\nabla(\theta_j \eta)^*\|^2_{\L^2(B(\boldsymbol{p}_j,\tilde\rho_j))}}
\\
&\le&\displaystyle C  |U_j\cap \Omega|
e^{\frac{1}{8\pi}\|\nabla(\theta_i \eta)\|^2_{\L^2(U_j\cap\Omega)}}
\\
&=&\displaystyle C |U_j\cap \Omega|
e^{\frac{1+\lambda}{8\alpha_j}\|\nabla\eta\|^2_{\L^2(\Omega)}+  C_{\lambda} \|\eta\|^2_{L^1(\Omega)}},
\end{array}
$$
where we took $\beta_j=1$.

Defining $\beta_\Omega=\max_{j=1,\cdots, J} \beta_j$ and returning to \eqref{partition}, we obtain, on using the above estimates, that 
$$
\begin{array}{rcl}
\displaystyle
\int_\Omega  e^{\eta(\x)}\dx&\le&\displaystyle  C 
\sum_{j=0}^J  |U_j\cap \Omega|
e^{\beta^2_\Omega\frac{1+\lambda}{8\alpha_\Omega}\|\nabla \eta\|^2_{\L^2(\Omega)}+ C_{\lambda} \|\eta\|^2_{L^1(\Omega)}}
\\
&=&\displaystyle C  |\Omega| e^ {\beta^2_\Omega\frac{1+\lambda}{8\alpha_\Omega} \|\nabla \eta\|^2_{\L^2(\Omega)}+ C_{\lambda} \|\eta\|^2_{L^1(\Omega)}}.
\end{array}
$$
\end{proof}

\begin{remark} Inequality \eqref{ineq:Trudinger-Moser_New} may be improved if some geometrical properties of $\partial\Omega$ are  taken into account. For instance, if $\Omega=[0,\ell]\times[0,\ell]$ with $\ell>0$,  it takes the form
\begin{equation}\label{ineq:Trudinger-Moser_New_square}
\int_\Omega  e^{\eta(\x)}\dx\le C e^{\displaystyle\frac{1+\lambda}{8\pi} \|\nabla \eta\|^2_{L^2(\Omega)}+C_\lambda\|\eta\|^2_{L^1(\Omega)}},
\end{equation}
since $\alpha_j=\frac{\pi}{2}$ and $\beta_j=\frac{\sqrt{2}}{2}$, where $\partial \Omega$ may be recovered by $B(\boldsymbol{p}_j, \sqrt{2}\ell)$, with $\boldsymbol{p}_j\in\mathcal{V}_h$.
Furthermore, if $\Omega$ is convex, one has $\beta_\Omega=1$. 
\end{remark}

A further generalisation will play a crucial role in connection with the finite element framework.  

\begin{theorem}  Let $\eta_h\in X_h$ with $\eta_h>0$. Then, for $\lambda>0$, there exists a constant $C_\lambda>0$, independent of $h$, such that
\begin{equation}\label{functional_ineq_I}
\int_\Omega i_h (e^{\eta_h(\x)})\,\dx\le C (1+ \|\nabla \eta_h\|^2_{\L^2(\Omega)}) e^{\displaystyle\beta_\Omega^2\frac{1+\lambda}{8\alpha_\Omega} \|\nabla \eta_h\|^2_{\L^2(\Omega)}+C_\lambda\|\eta_h\|^2_{L^1(\Omega)}}.
\end{equation}
\end{theorem}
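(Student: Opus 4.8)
The plan is to deduce \eqref{functional_ineq_I} from the continuous estimate \eqref{ineq:Trudinger-Moser_New} of Theorem~\ref{th:Moser-Trundinger} applied to $\eta_h$ itself, absorbing the discrepancy between $i_h(e^{\eta_h})$ and $e^{\eta_h}$ into the polynomial prefactor. Concretely, I would split
$$
\int_\Omega i_h(e^{\eta_h})\,\dx=\int_\Omega e^{\eta_h}\,\dx+\int_\Omega\bigl(i_h(e^{\eta_h})-e^{\eta_h}\bigr)\,\dx,
$$
and bound the second term by the $L^1(\Omega)$ interpolation error $\|i_h(e^{\eta_h})-e^{\eta_h}\|_{L^1(\Omega)}$. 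The whole point is that, measured in this norm, the error carries a factor $\|\nabla\eta_h\|_{\L^2(\Omega)}^2$, which is exactly the polynomial overhead appearing in \eqref{functional_ineq_I}.

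The core is a careful elementwise estimate of this interpolation error. Fix $T\in\mathcal{T}_h$ and set $v:=e^{\eta_h}$. Since $\eta_h$ is affine on $T$, its Hessian vanishes there, so $D^2 v=e^{\eta_h}\,\nabla\eta_h\otimes\nabla\eta_h$ is rank one with pointwise norm $e^{\eta_h}|\nabla\eta_h|^2$; moreover $|\nabla\eta_h|$ is constant on $T$, which I denote $|\nabla\eta_h|_T$. The standard Bramble--Hilbert estimate for the nodal interpolant reads $\|v-i_h v\|_{L^1(T)}\le C\,h_T^2\,|v|_{W^{2,1}(T)}$ with $C$ depending only on the shape-regularity of $\{\mathcal{T}_h\}_h$, and here $|v|_{W^{2,1}(T)}=|\nabla\eta_h|_T^2\int_T e^{\eta_h}\,\dx$. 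Hence
$$
\|i_h(e^{\eta_h})-e^{\eta_h}\|_{L^1(T)}\le C\,h_T^2\,|\nabla\eta_h|_T^2\int_T e^{\eta_h}\,\dx.
$$

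I would then sum over the triangulation. Quasi-uniformity gives $h_T^2\le C|T|$, whence $h_T^2|\nabla\eta_h|_T^2\le C\,|\nabla\eta_h|_T^2|T|=C\|\nabla\eta_h\|_{\L^2(T)}^2$, and therefore
$$
\|i_h(e^{\eta_h})-e^{\eta_h}\|_{L^1(\Omega)}\le C\sum_{T\in\mathcal{T}_h}\|\nabla\eta_h\|_{\L^2(T)}^2\int_T e^{\eta_h}\,\dx\le C\,\|\nabla\eta_h\|_{\L^2(\Omega)}^2\int_\Omega e^{\eta_h}\,\dx,
$$
using $\int_T e^{\eta_h}\le\int_\Omega e^{\eta_h}$ in the last step. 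Together with the splitting this yields $\int_\Omega i_h(e^{\eta_h})\,\dx\le\bigl(1+C\|\nabla\eta_h\|_{\L^2(\Omega)}^2\bigr)\int_\Omega e^{\eta_h}\,\dx$, and an application of \eqref{ineq:Trudinger-Moser_New} to $\eta_h$ (legitimate since $\eta_h>0$) closes the argument with precisely the stated exponential factor.

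The delicate point is the elementwise step: one must recognise that, because $\eta_h$ is piecewise affine, the $W^{2,1}$ seminorm of $e^{\eta_h}$ on $T$ collapses to the product $|\nabla\eta_h|_T^2\int_T e^{\eta_h}$, and it is this $|\nabla\eta_h|^2$ that, after the shape-regularity conversion $h_T^2|\nabla\eta_h|_T^2\sim\|\nabla\eta_h\|_{\L^2(T)}^2$ and summation, produces the \emph{polynomial} prefactor $(1+\|\nabla\eta_h\|_{\L^2(\Omega)}^2)$ rather than a cruder exponential-in-gradient bound. Keeping the interpolation constant independent of $h$ (hence insensitive to the possibly enormous values of $e^{\eta_h}$) is exactly what makes the constant in \eqref{functional_ineq_I} uniform in $h$.
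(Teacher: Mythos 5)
Your argument is correct, and it reaches the paper's key intermediate inequality $\int_\Omega i_h(e^{\eta_h})\,\dx\le (1+C\|\nabla \eta_h\|^2_{\L^2(\Omega)})\int_\Omega e^{\eta_h}\,\dx$ by a genuinely different route before finishing, exactly as the paper does, with an application of \eqref{ineq:Trudinger-Moser_New}. The paper expands $e^{\eta_h}$ as a power series, applies to each power the cited estimate $\|\eta_h^m-i_h(\eta_h^m)\|_{L^1(\Omega)}\le C m(m-1)h^2\int_\Omega |\eta_h|^{m-2}|\nabla\eta_h|^2\,\dx$ from \cite{GS_RG_2021}, resums to obtain $\int_\Omega(1+Ch^2|\nabla\eta_h|^2)e^{\eta_h}\,\dx$, and then converts $h^2|\nabla\eta_h|^2$ into $\|\nabla\eta_h\|^2_{\L^2(\Omega)}$ via the global inverse inequality $\|\nabla\eta_h\|_{\L^\infty(\Omega)}\le Ch^{-1}\|\nabla\eta_h\|_{\L^2(\Omega)}$. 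You instead estimate the interpolation error of $e^{\eta_h}$ directly and elementwise: since $\eta_h$ is affine on each $T$, the Hessian of $e^{\eta_h}$ collapses to $e^{\eta_h}\nabla\eta_h\otimes\nabla\eta_h$, the Bramble--Hilbert bound gives $\|e^{\eta_h}-i_h(e^{\eta_h})\|_{L^1(T)}\le Ch_T^2|\nabla\eta_h|_T^2\int_T e^{\eta_h}\,\dx$, and the local shape-regularity relation $h_T^2\le C|T|$ replaces the inverse inequality. What your version buys is self-containedness (no external lemma, no series manipulation) and a purely local conversion of the $h^2$ factor; what the paper's version buys is reuse of machinery already established in \cite{GS_RG_2021} and avoidance of the one delicate point in your argument, namely that the $L^1$--$W^{2,1}$ nodal interpolation estimate in two dimensions sits at the borderline Sobolev embedding $W^{2,1}\hookrightarrow C^0$ (harmless here, since $e^{\eta_h}$ is smooth on each element and a direct Taylor-remainder scaling argument yields the same bound, but worth a sentence if you write this up). Both routes end with the same crude step $\int_T e^{\eta_h}\,\dx\le\int_\Omega e^{\eta_h}\,\dx$, so neither gains anything quantitative over the other in the final prefactor.
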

\begin{proof} By invoking the inequality \cite[Co. 2.6]{GS_RG_2021}, for any $m\in\mathds{N}$, 
$$
\|\eta_h^m-i_h(\eta_h^m)\|_{L^1(\Omega)}\le
C m(m-1)h^2 \int_\Omega |\eta_h(\x)|^{m-2} |\nabla \eta_h(\x)|^2\, \dx,
$$
we have
\begin{equation}\label{co2.8-lab1}
\begin{array}{rcl}
\displaystyle
\int_\Omega i_h (e^{\eta_h(\x)})\,\dx&=&\displaystyle
\int_\Omega (1+\eta_h(\x))\,\dx+\sum_{m=2}^\infty\frac{1}{m!} \int_\Omega i_h(\eta_h^m(\x))\dx
\\
&\le&\displaystyle
\sum_{n=0}^\infty\frac{1}{m!} \int_\Omega \eta^m_h(\x)\,\dx
 \\
&&\displaystyle +\sum_{m=2}^\infty\frac{C m(m-1) h^2}{m!}  \int_\Omega |\nabla \eta_h(\x)|^2 \eta_h^{m-2}(\x)\,\dx
\\
&=&\displaystyle
\int_\Omega (1+ C h^2 |\nabla \eta_h(\x)|^2) e^{\eta_h(\x)}\,\dx
\\
&\le&\displaystyle
(1+ C\|\nabla \eta_h(\x)\|^2_{\L^2(\Omega)}) \int_\Omega e^{\eta_h(\x)}\,\dx.
\end{array}
\end{equation}
In the last line the inverse inequality $ \|\nabla \eta_h\|_{\L^\infty(\Omega)}\le C h^{-1} \|\nabla \eta_h\|_{\L^2(\Omega)}$ was utilised. Inequality \eqref{functional_ineq_I} follows on applying \eqref{ineq:Trudinger-Moser_New}. 
\end{proof}

For convenience, we use $\chi_\Omega$ for a shorthand of $\frac{\alpha_\Omega}{\beta^2_\Omega}$, where $\alpha_\Omega$ and $\beta_\Omega$ were defined  in Theorem~\ref{th:Moser-Trundinger}.
\begin{corollary} Let $\phi_h, \psi_h\in X_h$ with $\phi_h, \psi_h>0$. Then, for any $\lambda,\delta, \mu>0$, there exists $M=M(\lambda,\delta, \Omega)>0$ such that  
\begin{equation}\label{functional_ineq_II}
\begin{array}{rcl}
(\phi_h, \psi_h)_h&\le&\displaystyle\frac{1}{\mu} (\phi_h, \log\frac{\phi_h}{\bar \phi_h} )_h
\\
&&\displaystyle
+ \mu (\delta + \frac{1+\lambda}{8\chi_\Omega})\|\phi_h\|_{L^1(\Omega)} \|\nabla \psi_h\|^2_{\L^2(\Omega)}
\\
&&\displaystyle
 +M \mu \|\phi_h\|_{L^1(\Omega)} \|\psi_h\|^2_{L^1(\Omega)}
\\
&&\displaystyle
+\frac{M}{\mu} \|\phi_h\|_{L^1(\Omega)}.
\end{array} 
\end{equation}
\end{corollary}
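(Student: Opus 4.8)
The plan is to reduce the statement to a finite‑dimensional entropy (Gibbs) inequality at the nodes and then feed the functional inequality \eqref{functional_ineq_I} into the resulting logarithmic term. Writing $m_i=\int_\Omega\varphi_{\a_i}\,\dx>0$, the lumping convention gives $(\phi_h,\psi_h)_h=\sum_{i\in I}m_i\phi_i\psi_i$, $\|\phi_h\|_{L^1(\Omega)}=\sum_{i\in I}m_i\phi_i=|\Omega|\,\bar\phi_h$, and $\int_\Omega i_h(e^{\mu\psi_h})\,\dx=\sum_{i\in I}m_i e^{\mu\psi_i}$, so that the whole estimate becomes an inequality between sums over $\mathcal{N}_h$. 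Since $\phi_h>0$, the numbers $p_i=m_i/|\Omega|$ form a probability vector and $\rho_i=\phi_i/\bar\phi_h$ satisfies $\sum_{i\in I}p_i\rho_i=1$, i.e. $\rho$ is a probability density with respect to $p$.

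The core step is the Gibbs variational principle in this discrete setting: applying Jensen's inequality (concavity of the logarithm) to the probability vector $\tilde p_i=p_i\rho_i$ one obtains, for $g_i=\mu\psi_i$,
\[
\sum_{i\in I}p_i\rho_i g_i\le\log\!\Big(\sum_{i\in I}p_i e^{g_i}\Big)+\sum_{i\in I}p_i\rho_i\log\rho_i .
\]
Translating back through the identities above and multiplying by $\|\phi_h\|_{L^1(\Omega)}/\mu$ yields
\[
(\phi_h,\psi_h)_h\le\frac{1}{\mu}\Big(\phi_h,\log\tfrac{\phi_h}{\bar\phi_h}\Big)_h+\frac{\|\phi_h\|_{L^1(\Omega)}}{\mu}\log\!\Big(\frac{1}{|\Omega|}\int_\Omega i_h(e^{\mu\psi_h})\,\dx\Big),
\]
which already reproduces the first target term exactly and isolates a logarithmic factor to be controlled.

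It then remains to estimate that logarithm. I would apply \eqref{functional_ineq_I} with $\eta_h=\mu\psi_h>0$ and take logarithms, using $\chi_\Omega=\alpha_\Omega/\beta_\Omega^2$ so that the exponent contributes precisely $\frac{1+\lambda}{8\chi_\Omega}\mu^2\|\nabla\psi_h\|^2_{\L^2(\Omega)}+C_\lambda\mu^2\|\psi_h\|^2_{L^1(\Omega)}$, together with a constant $\log(C/|\Omega|)$ and the prefactor term $\log(1+\mu^2\|\nabla\psi_h\|^2_{\L^2(\Omega)})$. Multiplying by $\|\phi_h\|_{L^1(\Omega)}/\mu$ turns the exponent into $\mu\frac{1+\lambda}{8\chi_\Omega}\|\phi_h\|_{L^1(\Omega)}\|\nabla\psi_h\|^2_{\L^2(\Omega)}$, $\mu C_\lambda\|\phi_h\|_{L^1(\Omega)}\|\psi_h\|^2_{L^1(\Omega)}$, and $\frac{1}{\mu}\log(C/|\Omega|)\|\phi_h\|_{L^1(\Omega)}$, which match the second (partially), third and fourth terms respectively.

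The one genuinely nonlinear term, and the only real obstacle, is the prefactor $\log(1+\mu^2\|\nabla\psi_h\|^2_{\L^2(\Omega)})$. I would absorb it with the elementary bound $\log(1+x)\le\delta x+C_\delta$, valid for all $x\ge0$ with $C_\delta=\max_{x\ge0}(\log(1+x)-\delta x)<\infty$; applied with $x=\mu^2\|\nabla\psi_h\|^2_{\L^2(\Omega)}$ and multiplied by $\|\phi_h\|_{L^1(\Omega)}/\mu$, it contributes $\mu\delta\|\phi_h\|_{L^1(\Omega)}\|\nabla\psi_h\|^2_{\L^2(\Omega)}$ plus $\frac{C_\delta}{\mu}\|\phi_h\|_{L^1(\Omega)}$. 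Adding the $\delta$-part to the contribution of the exponent gives exactly the coefficient $\mu\big(\delta+\frac{1+\lambda}{8\chi_\Omega}\big)$ in front of $\|\phi_h\|_{L^1(\Omega)}\|\nabla\psi_h\|^2_{\L^2(\Omega)}$, while all remaining constants ($C_\lambda$, $\log(C/|\Omega|)$, $C_\delta$) are collected into a single $M=M(\lambda,\delta,\Omega)$ multiplying $\mu\|\phi_h\|_{L^1(\Omega)}\|\psi_h\|^2_{L^1(\Omega)}$ and $\frac{1}{\mu}\|\phi_h\|_{L^1(\Omega)}$, which completes the proof.
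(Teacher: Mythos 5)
Your proposal is correct and follows essentially the same route as the paper: the discrete Gibbs/Jensen step with weights $\tilde p_i=\frac{\phi_i\int_\Omega\varphi_{\a_i}}{\|\phi_h\|_{L^1(\Omega)}}$ is exactly the paper's application of Jensen's inequality to $\log\int_\Omega i_h(e^{\mu\psi_h})\,\dx$, and the upper bound on that logarithm via \eqref{functional_ineq_I} is identical. The only cosmetic difference is how the prefactor is absorbed — you use $\log(1+x)\le\delta x+C_\delta$ while the paper uses $\log(\delta(1+x))\le\delta(1+x)$ — which yields the same $\mu\delta\|\phi_h\|_{L^1(\Omega)}\|\nabla\psi_h\|^2_{\L^2(\Omega)}$ contribution and a $\delta$-dependent constant folded into $M$.
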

\begin{proof} On the one hand, from Jensen's inequality, we obtain
$$
\begin{array}{rcl}
\displaystyle
\log \int_\Omega i_h (e^{\mu \psi_h(\x)})\,\dx&=&\displaystyle
\log \sum_{i\in I} \frac{e^{\mu\psi_i}  \|\phi_h\|_{L^1(\Omega)}}{\phi_i} \frac{\phi_i}{\|\phi_h\|_{L^1(\Omega)}}\int_\Omega \varphi_{\a_i}(\x)\,\dx
\\
&\ge&\displaystyle
\sum_{i\in I} \log \left(\frac{e^{\mu\psi_i} \|\phi_h\|_{L^1(\Omega)}}{\phi_i}\right) \frac{\phi_i}{ \|\phi_h\|_{L^1(\Omega)}}\int_\Omega \varphi_{\a_i}(\x)\,\dx
\\
&=&\displaystyle
\sum_{i\in I} \Big(\mu\psi_i+\log \|\phi_h\|_{L^1(\Omega)}- \log \phi_i\Big)\frac{\phi_i}{\|\phi_h\|_{L^1(\Omega)}}\int_\Omega \varphi_{\a_i}(\x)\,\dx
\\
&=&\displaystyle
\frac{\mu}{ \|\phi_h\|_{L^1(\Omega)}} (\phi_h, \psi_h)_h-\frac{1}{ \|\phi_h\|_{L^1(\Omega)}} (\phi_h, \log\frac{\phi_h}{\bar\phi_h} )_h+\log |\Omega|.
\end{array}
$$
On the other hand, from \eqref{functional_ineq_I}, we get 
$$
\log \int_\Omega i_h (e^{\mu \psi_h(\x)})\,\dx\le \log \frac{C}{\delta}+ \log(\delta(1+ \mu^2\|\nabla \psi_h\|^2_{\L^2(\Omega)}))+ \frac{(1+\lambda)\mu^2}{8\chi_\Omega} \|\nabla \psi_h\|^2_{\L^2(\Omega)}+C_\lambda \mu^2\|\psi_h\|^2_{L^1(\Omega)},
$$
whence 
$$
\log \int_\Omega i_h (e^{\mu \psi_h(\x)})\,\dx\le \log \frac{C}{\delta}+\delta+(\delta +\frac{1+\lambda}{8\chi_\Omega})\mu^2 \|\nabla \psi_h\|^2_{\L^2(\Omega)}+C_\lambda \mu^2\|\psi_h\|^2_{L^1(\Omega)}.
$$
Combining the above inequalities yields 
$$
\begin{array}{rcl}
(\phi_h, \psi_h)_h&\le&\displaystyle
\frac{1}{\mu} (\phi_h, \log\frac{\phi_h}{\bar \phi_h} )_h
\\
&&\displaystyle
+\frac{\|\phi_h\|_{L^1(\Omega)}}{\mu}(\log\frac{C }{\delta |\Omega|}+\delta)
\\
&&\displaystyle
+ (\delta+\frac{1+\lambda}{8\chi_\Omega}) \mu \|\phi\|_{L^1(\Omega)} \|\nabla \psi_h\|^2_{\L^2(\Omega)}
\\
&&\displaystyle
+C_\lambda \mu \|\phi\|_{L^1(\Omega)} \|\psi_h\|^2_{L^1(\Omega)}.
\end{array}
$$
Selecting $M=\max\{\log\frac{C_\lambda }{\delta |\Omega|}+\delta, C_\lambda\}$ completes the proof. 
\end{proof}
\begin{corollary} Let $\phi_h\in X_h$ with $\phi_h>0$.  Then, for any $\lambda>0$, there exists $M=M(\lambda, \Omega)>0$ such that
\begin{equation}\label{functional_ineq_III}
\begin{array}{rcl}
(\phi_h, \log(\phi_h+1))_h&\le&\displaystyle
4(\delta + \frac{1+\lambda}{8\chi_\Omega}) \|\phi_h\|_{L^1(\Omega)} \|\nabla i_h(\log(\phi_h+1))\|^2_{\L^2(\Omega)}+4 M \|\phi_h\|_{L^1(\Omega)}^3
\\
&&\displaystyle
+(M-\log \bar\phi_h) \|\phi_h\|_{L^1(\Omega)}.
\end{array}
\end{equation}
\end{corollary}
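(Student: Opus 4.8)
The plan is to obtain \eqref{functional_ineq_III} directly from the preceding Corollary \eqref{functional_ineq_II} by a single well-chosen substitution followed by an absorption argument, so no new functional inequality is needed. The natural choice for the second slot is $\psi_h=i_h(\log(\phi_h+1))$, which belongs to $X_h$ and is strictly positive, since $\phi_h>0$ forces every nodal value $\log(\phi_i+1)$ to be positive and a piecewise-linear function with positive nodal values is positive throughout. The first step I would record is the identity $(\phi_h,\log(\phi_h+1))_h=(\phi_h,i_h(\log(\phi_h+1)))_h=(\phi_h,\psi_h)_h$, which holds because the lumped product $(\cdot,\cdot)_h$ evaluates its arguments only at the vertices; this legitimizes replacing the non-polynomial $\log(\phi_h+1)$ on the left-hand side by the admissible function $\psi_h$ without changing the value, so that \eqref{functional_ineq_II} becomes applicable.

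Next I would apply \eqref{functional_ineq_II} with this $\psi_h$ and the specific value $\mu=2$, which is calibrated to reproduce the constants in the target. The gradient term yields $2(\delta+\tfrac{1+\lambda}{8\chi_\Omega})\|\phi_h\|_{L^1(\Omega)}\|\nabla i_h(\log(\phi_h+1))\|^2_{\L^2(\Omega)}$, the $L^1$-term becomes $2M\|\phi_h\|_{L^1(\Omega)}\|\psi_h\|^2_{L^1(\Omega)}$, and the residual term contributes $\tfrac{M}{2}\|\phi_h\|_{L^1(\Omega)}$, while the entropy prefactor $\tfrac1\mu$ equals $\tfrac12$.

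The crux is then to control the entropy term $\tfrac12(\phi_h,\log\tfrac{\phi_h}{\bar\phi_h})_h$ on the right by the left-hand side itself. Writing $\log\tfrac{\phi_h}{\bar\phi_h}=\log\phi_h-\log\bar\phi_h$, using $(\phi_h,1)_h=\|\phi_h\|_{L^1(\Omega)}$ (because $i_h\phi_h=\phi_h$ and $\phi_h>0$), and applying the elementary nodal bound $\log\phi_i\le\log(\phi_i+1)$ gives $(\phi_h,\log\tfrac{\phi_h}{\bar\phi_h})_h\le(\phi_h,\log(\phi_h+1))_h-\log\bar\phi_h\,\|\phi_h\|_{L^1(\Omega)}$. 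This reproduces $(\phi_h,\log(\phi_h+1))_h$ with coefficient $\tfrac12$, which I would move to the left, leaving $\tfrac12(\phi_h,\log(\phi_h+1))_h$ there and the boundary contribution $-\tfrac12\log\bar\phi_h\,\|\phi_h\|_{L^1(\Omega)}$ on the right.

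Finally, to match the cubic term I would invoke $\log(1+s)\le s$ at each vertex to obtain $\psi_h\le\phi_h$ nodally, whence $\|\psi_h\|_{L^1(\Omega)}\le\|\phi_h\|_{L^1(\Omega)}$ and thus $\|\psi_h\|^2_{L^1(\Omega)}\le\|\phi_h\|^2_{L^1(\Omega)}$, turning the $L^1$-term into a multiple of $\|\phi_h\|^3_{L^1(\Omega)}$. Multiplying the resulting inequality through by $2$ and collecting the constants into a single $M$ produces the leading factor $4(\delta+\tfrac{1+\lambda}{8\chi_\Omega})$, the term $4M\|\phi_h\|^3_{L^1(\Omega)}$, and the closing term $(M-\log\bar\phi_h)\|\phi_h\|_{L^1(\Omega)}$, which is precisely \eqref{functional_ineq_III}. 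I expect the only delicate point to be the very first step, namely verifying that passing from $\log(\phi_h+1)$ to its interpolant is exact under the lumped product and preserves positivity; once that admissibility is secured, the remainder is a choice of $\mu$, a monotonicity estimate, and bookkeeping of constants.
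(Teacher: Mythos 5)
Your proposal is correct and follows essentially the same route as the paper's own proof: the substitution $\psi_h=i_h(\log(\phi_h+1))$ with $\mu=2$ in \eqref{functional_ineq_II}, absorption of the entropy term via $\log\phi_h\le\log(\phi_h+1)$, the bound $\|i_h(\log(\phi_h+1))\|_{L^1(\Omega)}\le\|\phi_h\|_{L^1(\Omega)}$, and the final bookkeeping. Your explicit verification that the lumped product makes the substitution exact and that $\psi_h>0$ is a welcome detail the paper leaves implicit.
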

\begin{proof} Select $\psi_h=i_h(\log(\phi_h+1))$ and  $\mu=2$ in \eqref{functional_ineq_II} to get
$$
\begin{array}{rcl}
(\phi_h, \log(\phi_h+1))_h&\le&\displaystyle
\frac{1}{2} (\phi_h, \log\frac{\phi_h}{\bar \phi_h} )_h+\frac{M}{2} \|\phi_h\|_{L^1(\Omega)}
\\
&&\displaystyle
 + 2 (\delta + \frac{1+\lambda}{8\chi_\Omega}) \|\phi_h\|_{L^1(\Omega)} \|\nabla i_h(\log(\phi_h+1))\|^2_{\L^2(\Omega)}
 \\
 &&
 \displaystyle
 +2 M\|\phi_h\|_{L^1(\Omega)} \|i_h(\log(\phi_h+1))\|^2_{L^1(\Omega)}.
\end{array} 
$$
On noting that 
$$
\begin{array}{rcl}
\displaystyle
\frac{1}{2}(\phi_h, \log\frac{\phi_h}{\bar \phi_h})_h&=&\displaystyle
\frac{1}{2}(\phi_h, \log\phi_h)_h-\frac{1}{2} \log \bar\phi_h\|\phi_h\|_{L^1(\Omega)} 
\\
&\le&\displaystyle
\frac{1}{2}(\phi_h, \log(\phi_h+1))_h-\frac{1}{2}\log\bar\phi_h \|\phi_h\|_{L^1(\Omega)}
\end{array}
$$
and that $\| i_h(\log(\phi_h+1))\|_{L^1(\Omega)}\le \|\phi_h\|_{L^1(\Omega)}$, we complete the proof.
\end{proof}

We end up this section with a pointwise inequality.
\begin{proposition} It follows that, for all $x, y>0$, 
\begin{equation}\label{ineq:log}
(\log(x+1)-\log(y+1))^2\le \frac{(x-y)^2}{(x+1)(y+1)}
\end{equation}
holds.
\end{proposition}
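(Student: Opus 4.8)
The plan is to reduce the two-variable inequality to a one-variable calculus problem by exploiting its scaling structure. First I would substitute $a=x+1$ and $b=y+1$, so that $a,b>1$ and the claim becomes $(\log a-\log b)^2\le (a-b)^2/(ab)$; note that this form is in fact valid for all $a,b>0$. When $a=b$ both sides vanish, so one may assume without loss of generality $a>b>0$ and set $t=a/b>1$. Since $\log a-\log b=\log t$ and $(a-b)/\sqrt{ab}=(t-1)/\sqrt t$, the inequality is homogeneous of degree zero in $(a,b)$ and collapses to the single-variable statement
\[
\log t\le \frac{t-1}{\sqrt t},\qquad t\ge 1.
\]

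Next I would establish this scalar inequality by monotonicity. Define $f(t)=\dfrac{t-1}{\sqrt t}-\log t=\sqrt t-\dfrac{1}{\sqrt t}-\log t$ on $[1,\infty)$, so that $f(1)=0$. Differentiating gives
\[
f'(t)=\tfrac12 t^{-1/2}+\tfrac12 t^{-3/2}-t^{-1}=\tfrac12 t^{-3/2}\bigl(t-2\sqrt t+1\bigr)=\frac{(\sqrt t-1)^2}{2\,t^{3/2}}\ge 0.
\]
Hence $f$ is nondecreasing on $[1,\infty)$ and $f(t)\ge f(1)=0$, which is exactly $\log t\le (t-1)/\sqrt t$. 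Squaring (both sides being nonnegative for $t\ge1$) and undoing the substitution $t=a/b$ recovers the desired bound, with equality precisely when $t=1$, i.e. $x=y$.

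I expect the only delicate point to be the reduction itself — recognising that the apparently two-parameter inequality is governed by the single ratio $t=a/b$ — after which the estimate follows from the fortunate factorisation $t-2\sqrt t+1=(\sqrt t-1)^2$, which makes $f'\ge0$ transparent. As an alternative route that sidesteps the explicit ratio substitution, I would invoke the classical logarithmic-mean/geometric-mean inequality $\dfrac{a-b}{\log a-\log b}\ge\sqrt{ab}$, valid for all $a\ne b>0$; since $a-b$ and $\log a-\log b$ carry the same sign, squaring this inequality and cross-multiplying yields $(\log a-\log b)^2\le (a-b)^2/(ab)$ at once. Either approach settles the proposition.
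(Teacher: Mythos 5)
Your proof is correct and takes essentially the same route as the paper: both reduce the inequality to a one-variable statement in the ratio $z=(x+1)/(y+1)$ and then verify it by elementary calculus (the paper asserts that $g(z)=(z-1)^2-z\log^2 z$ has its global minimum $0$ at $z=1$, which is just the squared form of your $\log t\le (t-1)/\sqrt t$). Your version is in fact slightly more complete, since you exhibit the derivative factorisation $f'(t)=(\sqrt t-1)^2/(2t^{3/2})$ explicitly rather than asserting the minimum.
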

\begin{proof} Let $g(z)= (z-1)^2-z \log^2 z$.  We know that $g(z)\ge 0$ for $z>0$ since $g$ attains its unique global minimum $0$ at $z=1$. Therefore, we deduce that  $\log^2 z\le \frac{(z-1)^2}{z}$ for all $z>0$. Taking $z=\frac{x+1}{y+1}$ completes the proof.
\end{proof}

\section{Main results}
In this section the main theoretical results of this work are stated and proved: lower bounds, time-independent and -dependent integrability bounds.  These will be successfully attained as an upshot of  the new discretisation for the convective and chemotaxis terms in \eqref{eq:n_h} and of the design of  the stabilising terms $B_n$ and $B_c$ in \eqref{Bn} and $\eqref{Bc}$, which have been neatly devised. Nor should the role of the functional inequalities \eqref{functional_ineq_II}  and  \eqref{functional_ineq_III} be forgotten, which will be of great importance; its last consequence is  deriving a priori bounds under a smallness condition for $\|n_0\|_{L^1(\Omega)}$.

\subsection{Lower bounds} We open our discussion with the proof of the lower bounds for $(n^{m+1}_h, c^{m+1}_h)$, because they are closely connected with the $L^1(\Omega)$ bounds and later on with the a priori energy bounds.  
\begin{lemma} It follows that the discrete solution pair  $(n^{m+1}_h, c^{m+1}_h)$ of \eqref{eq:n_h}-\eqref{eq:c_h} satisfies 
\begin{equation}\label{lower_bounds}
n^{m+1}_h>0\quad\mbox{ and }\quad c^{m+1}_h\ge 0.
\end{equation} 
\end{lemma}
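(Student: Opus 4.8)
The plan is to proceed by induction on the time level $m$, establishing at each step first $n^{m+1}_h>0$ and then $c^{m+1}_h\ge 0$; the scheme has been engineered precisely so that a discrete maximum principle holds at extremal nodes. The base case is immediate from $n^0_h=\mathcal{I}_h n_0>0$ and $c^0_h=\mathcal{I}_h c_0\ge 0$, since the averaged interpolation preserves (strict) positivity. So I assume $n^m_h>0$ and $c^m_h\ge 0$, and abbreviate $\omega_i=\int_\Omega\varphi_{\a_i}\,\dx>0$.

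\emph{Positivity of $n^{m+1}_h$.} Let $\a_{i_0}$ realise the global minimum of $n^{m+1}_h$ over the nodes and test \eqref{eq:n_h} with $\bar n_h=\varphi_{\a_{i_0}}$; by Lemma~\ref{lm: alpha_i}, $\alpha_{\a_{i_0}}(n^{m+1}_h)=1$. Arguing by contradiction, I suppose $n_{i_0}:=n^{m+1}_{i_0}\le 0$. Then $[n_{i_0}]_+=0$ forces $\gamma^{\rm ch}_{i_0 j}=0$ in \eqref{def: gamma_ij_chem} for every $j$, so the chemotaxis term $(n^{m+1}_h\nabla c^{m+1}_h,\nabla\varphi_{\a_{i_0}})_*$ vanishes identically; this truncation is exactly what neutralises the only spatial contribution lacking a sign. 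The lumped time derivative gives $\tfrac{\omega_{i_0}}{k}(n_{i_0}-n^m_{i_0})$, while the graph-Laplacian stabiliser \eqref{Bn} collapses, by the symmetry $\nu^n_{ij}=\nu^n_{ji}$, to $\sum_{j\ne i_0}\nu^n_{i_0 j}(n_{i_0}-n_j)$.

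The decisive computation is to rewrite the consistent diffusion together with the modified convection \eqref{New_conv_term: extended}, evaluated at $\a_{i_0}$, in the flux form $\sum_{j\ne i_0} f^n_{i_0 j}(n_{i_0}-n_j)$ encoded in \eqref{def:nu_n}. Granting this, $\alpha_{\a_{i_0}}=1$ yields $\nu^n_{i_0 j}\ge f^n_{i_0 j}$, so adding the stabiliser renders the entire spatial contribution $\sum_{j\ne i_0}d_{i_0 j}(n_{i_0}-n_j)$ with $d_{i_0 j}\ge 0$; since $n_{i_0}$ is minimal this is $\le 0$, and \eqref{eq:n_h} then forces $n_{i_0}\ge n^m_{i_0}>0$, contradicting $n_{i_0}\le 0$. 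Hence $n^{m+1}_h>0$. I expect the bulk of the work, and the main obstacle, to be exactly this algebraic recasting: checking that the factor $\gamma^c_{ji}$, the extension $g_\varepsilon$ of \eqref{def:g_eps}, and the mean-value quotient defining $f^n_{ij}$ together assemble the convection--diffusion fluxes into the form that $\nu^n$ is built to dominate.

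\emph{Nonnegativity of $c^{m+1}_h$.} Now let $\a_{i_0}$ minimise $c^{m+1}_h$ and test \eqref{eq:c_h} with $\varphi_{\a_{i_0}}$, keeping the consistent mass matrix (nothing is lumped on the left). Writing the nodal equation as $\sum_j L_{i_0 j}c^{m+1}_j=b_{i_0}$, the off-diagonal entries are $L_{i_0 j}=f^c_{i_0 j}-\nu^c_{i_0 j}\le 0$, since $\alpha_{\a_{i_0}}=1$ gives $\nu^c_{i_0 j}\ge f^c_{i_0 j}$ in \eqref{def:nu^c_ij}, absorbing the positive $k^{-1}$-mass part hidden in $f^c_{i_0 j}$. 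The source $b_{i_0}=\tfrac1k(c^m_h,\varphi_{\a_{i_0}})+n^{m+1}_{i_0}\omega_{i_0}\ge 0$ by the induction hypothesis, by $n^{m+1}_h>0$, and by nonnegativity of the $\mathds{P}_1$ mass-matrix entries. The only delicate point is the row sum $\sum_j L_{i_0 j}=(k^{-1}+1)\omega_{i_0}+\tfrac12(\nabla\cdot\u^m_h,\varphi_{\a_{i_0}})$: since $\varphi_{\a_{i_0}}-\bar\varphi_{\a_{i_0}}\in Q_h$, the discrete incompressibility \eqref{eq:p_h} annihilates the zero-mean part, while the mean part contributes $\bar\varphi_{\a_{i_0}}\int_{\partial\Omega}\u^m_h\cdot\n=0$ by the Dirichlet condition $\u^m_h\in\boldsymbol{H}^1_0(\Omega)$; thus the divergence term drops and the row sum equals $(k^{-1}+1)\omega_{i_0}>0$. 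Were $c_{i_0}:=c^{m+1}_{i_0}<0$ the minimum, then $0\le b_{i_0}=\sum_j L_{i_0 j}c^{m+1}_j\le c_{i_0}\sum_j L_{i_0 j}<0$, a contradiction, so $c^{m+1}_h\ge 0$, which completes the induction.
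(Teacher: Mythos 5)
Your proposal is correct and follows essentially the same route as the paper's proof: induction in $m$; testing with the nodal basis function at a minimising node where $\alpha_{\a_{i_0}}=1$; using the truncation $[\cdot]_+$ to annihilate the chemotaxis term and the definitions of $\nu^n$, $\nu^c$ to obtain the sign of the off-diagonal coefficients; and concluding by contradiction (your handling of the divergence term via $\varphi_{\a_{i_0}}-\bar\varphi_{\a_{i_0}}\in Q_h$ is in fact more explicit than the paper's bare appeal to \eqref{eq:p_h}). The only point to correct is the sign in your intermediate flux form: convection plus diffusion at the node assemble as $\sum_{j\ne i_0}f^n_{i_0 j}\,(n_j-n_{i_0})$, not $\sum_{j\ne i_0}f^n_{i_0 j}\,(n_{i_0}-n_j)$, so that after adding the stabiliser $\sum_{j\ne i_0}\nu^n_{i_0 j}(n_{i_0}-n_j)$ one obtains $d_{i_0 j}=\nu^n_{i_0 j}-f^n_{i_0 j}\ge 0$, which is exactly what your subsequent inequality requires.
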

\begin{proof} We establish \eqref{lower_bounds} by induction on $m$; the case $m=0$ being entirely analogue to the general one is omitted. It will be firstly shown that $n^{m+1}_h$ cannot take non-positive values. Suppose the contrary at a certain node $\boldsymbol{a}_i\in\mathcal{N}_h$, i.e., $n^{m+1}_i:=n^{m+1}_h(\boldsymbol{a}_i)\le 0$, which is a local minimum. For the sake of simplicity and with no loss in generality, one may order the indexes such $i<j$ for all $ j\in I(\Omega_{\a_i})$. Let $I^*(\Omega_{\a_i})=\{j\in I(\Omega_{\a_i}): n^{m+1}_j=n^{m+1}_i\}$ and let  $I^*_c(\Omega_{\a_i})$ be  its complementary.  Then choose $\bar n_h=\varphi_{\a_i}$ in \eqref{eq:n_h} to get 
$$
(\delta_t n^{m+1}_h, \varphi_{\a_i})_h-(n^{m+1}_h\u^m_h, \nabla\varphi_{\a_i})_*+(\nabla n^{m+1}_h, \nabla \varphi_{\a_i})+(B_n(n^{m+1}_h, \u^m_h) n^{m+1}_h, \varphi_{\a_i})=0,
$$
where use is made of the fact that  $(n^{m+1}_h\nabla  c^{m+1}_h, \nabla \varphi_{\a_i})_*=0$ from \eqref{def: gamma_ij_chem} and \eqref{New_KS_term} due to $[n^{m+1}_i]_+=0$. By virtue of \eqref{New_conv_term: extended} and \eqref{Bn}, one writes
$$
\begin{array}{c}
\displaystyle
k^{-1}(1,\varphi_{\a_i}) n^{m+1}_i+\sum_{j\in I^*_c(\Omega_{\a_i})} (n_j^{m+1}-n_i^{m+1})  (\nabla\varphi_{\a_j}, \nabla\varphi_{\a_i})
\\
\displaystyle
+\sum_{j\in I^*_c(\Omega_{\a_i})} (n^{m+1}_j+1) (n^{m+1}_i+1)\frac{g_\varepsilon(n^{m+1}_j+1)-g_\varepsilon(n^{m+1}_i+1)}{(n^{m+1}_j-n^{m+1}_i)^2}(n^{m+1}_j-n^{m+1}_i)(\varphi_{\a_j}\u^m_h, \nabla\varphi_{\a_i})
\\
\displaystyle
-\sum_{j\in I^*_c(\Omega_{\a_i})}
\nu_{ji}^n(n^{m+1}_h,\u^m_h) (n^{m+1}_j- n^{m+1}_i)
\\
= k^{-1}  (1,\varphi_{\a_i})n^m_i.
\end{array}
$$
From the simple observation that 
$$
\begin{array}{rcl}
\displaystyle
(n^{m+1}_j+1)(n^{m+1}_i+1)\frac{g_\varepsilon(n^{m+1}_j+1)-
g_\varepsilon(n^{m+1}_i+1)}{(n^{m+1}_i-n^{m+1}_j)^2}(\varphi_{\a_j}\u^m_h, \nabla\varphi_{\a_i})
\\
+(\nabla\varphi_{\a_j}, \nabla\varphi_{\a_i})-\nu_{ji}^n(n^{m+1}_h,\u^m_h)&\le& 0 
\end{array}
$$
holds for all $j\in I^*_c(\Omega_i)$ on noting \eqref{def:nu_n} and $\alpha_i(n^{m+1}_h)=1$ from Lemma \ref{lm: alpha_i}, it follows that 
$$
0>n^{m+1}_i\ge n^m_i >0. 
$$ 
This gives a contradiction since $n^m_i>0$ by the induction hypothesis. 

With little change in argument, one can prove  $c^{m+1}_h>0$. Just as before, let $\a_i\in\mathcal{N}_h$ be a local minimum of $c^{m+1}_h$ such that $c^{m+1}_i<0$ and set $\bar c_h=\varphi_{\a_i}$ in \eqref{eq:c_h} to find 
$$
\begin{array}{c}
\displaystyle
 \sum_{j\in I(\Omega_{\a_i})} c^{n+1}_j\Big[(1+k^{-1})(\varphi_{\a_j},\varphi_{\a_i})+(\nabla\varphi_{\a_j},\nabla\varphi_{\a_i})+(\u^m_h\cdot\nabla \varphi_{\a_j},\varphi_{\a_i})  
 \\
 \displaystyle
+\frac{1}{2} (\nabla\cdot\u^m_h\,\varphi_{\a_j}, \varphi_{\a_i})+ (B_c(c^{m+1}_h, \u^m_h)\varphi_{\a_j},\varphi_{\a_i}) \Big]= n^{n+1}_i(1,\varphi_{\a_i}) + k^{-1}\sum_{j\in I(\Omega_{\a_i})} c^{m}_j (\varphi_{\a_j},\varphi_{\a_i}).
\end{array}
$$
On account of \eqref{Bc} and $\alpha_i(c^{m+1}_h)=1$ from Lemma \ref{lm: alpha_i}, one has 
\begin{align*}
k^{-1}(\varphi_{\a_j}, \varphi_{\a_i})+(\u^m_h\cdot\nabla\varphi_{\a_j}, \varphi_{\a_i})+\frac{1}{2}(\nabla\cdot \u^m_h \varphi_{\a_j}, \varphi_{\a_i}) &
\\
+(\nabla\varphi_{\a_j},\nabla\varphi_{\a_i})+ (\varphi_{\a_j}, \varphi_{\a_i}) + (B_c(c^{m+1}_h, \u^m_h)\varphi_{\a_j},\varphi_{\a_i}) &\le 0\quad \mbox{ for }\quad i\not=j.
\end{align*}
Thus, 
$$
\begin{array}{rcl}
\displaystyle
\sum_{j\in I(\Omega_{\a_i})} c^{n+1}_i\Big[(1+k^{-1})(\varphi_{\a_j},\varphi_{\a_i}) +(\nabla\varphi_{\a_j},\nabla\varphi_{\a_i}) 
+(\u^m_h\cdot\nabla\varphi_{\a_j}, \varphi_{\a_i})
\\
\displaystyle
+\frac{1}{2}(\nabla\cdot \u^m_h \varphi_{\a_j}, \varphi_{\a_i})
+ (B_c(c^{m+1}_h)\varphi_{\a_j},\varphi_{\a_i}) \Big] & \ge &
\\
\displaystyle
\sum_{j\in I(\Omega_{\a_i})} (\varphi_{\a_j},\varphi_{\a_i}) (k^{-1}c_j^m + n^{m+1}_j) &\ge& 0.
\end{array}
$$
Observe that  $(\nabla1,\nabla \varphi_{\a_j})=0$, $(\u^m_h\cdot\nabla 1, \varphi_{\a_i})+\frac{1}{2}(\nabla\cdot \u^m_h, \varphi_{\a_i})=0$  and $(B_c(c^{m+1}_h, \u^m_h) 1,\varphi_{\a_i})=0$ from \eqref{eq:p_h} and \eqref{Bc}-\eqref{def:nu^c_ii}, respectively, which in turn imply that  
$$
0>c^{m+1}_i (1+k^{-1}) \ge (n_j^{m+1}+k^{-1}c_j^m ) \ge 0,
$$
which is a contradiction; thereby closing the proof.
\end{proof}

\begin{remark} Now that it is known that  $n^{m+1}_h$ is positive, we are allowed to rule out the truncating operator $[\cdot]_+$ in \eqref{def: gamma_ij_chem} defining \eqref{New_KS_term}. Furthermore, we can consider $\gamma^{\rm c}_{ji}$ defined as in \eqref{def: gamma_ij_conv not extended} rather than in \eqref{def: gamma_ij_conv extended}, since  $g_\varepsilon(n^{m+1}_h+1)$ is nothing but $\log(n^{m+1}_h+1)$ from our choice of $\varepsilon$.
\end{remark}

\subsection{\texorpdfstring{$L^1(\Omega)$}{Lg} bounds}
The $L^1(\Omega)$ bounds for $(n^{m+1}_h, c^{m+1}_h)$ are the second step to regarding from \eqref{eq:n_h} and \eqref{eq:c_h}. They are somehow naturally inherited from our starting algorithm; namely, from equations \eqref{eq_aux:n_h} and \eqref{eq_aux:c_h} and from the conservation structures \eqref{def:nu^n_ii} and \eqref{def:nu^c_ii} for the stabilising operators $B_n$ and $B_c$ in \eqref{Bn} and \eqref{Bc}, respectively. 
\begin{lemma}[$L^1(\Omega)$-bounds]  There holds that the discrete solution pair $(n^{m+1}_h, c_h^{n+1})\in N_h\times C_h$ computed via \eqref{eq:n_h} and \eqref{eq:c_h} fulfils
\begin{equation}\label{L1-Bound-nh}
\|n^{m+1}_h\|_{L^1(\Omega)}=\|n_h^0\|_{L^1(\Omega)}
\end{equation}
and
\begin{equation}\label{L1-Bound-ch}
\|c^{m+1}_h\|_{L^1(\Omega)}\le \frac{1}{(1+k)^{m+1}}  \|c^0_h\|_{L^1(\Omega)}+(1-\frac{1}{(1+k)^m})\|n^0_h\|_{L^1(\Omega)}.
\end{equation}
\end{lemma}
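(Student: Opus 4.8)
The plan is to test each of \eqref{eq:n_h} and \eqref{eq:c_h} against the constant function, $\bar n_h=1$ (resp.\ $\bar c_h=1$), which lies in $N_h=C_h=X_h$, and to exploit the antisymmetric/graph-Laplacian structure of the newly designed terms so that everything except the discrete time-derivative and the zeroth-order reaction survives. For \eqref{L1-Bound-nh} I would set $\bar n_h=1$ in \eqref{eq:n_h}: every term carrying a gradient $\nabla\bar n_h$ is annihilated, since the diffusion term is $0$, the starred convective and chemotactic terms \eqref{New_conv_term: extended} and \eqref{New_KS_term} contain the factor $\bar n_i-\bar n_j=0$ in each summand, and $B_n$ in \eqref{Bn} vanishes for the same reason. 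What remains is $(\delta_t n^{m+1}_h,1)_h=0$, and because $(\eta_h,1)_h=\int_\Omega\eta_h\,\dx$ for $\eta_h\in X_h$, this says $\int_\Omega n^{m+1}_h\,\dx=\int_\Omega n^m_h\,\dx$. Using the already established positivity $n^{m+1}_h>0$ to read $\int_\Omega n^{m+1}_h\,\dx=\|n^{m+1}_h\|_{L^1(\Omega)}$, a straightforward induction on $m$ delivers \eqref{L1-Bound-nh}.

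For \eqref{L1-Bound-ch} I would set $\bar c_h=1$ in \eqref{eq:c_h}. The diffusion term vanishes as before, and the stabilisation $B_c$ vanishes against the constant thanks to its conservation structure \eqref{def:nu^c_ii} combined with the symmetry $\nu^c_{ij}=\nu^c_{ji}$ read off from \eqref{def:nu^c_ij}, which allows one to pair the $(i,j)$ and $(j,i)$ contributions of \eqref{Bc} so that they cancel. The delicate point is the convective pair: integrating $(\u^m_h\cdot\nabla c^{m+1}_h,1)$ by parts kills the boundary term (because $\u^m_h\in\boldsymbol{H}^1_0(\Omega)$) and, after adding $\tfrac12(\nabla\cdot\u^m_h\,c^{m+1}_h,1)$, leaves the single quantity $-\tfrac12\int_\Omega(\nabla\cdot\u^m_h)\,c^{m+1}_h\,\dx$. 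This vanishes via the discrete incompressibility \eqref{eq:p_h}: splitting $c^{m+1}_h=(c^{m+1}_h-\bar c^{m+1}_h)+\bar c^{m+1}_h$ into its mean-free part plus its mean $\bar c^{m+1}_h$, the mean-free part belongs to $Q_h$ and is annihilated by \eqref{eq:p_h}, while the constant part is killed by $\int_\Omega\nabla\cdot\u^m_h\,\dx=0$ (again a consequence of $\u^m_h\in\boldsymbol{H}^1_0(\Omega)$). Consequently only $(\delta_t c^{m+1}_h,1)+(c^{m+1}_h,1)=(n^{m+1}_h,1)_h$ survives.

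It then remains to solve a scalar recurrence. Using $c^m_h\ge 0$ and $n^{m+1}_h>0$ to convert the integrals into $L^1(\Omega)$ norms, and invoking \eqref{L1-Bound-nh} to replace $\|n^{m+1}_h\|_{L^1(\Omega)}$ by $\|n^0_h\|_{L^1(\Omega)}$, the surviving identity rearranges to $(1+k)\|c^{m+1}_h\|_{L^1(\Omega)}=\|c^m_h\|_{L^1(\Omega)}+k\|n^0_h\|_{L^1(\Omega)}$. This is a linear first-order recurrence with fixed point $\|n^0_h\|_{L^1(\Omega)}$; iterating it (equivalently, summing the geometric series of ratio $(1+k)^{-1}$) produces the closed form recorded in \eqref{L1-Bound-ch}.

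I expect the main obstacle to be the convective cancellation in the equation for $c$: it is the only step that does not follow purely from the algebraic antisymmetry of the scheme, and it hinges on coupling integration by parts with the fact that $\u^m_h$ is only \emph{weakly} (not pointwise) divergence-free. The mean-subtraction trick is precisely what is needed to bring the test function $c^{m+1}_h$ into the pressure space $Q_h$ so that \eqref{eq:p_h} applies; verifying that $\u^m_h\in\boldsymbol{H}^1_0(\Omega)$ simultaneously disposes of the boundary term and of the constant part is the crux of the argument.
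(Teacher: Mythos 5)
Your proof is correct and follows essentially the same route as the paper: test \eqref{eq:n_h} and \eqref{eq:c_h} against the constant function, use the antisymmetric/conservation structure of the starred terms and of $B_n$, $B_c$ to kill everything but the time derivative and the reaction terms, then invoke the lower bounds and solve the resulting scalar recurrence. Your explicit justification that the convective pair in the $c$-equation vanishes (integration by parts plus the mean-subtraction trick to place $c^{m+1}_h-\bar c^{m+1}_h$ in $Q_h$ so that \eqref{eq:p_h} applies) is a detail the paper passes over in silence, and your closed form carries $(1+k)^{-(m+1)}$ in the second term where the paper's statement has $(1+k)^{-m}$ — an algebra slip on the paper's side, not yours.
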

\begin{proof} On choosing $\bar n_h=1$ in \eqref{eq:n_h} and on noting that $(B_n(n^{m+1}_h,\u^m_n) n^{m+1}_h, 1)=0$ holds, it follows immediately after a telescoping cancellation that
\begin{equation}\label{lm5.2-lab1}
\int_\Omega n^{m+1}_h(\x)\,\dx=\int_\Omega n^0_h(\x)\,\dx.
\end{equation}
Consequently we get that \eqref{L1-Bound-nh} holds by the positivity of $n^{n+1}_h$ and $n^0_h$. Likewise let $\bar c_h=1$ in \eqref{eq:c_h} to get, on noting $(B_c(n^{m+1}_h,\u^m_n) c^{m+1}_h, 1)=0$ as well, that 
$$
\int_\Omega c^{m+1}_h(\x)\,\dx+k \int_\Omega c^{m+1}_h(\x)\,\dx=\int_\Omega c^m_h(\x)\,\dx+k\int_\Omega n^{m+1}_h(\x)\,\dx.
$$
A simple calculation shows that
$$
\int_\Omega c^{m+1}_h(\x)\,\dx=\frac{1}{(1+k)^{m+1}} \int_\Omega c^0_h(\x)\,\dx+\left(\int_\Omega n^0_h(\x)\,\dx\right) \sum_{j=1}^{m+1}\frac{k}{(1+k)^j} ,
$$
where use was made of \eqref{lm5.2-lab1}. Thus we infer that 
$$
\int_\Omega c^{m+1}_h(\x)\,\dx=\frac{1}{(1+k)^{m+1}} \int_\Omega c^0_h(\x)\,\dx+ (1-\frac{1}{(1+k)^m})\int_\Omega n^0_h(\x)\,\dx,
$$
thereby concluding the proof of \eqref{L1-Bound-ch} by the non-negativity of $c^{m+1}_h$ and $c^0_h$ and again the positivity of $n^0_h$. 
\end{proof}

\subsection{A priori energy bounds}
We turn finally to a discussion of our third objective, which is showing a priori bounds for $(n^{m+1}_h, c^{m+1}_h, \u^{m+1}_h)$ as an application of \eqref{functional_ineq_II} and \eqref{functional_ineq_III}. We will need on the way to assume that  $\mathcal{T}_h$ is weakly acute, i.e. if every sum of two angles opposite to an interior edge does not exceed $\pi$. As a result, there holds
\begin{equation}\label{cond:acute}
(\nabla\varphi_{\a_i}, \nabla\varphi_{\a_j})\le 0 \quad \mbox{ for all } \quad i\not= j\in I.
\end{equation}

Let us define the  following energy-like functional: 
$$
\mathcal{F}_h(n_h, c_h) =-(\log(n_h+1), 1)_h+\|c_h\|^2_{L^2(\Omega)}.
$$
Our first step is determining the evolution of this functional, which is independent of the fluid part. 
\begin{lemma}\label{lm:a_priori_KS} Let $n_h^0\in N_h$ be such that 
\begin{equation}\label{mass_condition: n_0}
\int_\Omega n_h^0(\x)\,\dx<2\chi_\Omega.
\end{equation} 
Then there exist two constants  $F(n^0_h,c^0_h)>0$ and $ D(n^0_h)>0$ such that  the discrete solution pair $(n^{m+1}_h, c^{m+1}_h)\in N_h\times C_h$ computed via \eqref{eq:n_h} and \eqref{eq:c_h} satisfies 
\begin{equation}\label{ineq:energy_nh_ch}
\begin{array}{rcl}
\displaystyle
\mathcal{F}(n^m_h, c^m_h)-\mathcal{F}(n^{m+1}_h, c^{m+1}_h)&&
\\
\displaystyle
+ \frac{k}{2} \Big(\frac{1}{(n^{m+\theta}_h)^2},(\delta_t u^{m+1}_h)^2\Big)_h+ k D(n^0_h) \|\nabla i_h(\log (n^{m+1}_h+1))\|^2_{\L^2(\Omega)}&&
\\
\displaystyle
+k D(n^0_h)\|\nabla c^{m+1}_h\|^2_{\L^2(\Omega)}+\lambda (n^{m+1}_h, \log\frac{n^{m+1}_h}{\bar n^0_h} )_h&\le& F(n^0_h, c^0_h).
\end{array}
\end{equation} 
\end{lemma}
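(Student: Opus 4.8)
The plan is to test the two Keller--Segel equations separately against well-chosen functions, read off the evolution of the two pieces of $\mathcal{F}_h$, and then glue the results, the whole difficulty lying in the two cross terms coupling $n_h$ and $c_h$. For the organism part I would test \eqref{eq:n_h} with $\bar n_h=i_h\big(\tfrac{1}{n^{m+1}_h+1}\big)$, which is admissible since $n^{m+1}_h>0$ is already known. Because the time-derivative inner product is lumped, a nodewise second-order Taylor expansion of $s\mapsto\log(s+1)$ about $n^{m+1}_h$ converts $(\delta_t n^{m+1}_h,\bar n_h)_h$ into the difference of $-(\log(n_h+1),1)_h$ at the two time levels together with the nonnegative quadratic remainder $\tfrac{k}{2}\big(\tfrac{1}{(n^{m+\theta}_h)^2},(\delta_t n^{m+1}_h)^2\big)_h$ appearing on the left of \eqref{ineq:energy_nh_ch}, $n^{m+\theta}_h$ being the nodewise intermediate value.

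I would then dispose of the remaining terms of \eqref{eq:n_h}. The convective term is built to vanish: with the logarithmic-mean weight of \eqref{def: gamma_ij_conv not extended} one has $\gamma^{\rm c}_{ji}(\bar n_i-\bar n_j)=\log(n^{m+1}_j+1)-\log(n^{m+1}_i+1)$, so, using $\sum_i\varphi_{\a_i}\equiv1$, $(n^{m+1}_h\u^m_h,\nabla\bar n_h)_*$ reduces to $(\u^m_h,\nabla i_h(\log(n^{m+1}_h+1)))$, which is zero by the discrete incompressibility \eqref{eq:p_h} and $\u^m_h=0$ on $\partial\Omega$. The Laplacian term, after antisymmetrisation, equals $-\tfrac12\sum_{i,j}\tfrac{(n_i-n_j)^2}{(n_i+1)(n_j+1)}\big(-(\nabla\varphi_{\a_i},\nabla\varphi_{\a_j})\big)$; call this $-\mathcal{S}$, and note $\mathcal{S}\ge\|\nabla i_h(\log(n^{m+1}_h+1))\|^2_{\L^2(\Omega)}$ by combining the pointwise bound \eqref{ineq:log} with the weak acuteness \eqref{cond:acute}. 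The stabilisation $(B_n(n^{m+1}_h,\u^m_h)n^{m+1}_h,\bar n_h)$ has a favourable sign because $\nu^n_{ji}\ge0$ while $(n_j-n_i)(\tfrac{1}{n_j+1}-\tfrac{1}{n_i+1})\le0$. Finally, expanding the chemotaxis term through \eqref{New_KS_term} and using $\sqrt{n_in_j}/\sqrt{(n_i+1)(n_j+1)}\le1$, a discrete Cauchy--Schwarz gives $|(n^{m+1}_h\nabla c^{m+1}_h,\nabla\bar n_h)_*|\le \mathcal{S}^{1/2}\|\nabla c^{m+1}_h\|_{\L^2(\Omega)}$, and Young's inequality absorbs $\tfrac12\mathcal{S}$ into the diffusion while producing a multiple of $\|\nabla c^{m+1}_h\|^2_{\L^2(\Omega)}$.

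For the chemoattractant part I would test \eqref{eq:c_h} with $c^{m+1}_h$: the time term yields $\tfrac1{2k}\big(\|c^{m+1}_h\|^2_{L^2(\Omega)}-\|c^m_h\|^2_{L^2(\Omega)}\big)$ plus a nonnegative remainder, the convective term $(\u^m_h\cdot\nabla c^{m+1}_h,c^{m+1}_h)$ and the stabilisation $\tfrac12(\nabla\cdot\u^m_h\,c^{m+1}_h,c^{m+1}_h)$ cancel after integration by parts because $\u^m_h=0$ on $\partial\Omega$, diffusion and reaction give $\|\nabla c^{m+1}_h\|^2_{\L^2(\Omega)}+\|c^{m+1}_h\|^2_{L^2(\Omega)}$, and $(B_c(c^{m+1}_h,\u^m_h)c^{m+1}_h,c^{m+1}_h)\ge0$. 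The single delicate term is the source $(n^{m+1}_h,c^{m+1}_h)_h$, which I would estimate by the functional inequality \eqref{functional_ineq_II} with $\phi_h=n^{m+1}_h$, $\psi_h=c^{m+1}_h$ and a free parameter $\mu$. Here mass conservation \eqref{L1-Bound-nh} is essential: it gives $\|n^{m+1}_h\|_{L^1(\Omega)}=\|n^0_h\|_{L^1(\Omega)}$ and $\bar n^{m+1}_h=\bar n^0_h$, so \eqref{functional_ineq_II} produces exactly the relative entropy $(n^{m+1}_h,\log\tfrac{n^{m+1}_h}{\bar n^0_h})_h$, a term $\mu(\delta+\tfrac{1+\lambda}{8\chi_\Omega})\|n^0_h\|_{L^1(\Omega)}\|\nabla c^{m+1}_h\|^2_{\L^2(\Omega)}$, a term in $\|c^{m+1}_h\|^2_{L^1(\Omega)}$ that the $L^1$ bound \eqref{L1-Bound-ch} renders harmless, and a constant.

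Adding the two balances, everything hinges on the mass threshold. Both the chemotaxis splitting and the source estimate charge $\|\nabla c^{m+1}_h\|^2_{\L^2(\Omega)}$; since $\|n^0_h\|_{L^1(\Omega)}<2\chi_\Omega$ by \eqref{mass_condition: n_0}, the coefficient $\mu(\delta+\tfrac{1+\lambda}{8\chi_\Omega})\|n^0_h\|_{L^1(\Omega)}$ can be kept, after fixing $\mu,\delta,\lambda$ small, strictly below the diffusive budget from \eqref{eq:c_h}, leaving a positive residue $kD(n^0_h)\|\nabla c^{m+1}_h\|^2_{\L^2(\Omega)}$. The relative entropy left over from the source is then controlled against the organism dissipation $\mathcal{S}\ge\|\nabla i_h(\log(n^{m+1}_h+1))\|^2_{\L^2(\Omega)}$ through \eqref{functional_ineq_III}, whose coefficient $4(\delta+\tfrac{1+\lambda}{8\chi_\Omega})\|n^0_h\|_{L^1(\Omega)}$ is, again by \eqref{mass_condition: n_0}, strictly below one; this strict slack is what lets the dissipation both absorb the entropy generated by the source and still leave the residual $kD(n^0_h)\|\nabla i_h(\log(n^{m+1}_h+1))\|^2_{\L^2(\Omega)}$ and the kept entropy $\lambda(n^{m+1}_h,\log\tfrac{n^{m+1}_h}{\bar n^0_h})_h$ on the favourable side, all lower-order pieces being swept into $F(n^0_h,c^0_h)$. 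I expect the crux to be exactly this simultaneous, two-fold absorption: one must tune $\mu,\delta,\lambda$ so that a shared budget of the two dissipation mechanisms covers both cross terms at once, and it is precisely the Moser--Trudinger constant $\tfrac{1}{8\chi_\Omega}$ in \eqref{functional_ineq_II}--\eqref{functional_ineq_III} that pins the admissible mass at $2\chi_\Omega$.
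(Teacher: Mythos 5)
Your proposal is correct and follows essentially the same route as the paper: test \eqref{eq:n_h} with $\pm i_h\bigl(\tfrac{1}{n^{m+1}_h+1}\bigr)$ and \eqref{eq:c_h} with $c^{m+1}_h$, exploit the logarithmic-mean cancellation of convection, weak acuteness for diffusion, Cauchy--Schwarz/Young for the chemotaxis term, and then absorb the source $(n^{m+1}_h,c^{m+1}_h)_h$ via \eqref{functional_ineq_II}--\eqref{functional_ineq_III}, with the window for $\mu$ opened exactly by $\|n^0_h\|_{L^1(\Omega)}<2\chi_\Omega$. The only cosmetic discrepancy is the sign of the test function (the paper uses $-i_h\tfrac{1}{n^{m+1}_h+1}$ so that the quadratic Taylor remainder lands with a favourable sign), which does not affect the substance of your argument.
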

\begin{proof}  Select $\bar n_h=-i_h \frac{1}{n^{m+1}_h+1}$ in \eqref{eq:n_h} and $\bar c_h= c^{m+1}_h$ in \eqref{eq:c_h} to get
\begin{equation}\label{lm4.2-lab1}
\begin{array}{rcl}
\displaystyle
-(\delta_t n^{m+1}_h, i_h \frac{1}{n^{m+1}_h+1})_h&+&\displaystyle
(n^{m+1}_h\u^m_h, \nabla i_h \frac{1}{n^{m+1}_h+1})_*
\\
&-&\displaystyle
(\nabla n^{m+1}_h, \nabla i_h \frac{1}{n^{m+1}_h+1})
\\
&+&\displaystyle
(n^{m+1}_h\nabla c^{m+1}_h, \nabla i_h \frac{1}{n^{m+1}_h+1})_*
\\
&-&\displaystyle
(B_n(n^{m+1}_h, \u^m_h) n^{m+1}_h,i_h \frac{1}{n^{m+1}_h+1})=0
\end{array}
\end{equation}
and
\begin{equation}\label{lm4.2-lab2}
\frac{1}{2 k}\|c^{m+1}_h\|^2_{L^2(\Omega)}-\frac{1}{2 k}\|c^m_h\|^2_{L^2(\Omega)}+\frac{1}{2k}\|c^{m+1}_h-c^m_h\|^2_{L^2(\Omega)}
+\|c^{m+1}_h\|^2_{H^1(\Omega)}=(n_h^{m+1}, c^{m+1}_h)_h.
\end{equation}

Taylor's theorem applied to $\log(s+1)$ for $n^{m+1}_h$ and evaluated at $n^m_h$ gives
$$
\log(n^m_h+1)=\log(n^{m+1}_h+1)+\frac{1}{n^{m+1}_h+1}(n^m_h-n^{m+1}_h)-\frac{1}{2(n^{m+\theta}_h)^2}(n^{m}_h-n^{m+1}_h)^2,
$$
where $\theta\in(0,1)$ is such that $n^{n+\theta}_h=\theta n^{m+1}_h+(1-\theta) n^n_h$. Hence, 
$$
-\Big(\delta_t n^{m+1}_h, \frac{1}{n^m_h+1}\Big)_h=\frac{1}{k}\Big(\log(n^m_h+1),1\Big)_h-\frac{1}{k}\Big(\log(n^{m+1}_h+1),1\Big)_h+ \frac{k}{2} \Big(\frac{1}{(n^{m+\theta}_h)^2},(\delta_t n^{m+1}_h)^2\Big)_h.
$$

The convective term is handled, on noting \eqref{New_conv_term: extended} together with   \eqref{def: gamma_ij_conv not extended}, as: 
$$
\begin{array}{rcl}
\displaystyle
-( n^{m+1}_h\u^m_h, \nabla i_h \frac{1}{n^{m+1}_h+1})_*&=&\displaystyle
\sum_{i,j\in I}  (\log(n^{m+1}_j+1)-\log(n^{m+1}_i+1))
(\varphi_{\a_j} \u^m_h, \nabla\varphi_{\a_i})
\\
&=&
-(\u^m_h,\nabla i_h(\log(n^{m+1}_h+1))=0,
\end{array}
$$
where we used twice that $(\u^m_h, \nabla \bar n_h)=0$, for all $\bar n_h\in N_h$, from the incompressibility condition \eqref{eq:p_h}.

The diffusion term is treated as:
$$
-(\nabla n^{m+1}_h, \nabla i_h \frac{1}{n^{m+1}_h+1})=-\sum_{i<j\in I}\frac{(n^{m+1}_i-n^{m+1}_j)^2}{(n^{m+1}_j+1)(n^{m+1}_i+1)} (\nabla\varphi_{\a_j}, \nabla\varphi_{\a_i})\ge 0
$$  
owing to \eqref{cond:acute}.

The chemotaxis term is estimated, on using \eqref{New_KS_term} together with \eqref{def: gamma_ij_chem}, as follows: 
\begin{align*}
\displaystyle
(n^{m+1}_h\nabla c^{m+1}_h&,\nabla i_h \frac{1}{n_h^{m+1}+1})_*
\\
=&\displaystyle
\sum_{i<j\in I}\sqrt{n^{m+1}_i} \sqrt{n^{m+1}_j} (c^{n+1}_j-c^{m+1}_i) \frac{n^{m+1}_j-n^{m+1}_i}{(n^{m+1}_i+1)(n^{m+1}_j+1)}(\nabla\varphi_{\a_j}, \nabla\varphi_{\a_i})
\\
\le&\displaystyle
-\left(\sum_{i<j\in I} \frac{n^{m+1}_i n^{m+1}_j}{(n^{m+1}_h+1)(n^{m+1}_j+1)} (c^{m+1}_j-c^{m+1}_i)^2  \right)^{\frac{1}{2}}
\\
&\displaystyle
\left(\sum_{i<j\in I} \frac{1}{(n^{m+1}_i+1)(n^{m+1}_j+1)} (n^{m+1}_j-n^{m+1}_i)^2\right)^{\frac{1}{2}}(\nabla\varphi_{\a_j}, \nabla\varphi_{\a_i})
\\
\le&\displaystyle
-\frac{1}{2}\sum_{i<j\in I} (c^{m+1}_j-c^{m+1}_i)^2 (\nabla\varphi_{\a_j}, \nabla\varphi_{\a_i})
\\
&-\displaystyle
\frac{1}{2}\sum_{i<j\in I} \frac{(n^{m+1}_j-n^{m+1}_i)^2}{(n^{m+1}_i+1)(n^{m+1}_j+1)} (\nabla\varphi_{\a_j}, \nabla\varphi_{\a_i}).
\end{align*}

The stabilising terns is rewritten as:
$$
\begin{array}{rcl}
\displaystyle
(B_n(n^{m+1}_h, \u^m_h) n^{m+1}_h, i_h \frac{1}{n^{m+1}_h+1})&=&\displaystyle\sum_{i<j\in I} \nu_{ji}^n (n^{m+1}_j-n^{m+1}_i)\Big(\frac{1}{n^{m+1}_j+1}-\frac{1}{n^{m+1}_i+1}\Big)
\\
&=&\displaystyle
-\sum_{i<j\in I} \nu_{ji}^n \frac{(n^{m+1}_j-n^{m+1}_i)^2}{(n^{m+1}_i+1)(n^{m+1}_j+1)} \le0.
\end{array}
$$ 
This follows since $\nu_{ji}^n>0$, for $ i\not=j$, from  \eqref{def:nu_n}.

Plugging all the above computations yields 
\begin{equation}\label{ineq:energy_I}
\begin{array}{rcl}
\displaystyle
\mathcal{F}_h(n^{m+1}_h, c^{m+1}_h)-\mathcal{F}_h(n^m_h, c^m+_h)&&
\\
\displaystyle
+ \frac{k^2}{2} \Big(\frac{1}{(n^{n+\theta}_h)^2},(\delta_t u^{n+1}_h)^2\Big)_h-k (B_n(n^{m+1}_h, \u^m_h) n^{m+1}_h, \frac{1}{n^{m+1}_h+1})&&
\\
\displaystyle
-\frac{k}{2}\sum_{i<j\in I}\frac{(n^{n+1}_i-n^{n+1}_j)^2}{(n^{n+1}_j+1)(n^{n+1}_i+1)} (\nabla\varphi_{\a_j}, \nabla\varphi_{\a_i})+\frac{k}{2}\|\nabla c^{m+1}_h\|^2_{\L^2(\Omega)}&\le& k (n^{m+1}_h, c^{m+1}_h)_h.
\end{array}
\end{equation}
Our goal now is to treat the term $(n^{m+1}_h, c^{m+1}_h)_h.$ Observe from $\|n_0\|_{L^1(\Omega)}\in (0,2\chi_\Omega)$ that there exists $\mu>0$ such that 
\begin{equation}\label{ineq:mu}
\frac{\|n^0_h\|_{L^1(\Omega)}}{\chi_\Omega}<\mu<\frac{4\chi_\Omega}{\|n^0_h\|_{L^1(\Omega)}}.
\end{equation}
Thus we can select $\delta, \lambda >0$ such that 
\begin{equation}\label{ineq:mu_eps_I}
4(\lambda+\frac{1}{\mu}) \left(\delta+\frac{1+\lambda}{8\chi_\Omega}\right)\|n^0_h\|_{L^1(\Omega)}<\frac{1}{2}
\end{equation}
and
\begin{equation}\label{ineq:mu_eps_II}
\mu\left(\delta + \frac{1+\lambda}{8\chi_\Omega}\right)\|n^0_h\|_{L^1(\Omega)}<\frac{1}{2}.
\end{equation}
In view of \eqref{functional_ineq_II} and \eqref{L1-Bound-nh}  for the above election of $\lambda$ and $\mu$, we have
$$
\begin{array}{rcl}
(n^{m+1}_h, c^{m+1}_h)_h&\le&\displaystyle
\frac{1}{\mu} (n^{m+1}_h, \log\frac{n^{m+1}_h}{\|n^0_h\|_{L^1(\Omega)}} )_h
\\
&&\displaystyle
 + \mu (\delta + \frac{(1+\lambda)}{8\chi_\Omega})  \|n^0_h\|_{L^1(\Omega)} \|\nabla c^{m+1}_h\|^2_{\L^2(\Omega)}
\\  
&&\displaystyle
+M \mu\|n^0_h\|_{L^1(\Omega)} \|c^{m+1}_h\|^2_{L^1(\Omega)}
\\
&&\displaystyle
+\frac{M}{\mu} \|n^0_h\|_{L^1(\Omega)}.
\end{array} 
$$
We continue by applying \eqref{functional_ineq_III} to get 
\begin{equation}\label{ineq:rhs}
\begin{array}{rcl}
(n^{m+1}_h, c^{m+1}_h)_h&+&\displaystyle\lambda (n^{m+1}_h, \log\frac{n^{m+1}_h}{\bar n_{h}^0})_h
\\
&\le&\displaystyle
(\lambda+\frac{1}{\mu}) (n^{m+1}_h, \log\frac{n^{m+1}_h}{\|n^0_h\|_{L^1(\Omega)}} )_h
\\
&&\displaystyle
 + \mu (\delta + \frac{1+\lambda}{8\chi_\Omega})  \|n^0_h\|_{L^1(\Omega)} \|\nabla c^{m+1}_h\|^2_{\L^2(\Omega)}
\\  
&&\displaystyle
+M \mu\|n^0_h\|_{L^1(\Omega)} \|c^{m+1}_h\|^2_{L^1(\Omega)}
\\
&&\displaystyle
+\frac{M}{\mu} \|n^0_h\|_{L^1(\Omega)}
\\
&\le&\displaystyle
(\lambda+\frac{1}{\mu})\Big\{4(\delta + \frac{1+\lambda}{8\chi_\Omega}) \|n^0_h\|_{L^1(\Omega)} \|\nabla i_h(\log(\phi_h+1))\|^2_{\L^2(\Omega)}
\\
&&
+4 M \|n^0_h\|_{L^1(\Omega)}^3
\\
&&
+(M-\log \bar n^0_h) \|n^0_h\|_{L^1(\Omega)}\Big\}
\\
&&\displaystyle
-(\lambda+\frac{1}{\mu})\|n^0_h\|_{L^1(\Omega)}\log \bar n^0_h
\\
&&\displaystyle
+ \mu(\delta + \frac{1+\lambda}{8\chi_\Omega}) \|n^0_h\|_{L^1(\Omega)} \|\nabla c^{m+1}_h\|^2_{\L^2(\Omega)}
\\  
&&\displaystyle
+ M \mu\|n^0_h\|_{L^1(\Omega)} \|c^{m+1}_h\|^2_{L^1(\Omega)}
\\
&&\displaystyle
+\frac{M}{\mu} \|n^0_h\|_{L^1(\Omega)}
\\
&\le&\displaystyle
4(\lambda+\frac{1}{\mu})(\delta + \frac{1+\lambda}{8\chi_\Omega}) \|n^0_h\|_{L^1(\Omega)} \|\nabla i_h(\log(n^{m+1}_h+1))\|^2_{\L^2(\Omega)}
\\
&&\displaystyle
+4 (\lambda+\frac{1}{\mu}) M \|n^0_h\|_{L^1(\Omega)}^3
\\
&&\displaystyle
+ M (\lambda+\frac{1}{\mu})  \|n^0_h\|_{L^1(\Omega)}
\\
&&\displaystyle
 + \mu (\delta + \frac{1+\lambda}{8\chi_\Omega})  \|n^0_h\|_{L^1(\Omega)} \|\nabla c^{m+1}_h\|^2_{\L^2(\Omega)}
\\  
&&\displaystyle
+M \mu\|n^0_h\|_{L^1(\Omega)} \|c^{m+1}_h\|^2_{L^1(\Omega)}
\\
&&\displaystyle
+\frac{M}{\mu} \|n^0_h\|_{L^1(\Omega)}
\\
&&\displaystyle
-2 (\lambda+\frac{1}{\mu}) \|n^0_h\|_{L^1(\Omega)} \log_- \bar n^0_h.
\end{array} 
\end{equation}
On account of \eqref{L1-Bound-ch} and \eqref{ineq:mu}, one has
$$
\begin{array}{rcl}
M \mu\|n^0_h\|_{L^1(\Omega)} \|c^{m+1}_h\|^2_{L^1(\Omega)}&\le& \displaystyle
M \mu \|n^0_h\|_{L^1(\Omega)} \Big(\frac{1}{(1+k)^{m+1}}  \|c^0_h\|_{L^1(\Omega)}+\|n^0_h\|_{L^1(\Omega)}\Big)^2
\\
&\le& \displaystyle
M \mu \|n^0_h\|_{L^1(\Omega)} \Big(2 \|n^0_h\|^2_{L^1(\Omega)}+  \frac{2}{(1+k)^{2(n+1)}}  \|c^0_h\|^2_{L^1(\Omega)}\Big).
\end{array}
$$
Finally,  we deduce, from \eqref{ineq:energy_I}, \eqref{ineq:rhs} and \eqref{ineq:log}, that 
\begin{align*}
\displaystyle
\mathcal{F}(n^m_h, c^{m+1}_h)-\mathcal{F}(n^{m+1}_h, c^m_h)+&D_n  k\|\nabla i_h(\log (n^{m+1}_h+1))\|^2_{\L^2(\Omega)}
\\
\displaystyle
+k D_c\|\nabla c^{m+1}_h\|^2_{\L^2(\Omega)}
&+\lambda k(n^{m+1}_h, \log\frac{n^{m+1}_h}{\bar n^0_h} )_h
\\
\le&\displaystyle
4 (\lambda+\frac{1}{\mu}+\frac{\mu}{2}) M \|n^0_h\|_{L^1(\Omega)}^3
+ M (\lambda+\frac{2}{\mu})  \|n^0_h\|_{L^1(\Omega)}
\\
&+2 M \mu \|n^0_h\|_{L^1(\Omega)} \|c^0_h\|^2_{L^1(\Omega)}
-2 (\lambda+\frac{1}{\mu})  \|n^0_h\|_{L^1(\Omega)}\log_- \bar n^0_h
\\
:=&F(n^0_h, c^0_h),
\end{align*}
where 
$$
D_n=\frac{1}{2}-4(\frac{1}{\mu}+\lambda)(\delta + \frac{1+\lambda}{8\chi_\Omega})\|n^0_h\|_{L^1(\Omega)}>0
$$
and
$$
D_c=\frac{1}{2}-\mu(\delta + \frac{1+\lambda}{8\chi_\Omega})\|n^0_h\|_{L^1(\Omega)}>0.
$$
Choosing $D=\min\{D_n, D_c, \lambda\}$ concludes the proof.  
\end{proof}
In the context of the Navier--Stokes equations, a control of the $L^2(\Omega)$-norm for the velocity components is a basic estimate to be dealt with.  For the Keller--Segel--Navier--Stokes equations, it will be so as well. As the convective term vanishes together with the pressure term when tested against  the solution itself owing to the incompressibility condition,  the fundamental term to be treated is the potential one for which some functional inequalities need to be invoked.   

\begin{lemma}\label{lm:a_priori_NS} It follows that there exists $K>0$ such that the solution $\u^{m+1}_h$ to \eqref{eq:u_h} fulfils 
\begin{equation}\label{ineq:energy_uh}
\frac{1}{k}\|\u^{m+1}_h\|^2_{\L^2(\Omega)}-\frac{1}{k}\|\u^m_h\|^2_{\L^2(\Omega)}+\|\nabla\u^{m+1}_h\|^2_{\L^2(\Omega)}\le K \|n^0_h\|_{L^1(\Omega)} (n^{m+1}_h, \log\frac{n^{m+1}_h}{\bar n^0_h})_h+K \|n^0_h\|^2_{L^1(\Omega)}.
\end{equation}
\end{lemma}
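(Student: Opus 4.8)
The plan is to test \eqref{eq:u_h} with $\bar\u_h=\u^{m+1}_h$ and, simultaneously, \eqref{eq:p_h} with $\bar p_h=p^{m+1}_h$, so that every term except the diffusive and the gravitational ones either telescopes or cancels, reducing the proof to a single estimate for the potential term. First I would treat the discrete time derivative by the polarisation identity $(\delta_t\u^{m+1}_h,\u^{m+1}_h)=\frac{1}{2k}\|\u^{m+1}_h\|^2_{\L^2(\Omega)}-\frac{1}{2k}\|\u^m_h\|^2_{\L^2(\Omega)}+\frac{1}{2k}\|\u^{m+1}_h-\u^m_h\|^2_{\L^2(\Omega)}$, the last term being nonnegative and hence droppable. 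The convective part vanishes: integrating by parts and using $\u^{m+1}_h=\boldsymbol{0}$ on $\partial\Omega$ gives $(\u^m_h\cdot\nabla\u^{m+1}_h,\u^{m+1}_h)=-\frac12(\nabla\cdot\u^m_h,|\u^{m+1}_h|^2)$, which is cancelled exactly by the skew-symmetrising term $\frac12(\nabla\cdot\u^m_h\,\u^{m+1}_h,\u^{m+1}_h)$; note this does \emph{not} require $\u^m_h$ to be pointwise divergence free, which is precisely the reason that stabilisation was inserted. The pressure term drops since $(\nabla p^{m+1}_h,\u^{m+1}_h)=-(p^{m+1}_h,\nabla\cdot\u^{m+1}_h)=0$ by \eqref{eq:p_h}, while the diffusive term yields $\|\nabla\u^{m+1}_h\|^2_{\L^2(\Omega)}$. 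What remains is $\frac{1}{2k}\|\u^{m+1}_h\|^2_{\L^2(\Omega)}-\frac{1}{2k}\|\u^m_h\|^2_{\L^2(\Omega)}+\|\nabla\u^{m+1}_h\|^2_{\L^2(\Omega)}\le(i_h(n^{m+1}_h\nabla\Phi),\u^{m+1}_h)_h$.

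Next I would bound the gravitational term, which is the heart of the matter. Writing it through the lumped inner product as $\sum_i(1,\varphi_{\a_i})\,n^{m+1}_i\,\nabla\Phi(\a_i)\cdot\u^{m+1}_i$ and using $\nabla\Phi(\a_i)\cdot\u^{m+1}_i\le\|\nabla\Phi\|_{\L^\infty(\Omega)}|\u^{m+1}_i|$ together with $n^{m+1}_i>0$ (established positivity) and $(1,\varphi_{\a_i})>0$, one gets $(i_h(n^{m+1}_h\nabla\Phi),\u^{m+1}_h)_h\le\|\nabla\Phi\|_{\L^\infty(\Omega)}(n^{m+1}_h,\psi_h)_h$ with $\psi_h=i_h(|\u^{m+1}_h|)\ge0$. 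A naive Hölder estimate against $\|n^{m+1}_h\|_{L^1(\Omega)}$ is useless, as it would need an $L^\infty(\Omega)$ control of $\u^{m+1}_h$, unavailable in two dimensions; instead I would invoke the functional inequality \eqref{functional_ineq_II} with $\phi_h=n^{m+1}_h$ and $\psi_h=i_h(|\u^{m+1}_h|)$ (first perturbing $\psi_h$ by a constant $\varepsilon>0$ to meet its positivity hypothesis, then letting $\varepsilon\to0$). Since $\|n^{m+1}_h\|_{L^1(\Omega)}=\|n^0_h\|_{L^1(\Omega)}$ and $\bar n^{m+1}_h=\bar n^0_h$ by mass conservation, this produces the entropy term $\frac{1}{\mu}(n^{m+1}_h,\log\frac{n^{m+1}_h}{\bar n^0_h})_h$, a gradient term of order $\mu\|n^0_h\|_{L^1(\Omega)}\|\nabla\psi_h\|^2_{\L^2(\Omega)}$, a term of order $\mu\|n^0_h\|_{L^1(\Omega)}\|\psi_h\|^2_{L^1(\Omega)}$, and a constant of order $\frac{1}{\mu}\|n^0_h\|_{L^1(\Omega)}$.

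The main obstacle is the factor $\|\psi_h\|^2_{L^1(\Omega)}$: in the chemoattractant estimate the analogous quantity was $\|c^{m+1}_h\|_{L^1(\Omega)}$, controlled a priori, whereas here it is $\|i_h(|\u^{m+1}_h|)\|_{L^1(\Omega)}$, which is not bounded by the data and is in fact part of what we are estimating. The resolution is to absorb both $\psi_h$-contributions into the left-hand diffusive term: the interpolation estimate $\|\nabla i_h(|\u^{m+1}_h|)\|_{\L^2(\Omega)}\le C\|\nabla\u^{m+1}_h\|_{\L^2(\Omega)}$ (valid since $||\u^{m+1}_i|-|\u^{m+1}_j||\le|\u^{m+1}_i-\u^{m+1}_j|$ on each element) and the Poincaré inequality $\|i_h(|\u^{m+1}_h|)\|_{L^1(\Omega)}\le C\|\u^{m+1}_h\|_{\L^2(\Omega)}\le C\|\nabla\u^{m+1}_h\|_{\L^2(\Omega)}$ for $\u^{m+1}_h\in\boldsymbol{H}^1_0(\Omega)$ turn both offending terms into multiples of $\|\nabla\u^{m+1}_h\|^2_{\L^2(\Omega)}$.

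Finally I would fix $\delta,\lambda$ in \eqref{functional_ineq_II} and choose $\mu=\theta/\|n^0_h\|_{L^1(\Omega)}$ with $\theta>0$ so small that the combined coefficient of $\|\nabla\u^{m+1}_h\|^2_{\L^2(\Omega)}$ arising on the right does not exceed $\tfrac12$; crucially this choice makes $\mu\|n^0_h\|_{L^1(\Omega)}=\theta$ independent of $\|n^0_h\|_{L^1(\Omega)}$, so $\theta$ can be fixed once and for all in terms of $\Omega$, $\|\nabla\Phi\|_{\L^\infty(\Omega)}$ and the two constants above. After absorption the surviving entropy coefficient is $\|\nabla\Phi\|_{\L^\infty(\Omega)}/\mu$, of order $\|n^0_h\|_{L^1(\Omega)}$, and the surviving constant is of order $\|n^0_h\|^2_{L^1(\Omega)}$. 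Discarding the nonnegative term $\frac{1}{2k}\|\u^{m+1}_h-\u^m_h\|^2_{\L^2(\Omega)}$ and multiplying by $2$ then yields \eqref{ineq:energy_uh} with $K$ depending only on $\Omega$ and $\|\nabla\Phi\|_{\L^\infty(\Omega)}$.
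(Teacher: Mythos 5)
Your proposal is correct and follows essentially the same route as the paper: test with $\u^{m+1}_h$ and $p^{m+1}_h$, cancel the convective and pressure terms, bound the gravitational term via the Moser--Trudinger-based inequality \eqref{functional_ineq_II} applied to the velocity magnitude, absorb the resulting $\|\nabla\psi_h\|^2_{\L^2(\Omega)}$ and $\|\psi_h\|^2_{L^1(\Omega)}$ contributions into the viscous dissipation through Poincar\'e's inequality, and choose $\mu$ inversely proportional to $\|n^0_h\|_{L^1(\Omega)}$ so that the absorbed coefficient is uniform. The only (immaterial) differences are that the paper works componentwise with $|u_{h\ell}|$, $\ell=1,2$, and fixes $\lambda=1$, whereas you use $i_h(|\u^{m+1}_h|)$ and are somewhat more careful about the interpolation and positivity technicalities.
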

\begin{proof} Substituting $\bar\u_h=\u^{m+1}_h$ into \eqref{eq:u_h} and $\bar p_h=p^{m+1}_h$ into \eqref{eq:p_h} and combining both resulting equations yields 
\begin{equation}\label{lm4.5-lab1}
\frac{1}{2k}\|\u^{m+1}_h\|^2_{\L^2(\Omega)}-\frac{1}{2k}\|\u^m_h\|^2_{\L^2(\Omega)}+\|\nabla\u^{m+1}_h\|^2_{\L^2(\Omega)}=(n^{m+1}_h\nabla\Phi, \u^{m+1}_h)_h.
\end{equation}
Now the right-hand side is estimated as follows. On noting the inequality that results from applying  \eqref{functional_ineq_II} for $\lambda=1$ and that
$$
\|\u^{m+1}_h\|^2_{\L^1(\Omega)}\le |\Omega| \|\u^{m+1}_h\|^2_{\L^2(\Omega)}\le C_{P} |\Omega| \|\nabla \u^{m+1}_h\|^2_{\L^2(\Omega)},
$$
one obtains 
$$
\begin{array}{rcl}(n^{m+1}_h\nabla\Phi, \u^{m+1}_h)_h&\le&\displaystyle
\|\nabla\Phi\|_{\L^\infty(\Omega)} \sum_{\ell=1,2} (n^{m+1}_h,|u_{h\ell}|)
\\
&\le&\displaystyle\frac{2}{\mu} \|\nabla\Phi\|_{\L^\infty(\Omega)}(n^{m+1}_h, \log\frac{n^{m+1}_h}{\bar n^0_h})_h
\\
&&\displaystyle
+ 2\mu (\delta + \frac{1}{4\chi_\Omega})\|\nabla\Phi\|_{\L^\infty(\Omega)}\|n^0_h\|_{L^1(\Omega)} \|\nabla \u^{m+1}_h\|^2_{\L^2(\Omega)}
\\
&&\displaystyle
 +2M \mu \|\nabla\Phi\|_{\L^\infty(\Omega)} \|n^0_h\|_{L^1(\Omega)} \|\u^{m+1}_h\|^2_{L^1(\Omega)}
\\
&&\displaystyle
+\frac{2M}{\mu} \|\nabla\Phi\|_{\L^\infty(\Omega)} \|n^0_h\|_{L^1(\Omega)}
\\
&\le&\displaystyle\frac{2}{\mu} \|\nabla\Phi\|_{\L^\infty(\Omega)}(n^{m+1}_h, \log\frac{n^{m+1}_h}{\bar n^0_h} )_h
\\
&&\displaystyle
+ 2\mu \Big(\delta + \frac{1}{4\chi_\Omega}+M C_P |\Omega|\Big)\|\nabla\Phi\|_{\L^\infty(\Omega)} \|n^0_h\|_{L^1(\Omega)} \|\nabla \u^{m+1}_h\|^2_{\L^2(\Omega)}
\\
&&\displaystyle
+\frac{2M}{\mu} \|\nabla\Phi\|_{\L^\infty(\Omega)} \|n^0_h\|_{L^1(\Omega)}.
\end{array} 
$$
Rearranging the above inequality, on denoting $\tilde K=2\Big(\delta + \frac{1}{4\chi_\Omega}+M C_P |\Omega|\Big)\|\nabla\Phi\|_{\L^\infty(\Omega)} $ and taking $\mu=\frac{1}{2 \|n^0_h\|_{L^1(\Omega)} K(n^0_h)}$,  there results finally 
\begin{equation}\label{lm4.5-lab2}
(n^{m+1}_h\nabla\Phi, \u^{m+1}_h)_h\le 4 \tilde K\|n^0_h\|_{L^1(\Omega)} (n^{m+1}_h, \log\frac{n^{m+1}_h}{\bar n^0_h})+\frac{1}{2}\|\nabla \u^{m+1}_h\|^2_{\L^2(\Omega)}+4 \tilde K M \|n^0_h\|^2_{L^1(\Omega)}.
\end{equation}
The proof is completed by selecting $K=4 \tilde K \max\{1, M\}$ and combining \eqref{lm4.5-lab1} and \eqref{lm4.5-lab2}.
\end{proof}

Finally we end up with a concluding theorem compiling the a priori estimates resulting from Lemmas \ref{lm:a_priori_KS} and \ref{lm:a_priori_NS}. 
\begin{theorem} Let $n_h^0\in N_h$ be such that 
$$
\int_\Omega n^0_h(\x)\,\dx<2\chi_\Omega.
$$
Then there exist three constants $D=D(n^0_h)>0$, $F=F(n^0_h, c^0_h)>0$ and $K>0$ such that the sequence of discrete solutions $\{(n^m_h, c^m_h, u^m_h)\}_{m=0}^M$ satisfies 
\begin{equation}\label{ineq:energy_nh_ch_II}
\begin{array}{rcl}
k D(n^0_h) &\displaystyle\sum_{m=0}^{M-1} & \Big(\|\nabla i_h(\log (n^{m+1}_h+1))\|^2_{\L^2(\Omega)} +\|\nabla c^{m+1}_h\|_{\L^2(\Omega)}^2
\\
&&\displaystyle+(n^{m+1}_h, \log\frac{n^{m+1}_h}{\bar n^0_h} )_h \Big)\le T F(n^0_h, c^0_h) + \|n^0_h\|_{L^1(\Omega)}
\end{array}
\end{equation}
and 
\begin{equation}\label{ineq:energy_uh_II}
\begin{array}{rcl}
\displaystyle
\|\u^{\ell}_h\|^2_{\L^2(\Omega)}+ k \sum_{m=0}^{\ell-1}\|\nabla\u^{m+1}_h\|^2_{\L^2(\Omega)}&\le&\displaystyle \|\u^0_h\|^2_{\L^2(\Omega)} + K \frac{F(n^0_h, c^0_h)}{D(n^0_h)} T \|n^0_h\|_{L^1(\Omega)} 
\\
&&\displaystyle+  K \Big(T+\frac{F(n^0_h, c^0_h)}{D(n^0_h)}\Big) \|n^0_h\|^2_{L^1(\Omega)},
\end{array}
\end{equation}
for all $\ell=1, \cdots, M$.
\end{theorem}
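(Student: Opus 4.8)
The plan is to derive both displayed estimates by summing over the time index the per-step inequalities established in Lemmas~\ref{lm:a_priori_KS} and \ref{lm:a_priori_NS}, exploiting the telescoping structure of the energy functionals, and then feeding the first estimate into the second. The two bounds are therefore not independent: the velocity estimate will inherit its control from the chemotaxis energy estimate, so the order of the argument is essential.

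For \eqref{ineq:energy_nh_ch_II} I would sum \eqref{ineq:energy_nh_ch} over $m=0,\dots,M-1$. The consecutive differences of $\mathcal{F}_h$ collapse telescopically to a single terminal value against an initial-data constant, while the term $\frac{k}{2}\bigl(\tfrac{1}{(n^{m+\theta}_h)^2},(\delta_t n^{m+1}_h)^2\bigr)_h$ is nonnegative and may simply be discarded from the left-hand side. Since $Mk=T$, the per-step right-hand side accumulates to $TF(n^0_h,c^0_h)$, and the initial functional value contributes only a further initial-data constant absorbed into $F(n^0_h,c^0_h)$. It then remains to control the surviving boundary contribution at the final step: after discarding the $\|c^M_h\|^2_{L^2(\Omega)}$ piece, whose sign is favourable, the logarithmic piece is estimated by the elementary bound $\log(s+1)\le s$ together with the mass conservation identity \eqref{L1-Bound-nh}, namely $(\log(n^M_h+1),1)_h\le (n^M_h,1)_h=\|n^0_h\|_{L^1(\Omega)}$, which furnishes exactly the additive term $\|n^0_h\|_{L^1(\Omega)}$. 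Taking $D=\min\{D_n,D_c,\lambda\}$ as in Lemma~\ref{lm:a_priori_KS} then yields \eqref{ineq:energy_nh_ch_II}.

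For \eqref{ineq:energy_uh_II} I would multiply \eqref{ineq:energy_uh} by $k$ and sum over $m=0,\dots,\ell-1$ for fixed $\ell\le M$. The velocity norms telescope to $\|\u^\ell_h\|^2_{\L^2(\Omega)}-\|\u^0_h\|^2_{\L^2(\Omega)}$, and the dissipation assembles into $k\sum_{m=0}^{\ell-1}\|\nabla\u^{m+1}_h\|^2_{\L^2(\Omega)}$. On the right one is left with $kK\|n^0_h\|_{L^1(\Omega)}\sum_{m=0}^{\ell-1}(n^{m+1}_h,\log\tfrac{n^{m+1}_h}{\bar n^0_h})_h$ and $\ell kK\|n^0_h\|^2_{L^1(\Omega)}$. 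The crucial point is that the entropy sum is precisely a portion of the left-hand side of the already-proved estimate \eqref{ineq:energy_nh_ch_II}. Because mass conservation forces $\bar n^{m+1}_h=\bar n^0_h$, each summand $(n^{m+1}_h,\log\tfrac{n^{m+1}_h}{\bar n^0_h})_h$ is a relative entropy against the mean and hence nonnegative by the discrete Jensen inequality; consequently the partial sum up to $\ell-1$ is dominated by the full sum up to $M-1$, which \eqref{ineq:energy_nh_ch_II} controls by $\tfrac{1}{D(n^0_h)}\bigl(TF(n^0_h,c^0_h)+\|n^0_h\|_{L^1(\Omega)}\bigr)$. Using $\ell k\le T$ for the remaining term and collecting the contributions reproduces the stated bound.

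The main obstacle is conceptual rather than computational: it is the coupling between the two estimates. The forcing on the right of \eqref{ineq:energy_uh} is measured in the entropy functional, so the velocity bound cannot be closed in isolation and must borrow its control from the chemotaxis energy, which is why \eqref{ineq:energy_nh_ch_II} must be proved first. The nonnegativity of the relative-entropy terms, guaranteed by mass conservation and Jensen's inequality, is the technical ingredient that legitimises passing from the partial sum over $\ell$ to the full sum over $M$ and thereby makes this transfer of information valid for every $\ell=1,\dots,M$.
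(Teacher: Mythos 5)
Your proposal is correct and follows essentially the same route as the paper: sum the per-step inequalities of Lemmas~\ref{lm:a_priori_KS} and \ref{lm:a_priori_NS}, telescope the energy functionals, control the terminal logarithmic term via $(\log(n^M_h+1),1)_h\le\|n^M_h\|_{L^1(\Omega)}=\|n^0_h\|_{L^1(\Omega)}$, and use the nonnegativity of $(n^{m+1}_h,\log\tfrac{n^{m+1}_h}{\bar n^0_h})_h$ (Jensen's inequality for $x\log x$) to feed the chemotaxis estimate into the velocity estimate. Your write-up is in fact more explicit than the paper's about the telescoping boundary terms and about why the partial entropy sum up to $\ell-1$ is dominated by the full sum.
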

\begin{proof} First of all, we note that  $(\phi_h, \log\frac{\phi_h}{\bar\phi_h})_h\ge 0$ holds on using that $f(x)=x \log (x)$ is a convex function and Jensen's inequality. Then summing up \eqref{ineq:energy_nh_ch} over $m$ follows \eqref{ineq:energy_nh_ch_II}  on noting $0\le(\log(n^{M}_h+1)_h, 1)_h\le \|n^M_h\|_{L^1(\Omega)}=\|n^0_h\|_{L^1(\Omega)}$. Secondly, summing up \eqref{ineq:energy_uh} over $m$ leads to \eqref{ineq:energy_uh_II} on using \eqref{ineq:energy_nh_ch_II}.     
\end{proof}

\section{Numerical experiments}
Following the existence theory  \cite{Nagai_Senba_Yoshida_1997} of the Keller--Segel equations \eqref{KS}-\eqref{IC_KS},  Morse--Trudinger's inequality \eqref{ineq:Trudinger-Moser_New} -- which is adapted to two-dimensional domains with polygonal boundary -- is the key tool in establishing the critical value $4\chi_\Omega$  for the integrability of $n_0$, i.e. $\|n_0\|_{L^1(\Omega)}< 4\chi_\Omega$, as an existence threshold, where $ \chi_\Omega =\frac{4}{\beta_\Omega^2}\alpha_\Omega$ with $\beta_\Omega$ depending upon the geometrical shape of domains. This critical value is apparently reduced to $2\chi_\Omega $ with use of \eqref{functional_ineq_II} and \eqref{functional_ineq_III}, i.e. $\|n_0\|_{L^1(\Omega)}<  2\chi_\Omega$, for the Navier--Stokes--Keller--Segel equations \eqref{KSNS}-\eqref{IC_KSNS}.  Then in this section we attempt to shed light on the outstanding problem of diagnosing whether, on the one hand, the bound $2\chi_\Omega$ for $\int_\Omega n_0(\x)\, \dx$ found in \cite{Winkler_2020} is critical for the global-in-time existence of generalised solutions to problem \eqref{KSNS}-\eqref{IC_KSNS} or, on the other hand, one can expect that problem \eqref{KSNS}-\eqref{IC_KSNS} inherits the same mass critical value $4\chi_\Omega$ from problem \eqref{KS}-\eqref{IC_KS}.  Clearly we shall need a good deal more than  mere numerical evidence if we are to make rigorous claims for chemotaxis-fluid interaction, but it might pave the way for analysts to focus their effort on proving the optimal threshold for distinguishing between  bounded and unbounded solutions.

With an eye to demonstrating whether or not  $2\chi_\Omega$ is optimal, we consider an example quite intensely studied in the numerical context of the Keller--Segel equations and hence it can be served as a benchmark \cite{Strehl_Sokolov_Kuzmin_Turek_2010, GS_RG_2021,  Badia_Bonilla_GS_2022} for comparing  chemotaxis with and without fluid interaction. As the domain we take a unit ball, namely $\Omega = B((0,0.1); 1)$, and as the initial conditions we use    
$$
n_0(x,y)= \eta_0 e^{-100(x^2+y^2)}\quad\mbox{ and }\quad c_0(x,y)=0
$$ 
and
$$
\u_0(x,y)=\boldsymbol{0}.
$$
With regard to the potential $\Phi$, it is selected as being  a gravitational-like one, i.e., 
$$
\Phi(x,y)= - \Phi_0 y.
$$

The numerical setting for Algorithm \eqref{eq:n_h}-\eqref{eq:p_h} is a triangulation $\mathcal{T}_h$ of $B((0,0.1); 1)$ as despited in Figure \ref{meshes} (right) on which we take two first-order finite element spaces $N_h$ and $C_h$ for approximating the chemoattractant and organism densities, respectively,  and a Taylor--Hood finite element space pair $(\boldsymbol{U}_h, P_h)$ for approximating velocity and pressure. When needed, we will make use of a finer triangulation $\mathcal{T}_h^*$ as shown in Figure \ref{meshes} (left) to emphasise the growth of possible singularities, since it is known \cite{Badia_Bonilla_GS_2022} that such a growth is controlled by the $L^1(\Omega)$-norm and the measure of the macroelement on which the singularity evolves, i.e., the maximum reached by the singularity is proportional to the $L^1(\Omega)$-norm and inversely proportional to the macroelement measure.  Hence the macroelement where the singularity is supported is required to be as small as possible if the size of the $L^1(\Omega)$-norm is not large to boost its enlargement. It should be further noted that when $\Omega$ is a ball whose boundary is approximated by a polygonal, we have that $\alpha_\Omega= \pi - \epsilon$ for certain $\epsilon >0$ being very small depending on $\mathcal{T}_h$, since $\beta_\Omega=1$ due to the convexity of $\Omega$.  The parameter $\varepsilon$ is taken to be $10^{-6}$. Different time steps are considered ranging from $10^{-2}$ through $10^{-5}$. The latter is meant to be used for large values of $\eta_0$ and $\Phi_0$. As a reference to see how evolves the organism density over time, Figure \ref{Snapshots_n0h} illustrates the initial distribution of $n_{0h}$. 
\begin{figure}[b]
    \centering
    \includegraphics[width=0.3\textwidth]{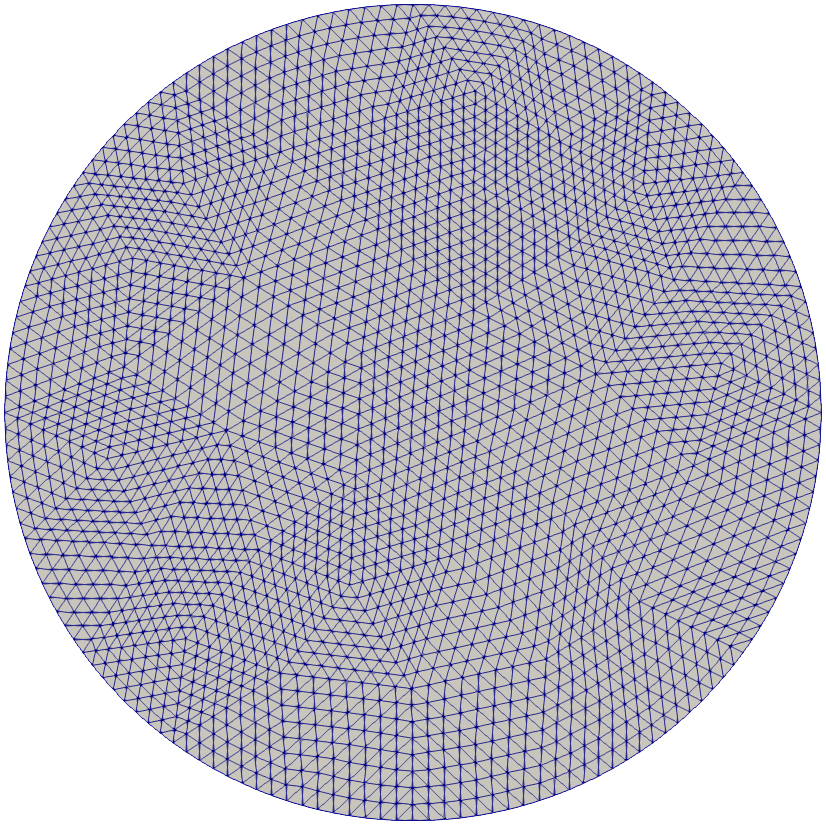}
    \includegraphics[width=0.3\textwidth]{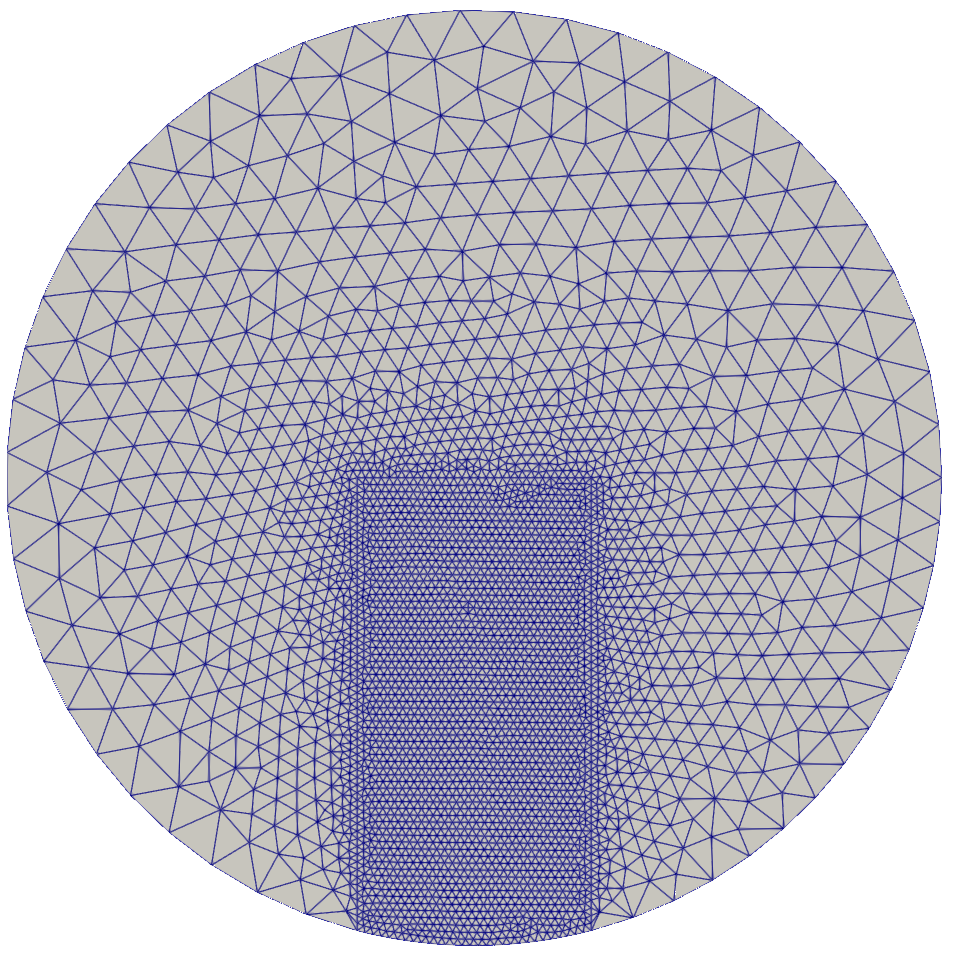}
    \caption{Triangulation $\mathcal{T}_h$ of $\Omega$ (right) and triangulation $\mathcal{T}_h^*$ refined on the singularity path.}
    \label{meshes}
\end{figure} 
\begin{figure}[b]
        \centering
        \includegraphics[width=0.4\textwidth]{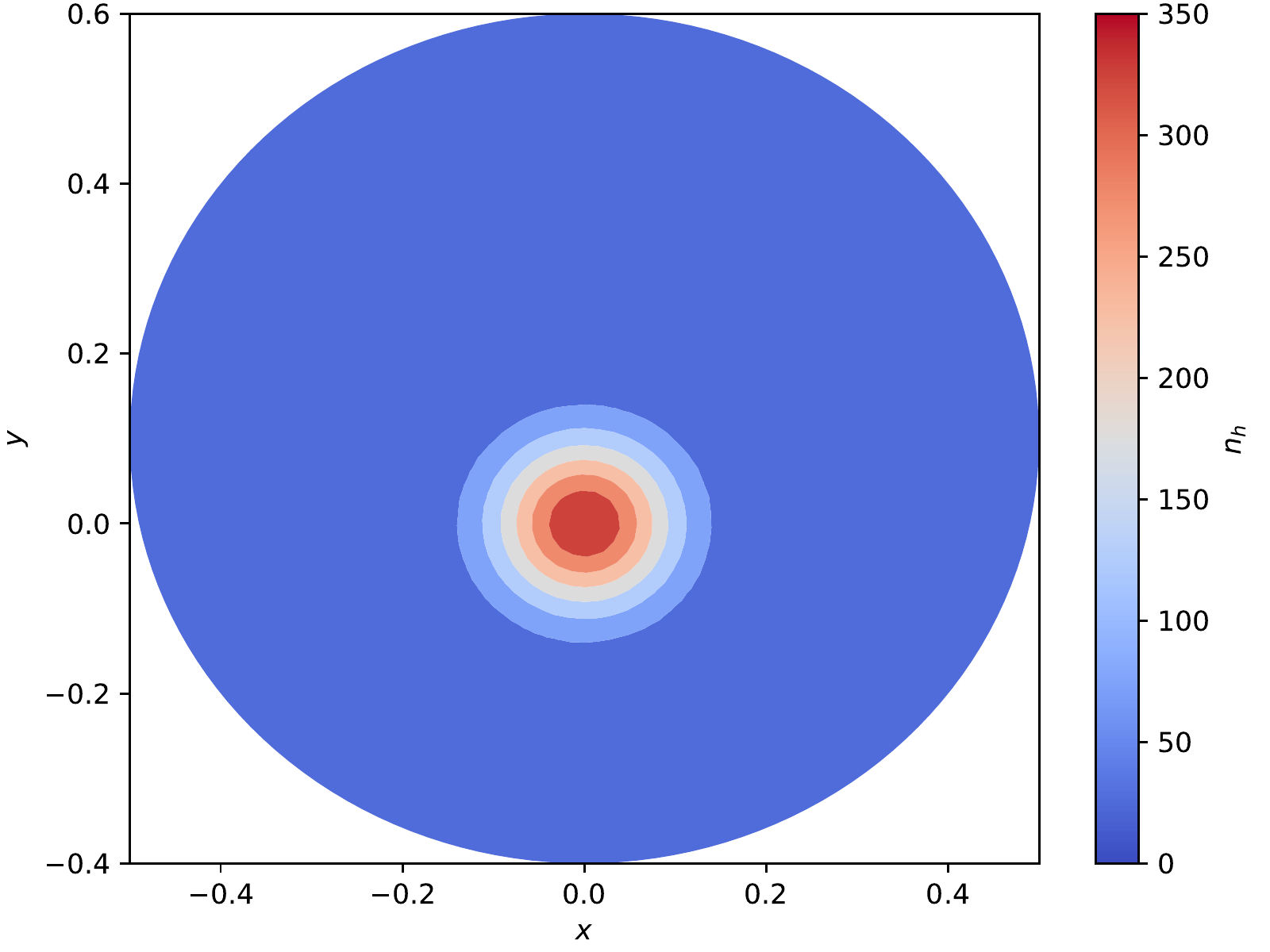}
        \caption{Initial distribution of the organism density $n_{0h}$.}\label{Snapshots_n0h}
\end{figure}

In the above framework, the numerical solution (if $\eta_0=1000$) provided by an algorithm \cite{Badia_Bonilla_GS_2022} similar to that developed in the paper but for approximating the Keller--Segel equations \eqref{KS}-\eqref{IC_KS} is that of the highest density concentration of both the chemoattractant and organisms moving from the origin to the point $(0, -0.9)$, where a Dirac-type singularity takes place. Accordingly, we may expect that as the fluid transports the singularity it may enhance its growth; thereby resulting in a reduction of the existence threshold below $4\chi_\Omega$.      
 
In order to see if a fluid can essentially modify the existence threshold for the fluid-free chemotactic behaviour, we will use the parameters $\eta_0$ and $\Phi_0$ to control the size of the  $L^1(\Omega)$-norm of $n_0$ and the size of the $W^{1,\infty}(\Omega)$-norm of  $\Phi$, respectively, in a set of numerical experiments. 

Finally, as a linearisation of Algorithm \eqref{eq:n_h}-\eqref{eq:p_h}, a Picard technique is implemented with a stopping criterium being the relative error between two different iterations for a tolerance of  $10^{-3}$.    

\subsection{Case: \texorpdfstring{$\eta_0=350$}{Lg} and \texorpdfstring{$\Phi_0=10$}{Lg}} To begin with, we take $\Phi_0=100$ and $\eta_0=350$, which gives the initial chemoattractant mass $\|n_0\|_{L^1(\Omega)}\approx11.0027<4\pi$. This amount is preserved for the $L^1(\Omega)$-norm of  $\{n_h^m\}_m$, whereas that of  $\{c_h^m\}_m$ increases; see Figure \ref{Graphs_350} (left). Observe in Figure \ref{Graphs_350} (middle) that maxima for $\{n_h^m\}_m$ drop rapidly in the very beginning and then start rising until becoming nearly constantly $100$ and for $\{c_h^m\}_m$ grow gradually up to over $14$. In contract to maxima for $\{n_h^m\}_m$, minima in Figure \ref{Graphs_350} (right) move inversely towards approximately $3$ and for $\{c_h^m\}_m$ increase reaching slightly over $10$. 

Snapshots at times $t=0.02$, $0.04$,  $1.0$ and $10.0$ of $\{n_h^m\}_m$, $\{c_h^m\}_m$ and $\{\u_h^m\}_m$ are depicted in Figures \ref{Snapshots_350_nh}, \ref{Snapshots_350_ch} and \ref{Snapshots_350_uh}. It is noticeable that the highest density area of both the chemoattractant and organisms moves from $(0,0.1)$ to $(0, -0.9)$;  in particular,  that of organisms occupies several macroelements around $(0, -0.9)$. The fluid flow in the beginning generates two vortices as a result of the structure of $n_{0h}$, which vanish as the high density area touches the boundary. 
\begin{figure}
    \begin{subfigure}[b]{0.23\textwidth}
        \centering
        \includegraphics[width=1.0\textwidth]{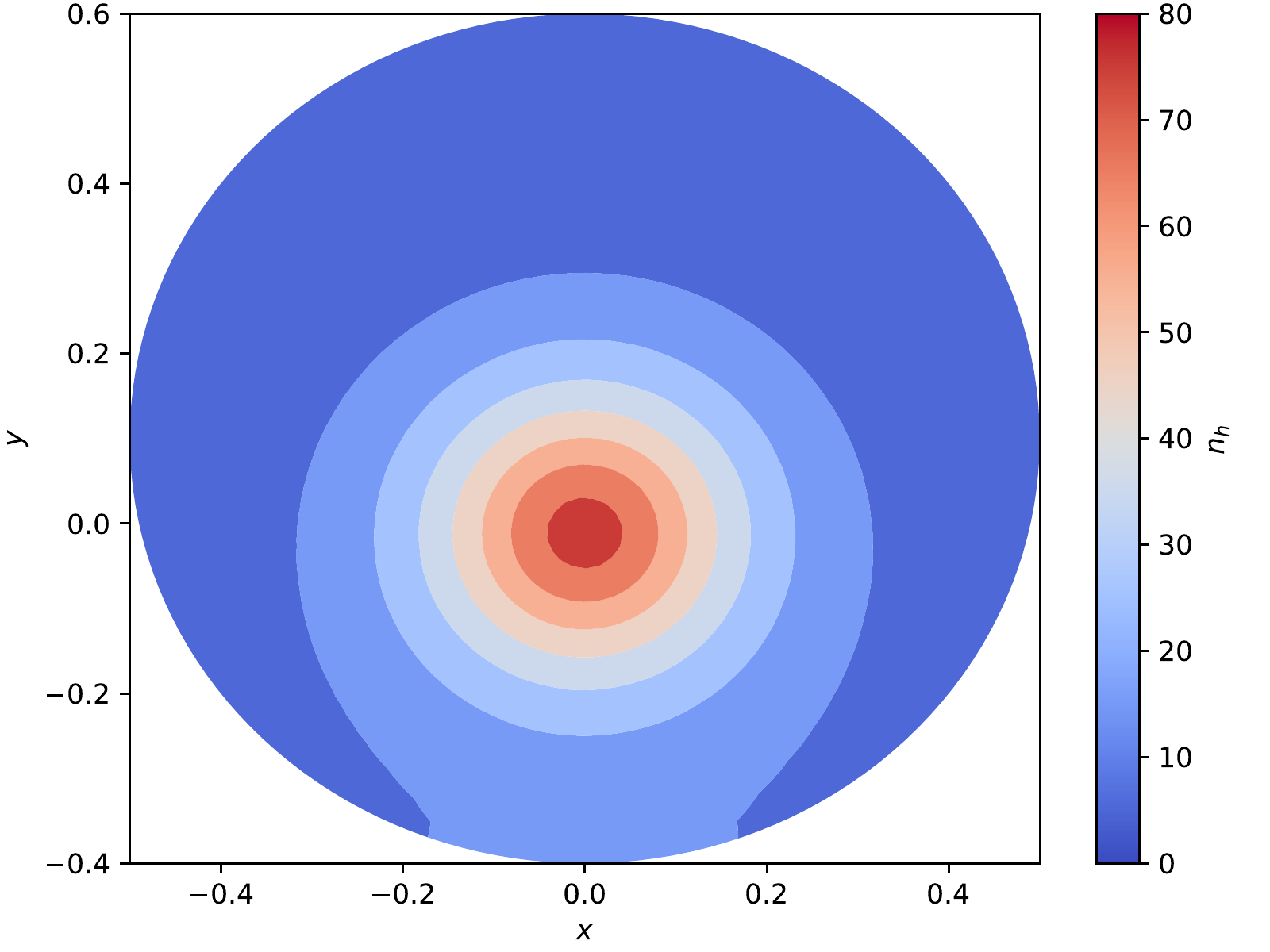}
    \end{subfigure}
    \begin{subfigure}[b]{0.23\textwidth}
        \centering
        \includegraphics[width=1.0\textwidth]{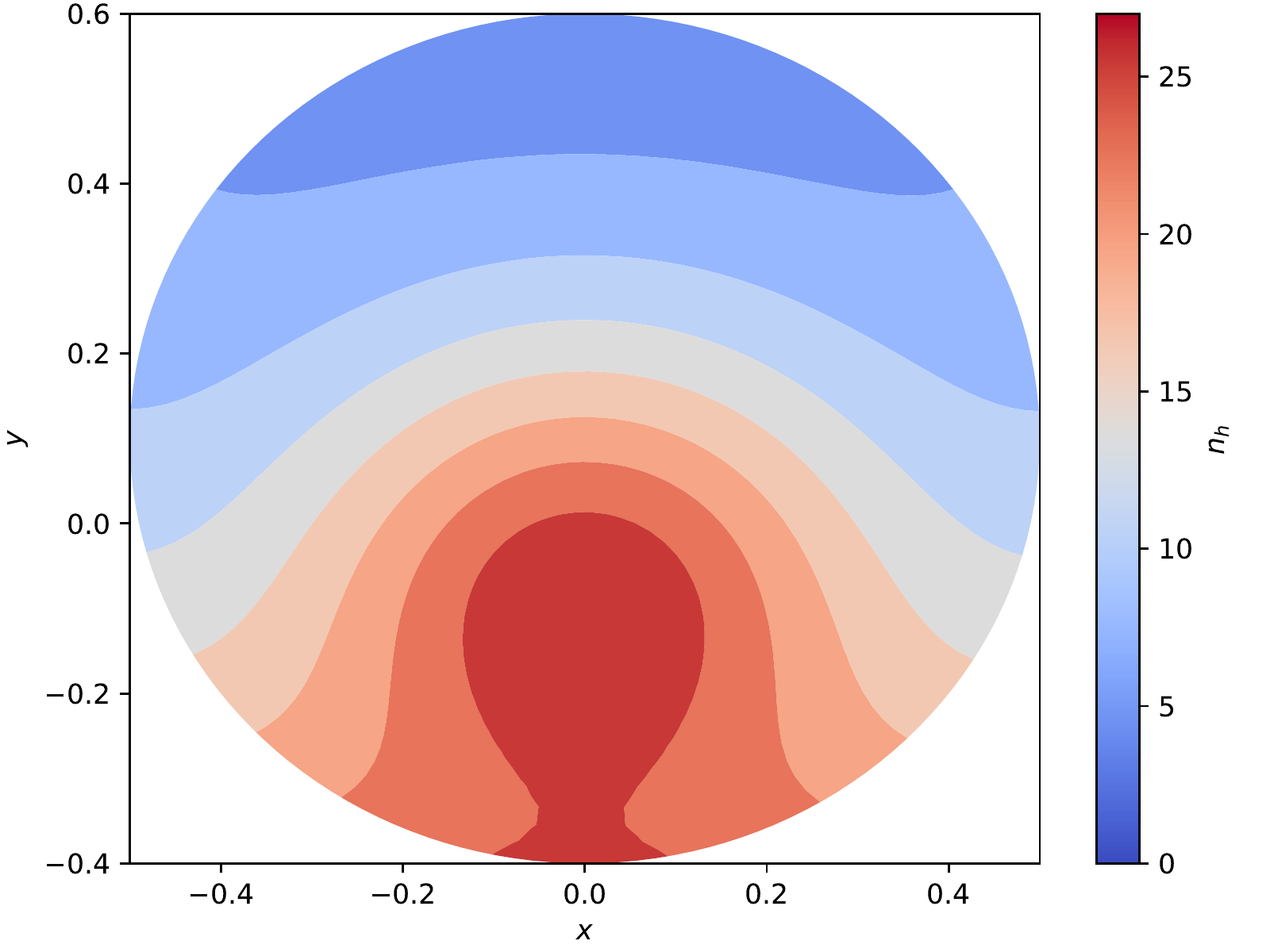}
    \end{subfigure}
    \begin{subfigure}[b]{0.23\textwidth}
        \centering
        \includegraphics[width=1.0\textwidth]{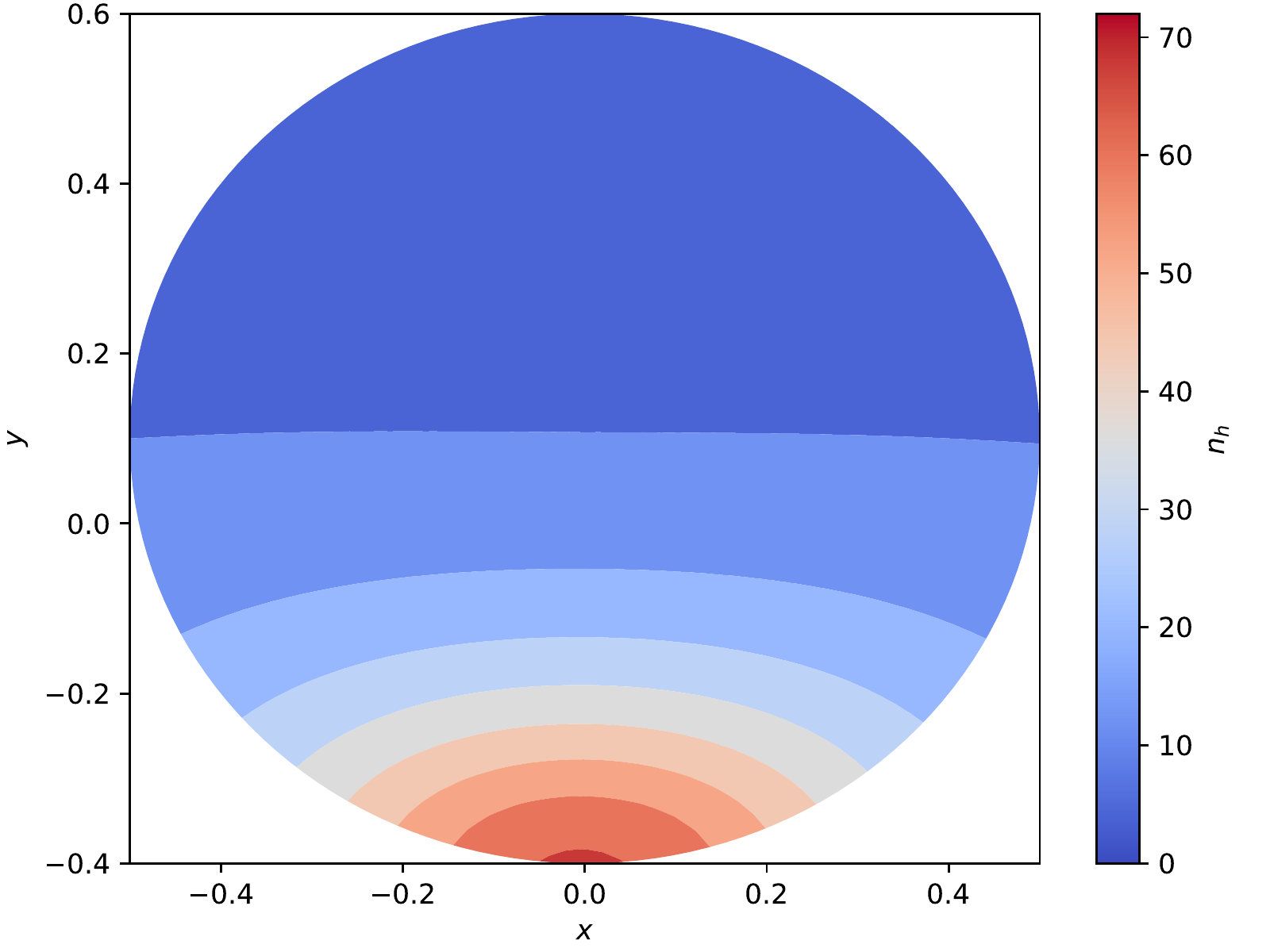}
    \end{subfigure}
    \begin{subfigure}[b]{0.23\textwidth}
        \centering
        \includegraphics[width=1.0\textwidth]{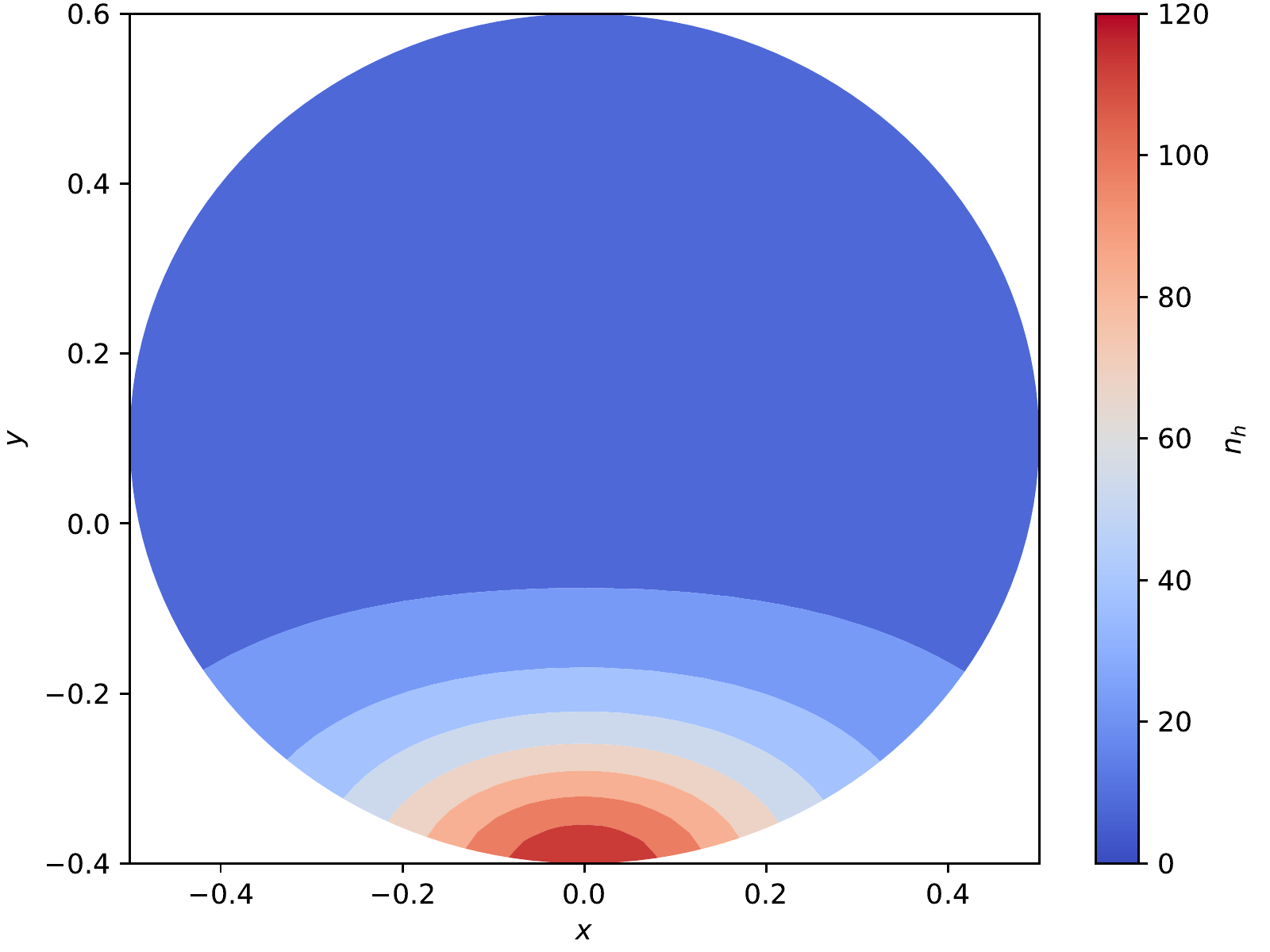}
    \end{subfigure}
            \caption{Snapshots at $t=0.02$, $t=0.04$, $t=1$,  and $10$ of $n_h$ for $\eta_0=350$}\label{Snapshots_350_nh}
            \end{figure}
    \begin{figure}
    \begin{subfigure}[b]{0.23\textwidth}
        \centering
        \includegraphics[width=1.0\textwidth]{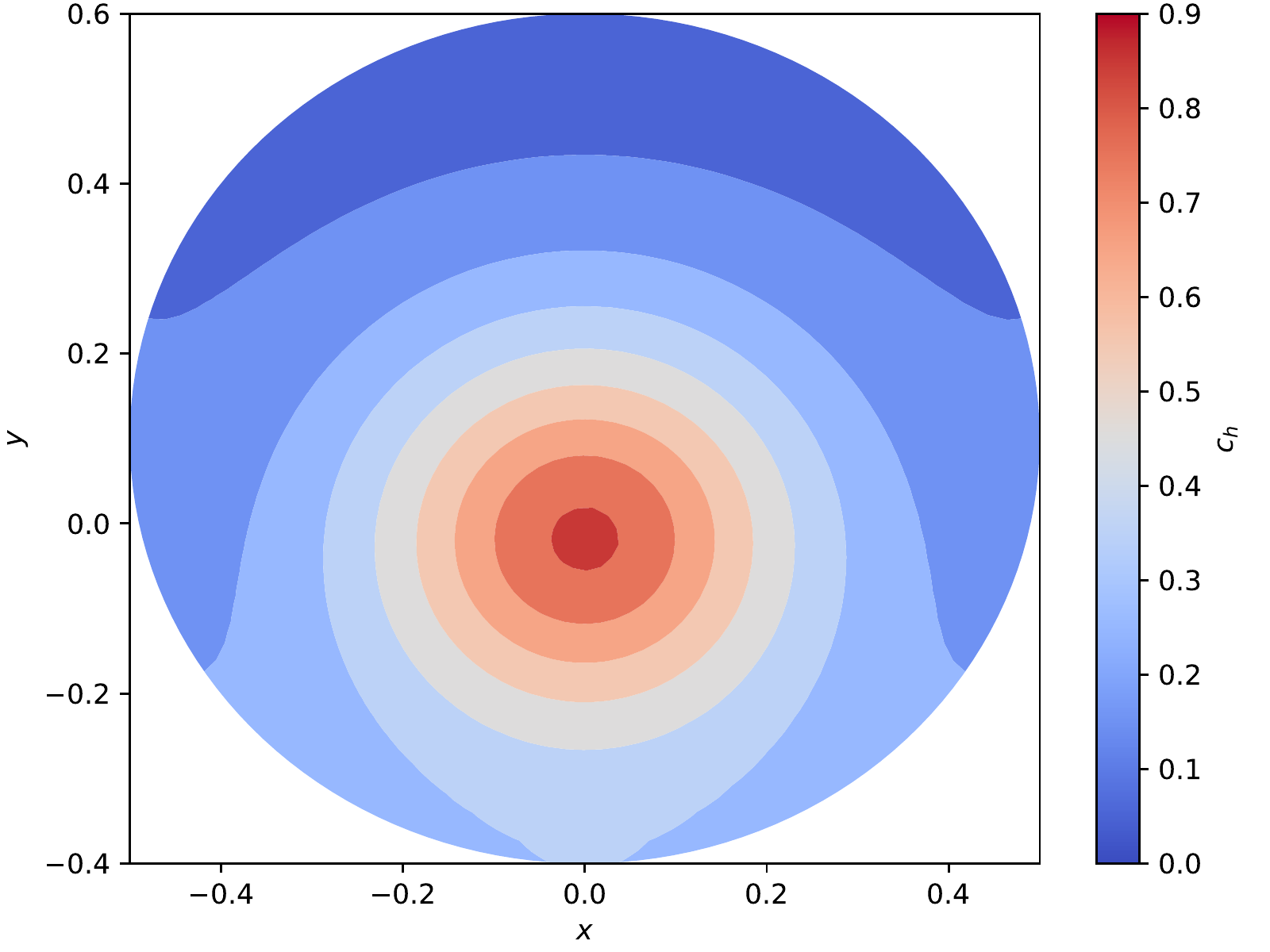}
    \end{subfigure}
    \begin{subfigure}[b]{0.23\textwidth}
        \centering
        \includegraphics[width=1.0\textwidth]{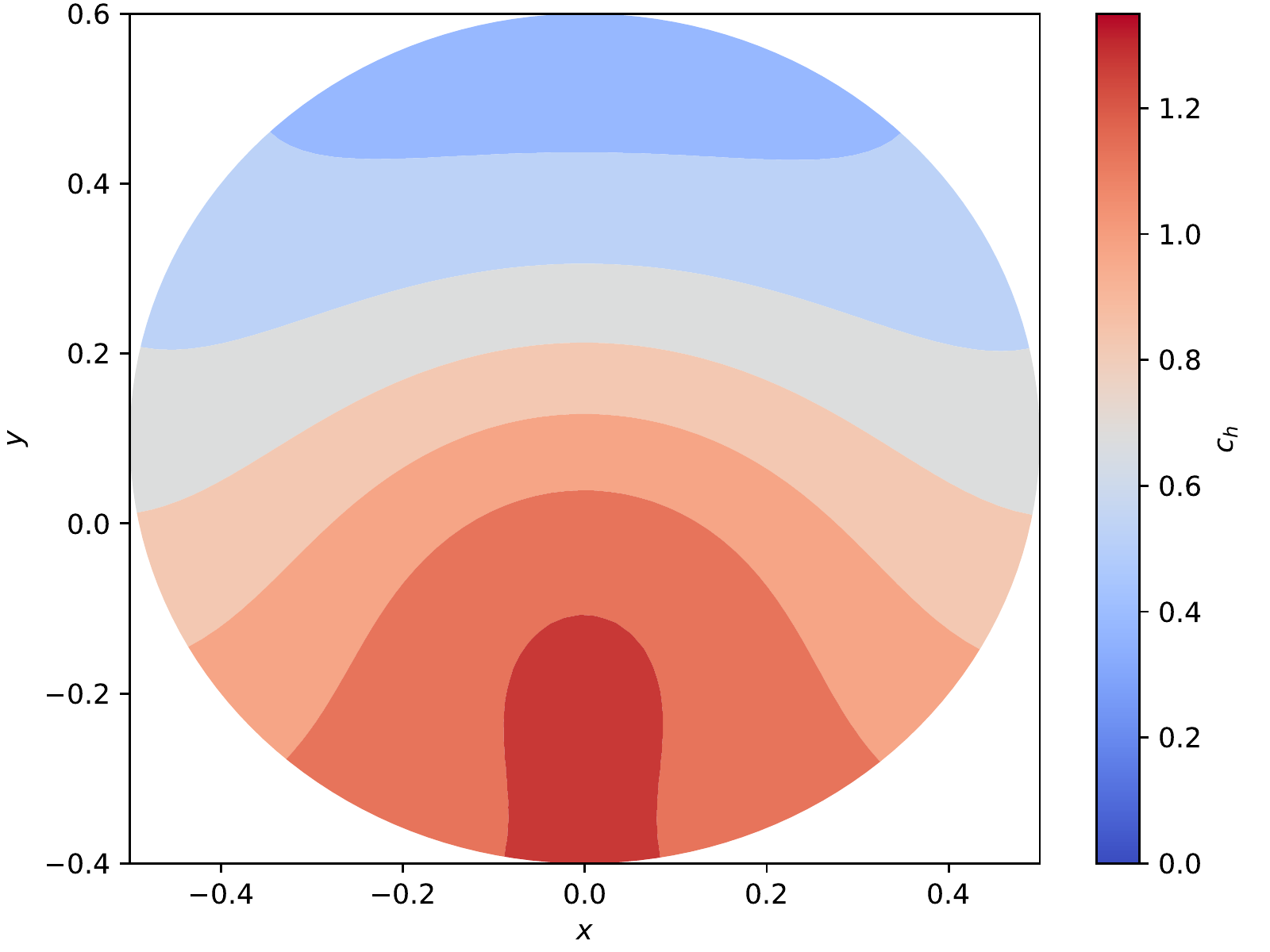}
    \end{subfigure}
    \begin{subfigure}[b]{0.23\textwidth}
        \centering
        \includegraphics[width=1.0\textwidth]{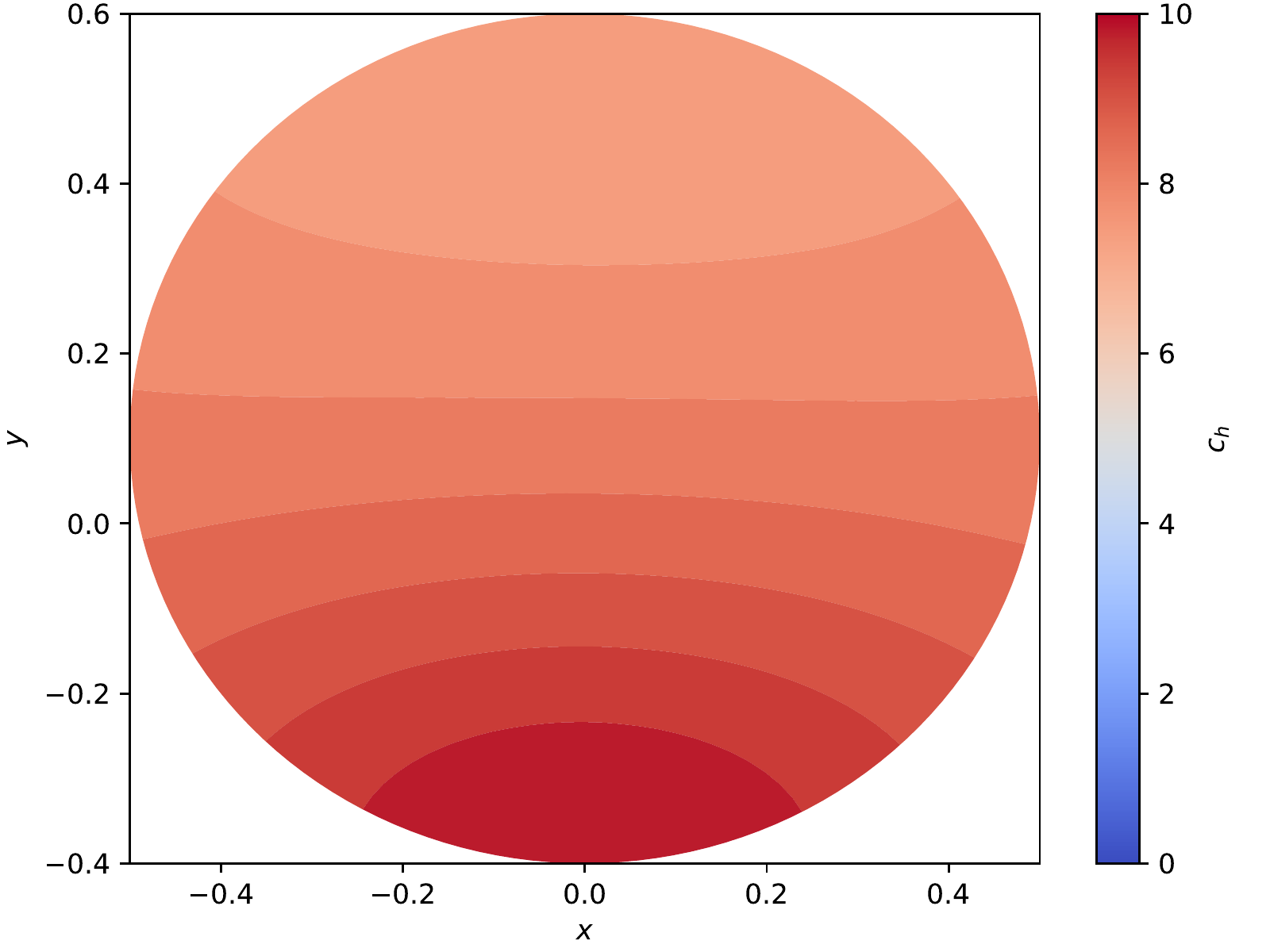}
    \end{subfigure}
    \begin{subfigure}[b]{0.23\textwidth}
        \centering
        \includegraphics[width=1.0\textwidth]{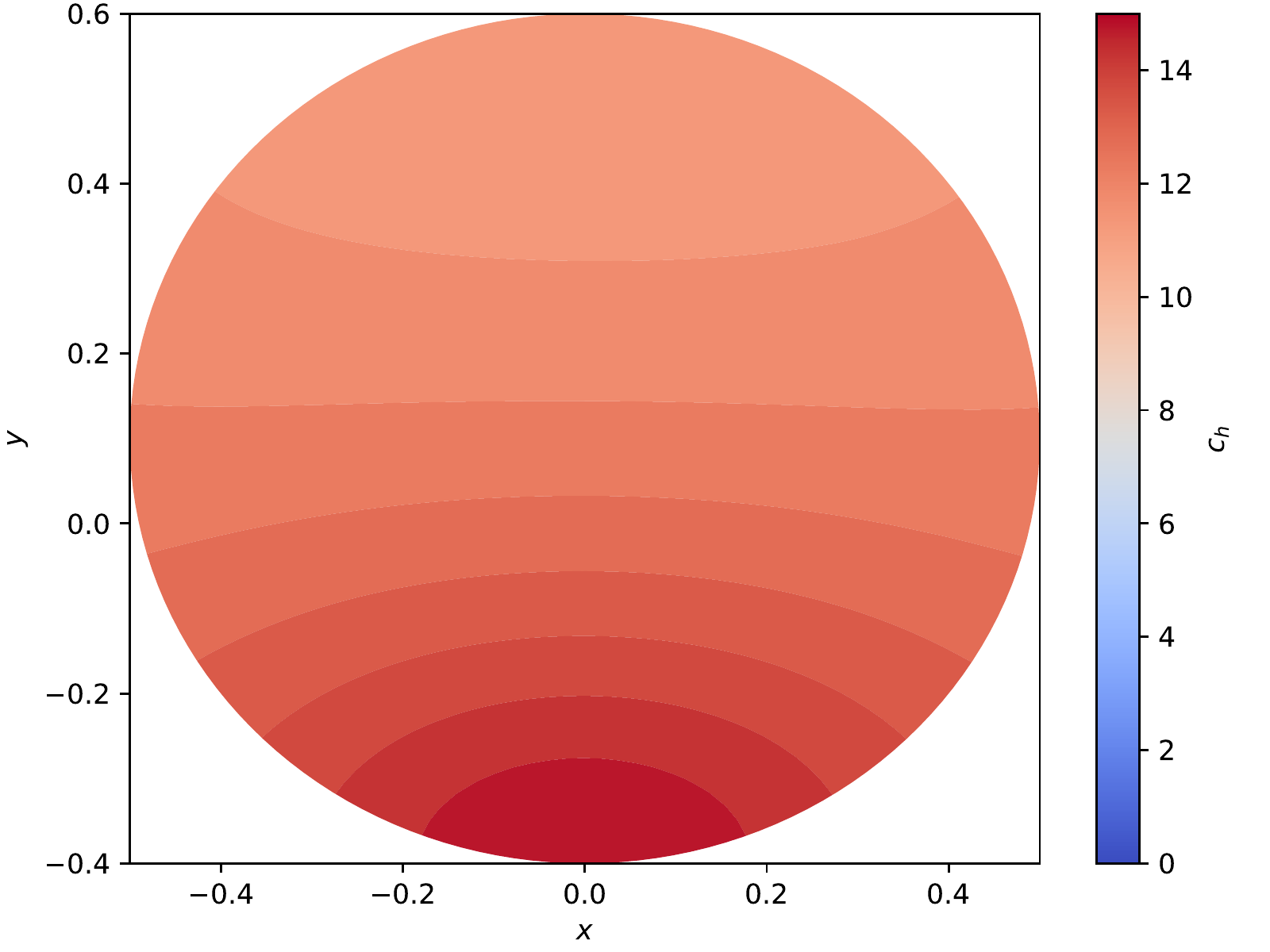}
    \end{subfigure}
    \caption{Snapshots at $t=0.02$, $t=0.04$, $1,$ and $10$ of $c_h$ for $\eta_0=350$}\label{Snapshots_350_ch}
            \end{figure}
    \begin{figure}
    \begin{subfigure}[b]{0.23\textwidth}
        \centering
        \includegraphics[width=1.0\textwidth]{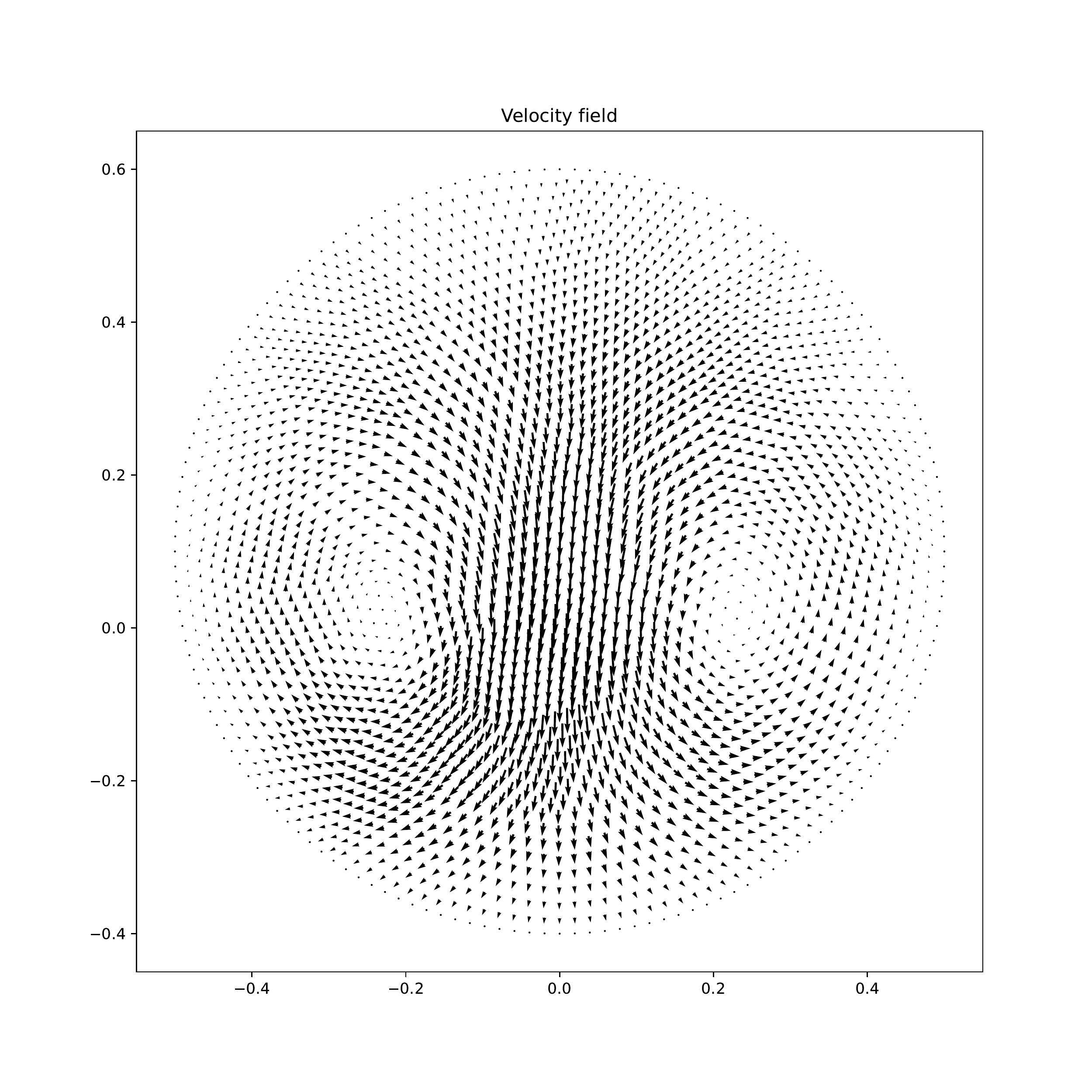}
    \end{subfigure}
    \begin{subfigure}[b]{0.23\textwidth}
        \centering
        \includegraphics[width=1.0\textwidth]{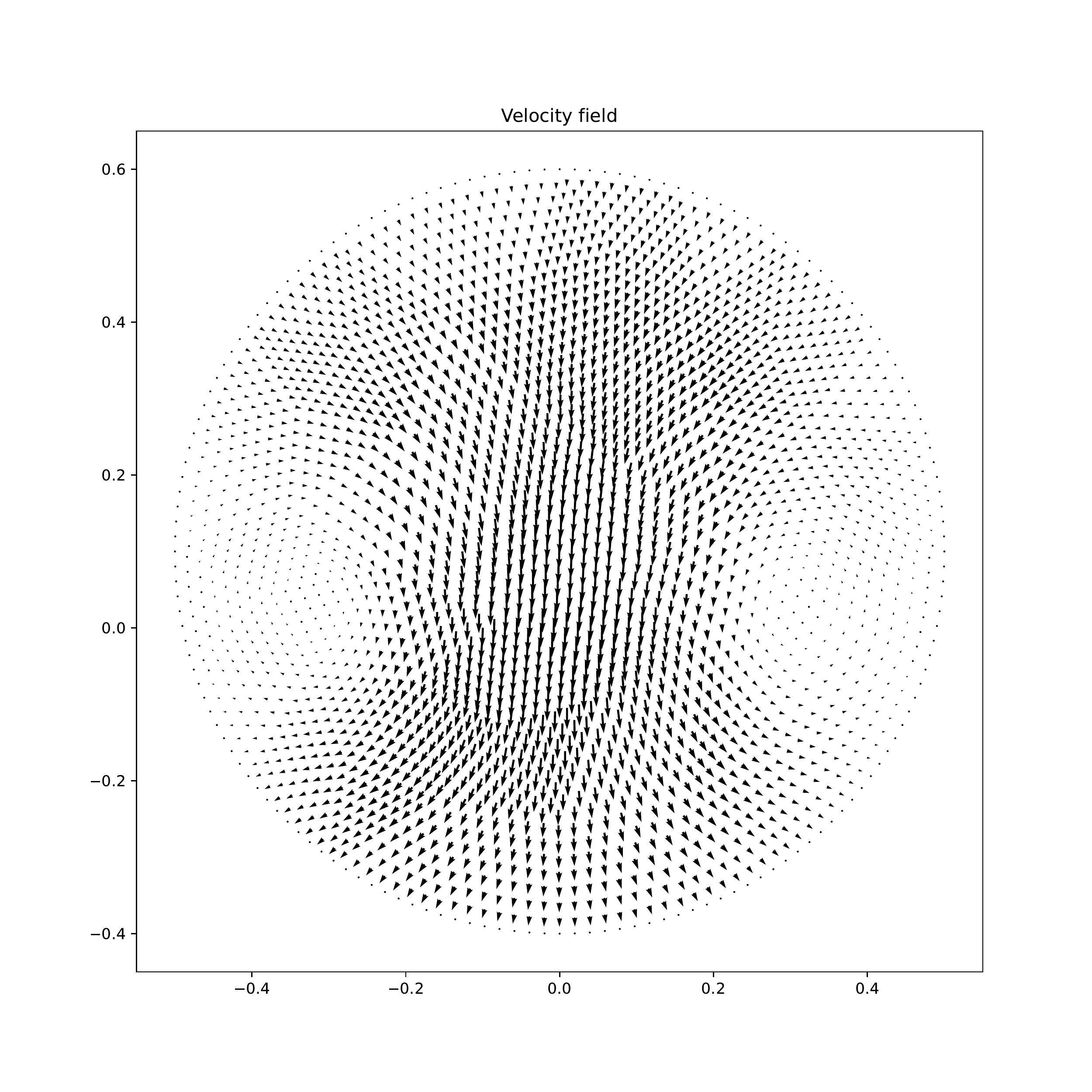}
    \end{subfigure}
    \begin{subfigure}[b]{0.23\textwidth}
        \centering
        \includegraphics[width=1.0\textwidth]{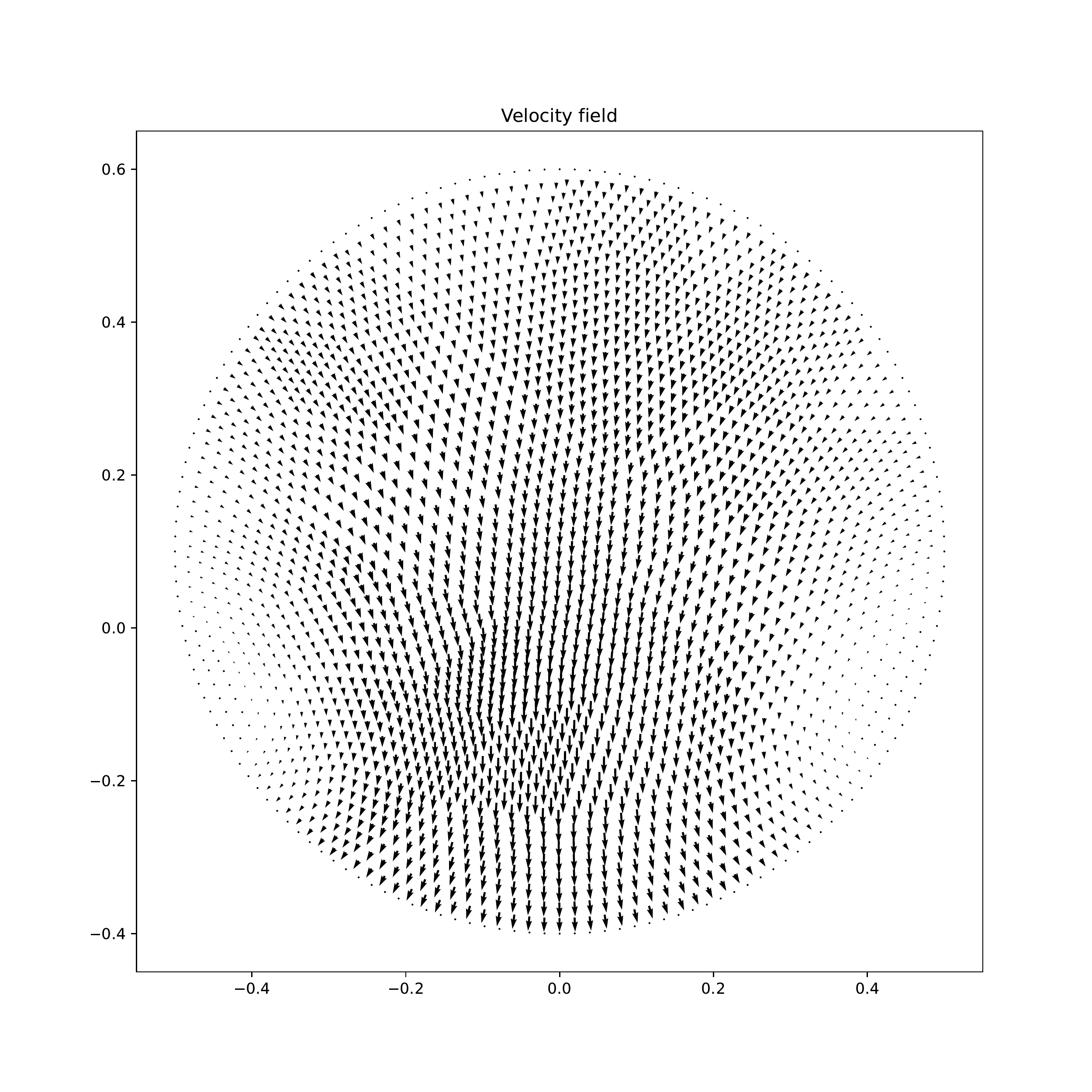}
    \end{subfigure}
    \begin{subfigure}[b]{0.23\textwidth}
        \centering
        \includegraphics[width=1.0\textwidth]{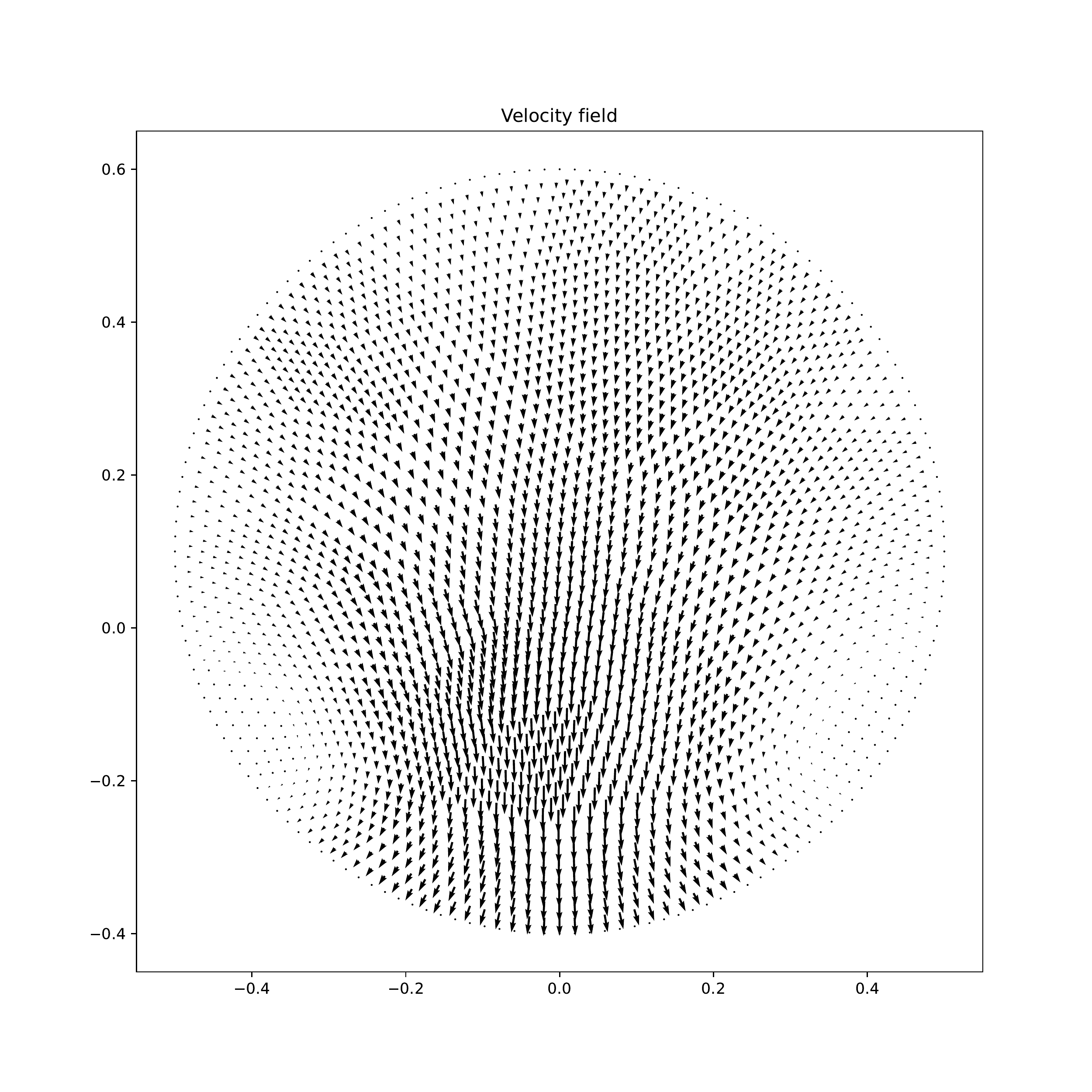}
    \end{subfigure}
    \caption{Snapshots at $t=0.02$, $t=0.04$, $1$, and $10$ of $\u_h$ for $\eta_0=350$}\label{Snapshots_350_uh}
\end{figure}
\begin{figure}
    \begin{subfigure}[b]{0.23\textwidth}
        \centering
        \includegraphics[width=1.0\textwidth]{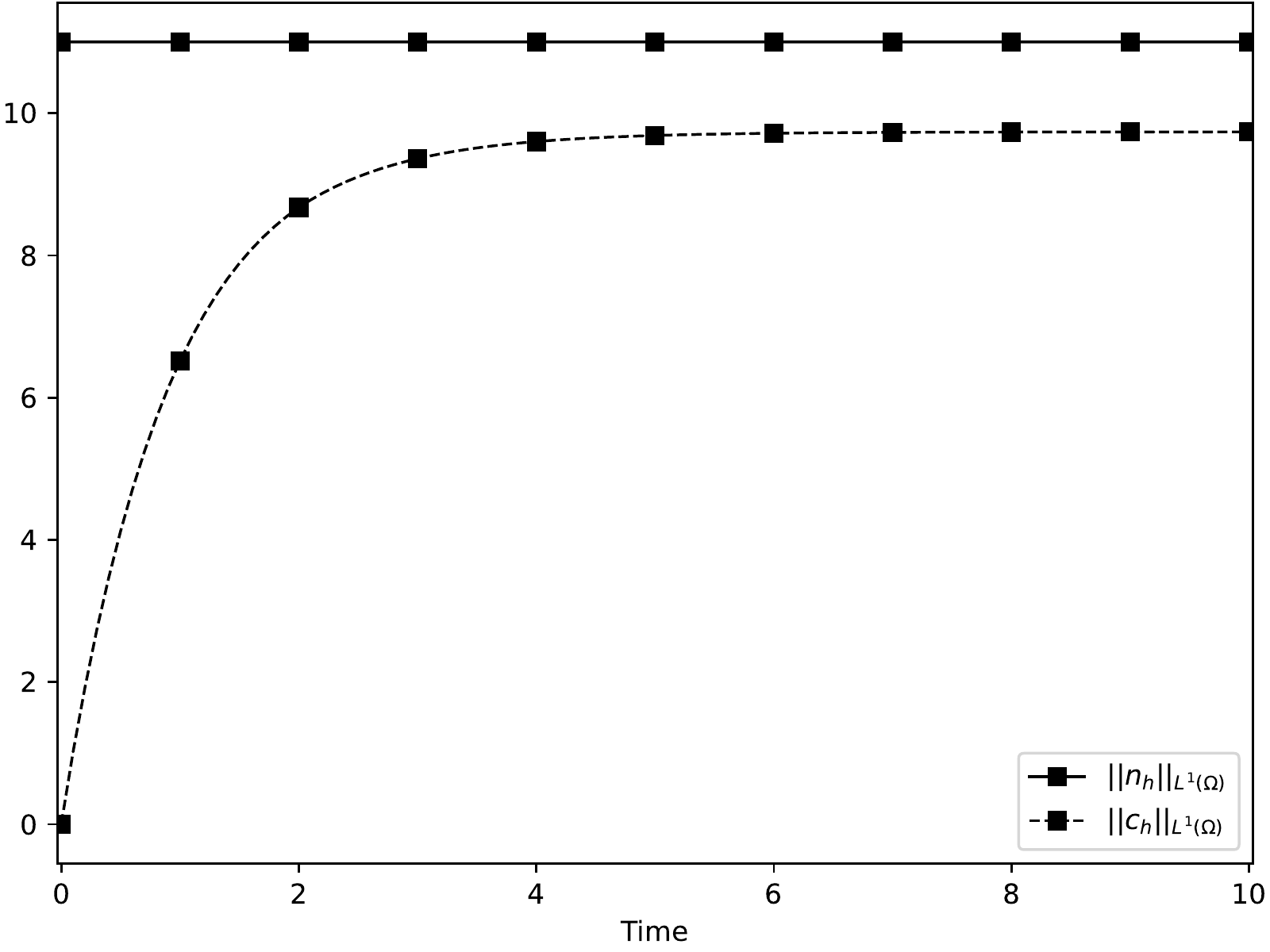}
    \end{subfigure}
    \begin{subfigure}[b]{0.23\textwidth}
        \centering
        \includegraphics[width=1.0\textwidth]{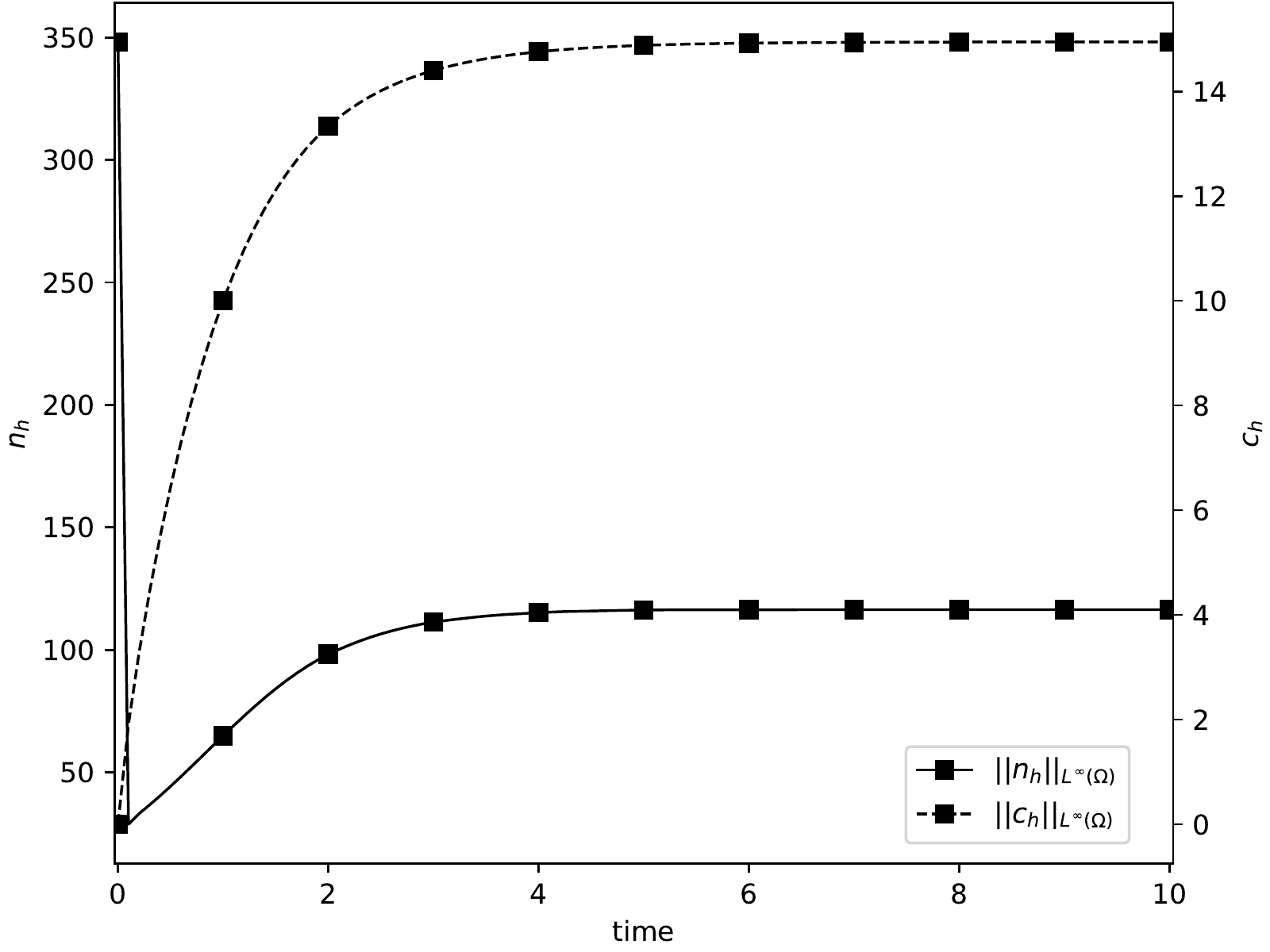}
    \end{subfigure}
    \begin{subfigure}[b]{0.23\textwidth}
        \centering
        \includegraphics[width=1.0\textwidth]{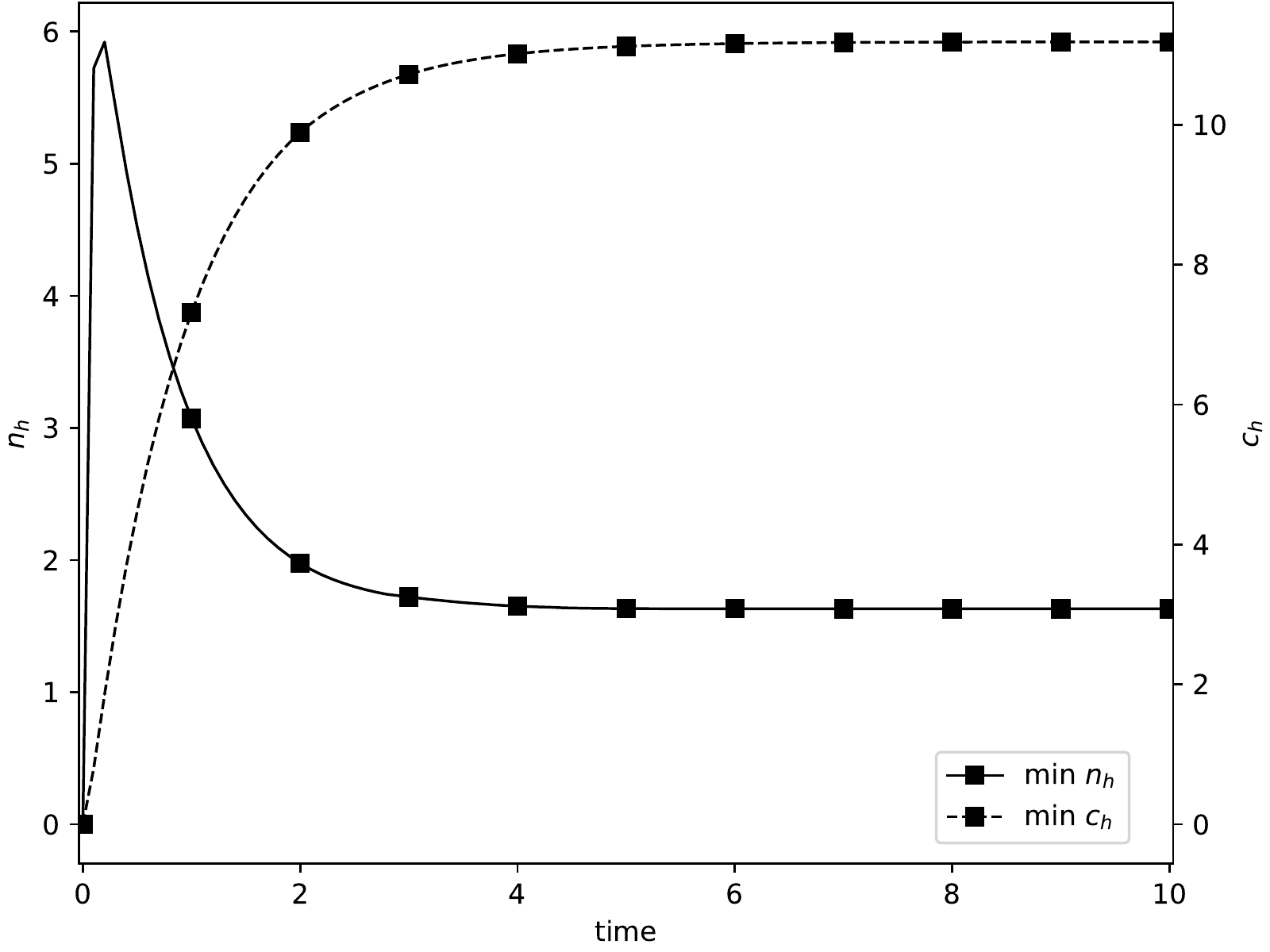}
    \end{subfigure}
            \caption{Plots of $L^1(\Omega)$-norm, maxima, minima and energy for $\eta=350$}\label{Graphs_350}
\end{figure}
\subsection{Case: \texorpdfstring{$\eta_0=400$}{Lg} and \texorpdfstring{$\Phi_0=10$}{Lg}} It will be assumed now that $\Phi_0=10$ and $\eta_0=400$. Accordingly this provides $\|n_0\|_{L^1(\Omega)}\approx12.5745\gtrsim4\pi$, whose value keeps for $\{n_h^m\}_m$ as before and $\{c_h^m\}_m$ gains mass being $\max_m\|c_h^m\|_{L^1(\Omega)}\approx 8.2289$ as shown in Figure \ref{Graphs_400} (left). In Figure \ref{Graphs_400} (middle) it is seen that maxima for $\{n_h^m\}_m$ and $\{c_h^m\}_m$ stabilise at $2115$ and $17.5$, respectively. In the evolution of minima we find a considerably important difference with regard to $\eta=350$, since minima for $\{n_h^m\}_m$ become almost null and those for $\{c_h^m\}_m$ exceed the value of $8$; cf. Figure \ref{Graphs_400} (right). In this case, the number of macroelements supporting the highest density values of $\{n_h^m\}_m$ are smaller than that for $\eta_0=350$ as can see from snapshots at $t=0.02$, $0.04$, $0.5$ and $1.78$ in Figure \ref{Snapshots_400_nh}. The dynamics of $\{c_h^m\}_m$ and $\{\u_h^m\}_m$ is displayed in Figures \ref{Snapshots_400_ch} and \ref{Snapshots_400_uh}, where it is observed that  the chemoattractant density distribution is less uniform than for $\eta=350$ and the velocity field is more active in the region of $(0, - 0.9)$; as a consequence of a higher concentration for $\{n_h^m\}_h$ at such a point. 
\begin{figure}
    \begin{subfigure}[b]{0.23\textwidth}
        \centering
        \includegraphics[width=1.0\textwidth]{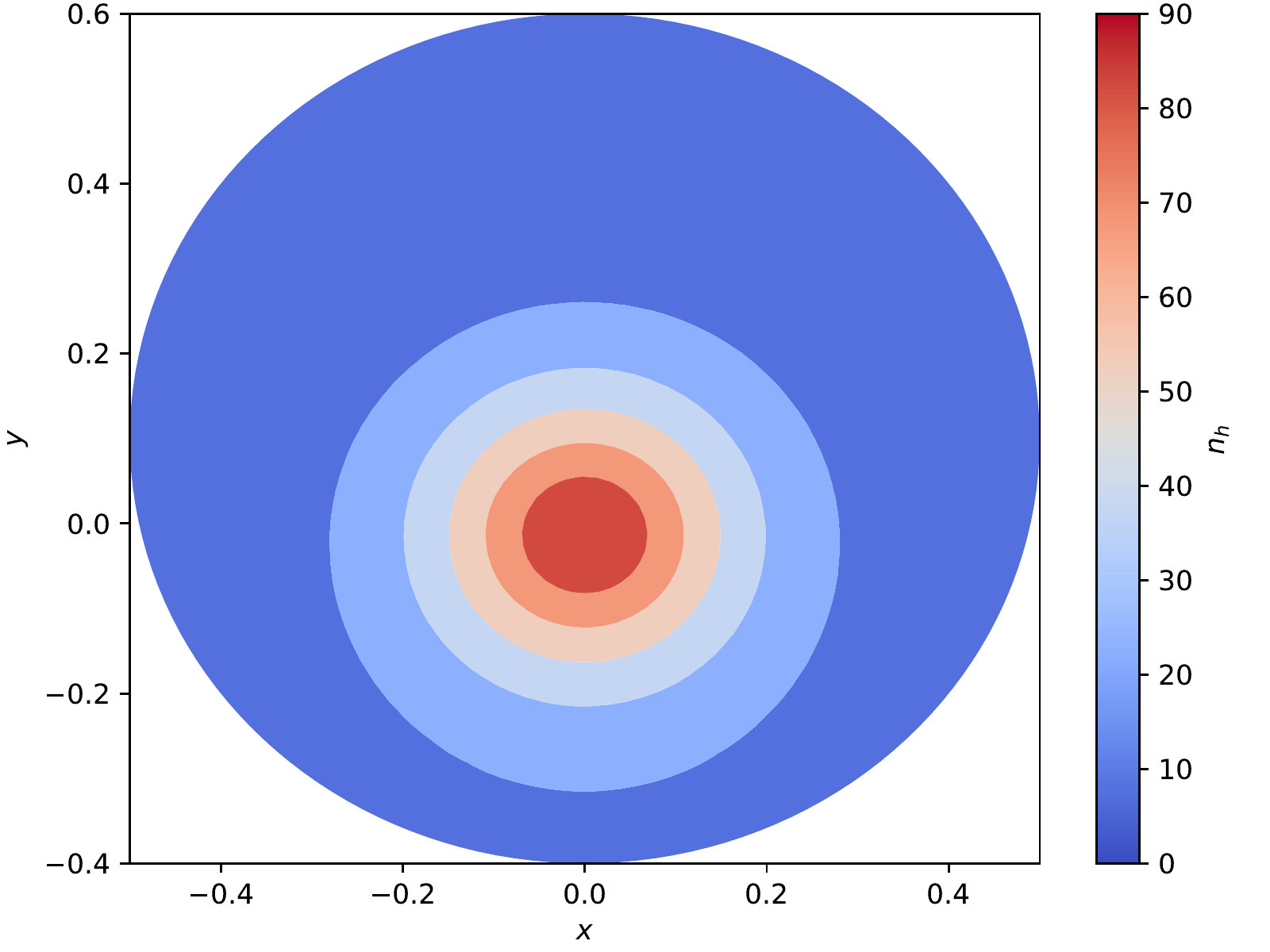}
    \end{subfigure}
    \begin{subfigure}[b]{0.23\textwidth}
        \centering
        \includegraphics[width=1.0\textwidth]{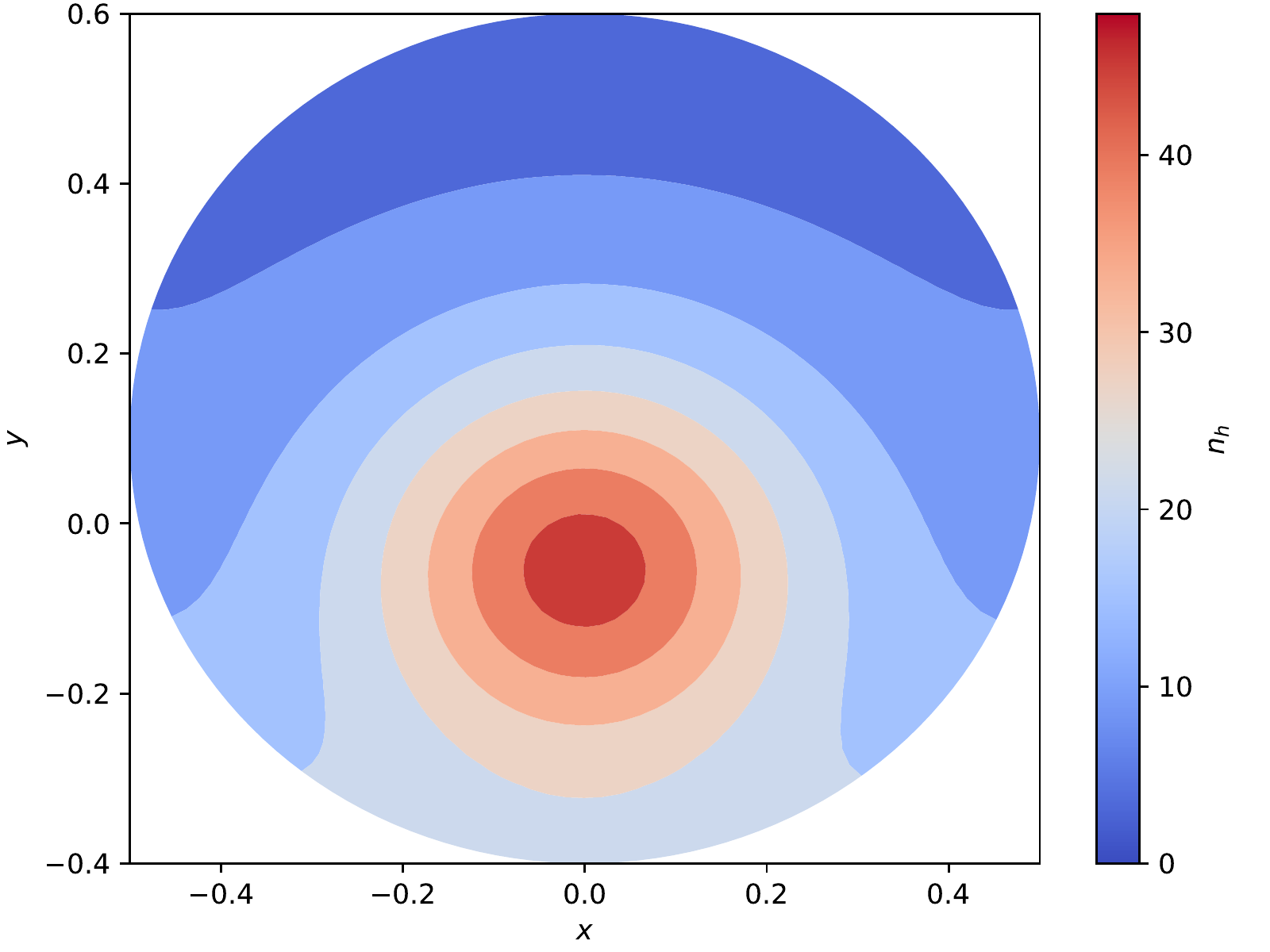}
    \end{subfigure}
    \begin{subfigure}[b]{0.23\textwidth}
        \centering
        \includegraphics[width=1.0\textwidth]{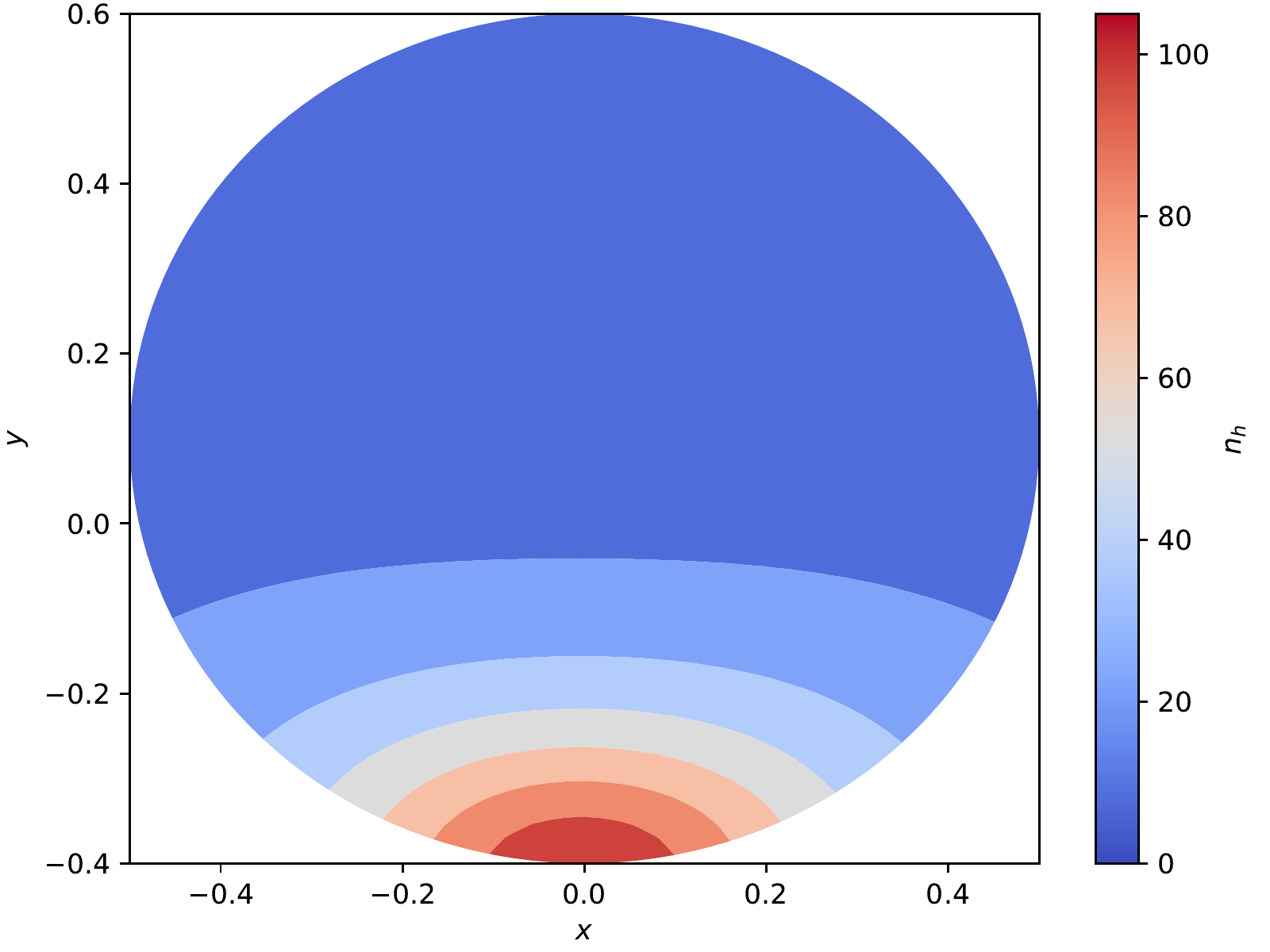}
    \end{subfigure}
    \begin{subfigure}[b]{0.23\textwidth}
        \centering
        \includegraphics[width=1.0\textwidth]{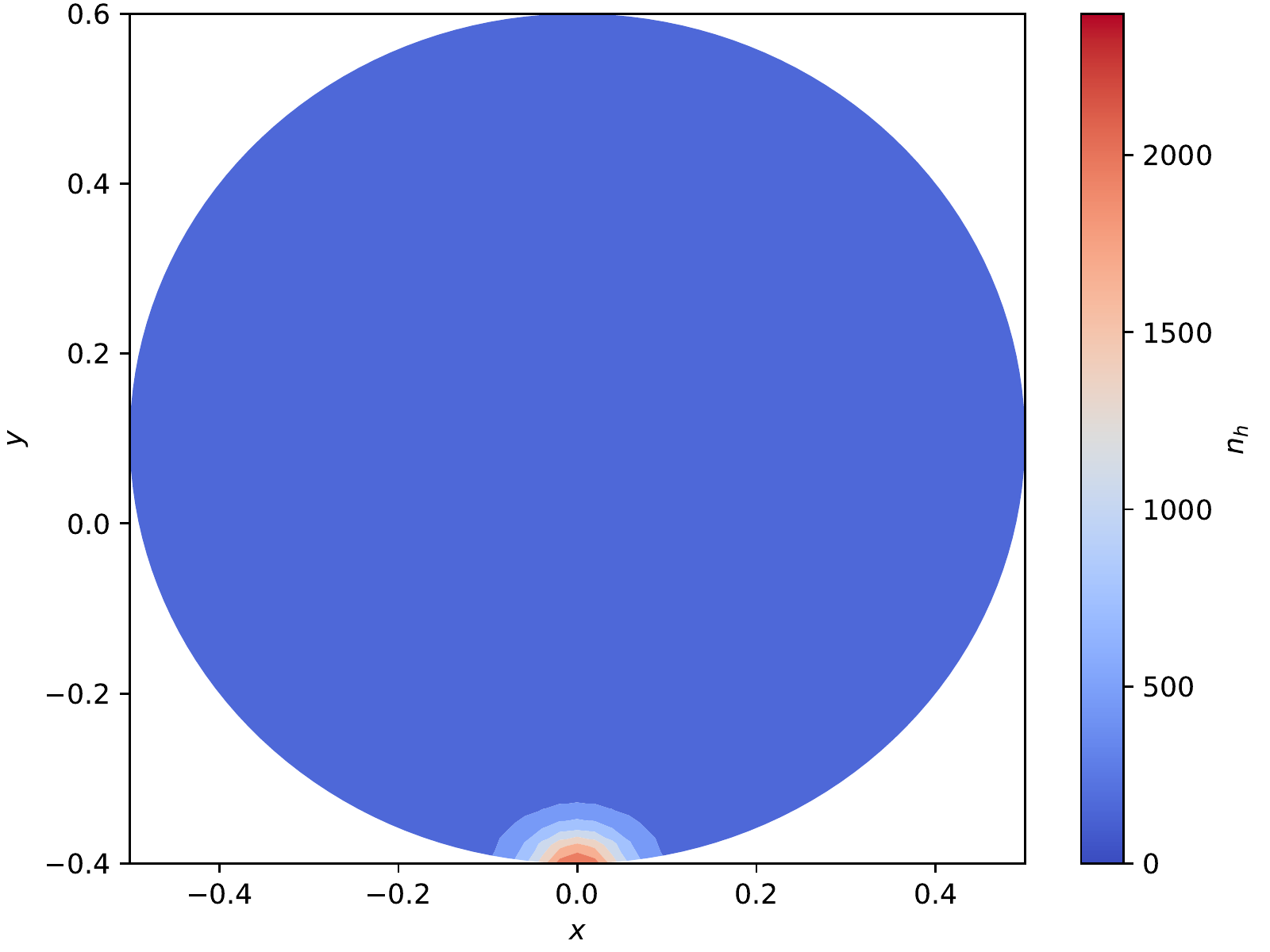}
    \end{subfigure}
            \caption{Snapshots at $t=0.02$, $t=0.04$, $0.5$ and $1.78$  of $\{n_h^m\}_m$ for $\eta_0=400$}\label{Snapshots_400_nh}
            \end{figure}
            \begin{figure}
    \begin{subfigure}[b]{0.23\textwidth}
        \centering
        \includegraphics[width=1.0\textwidth]{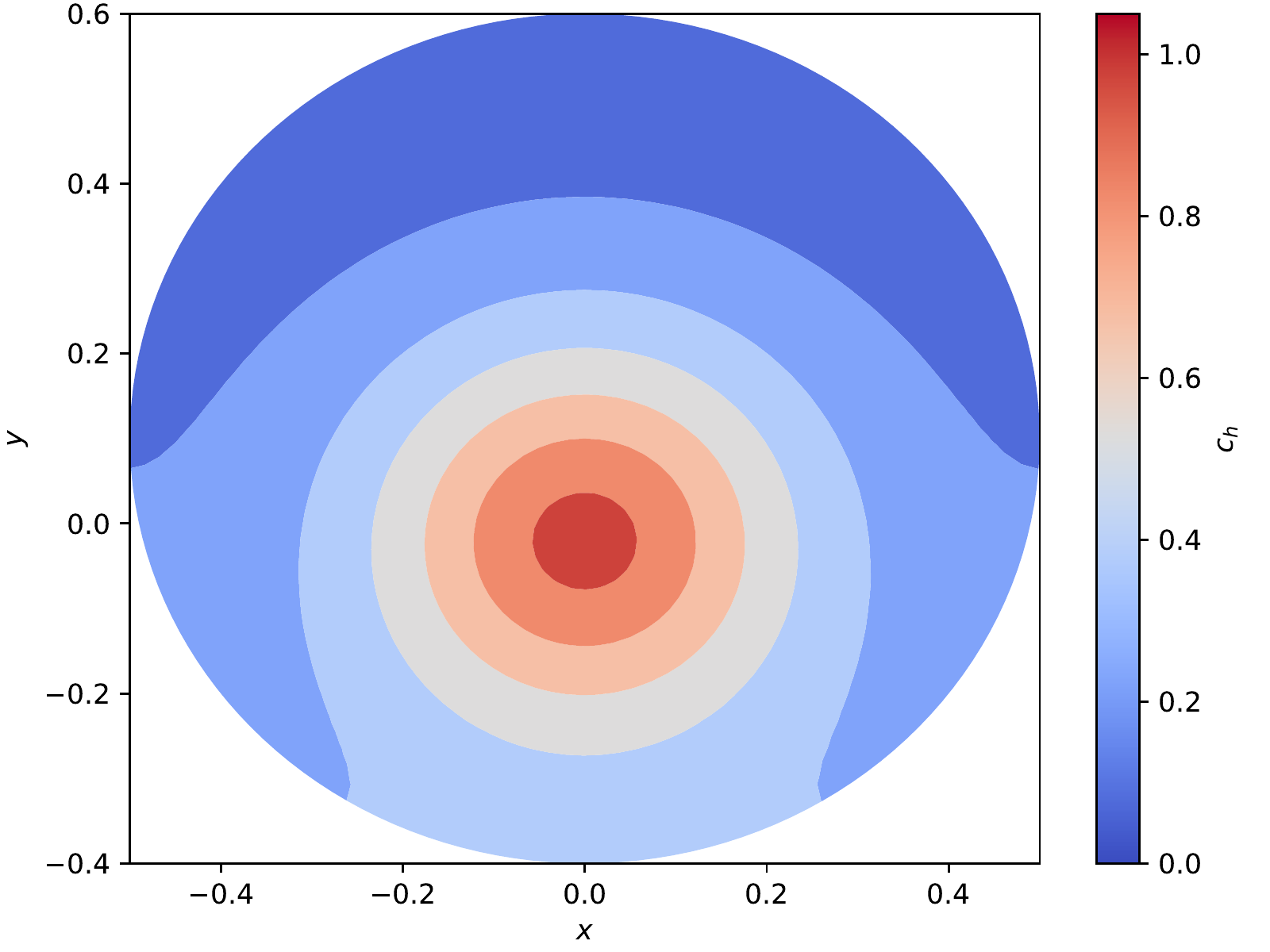}
    \end{subfigure}
    \begin{subfigure}[b]{0.23\textwidth}
        \centering
        \includegraphics[width=1.0\textwidth]{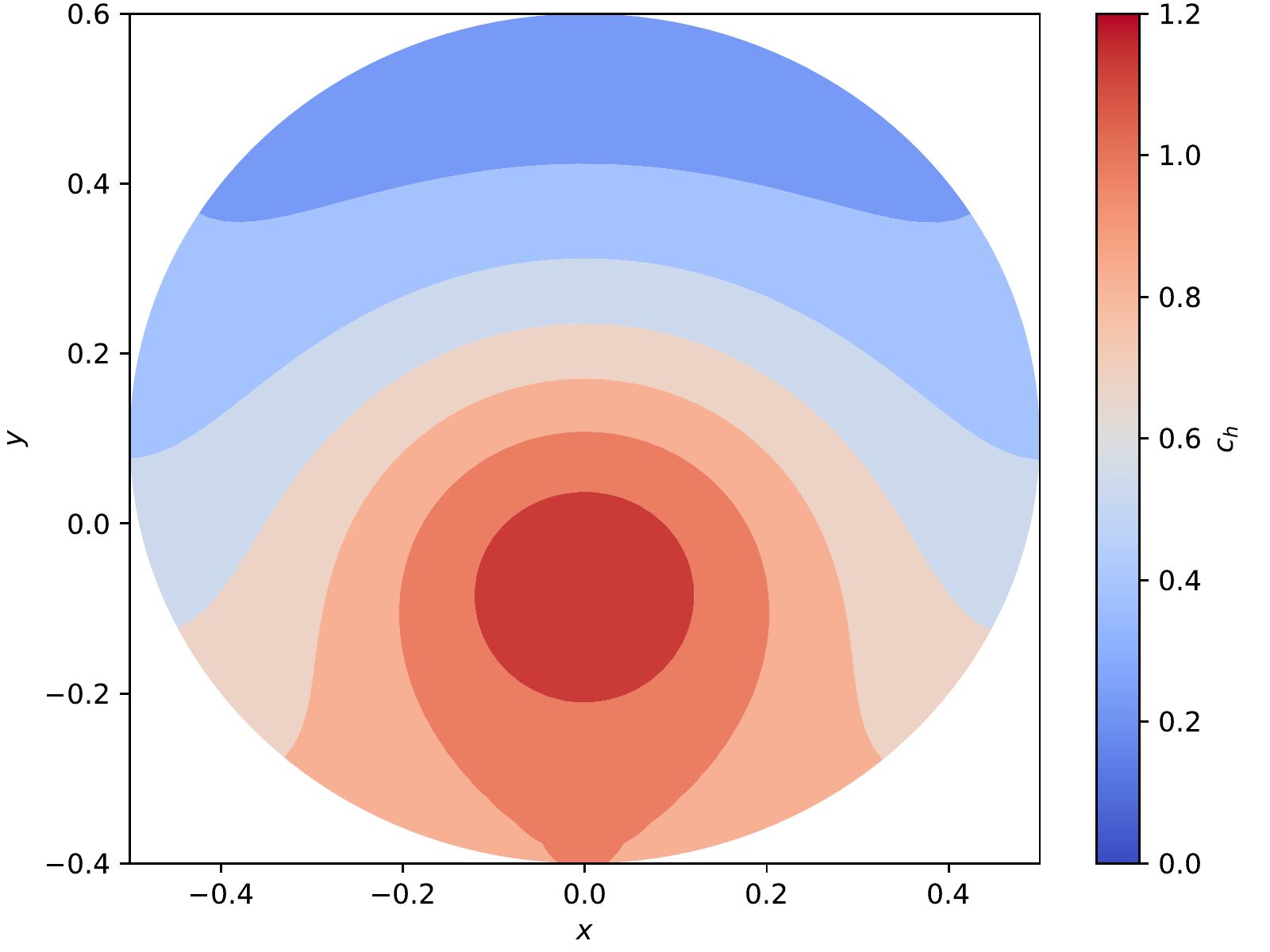}
    \end{subfigure}
    \begin{subfigure}[b]{0.23\textwidth}
        \centering
        \includegraphics[width=1.0\textwidth]{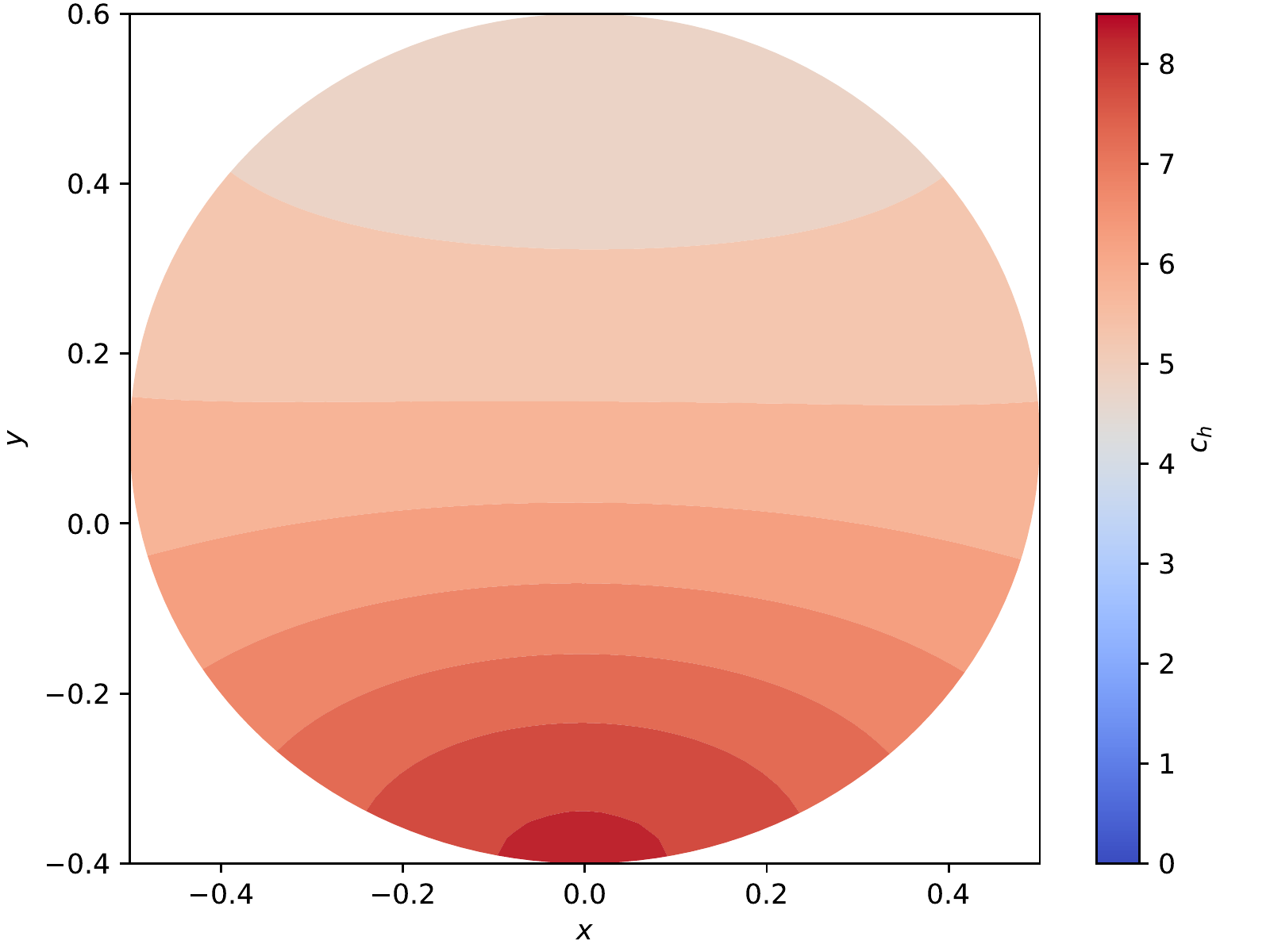}
    \end{subfigure}
    \begin{subfigure}[b]{0.23\textwidth}
        \centering
        \includegraphics[width=1.0\textwidth]{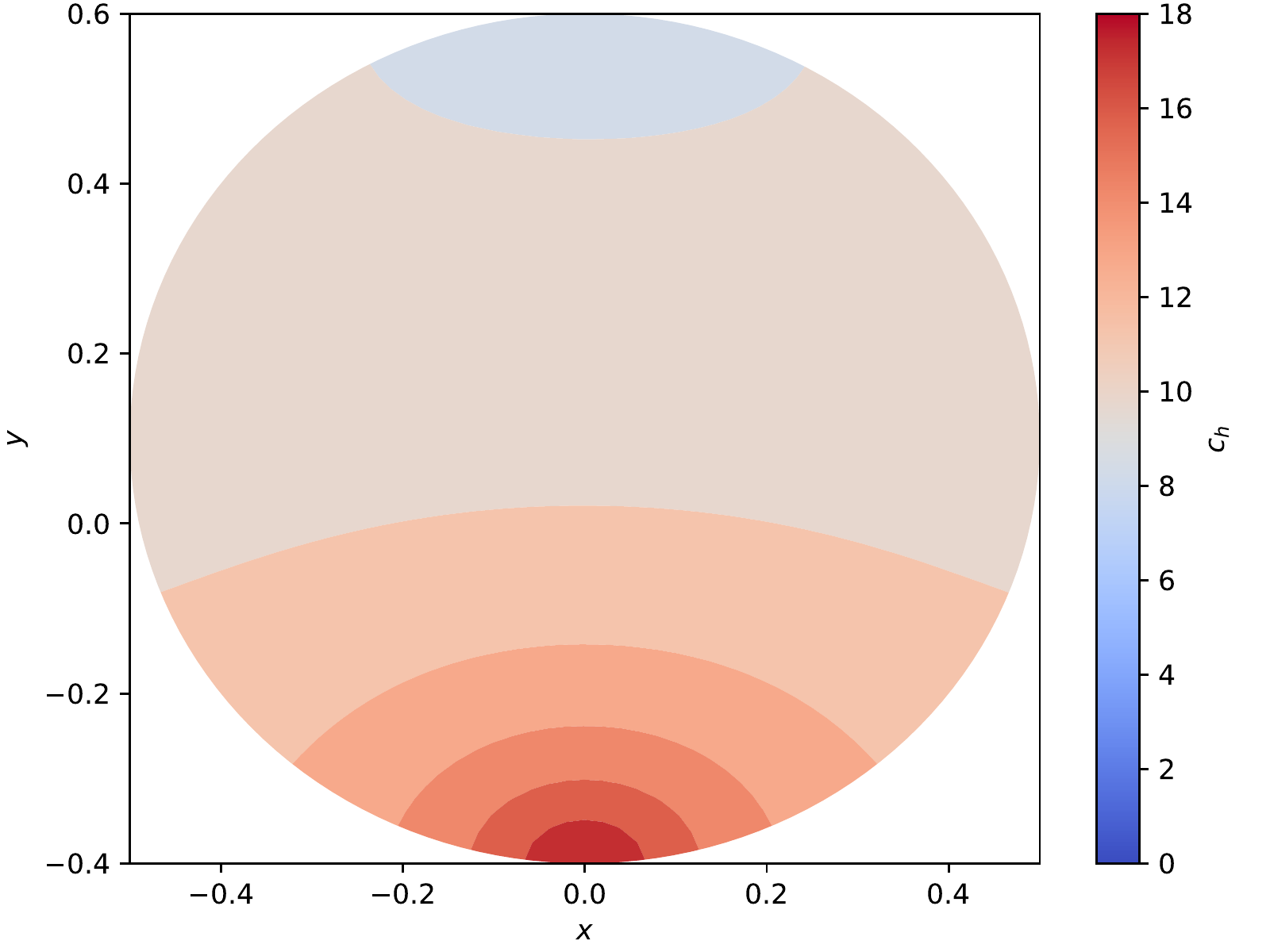}
    \end{subfigure}
                \caption{Snapshots at $t=0.02$, $t=0.04$, $0.5$ and $1.78$  of $\{c_h^m\}_m$ for $\eta_0=400$}\label{Snapshots_400_ch}
            \end{figure}
   \begin{figure}
    \begin{subfigure}[b]{0.23\textwidth}
        \centering
        \includegraphics[width=1.0\textwidth]{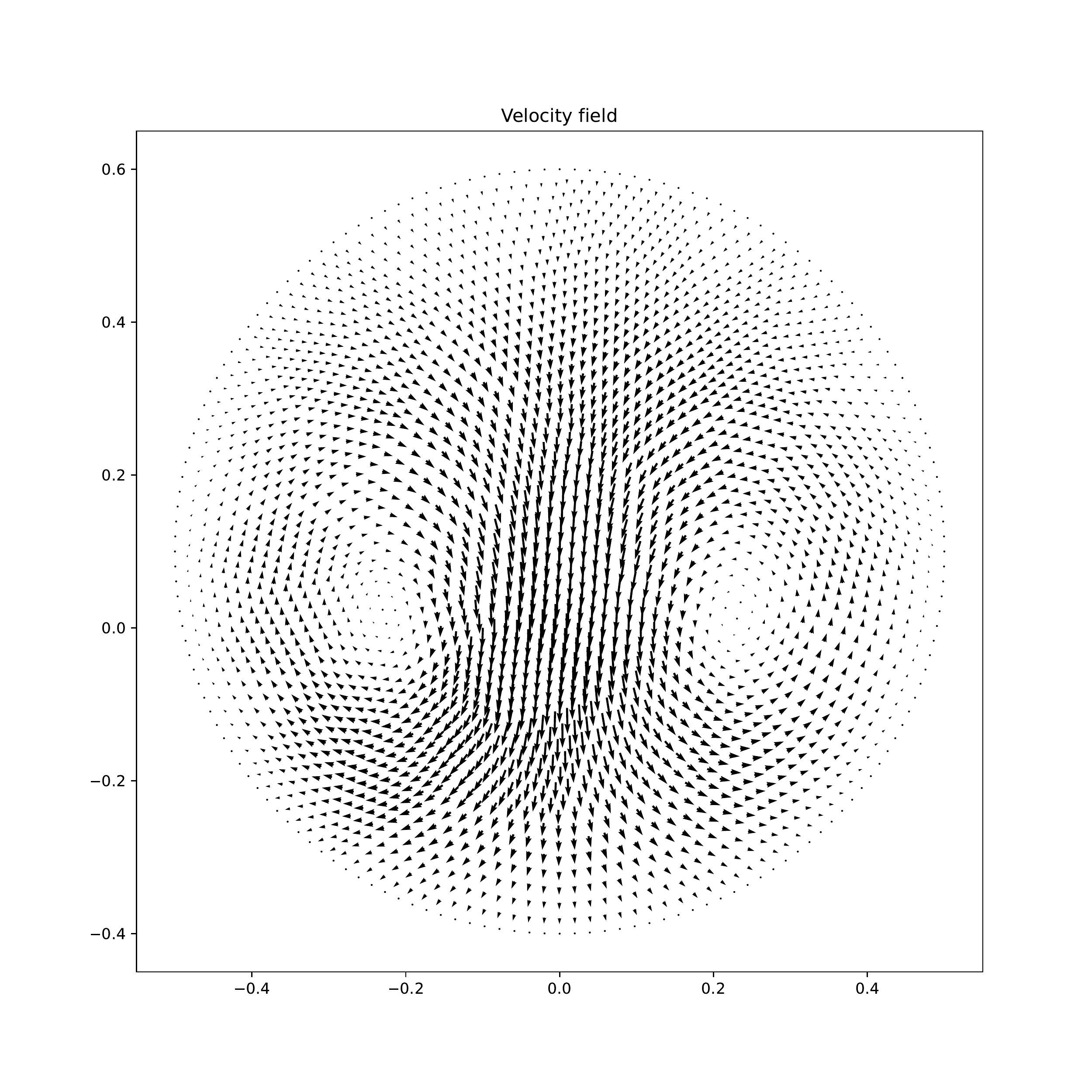}
    \end{subfigure}
    \begin{subfigure}[b]{0.23\textwidth}
        \centering
        \includegraphics[width=1.0\textwidth]{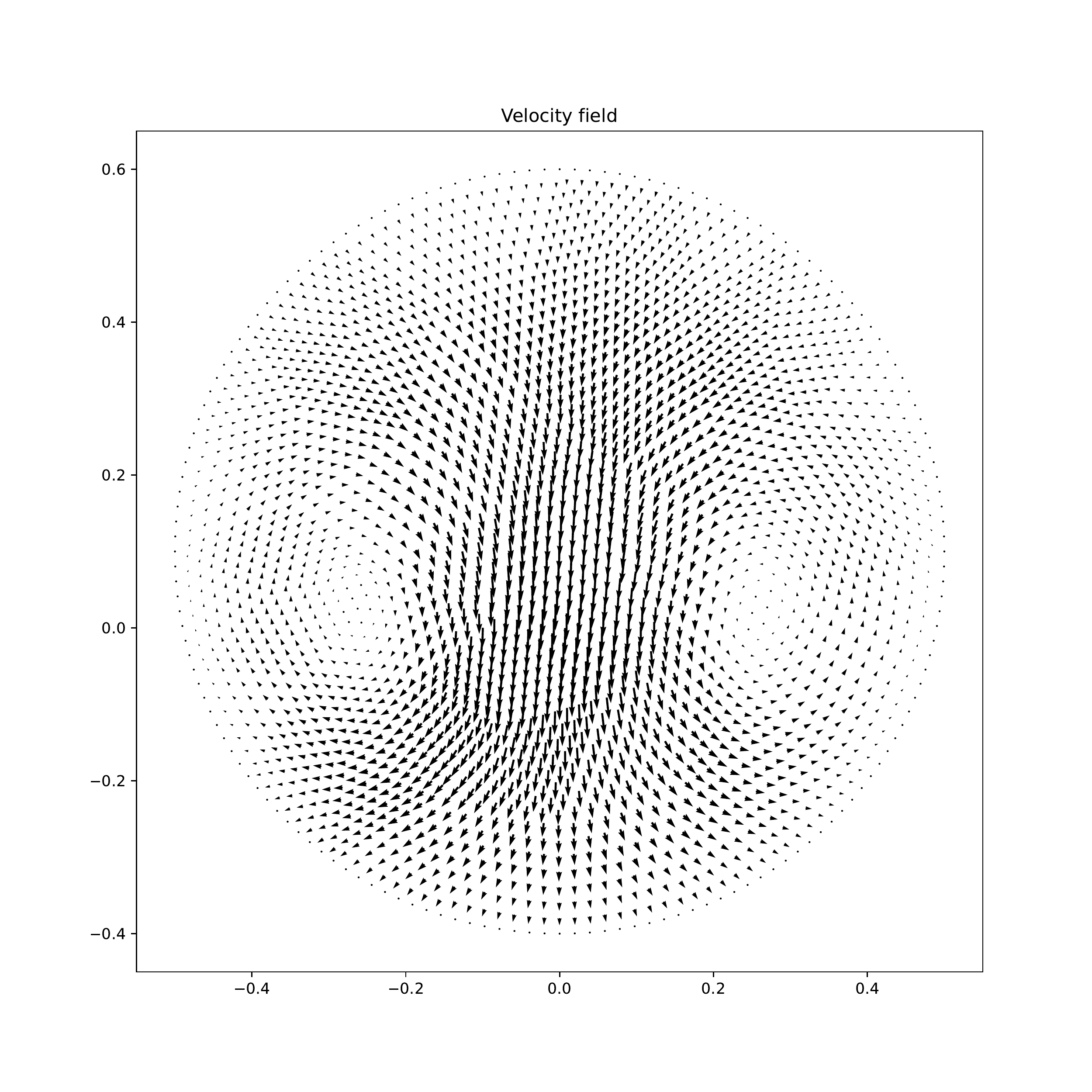}
    \end{subfigure}
    \begin{subfigure}[b]{0.23\textwidth}
        \centering
        \includegraphics[width=1.0\textwidth]{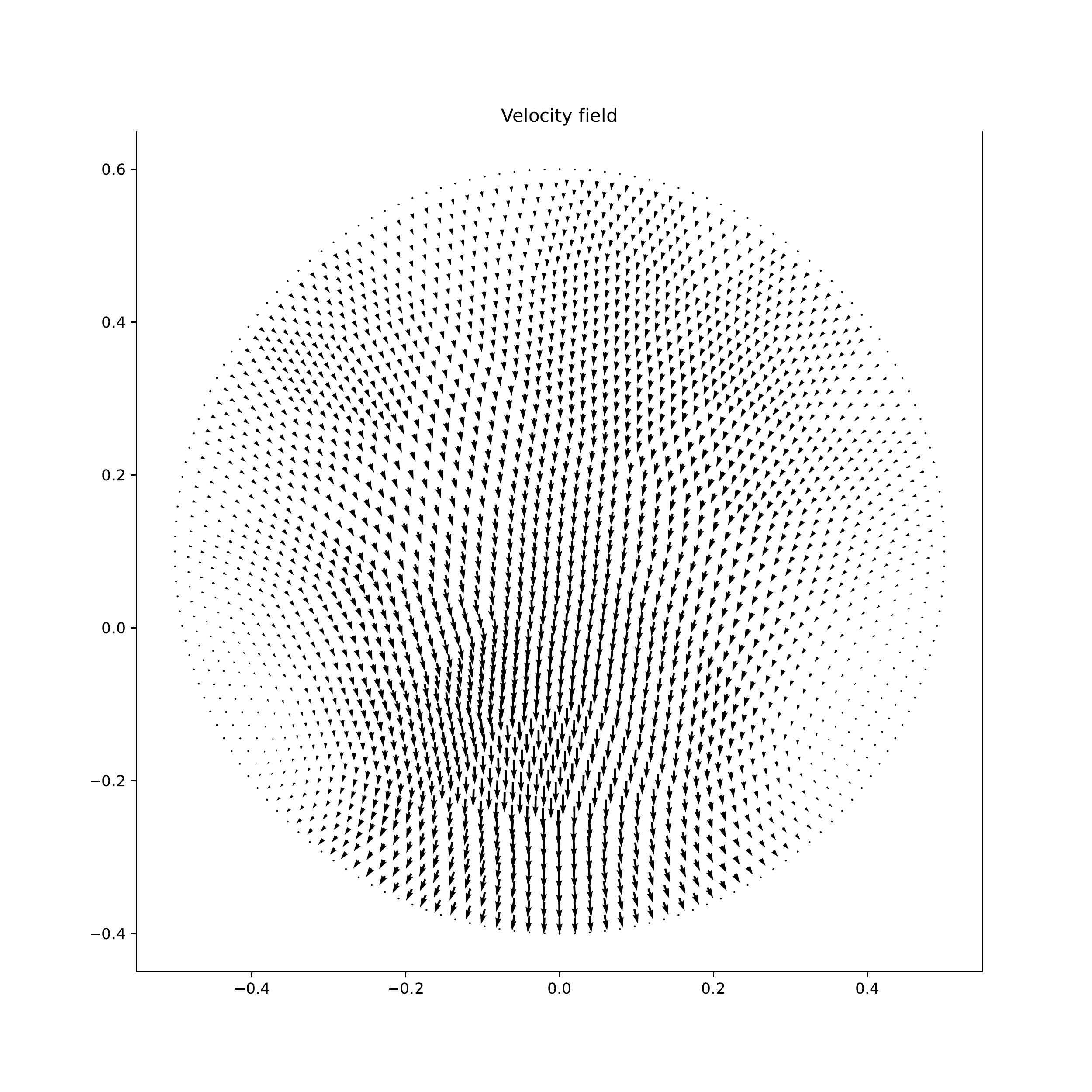}
    \end{subfigure}
    \begin{subfigure}[b]{0.23\textwidth}
        \centering
        \includegraphics[width=1.0\textwidth]{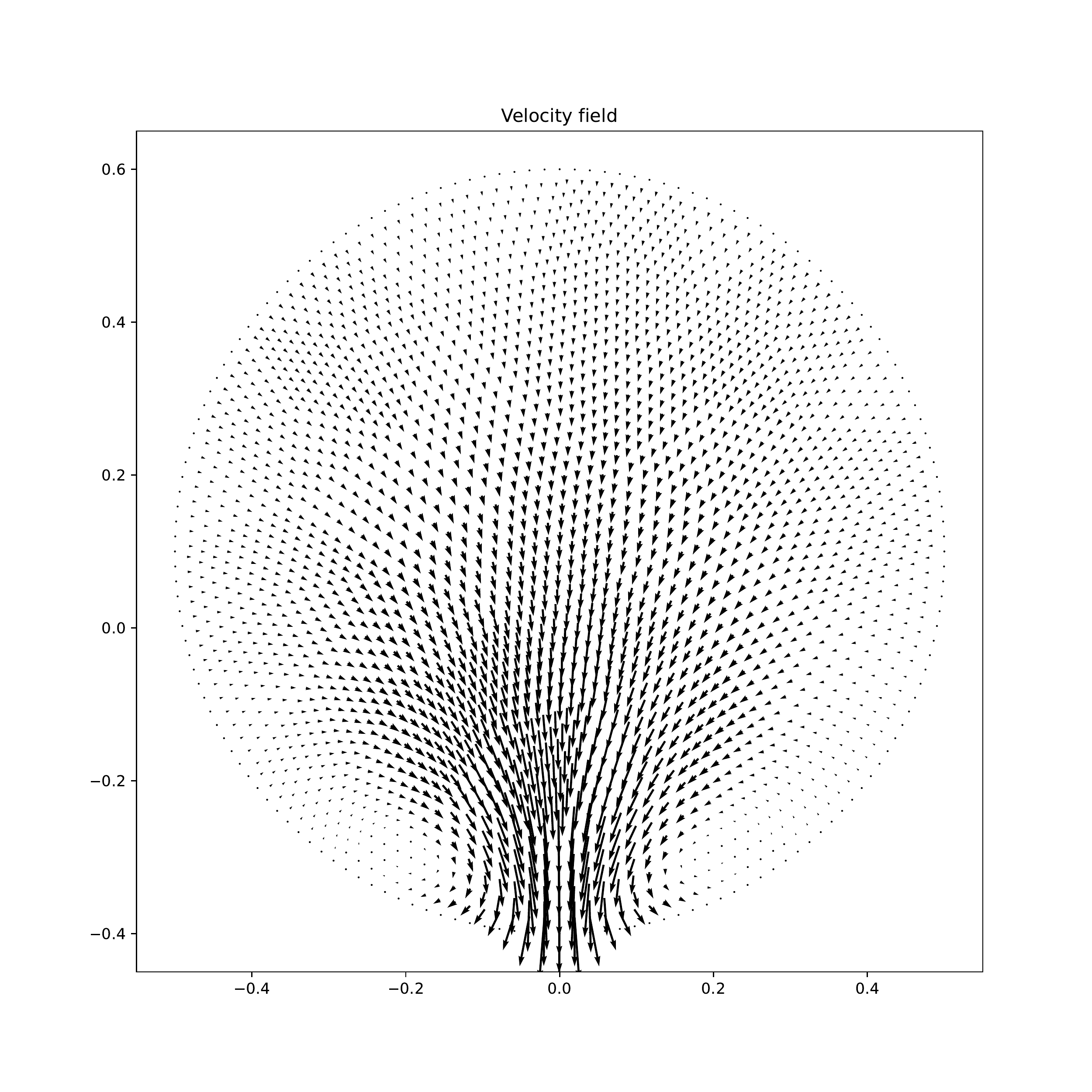}
    \end{subfigure}
    \caption{Snapshots at $t=0.02$, $t=0.04$, $0.5$ and $1.78$  of $\{\u_h^m\}_m$ for $\eta_0=400$}\label{Snapshots_400_uh}
\end{figure}
\begin{figure}
    \begin{subfigure}[b]{0.23\textwidth}
        \centering
        \includegraphics[width=1.0\textwidth]{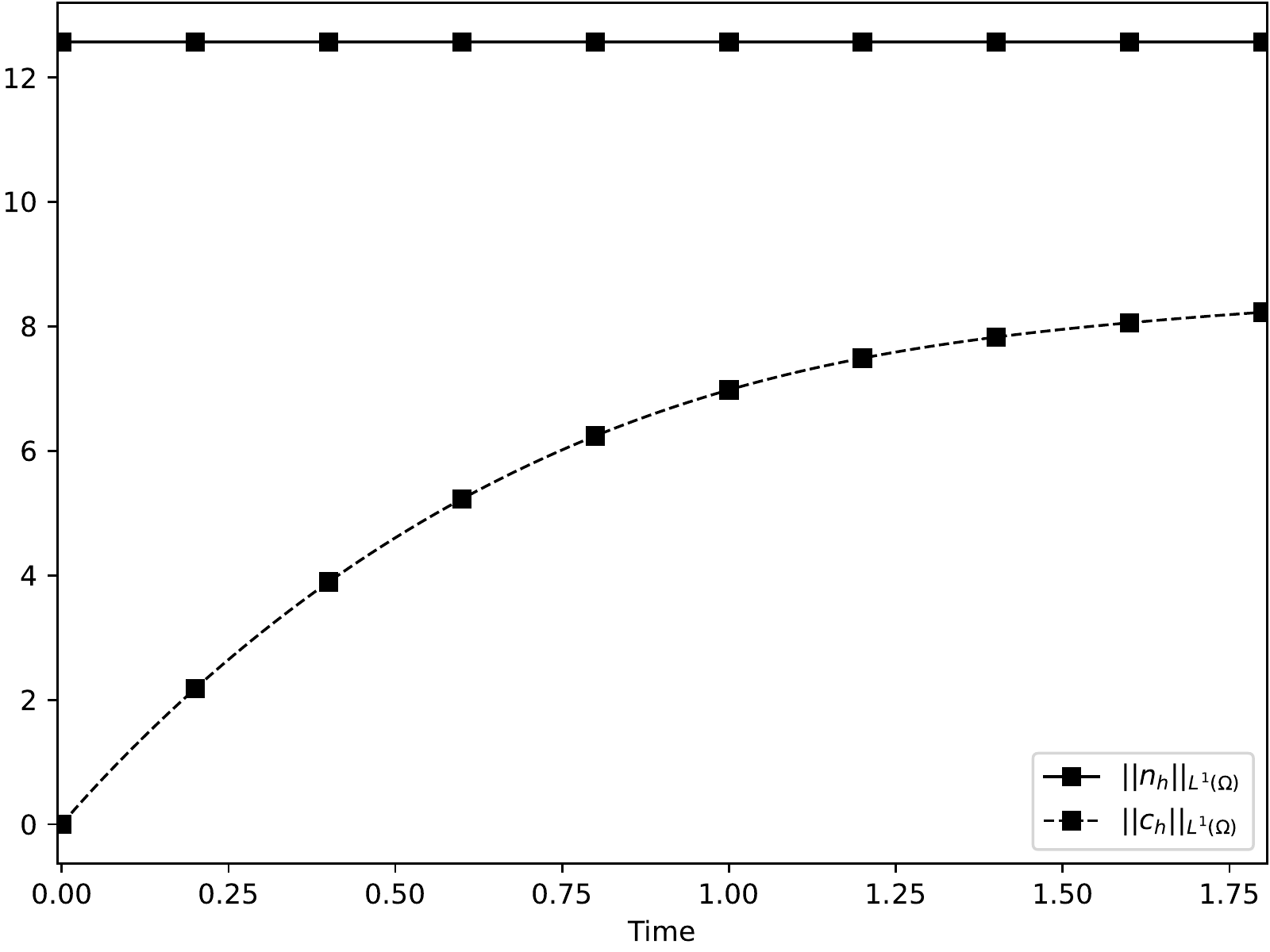}
    \end{subfigure}
    \begin{subfigure}[b]{0.23\textwidth}
        \centering
        \includegraphics[width=1.0\textwidth]{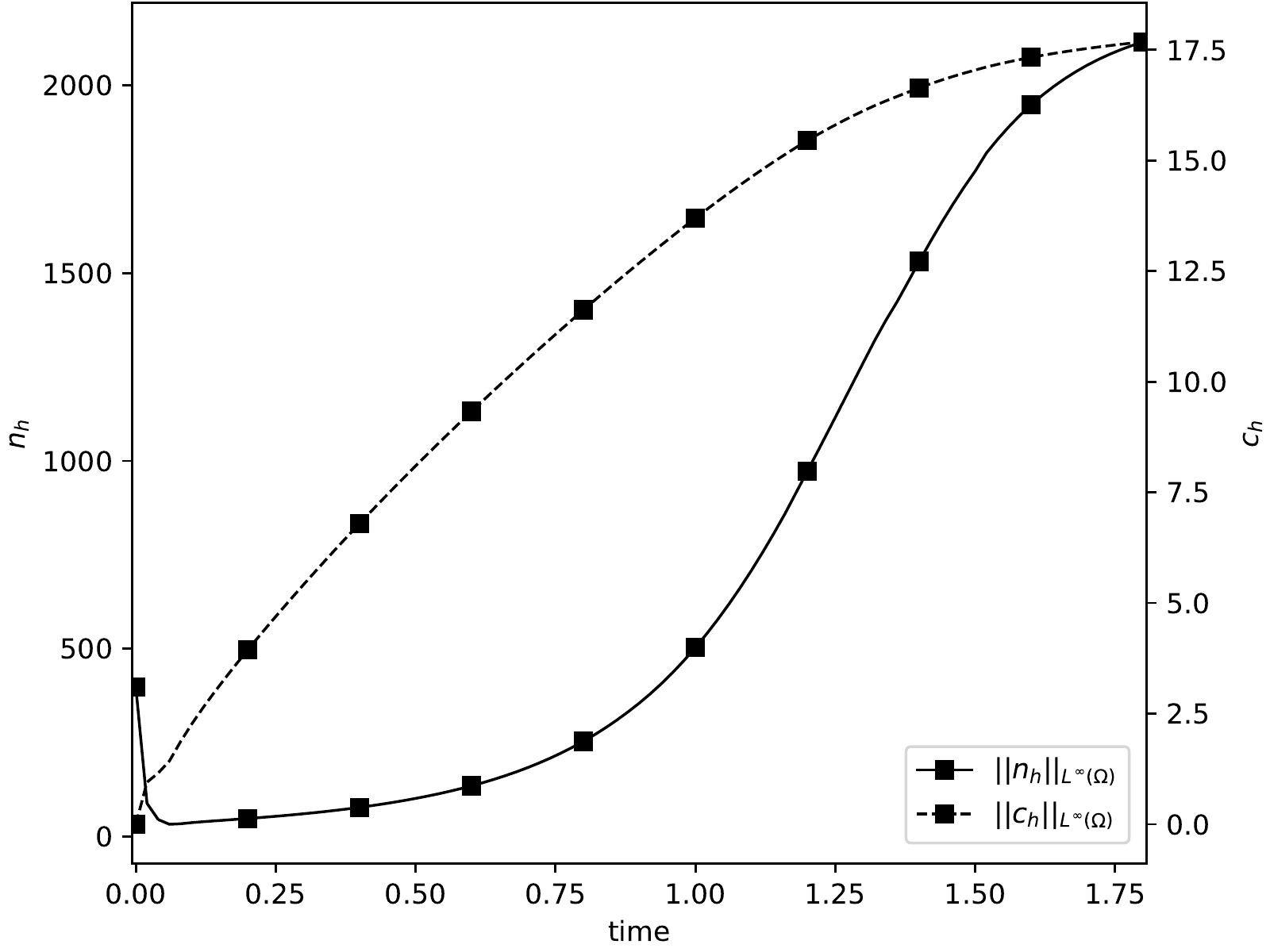}
    \end{subfigure}
    \begin{subfigure}[b]{0.23\textwidth}
        \centering
        \includegraphics[width=1.0\textwidth]{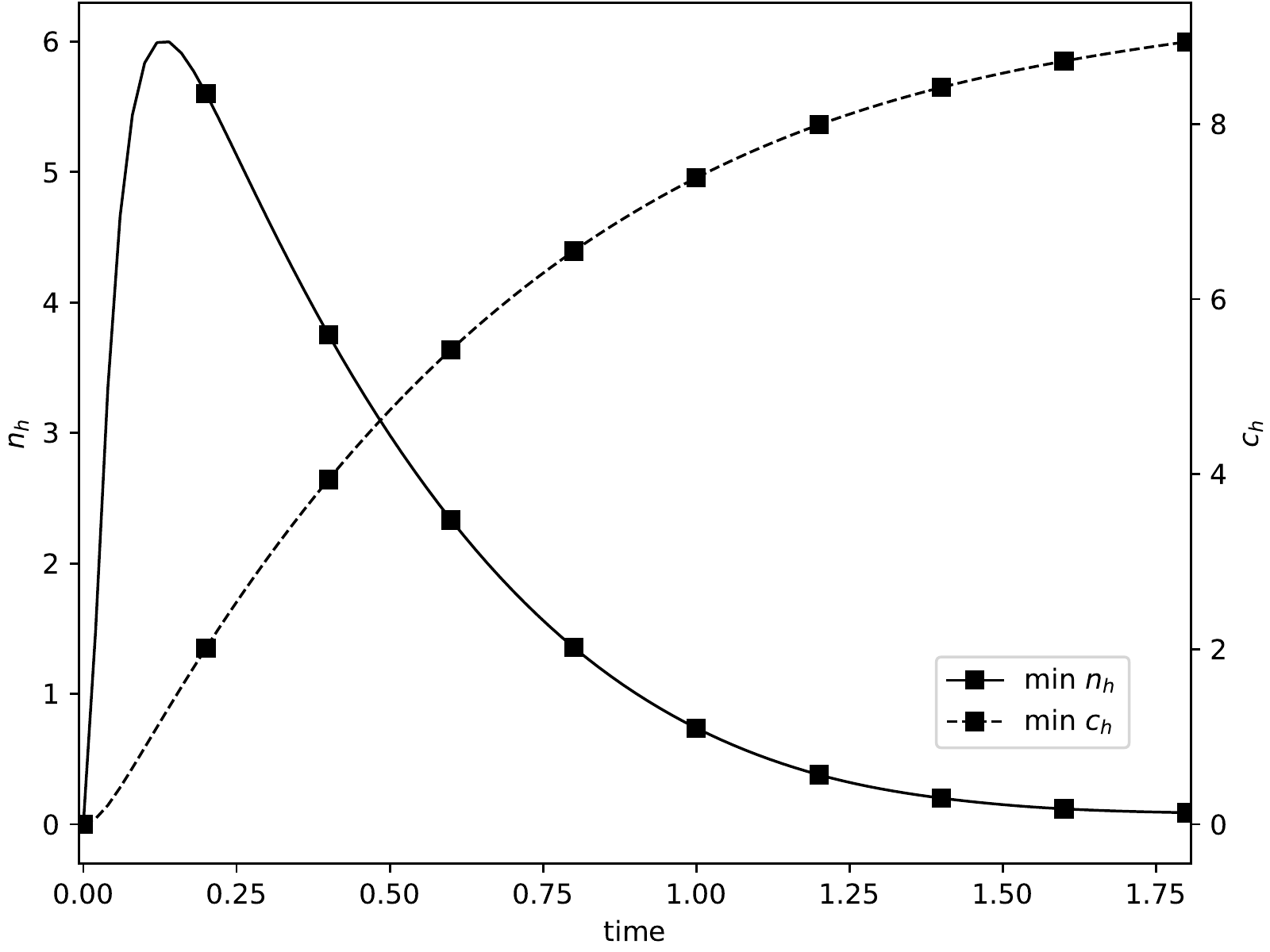}
    \end{subfigure}
            \caption{Evolution of total mass, maxima and minima $\{n_h^m\}$ and $\{c_h^m\}_m$ for $\eta=400$}\label{Graphs_400}.
\end{figure}

\subsection{Case: \texorpdfstring{$\eta_0=450$}{Lg} and \texorpdfstring{$\Phi_0=10$}{Lg}} We next continue selecting $\Phi_0=10$ and $\eta_0=450$, which leads to $\|n_0\|_{L^1(\Omega)}=14.1463$. For such a value we find a potential blowup formation for $\{n_h^m\}_m$ (Figure \ref{Snapshots_450_nh}), in favorable concordance with the theoretical results. This Dirac-type singularity is spontaneously formed at a time smaller than $t=0.7$ as displayed in Figure \ref{Graphs_450} (left) for maxima of $\{n_h^m\}_m$, where $\max_m \|n^m_h\|_{L^\infty}(\Omega)\approx 6\cdot 10^4$. As a result of the finite-time singularity  development, lower density areas for $\{n_h^m\}_m$ (Figure \ref{Snapshots_450_nh}) are dredged as chemotaxis dominates diffusion until being carried close to $0$ as seen in Figure \ref{Graphs_450} (middle); on the contrary, Figure \ref{Graphs_450} (middle) shows minina for $\{c_h^m\}_m$, which get smaller values than  $\eta_0=400$. Snapshots of $\{n_h^m\}_m$, $\{c_h^m\}_m$ and $\{\u_n^m\}_m$ at $t=0.04$, $0.1$, $0.4$ and $0.8$ are illustrated in Figures \ref{Snapshots_450_nh}, \ref{Snapshots_450_ch} and \ref{Snapshots_450_uh}, which are evidently different from those of  $\eta_0=400$.       
\begin{figure}
    \begin{subfigure}[b]{0.23\textwidth}
        \centering
        \includegraphics[width=1.0\textwidth]{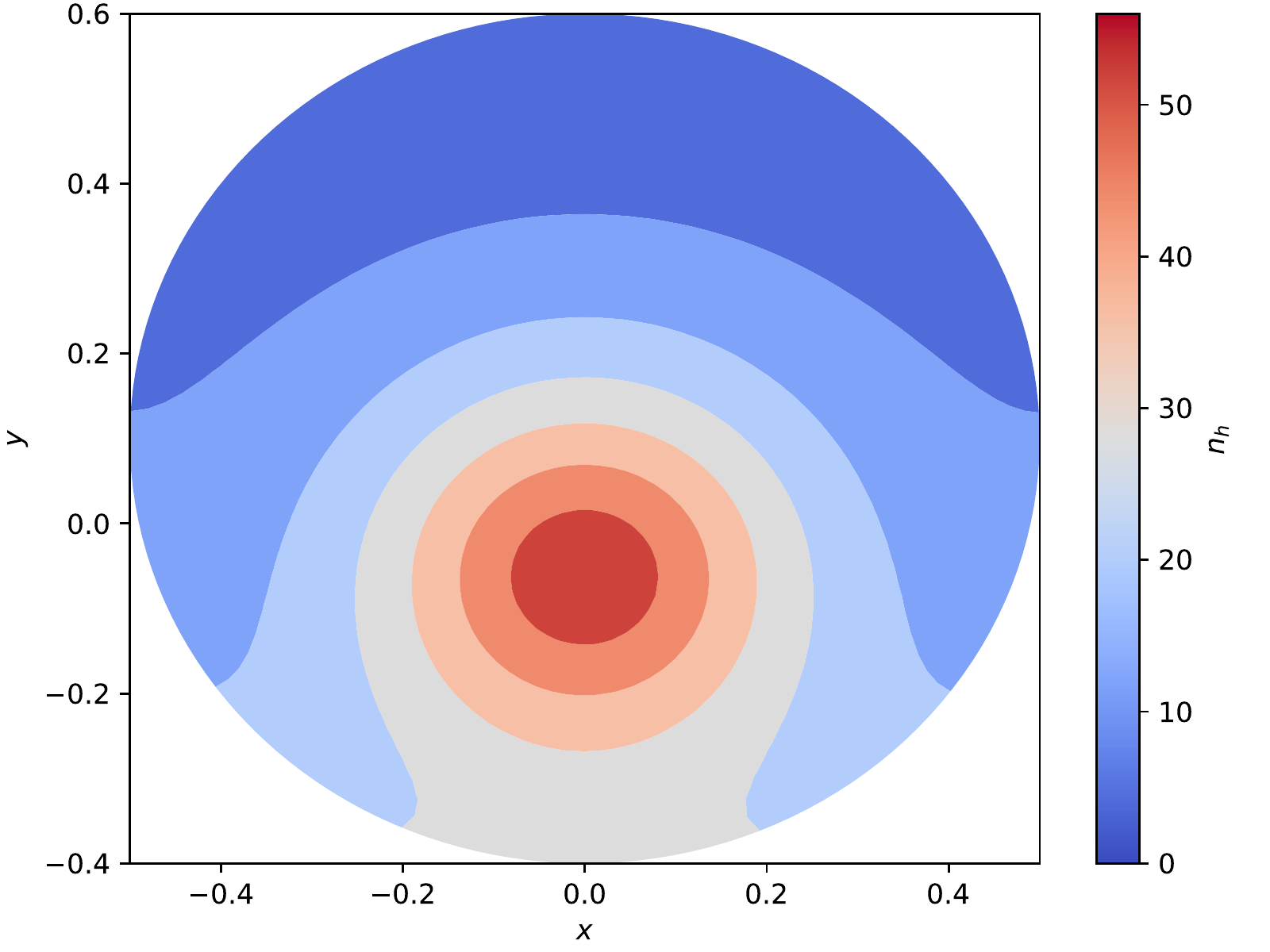}
    \end{subfigure}
    \begin{subfigure}[b]{0.23\textwidth}
        \centering
        \includegraphics[width=1.0\textwidth]{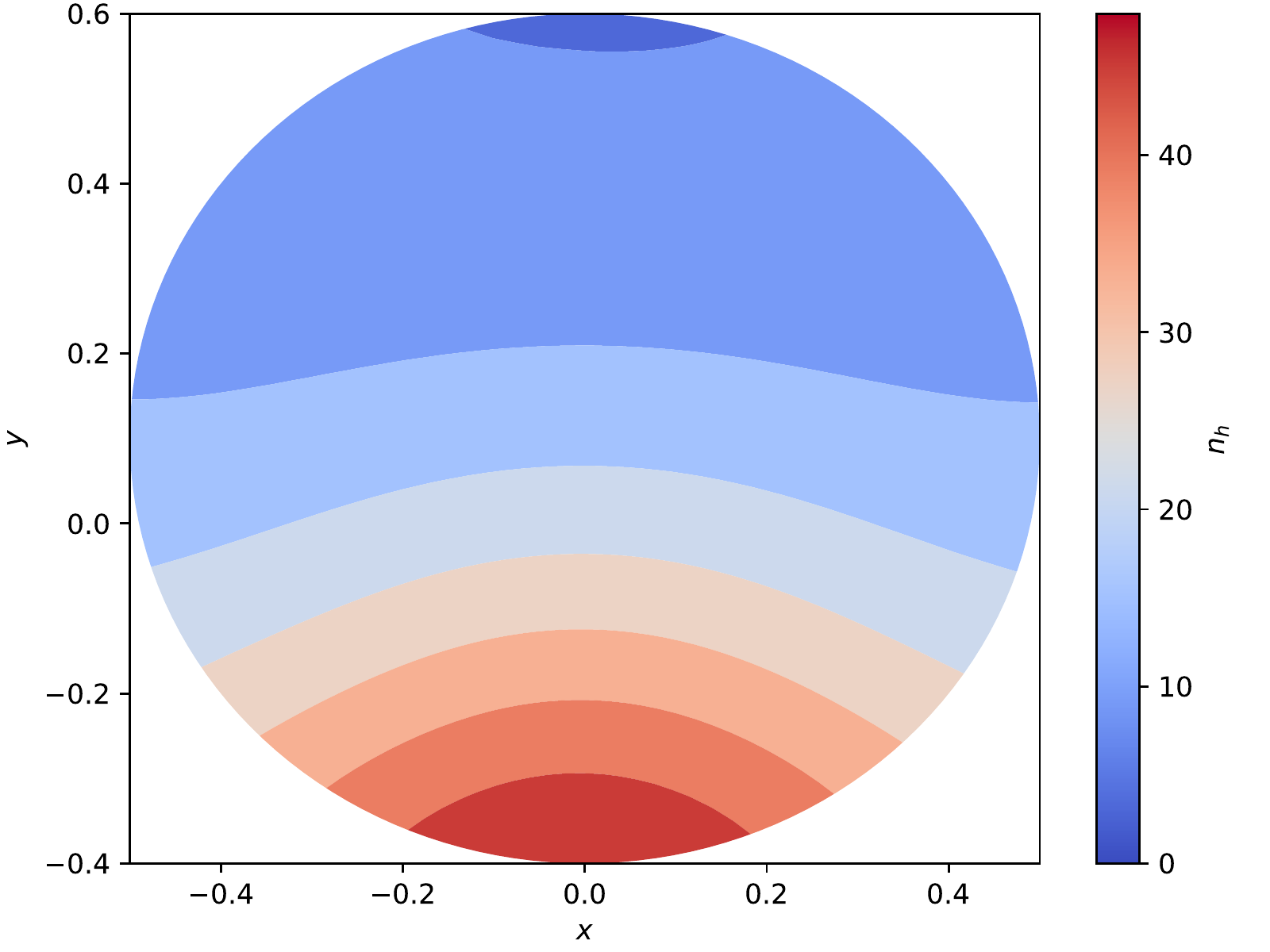}
    \end{subfigure}
    \begin{subfigure}[b]{0.23\textwidth}
        \centering
        \includegraphics[width=1.0\textwidth]{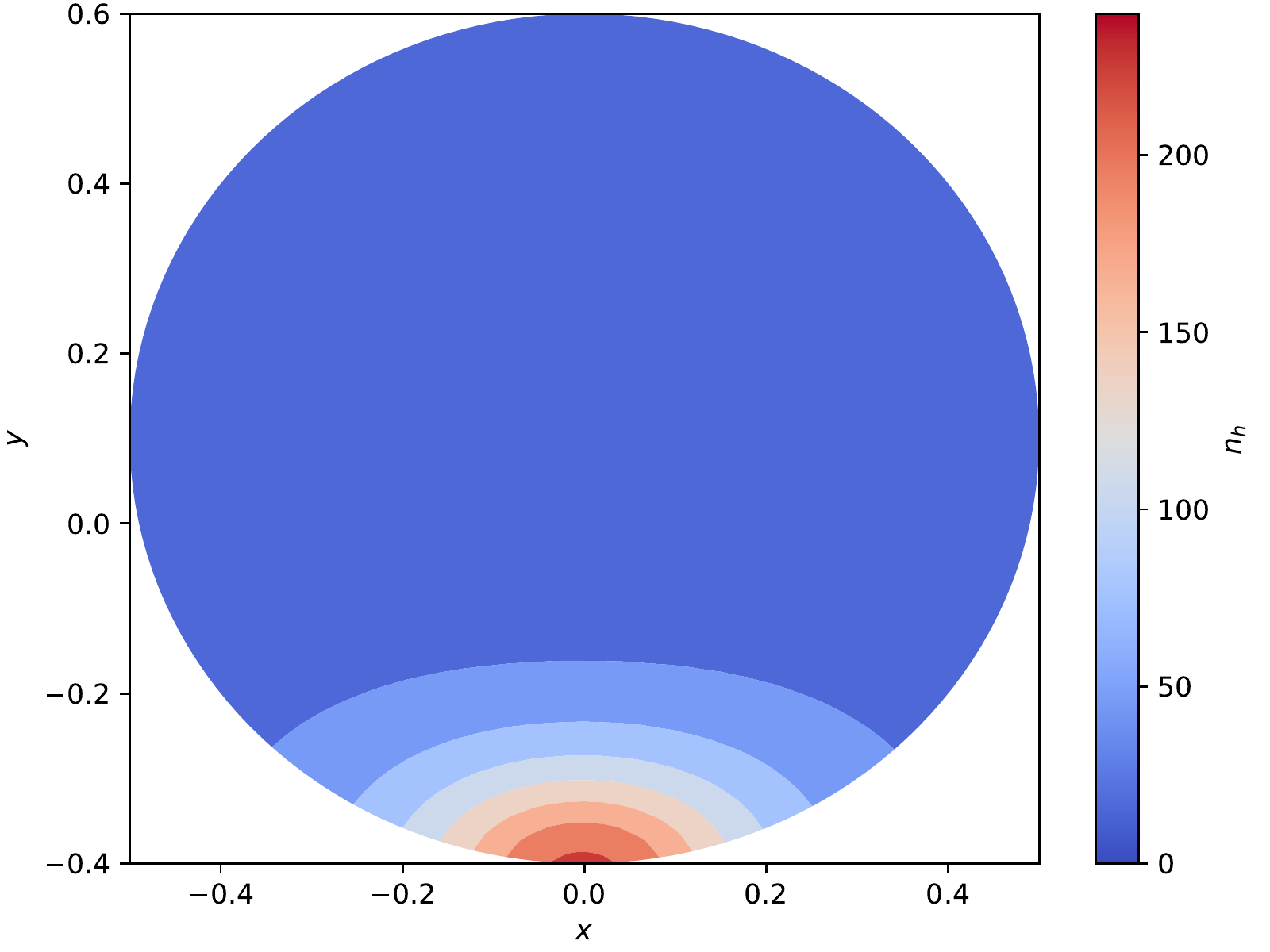}
    \end{subfigure}
    \begin{subfigure}[b]{0.23\textwidth}
        \centering
        \includegraphics[width=1.0\textwidth]{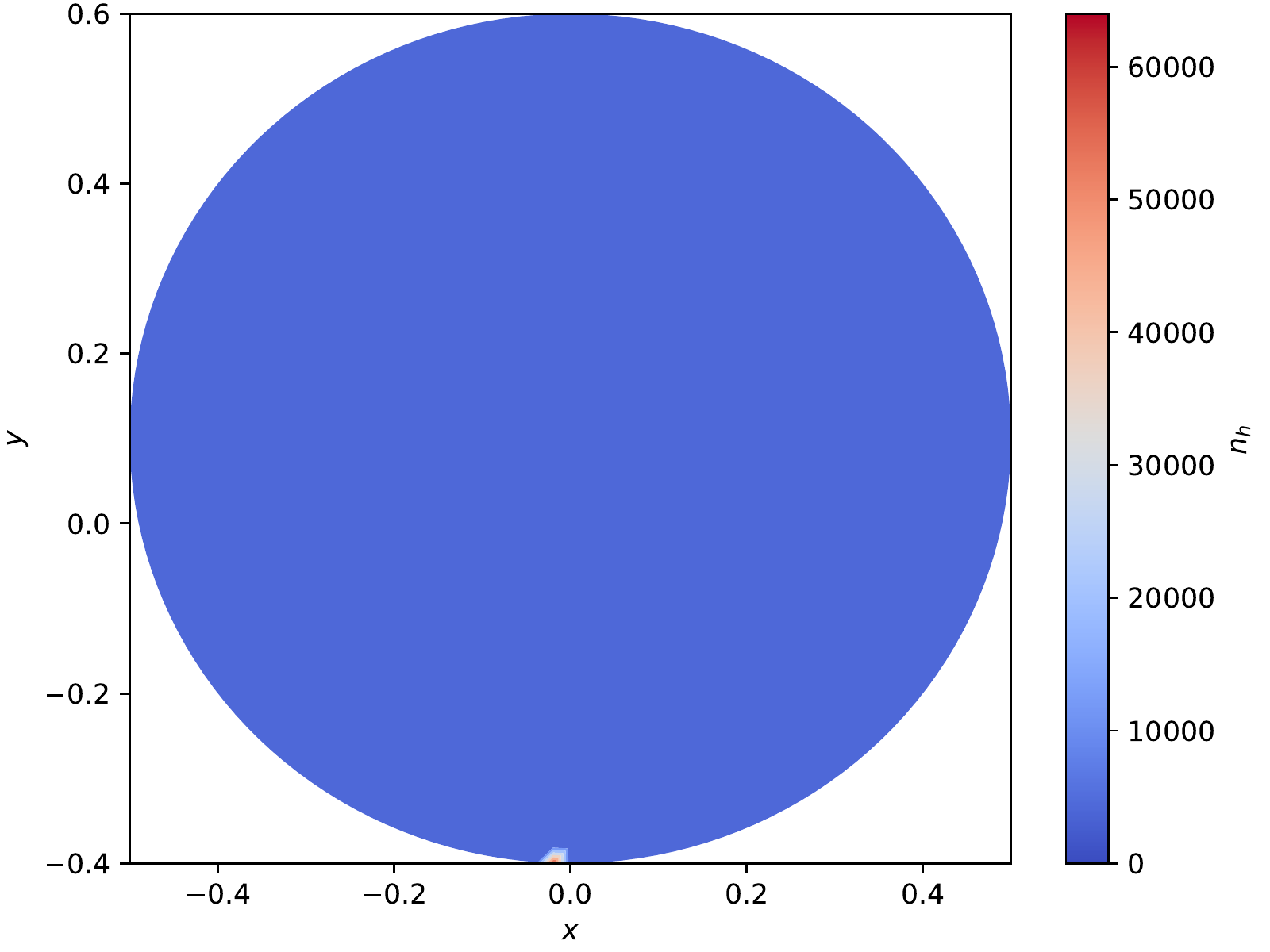}
    \end{subfigure}
            \caption{Snapshots at $t=0.04$, $0.1$, $0.4$ and $0.8$ of $\{n_h^m\}_m$ for $\eta_0=450$ on $\mathcal{T}_h^*$.}\label{Snapshots_450_nh}
\end{figure}
\begin{figure}
    \begin{subfigure}[b]{0.23\textwidth}
        \centering
        \includegraphics[width=1.0\textwidth]{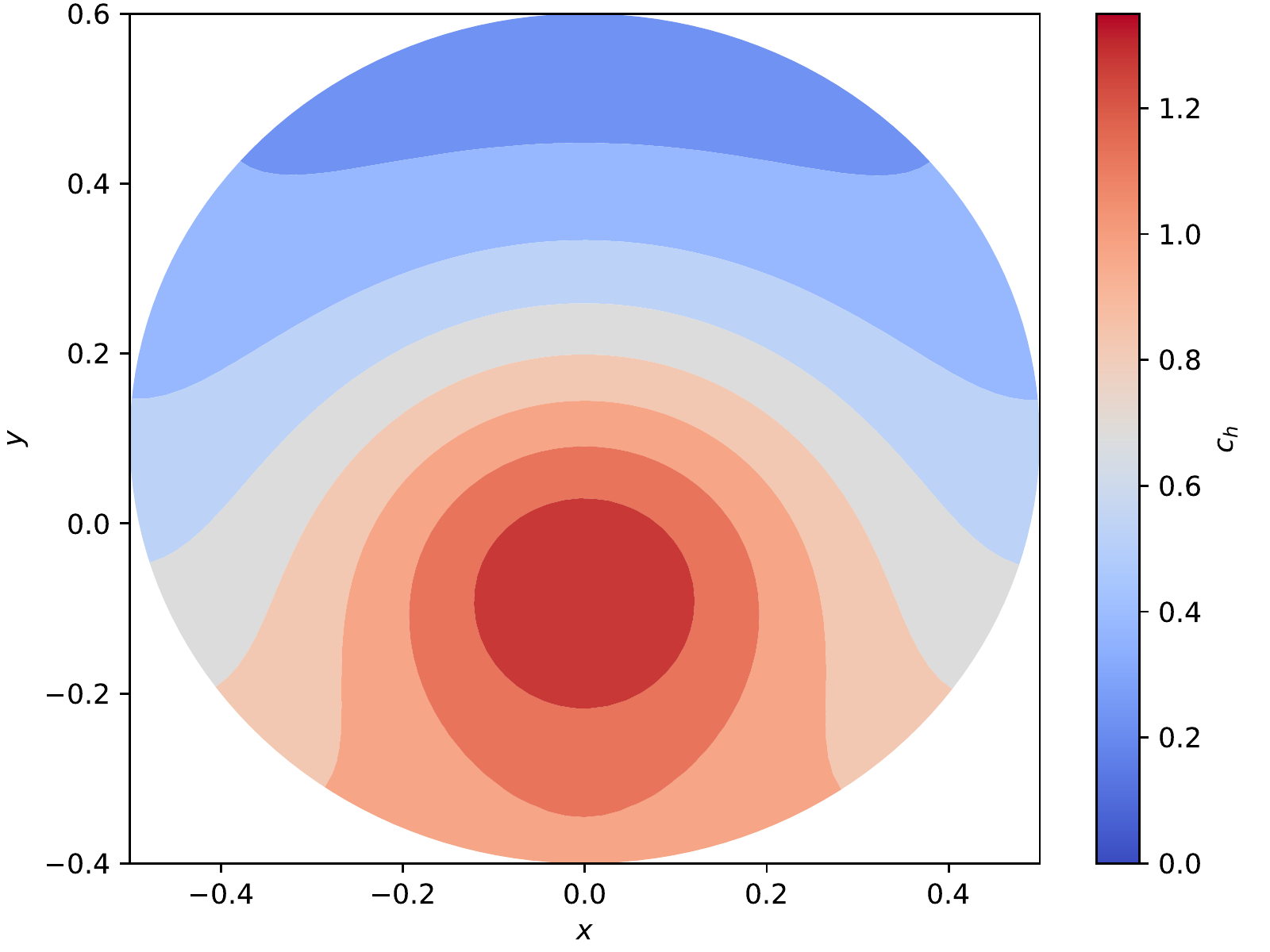}
    \end{subfigure}
    \begin{subfigure}[b]{0.23\textwidth}
        \centering
        \includegraphics[width=1.0\textwidth]{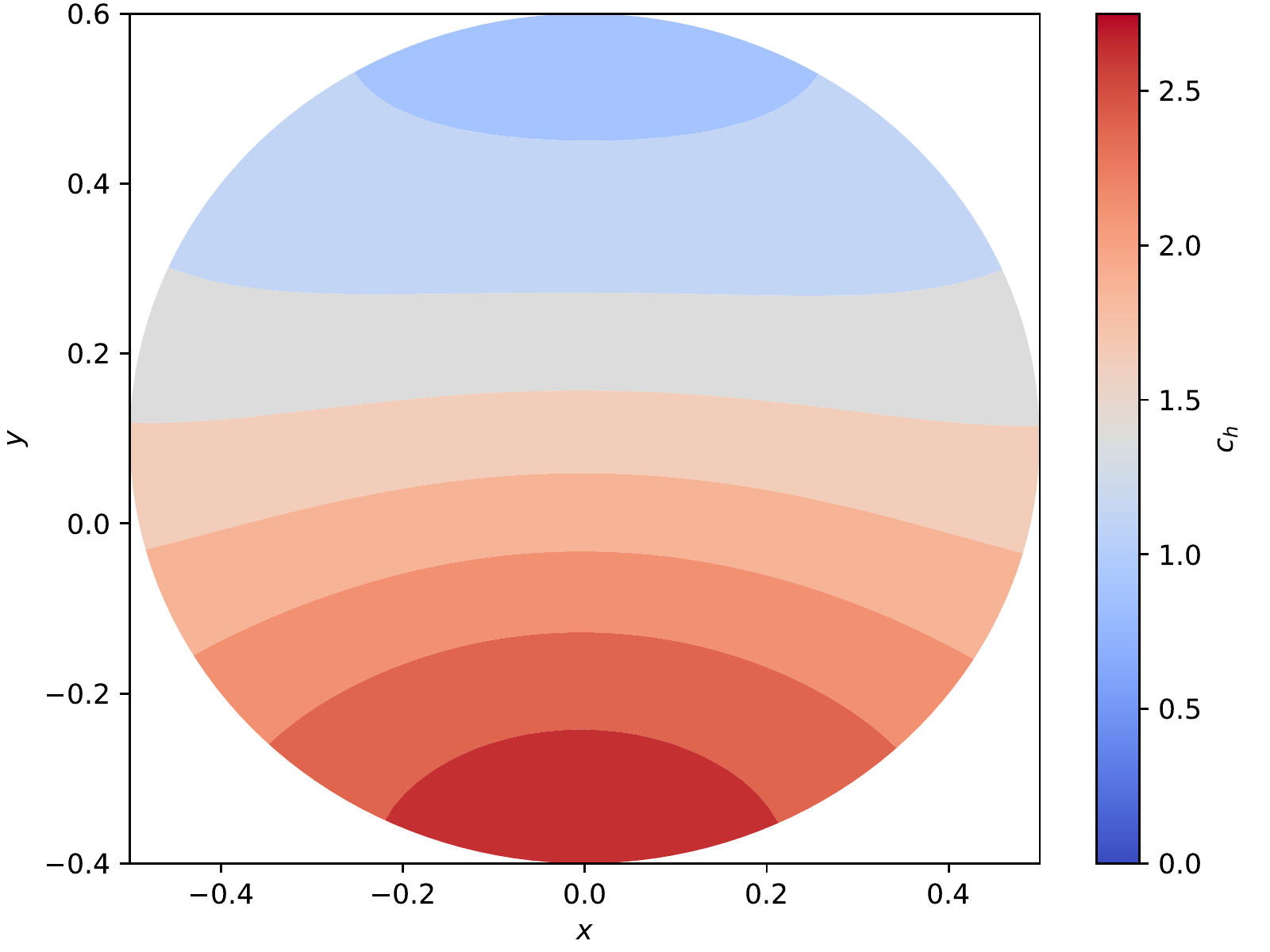}
    \end{subfigure}
    \begin{subfigure}[b]{0.23\textwidth}
        \centering
        \includegraphics[width=1.0\textwidth]{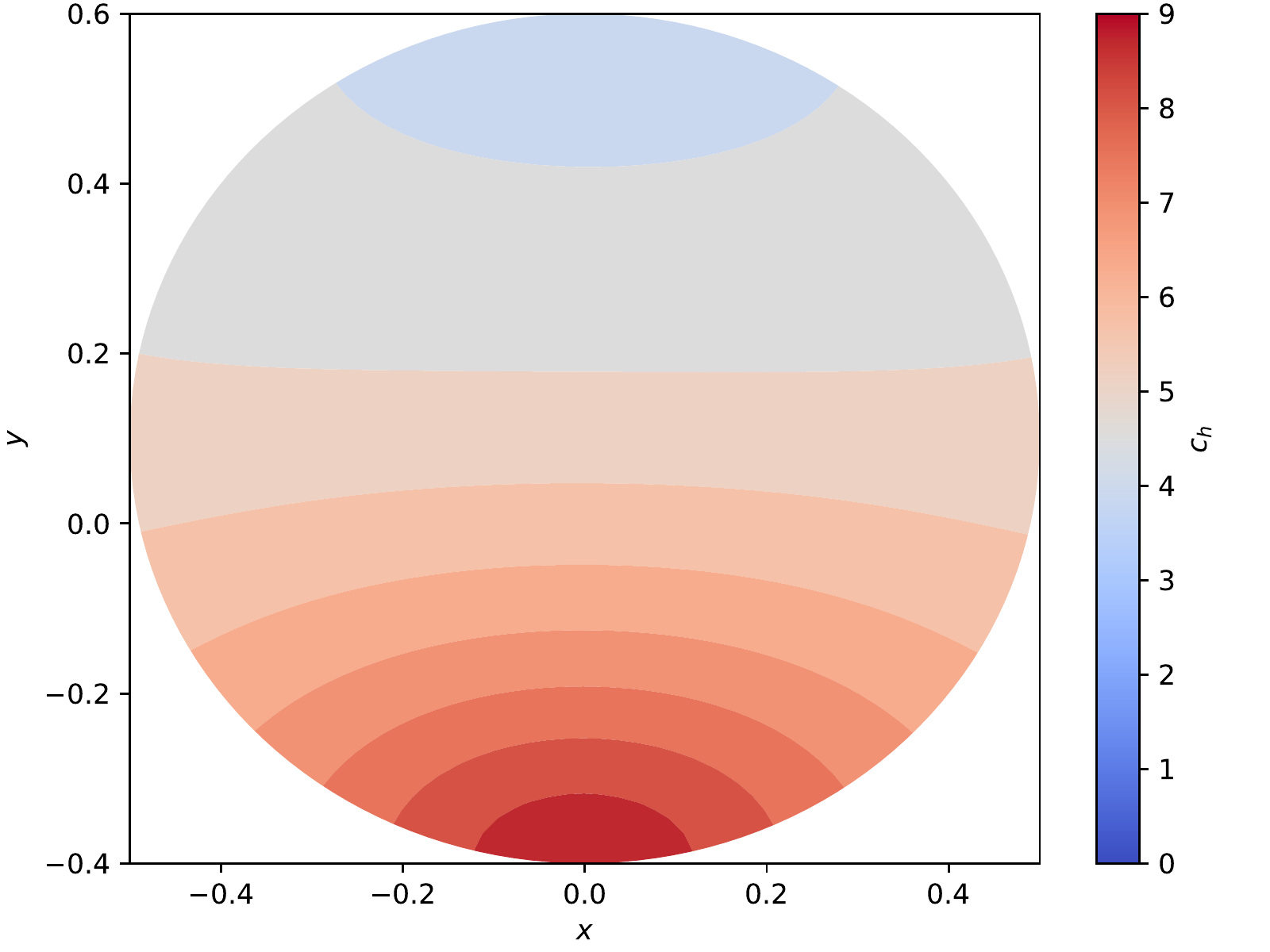}
    \end{subfigure}
    \begin{subfigure}[b]{0.23\textwidth}
        \centering
        \includegraphics[width=1.0\textwidth]{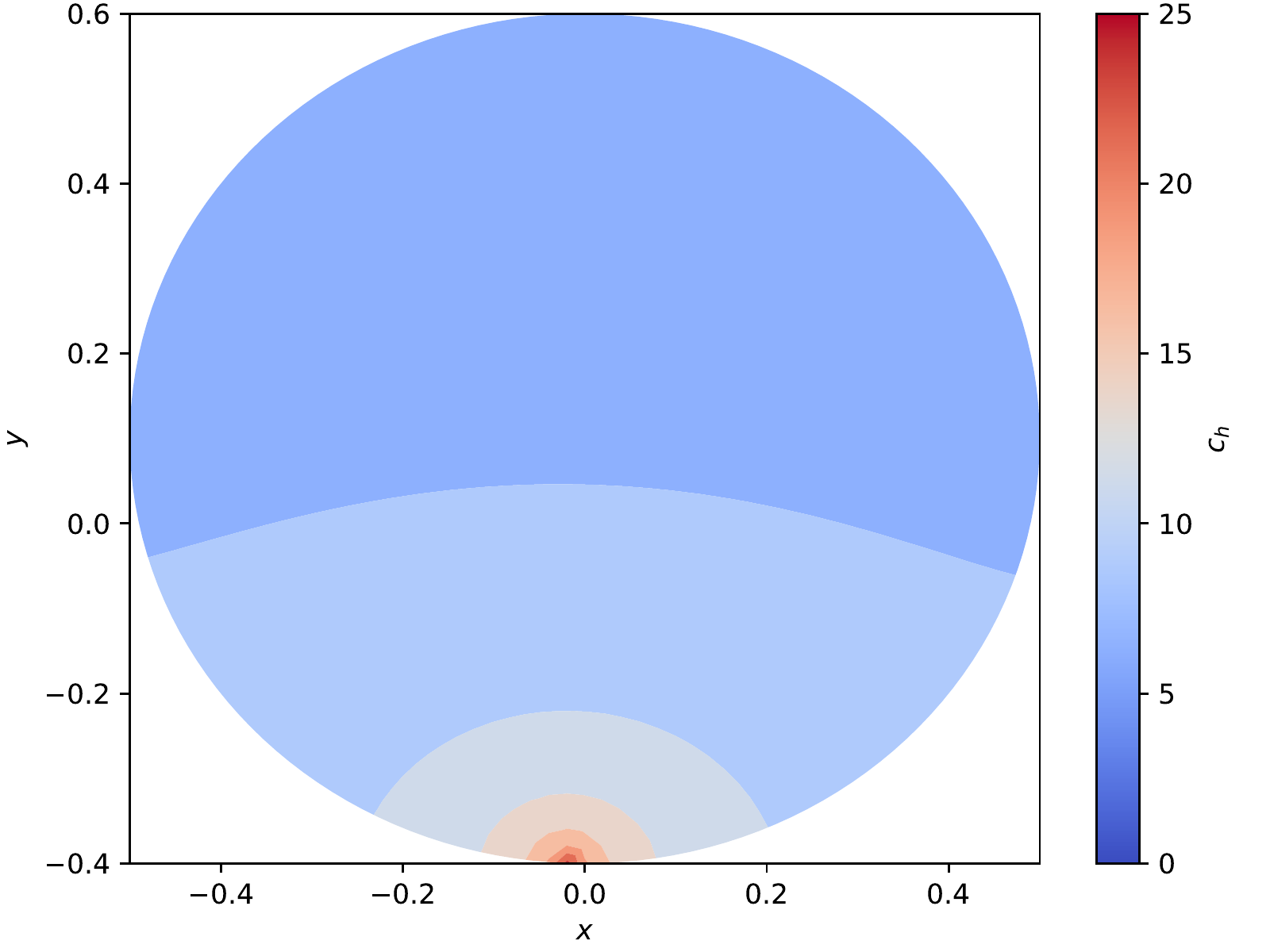}
    \end{subfigure}
\caption{Snapshots at $t=0.04$, $0.1$, $0.4$ and $0.8$ of $\{c_h^m\}_m$ for $\eta_0=450$ on $\mathcal{T}_h^*$.}\label{Snapshots_450_ch}
\end{figure}
\begin{figure}
    \begin{subfigure}[b]{0.23\textwidth}
        \centering
        \includegraphics[width=1.0\textwidth]{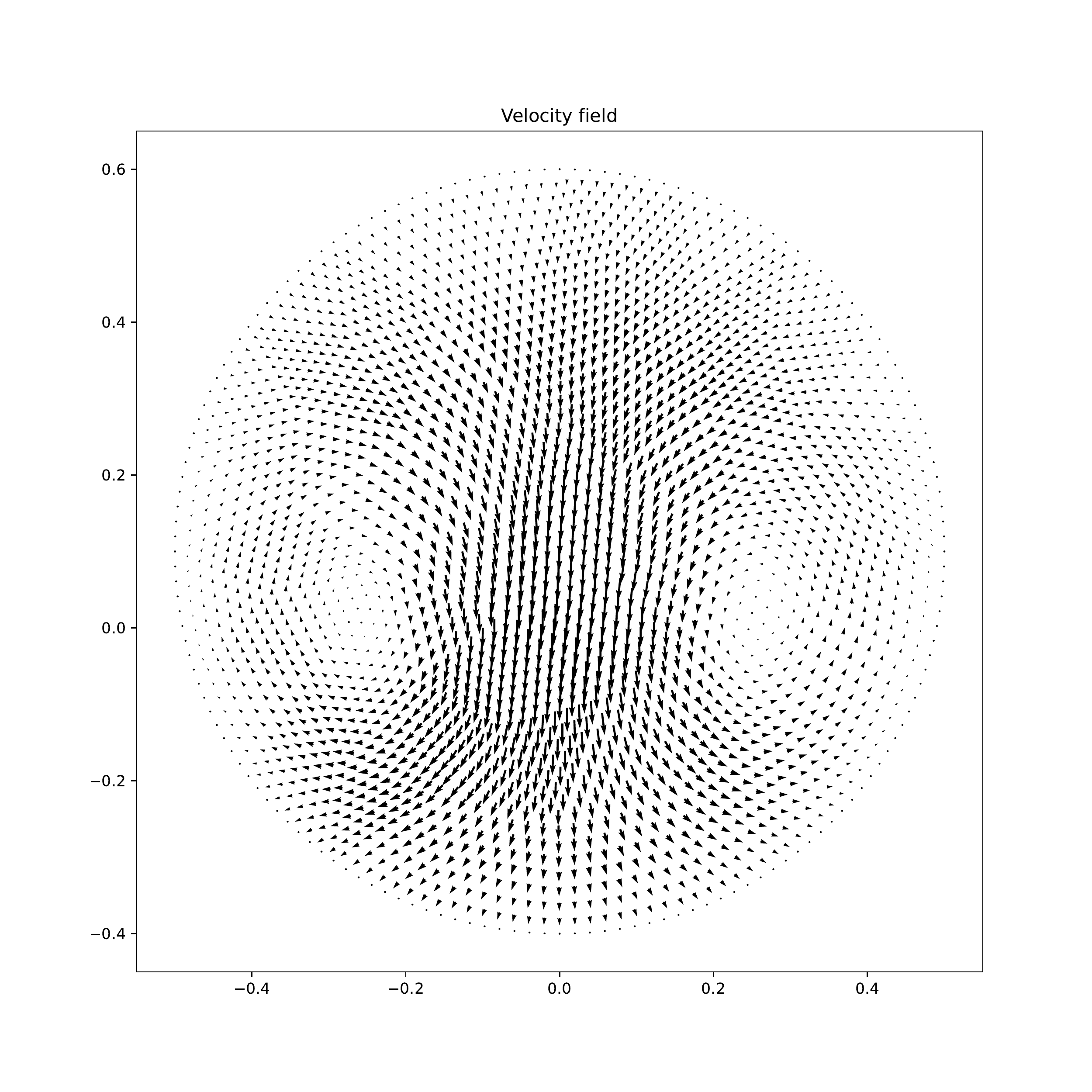}
    \end{subfigure}
    \begin{subfigure}[b]{0.23\textwidth}
        \centering
        \includegraphics[width=1.0\textwidth]{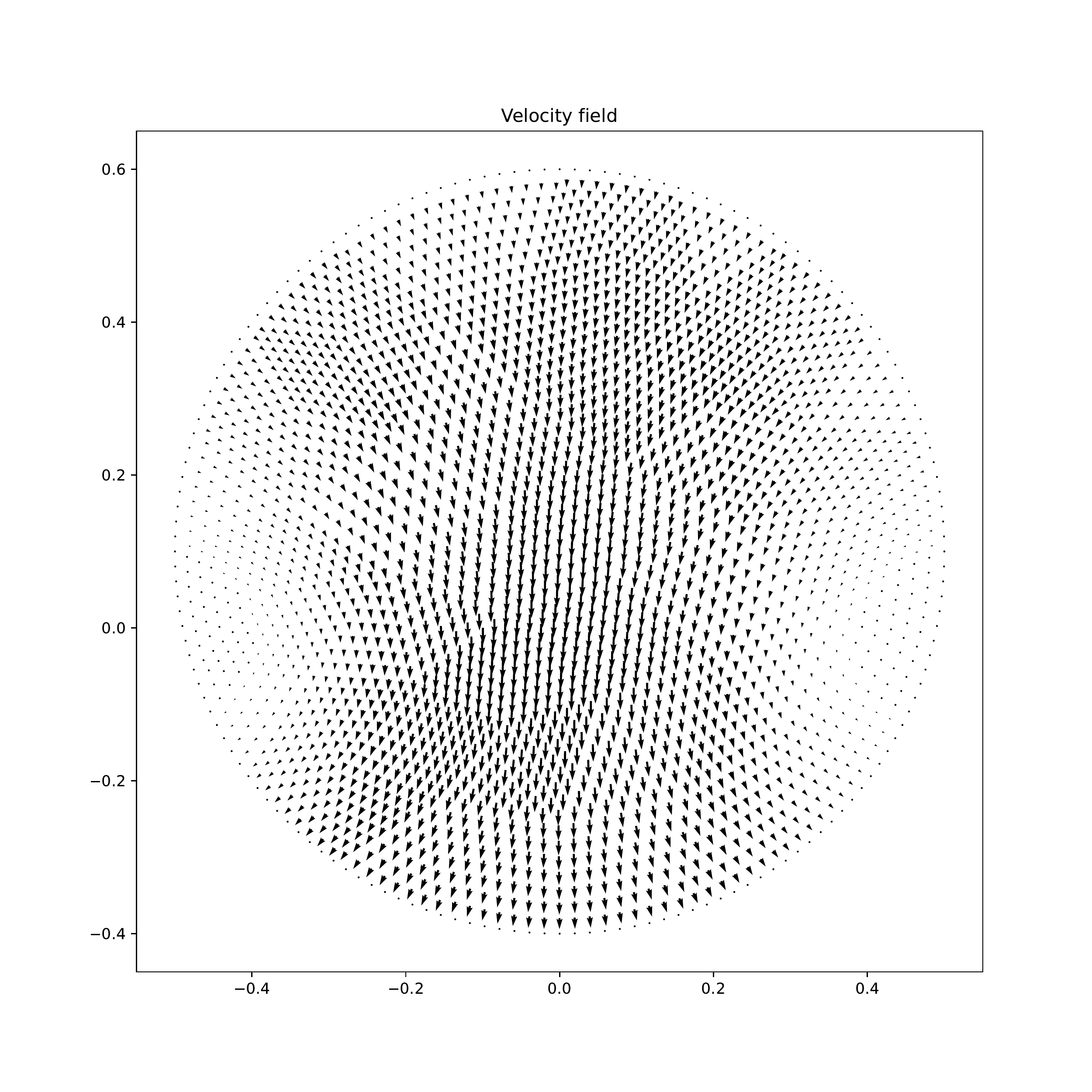}
    \end{subfigure}
    \begin{subfigure}[b]{0.23\textwidth}
        \centering
        \includegraphics[width=1.0\textwidth]{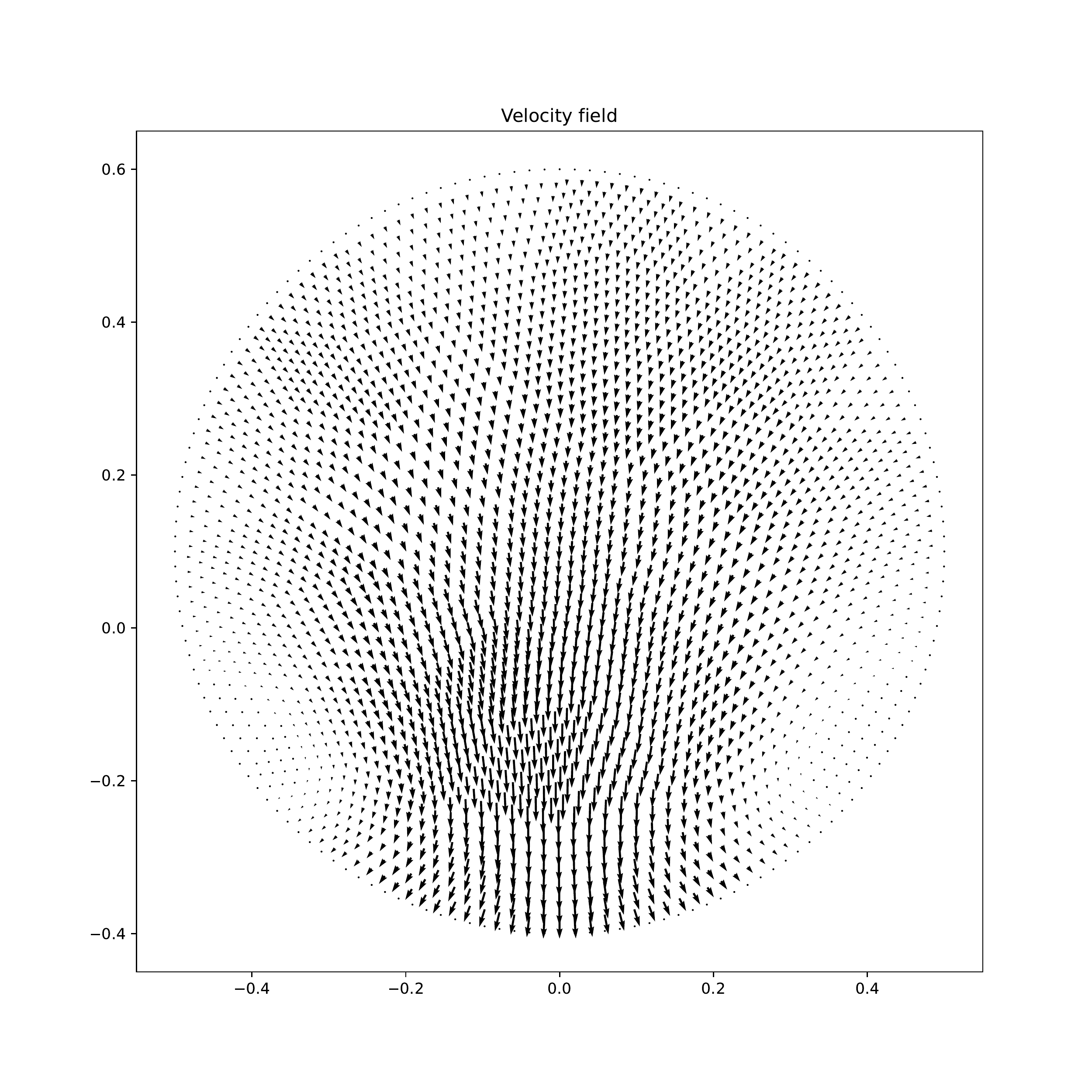}
    \end{subfigure}
    \begin{subfigure}[b]{0.23\textwidth}
        \centering
        \includegraphics[width=1.0\textwidth]{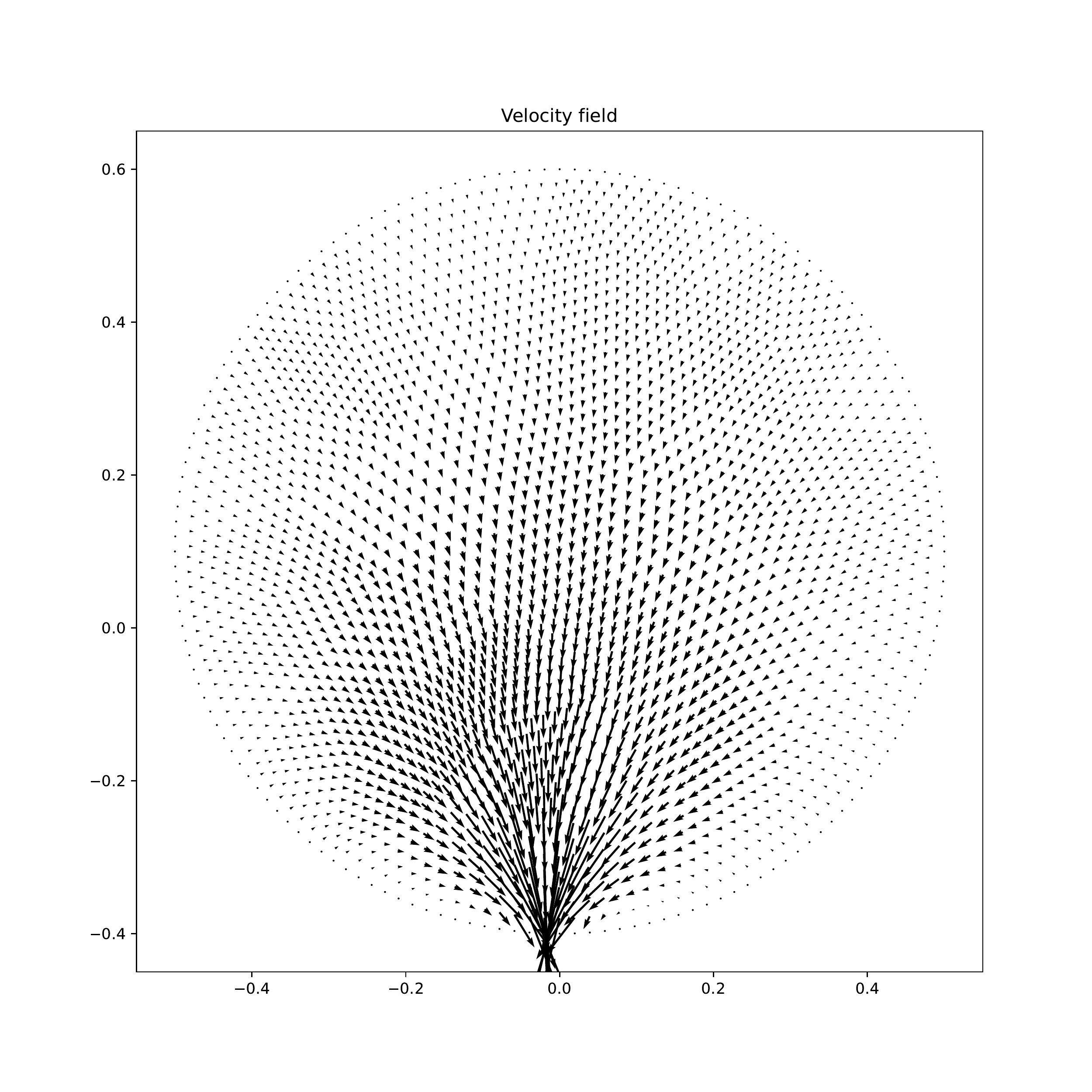}
    \end{subfigure}
    \caption{Snapshots at $t=0.04$, $0.1$, $0.4$ and $0.8$ of $\{\u_h^m\}_m$ for $\eta_0=450$ on $\mathcal{T}_h^*$.}\label{Snapshots_450_uh}
\end{figure}
\begin{figure}
    \begin{subfigure}[b]{0.23\textwidth}
        \centering
        \includegraphics[width=1.0\textwidth]{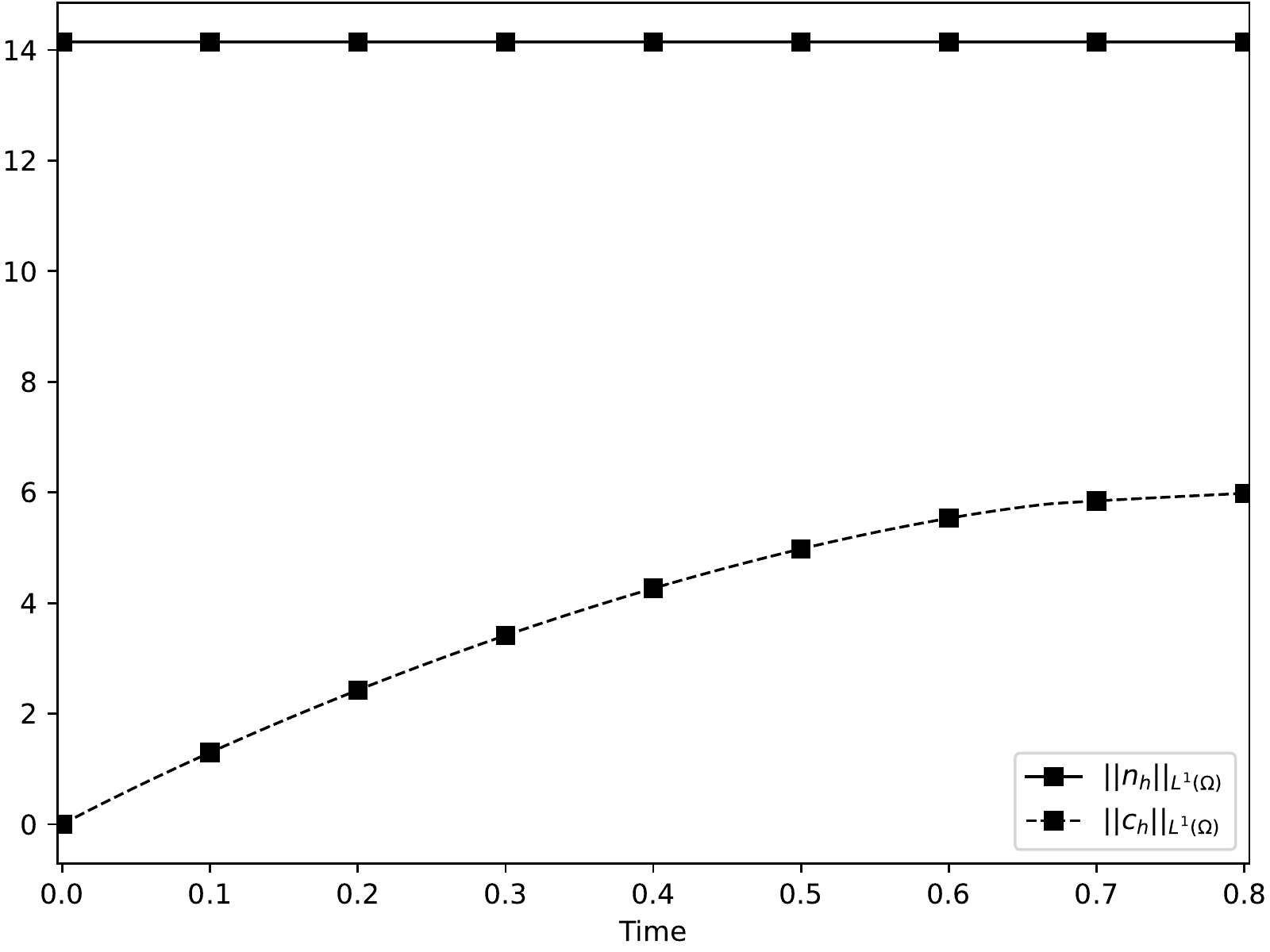}
    \end{subfigure}
    \begin{subfigure}[b]{0.23\textwidth}
        \centering
        \includegraphics[width=1.0\textwidth]{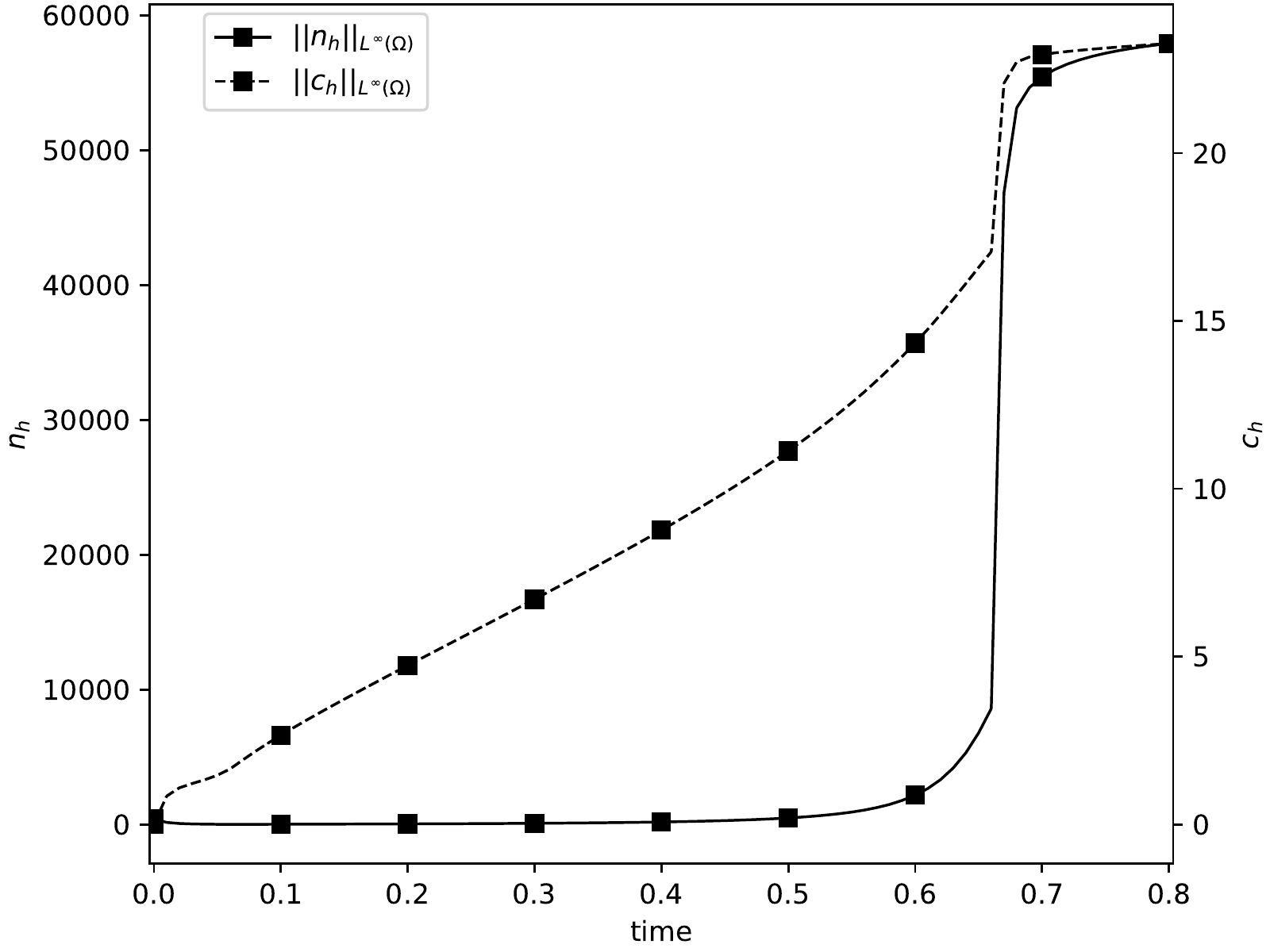}
    \end{subfigure}
    \begin{subfigure}[b]{0.23\textwidth}
        \centering
        \includegraphics[width=1.0\textwidth]{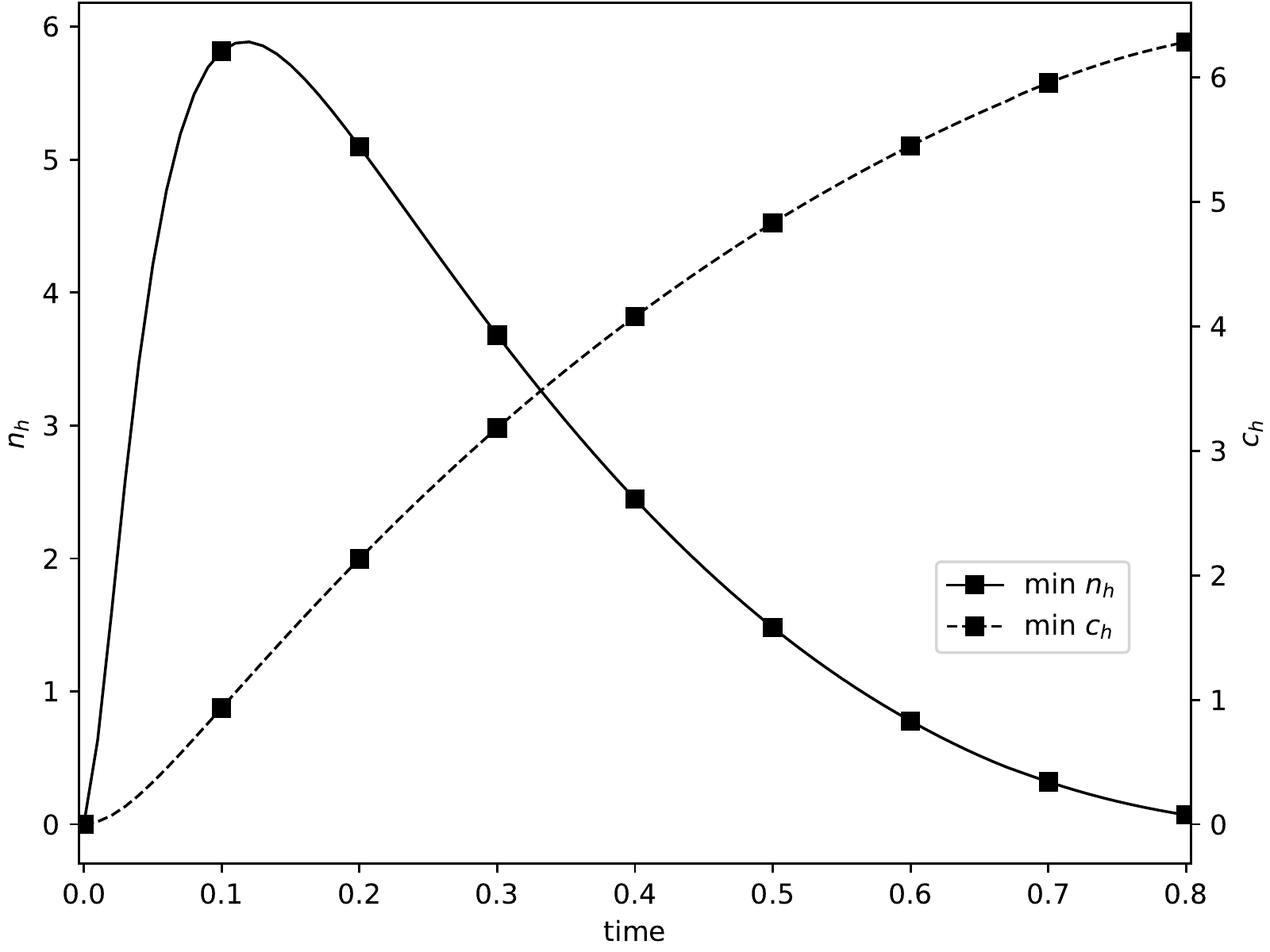}
    \end{subfigure}
            \caption{Plots of total mass, maxima and minima of $\{n_h^m\}_m$ and $\{c_h^m\}_m$ for $\eta_0=450$ on $\mathcal{T}_h^*$.}\label{Graphs_450}
\end{figure}

To make the singularity formation far more manifest, we use the partially refined mesh $\mathcal{T}_h^*$ displayed in Figure \ref{meshes} (right). The first remarkable aspect in Figure \ref{Graphs_450_finer} is concerned with maxima of $\{n_h^m\}_m$, which reaches higher values; $\max_{m}\|n_h^{m}\|\approx 4\cdot 10^{5}$.  As for the singularity-formation time, it is smaller being $t\approx0.5$. The qualitative evolution of $\{n_h^m\}_m$, $\{c_h^m\}_m$ and $\{\u_h^m\}_m$ illustrated in Figures \ref{Snapshots_450_finer_nh}, \ref{Snapshots_450_finer_ch} and \ref{Snapshots_450_finer_nh}  does not differ from that computed on $\mathcal{T}_h$.       
\begin{figure}
    \begin{subfigure}[b]{0.23\textwidth}
        \centering
        \includegraphics[width=1.0\textwidth]{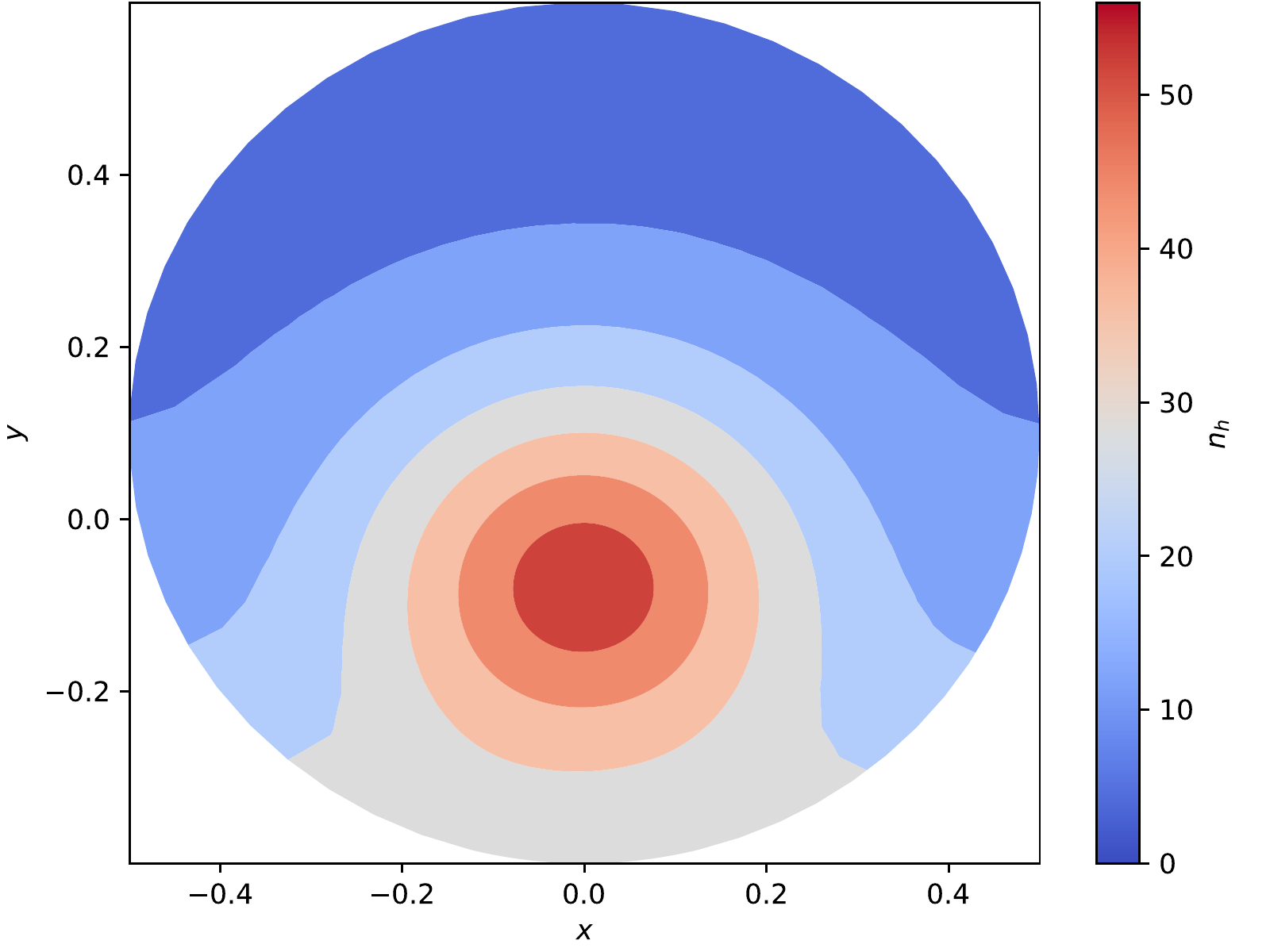}
    \end{subfigure}
    \begin{subfigure}[b]{0.23\textwidth}
        \centering
        \includegraphics[width=1.0\textwidth]{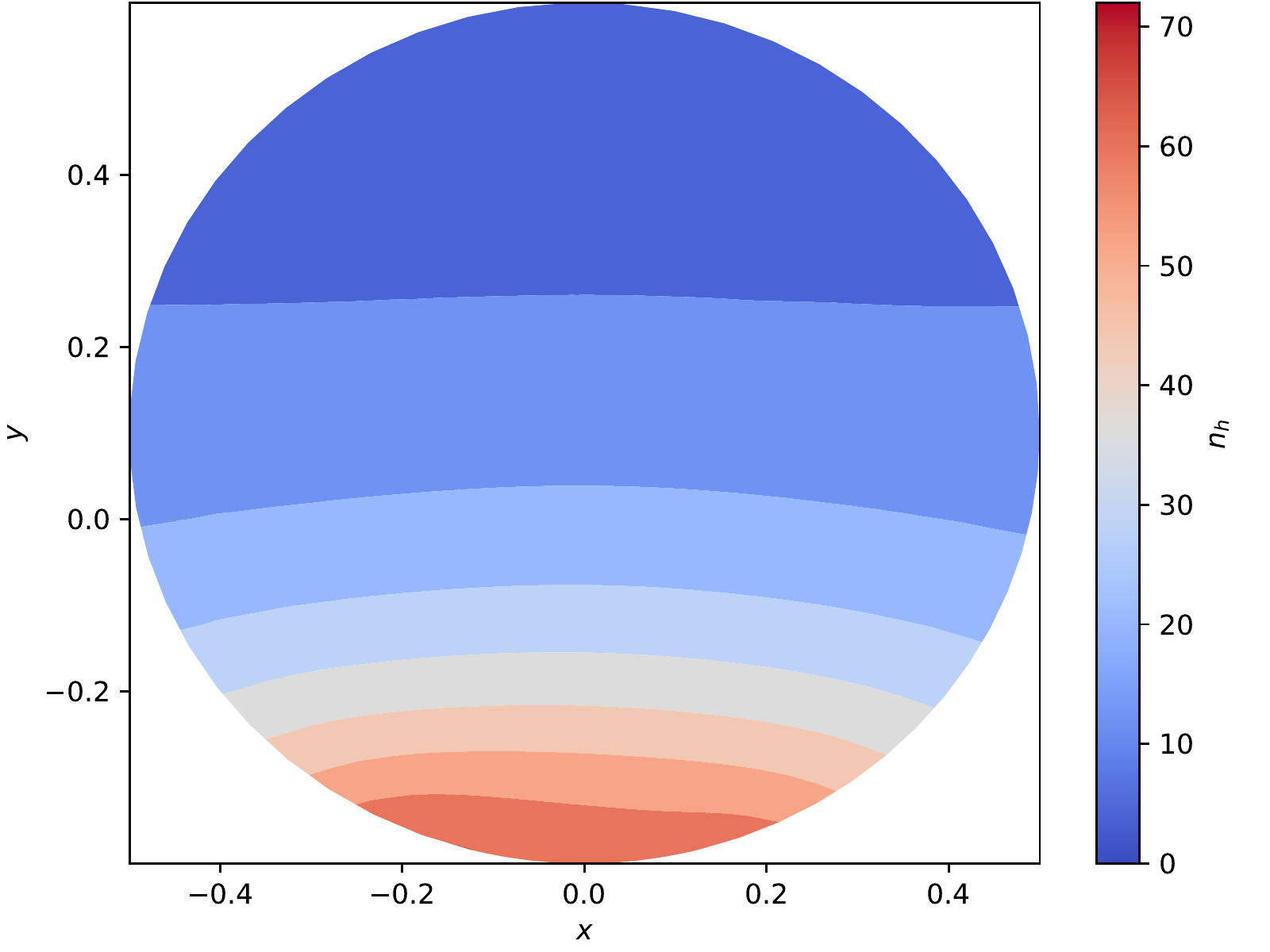}
    \end{subfigure}
    \begin{subfigure}[b]{0.23\textwidth}
        \centering
        \includegraphics[width=1.0\textwidth]{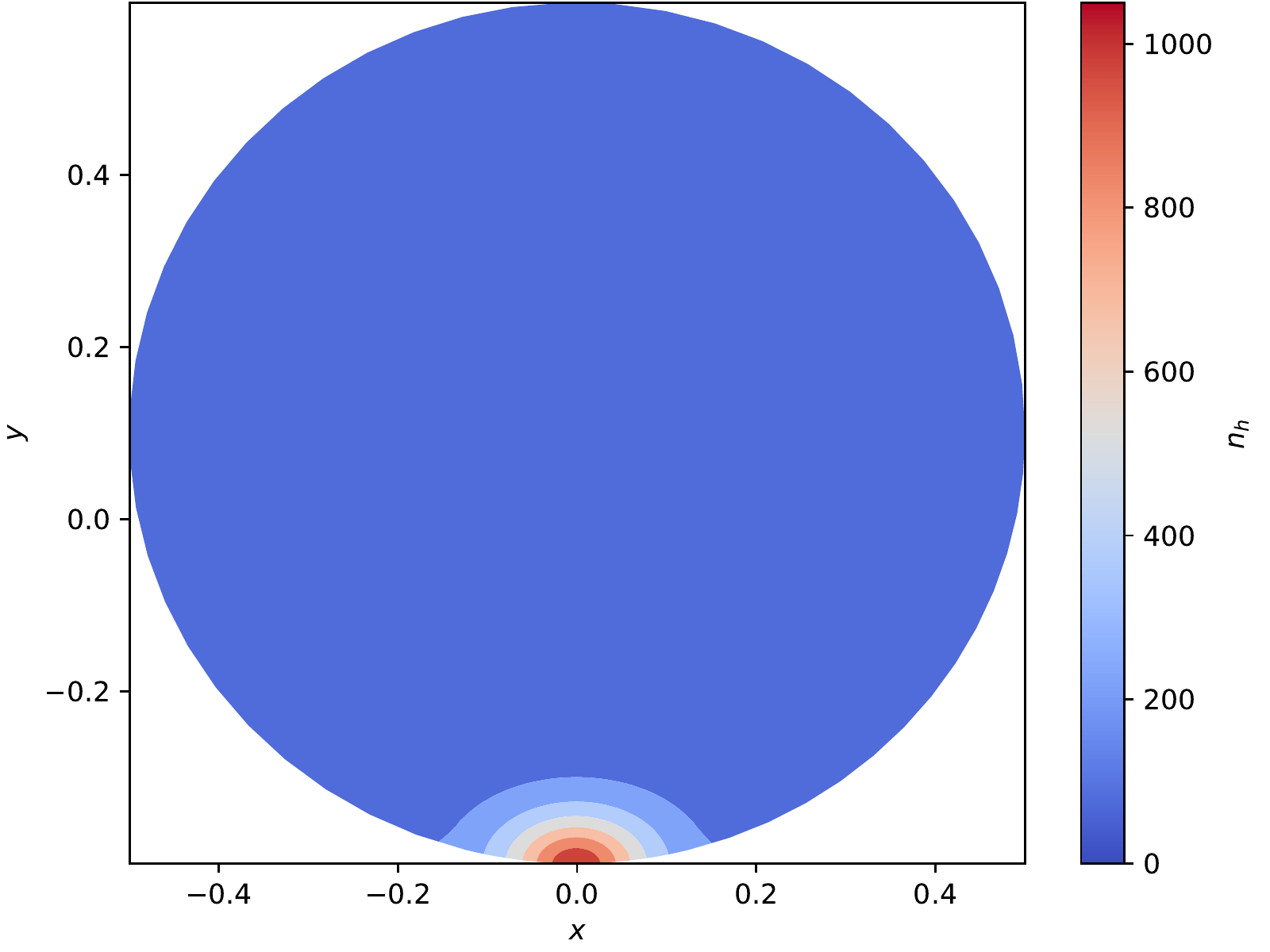}
    \end{subfigure}
    \begin{subfigure}[b]{0.23\textwidth}
        \centering
        \includegraphics[width=1.0\textwidth]{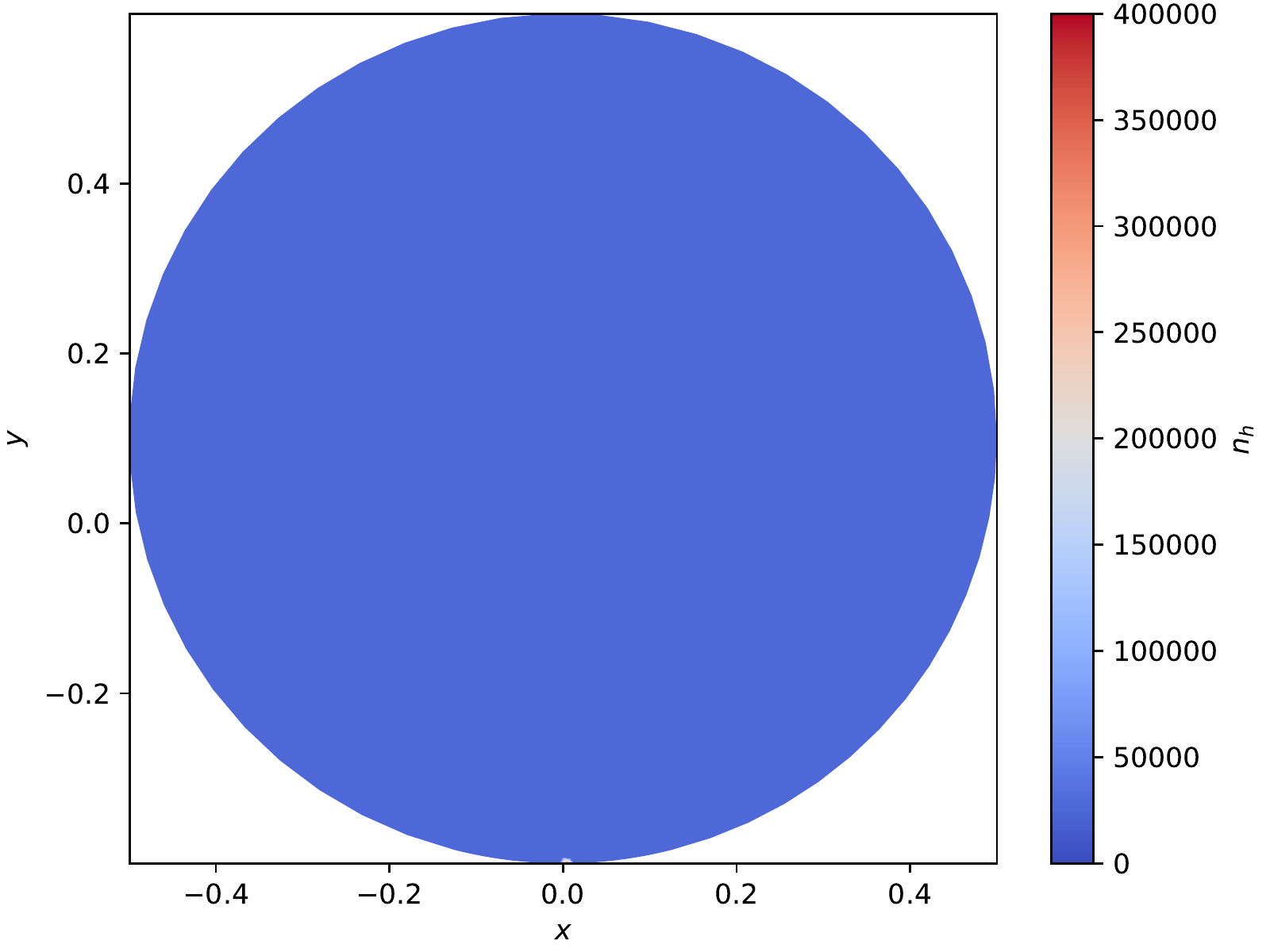}
    \end{subfigure}
            \caption{Snapshots at $t=0.04$, $0.1$, $0.4$ and $0.7$ of $\{n_h^m\}_m$ for $\eta_0=450$}\label{Snapshots_450_finer_nh}
\end{figure}
\begin{figure}
    \begin{subfigure}[b]{0.23\textwidth}
        \centering
        \includegraphics[width=1.0\textwidth]{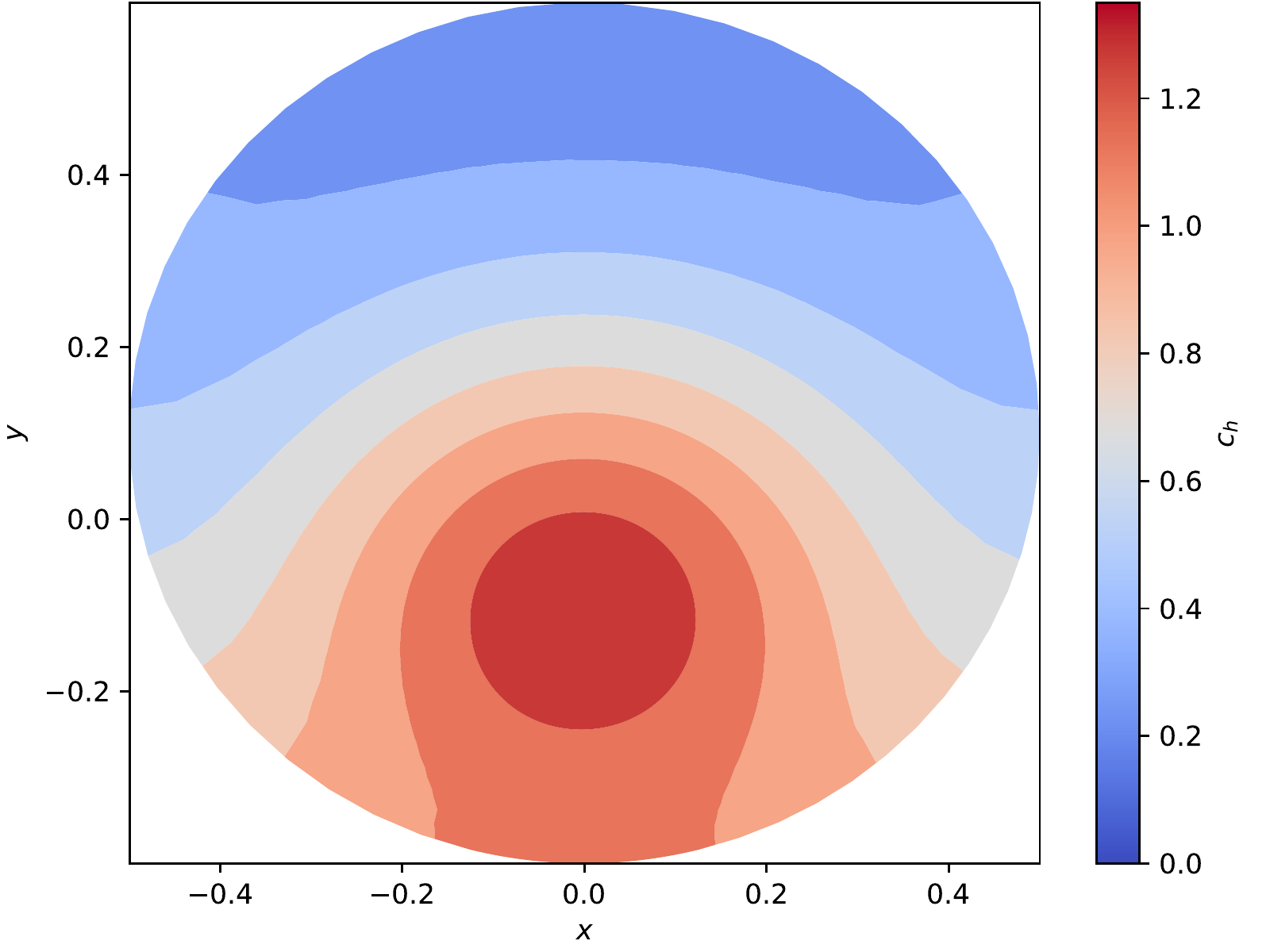}
    \end{subfigure}
    \begin{subfigure}[b]{0.23\textwidth}
        \centering
        \includegraphics[width=1.0\textwidth]{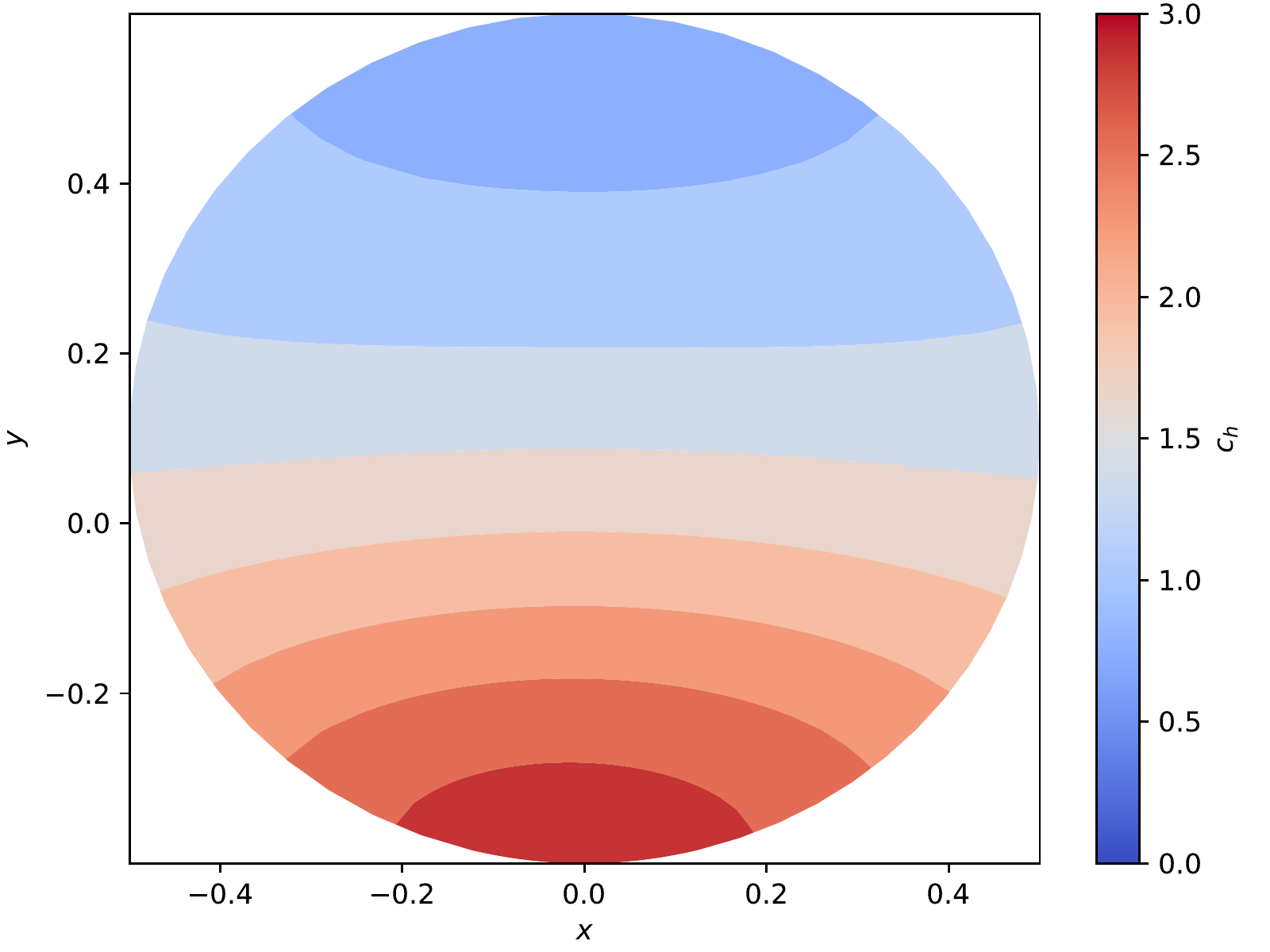}
    \end{subfigure}
    \begin{subfigure}[b]{0.23\textwidth}
        \centering
        \includegraphics[width=1.0\textwidth]{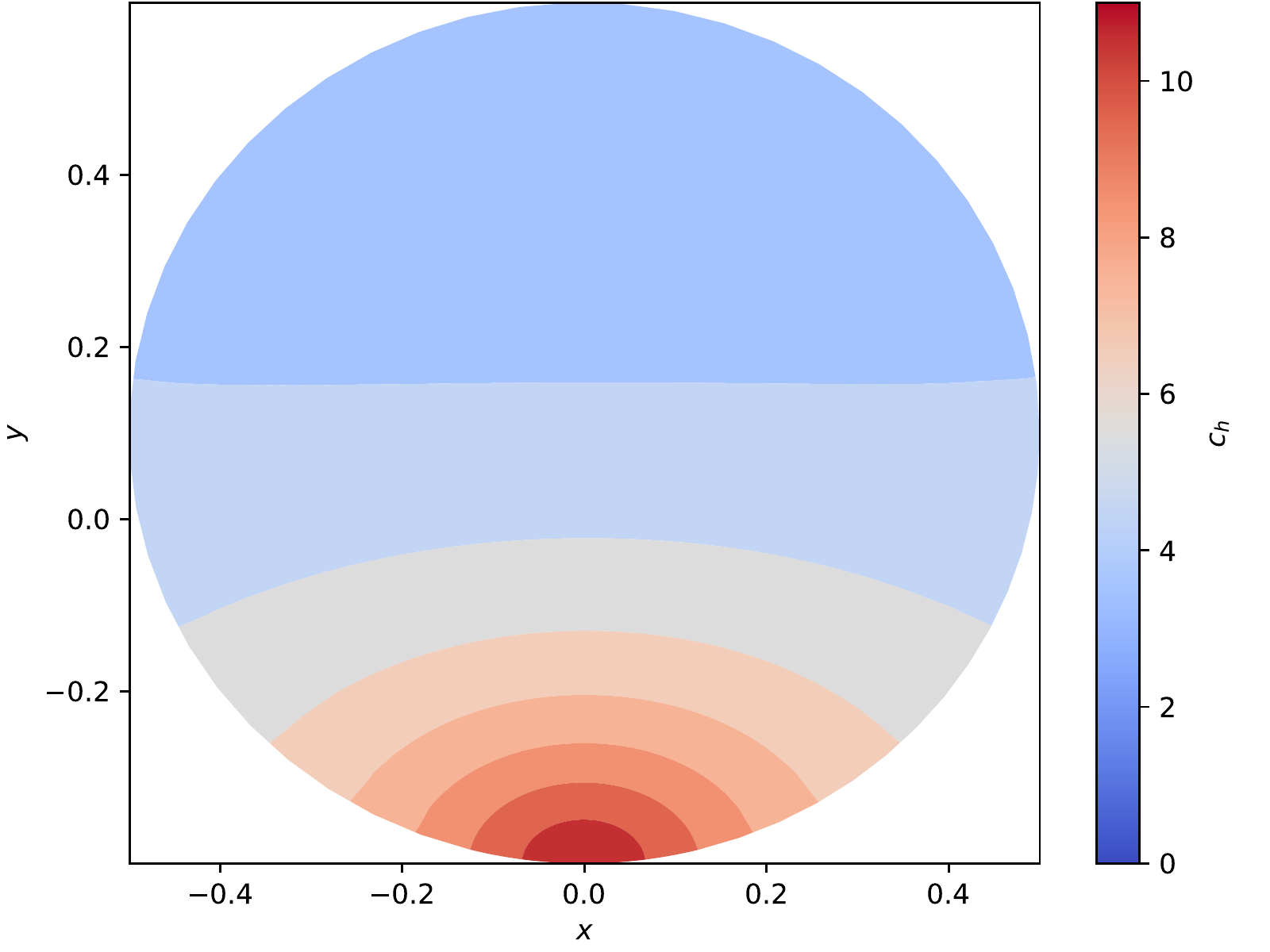}
    \end{subfigure}
    \begin{subfigure}[b]{0.23\textwidth}
        \centering
        \includegraphics[width=1.0\textwidth]{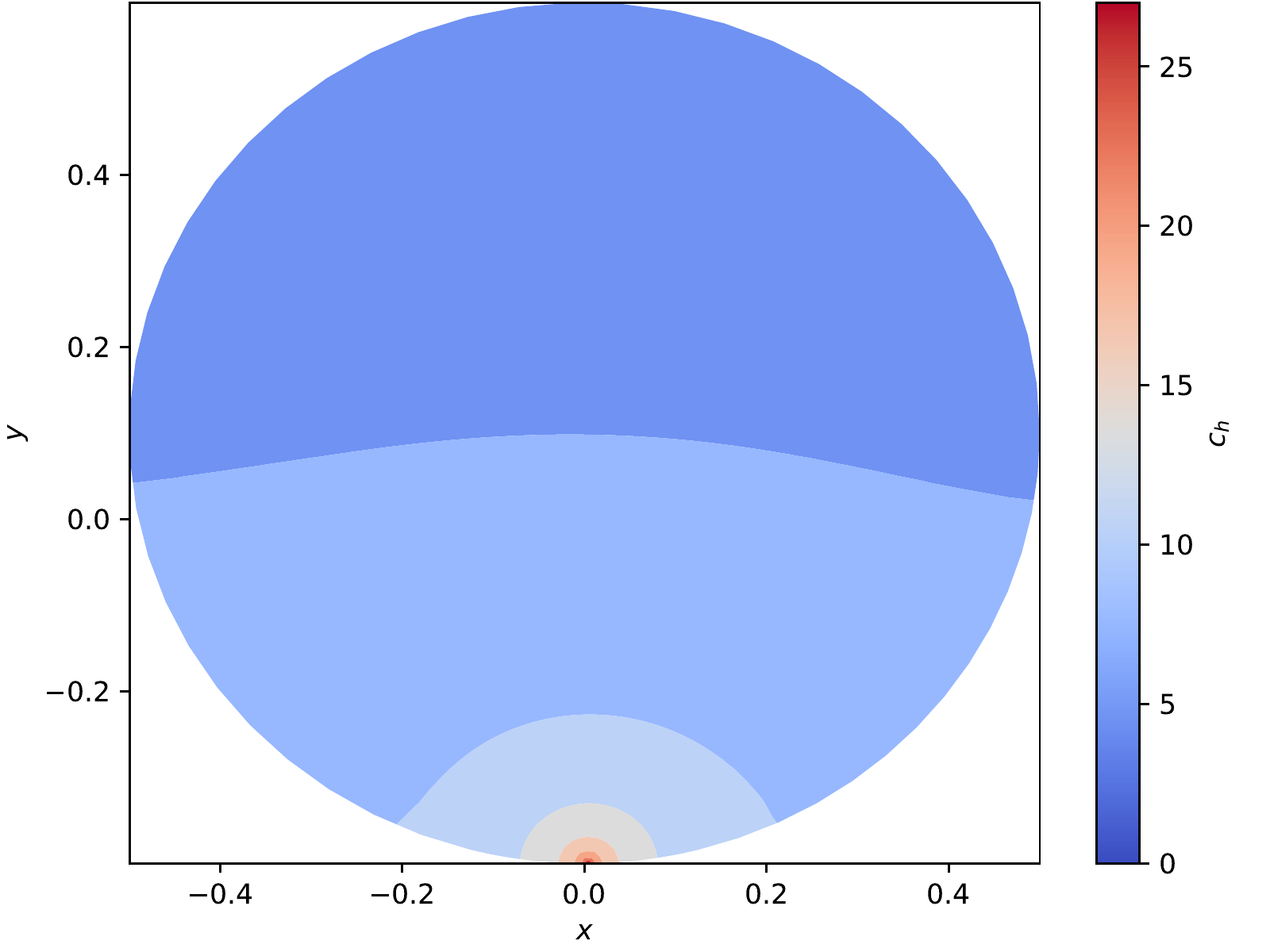}
    \end{subfigure}
\caption{Snapshots at $t=0.04$, $0.1$, $0.4$ and $0.7$ of $\{c_h^m\}_m$ for $\eta_0=450$.}\label{Snapshots_450_finer_ch}
\end{figure}
\begin{figure}
    \begin{subfigure}[b]{0.23\textwidth}
        \centering
        \includegraphics[width=1.0\textwidth]{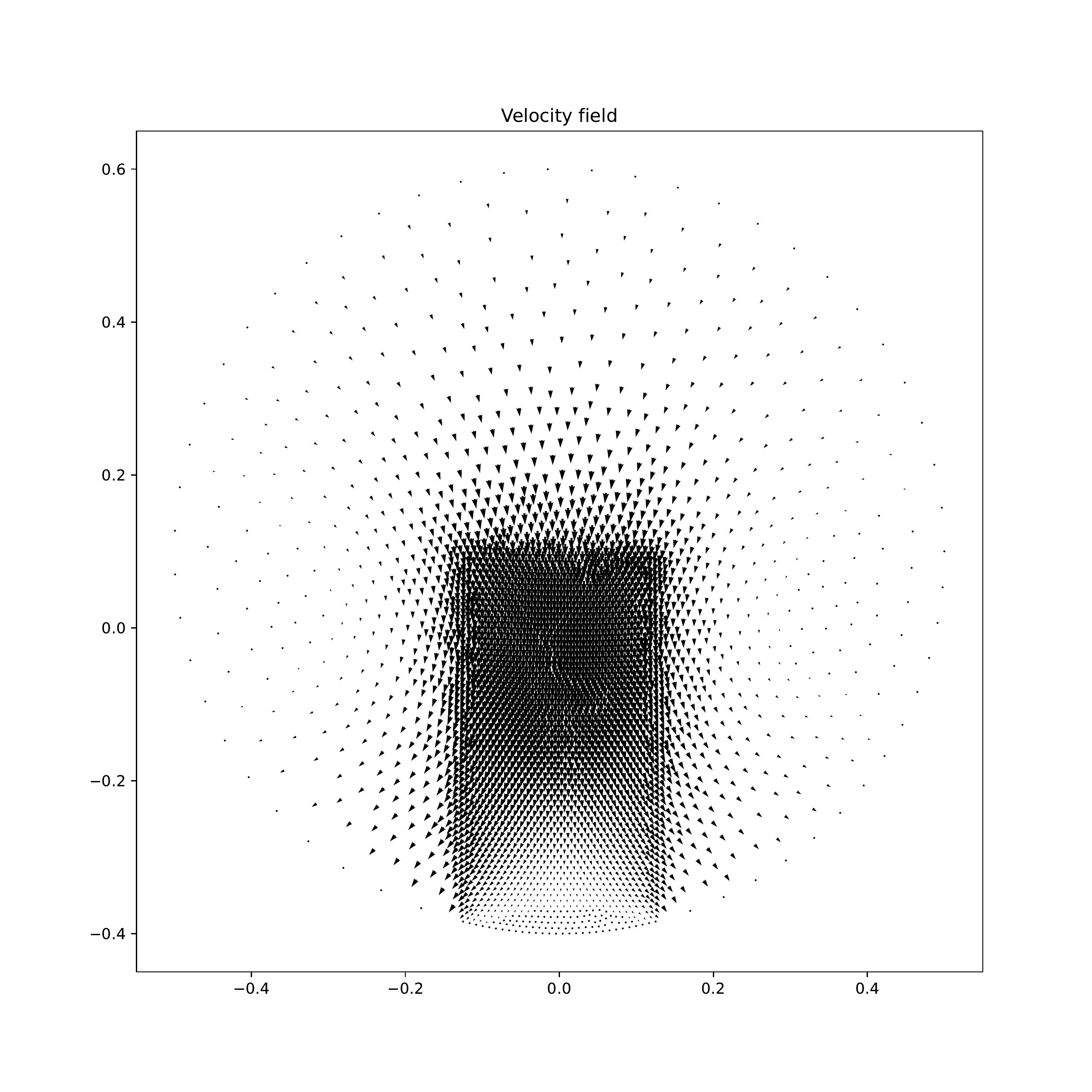}
    \end{subfigure}
    \begin{subfigure}[b]{0.23\textwidth}
        \centering
        \includegraphics[width=1.0\textwidth]{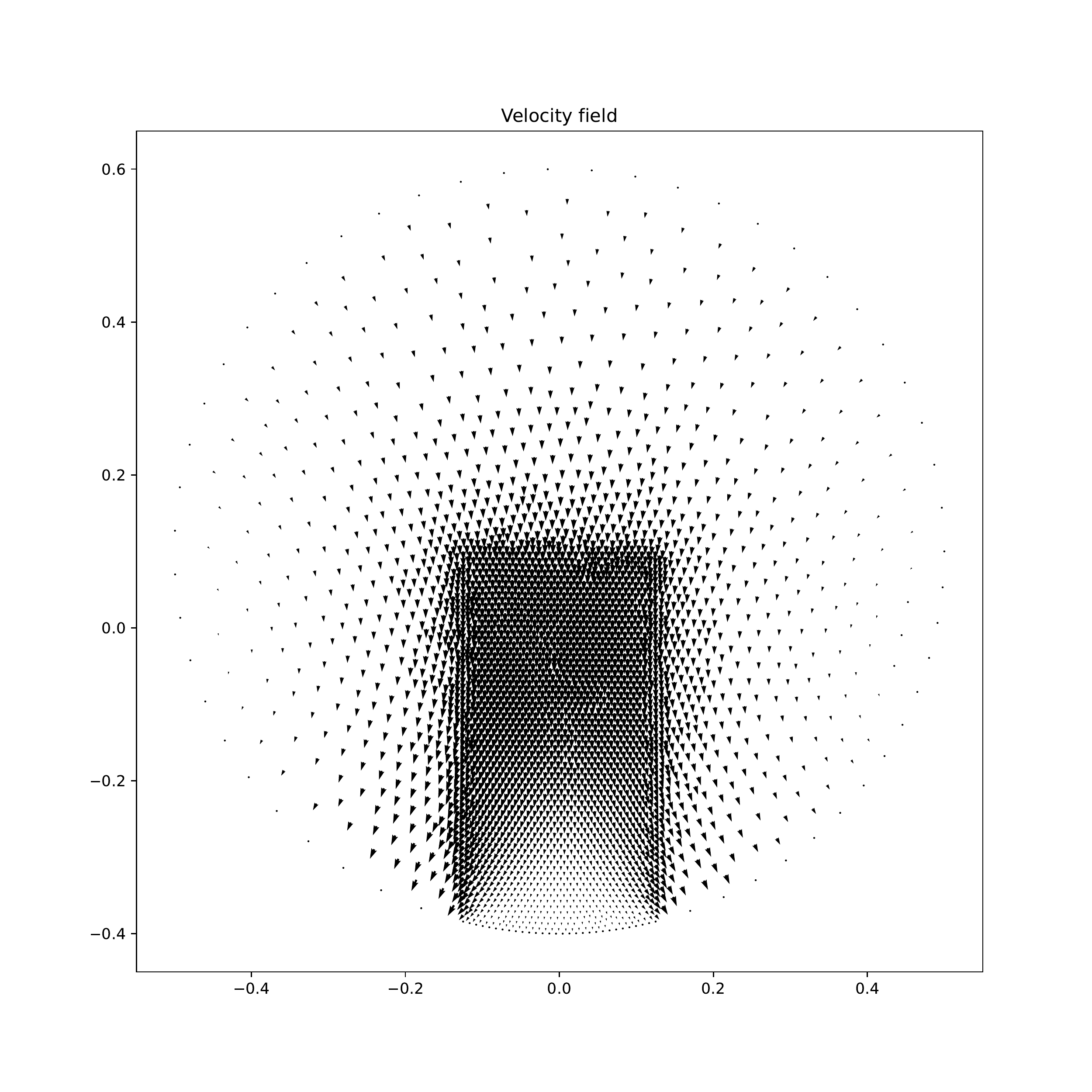}
    \end{subfigure}
    \begin{subfigure}[b]{0.23\textwidth}
        \centering
        \includegraphics[width=1.0\textwidth]{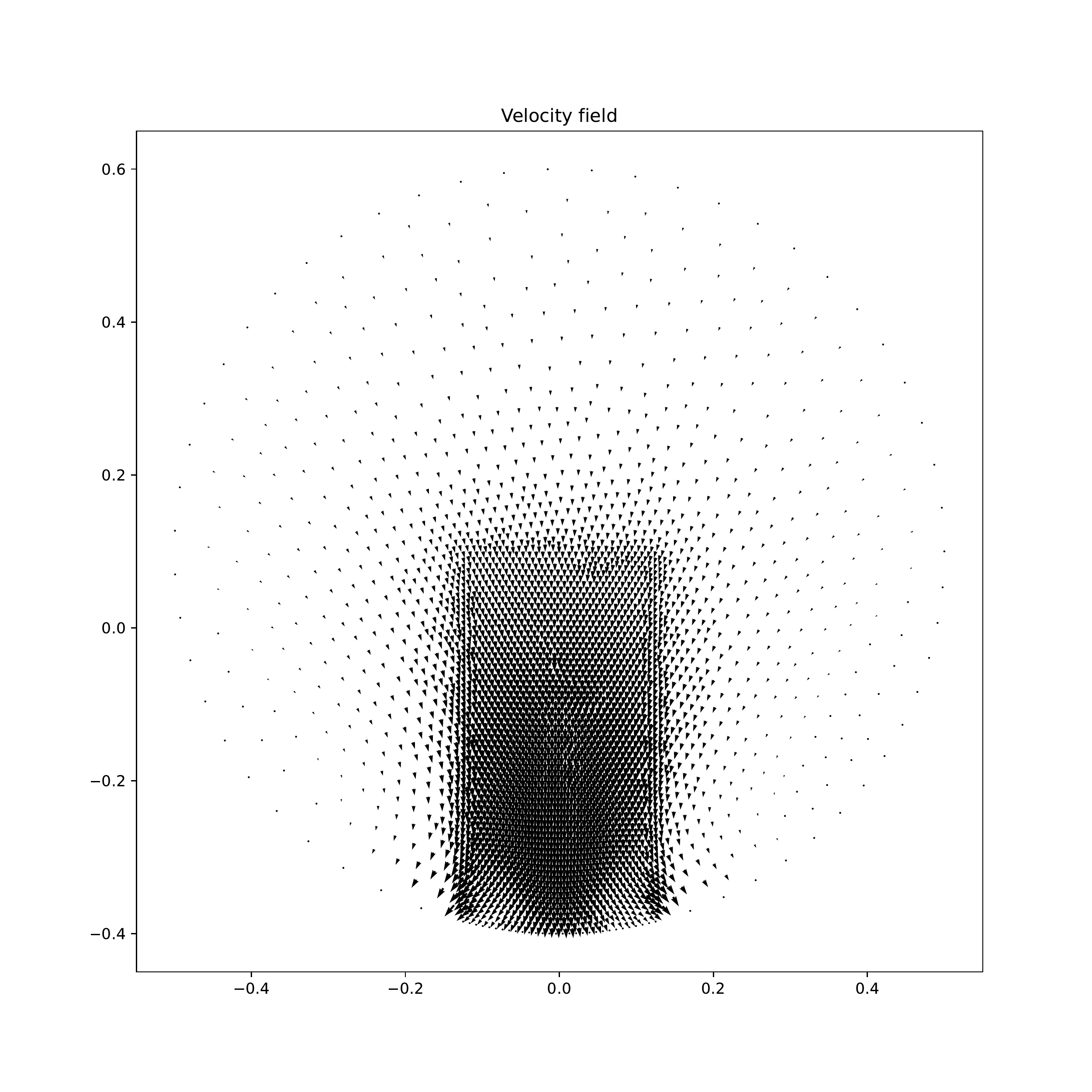}
    \end{subfigure}
    \begin{subfigure}[b]{0.23\textwidth}
        \centering
        \includegraphics[width=1.0\textwidth]{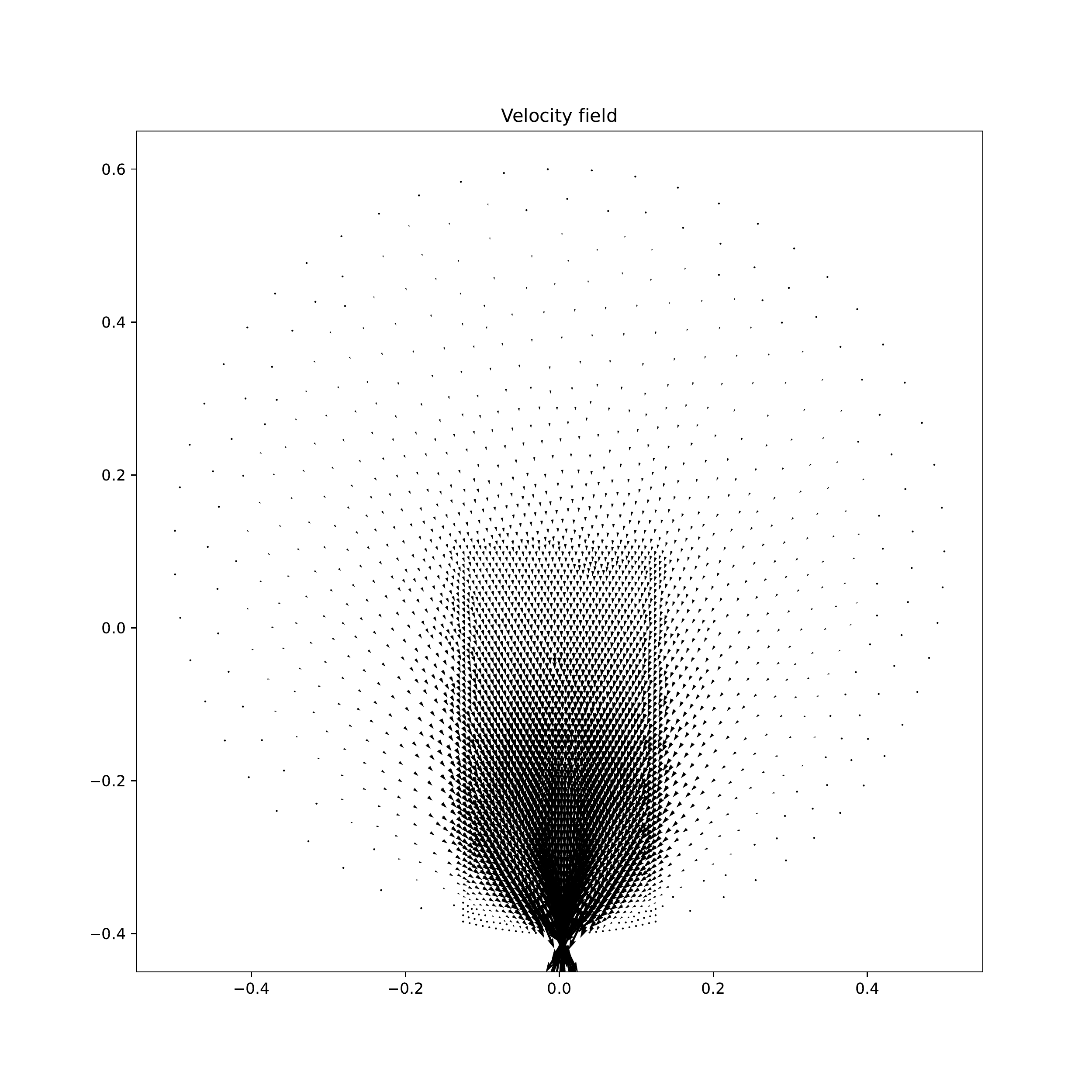}
    \end{subfigure}
    \caption{Snapshots at $t=0.04$, $0.1$, $0.4$ and $0.7$ of $\{\u_h^m\}_m$ for $\eta_0=450$.}\label{Snapshots_450_finer_uh}
\end{figure}

\begin{figure}
    \begin{subfigure}[b]{0.23\textwidth}
        \centering
        \includegraphics[width=1.0\textwidth]{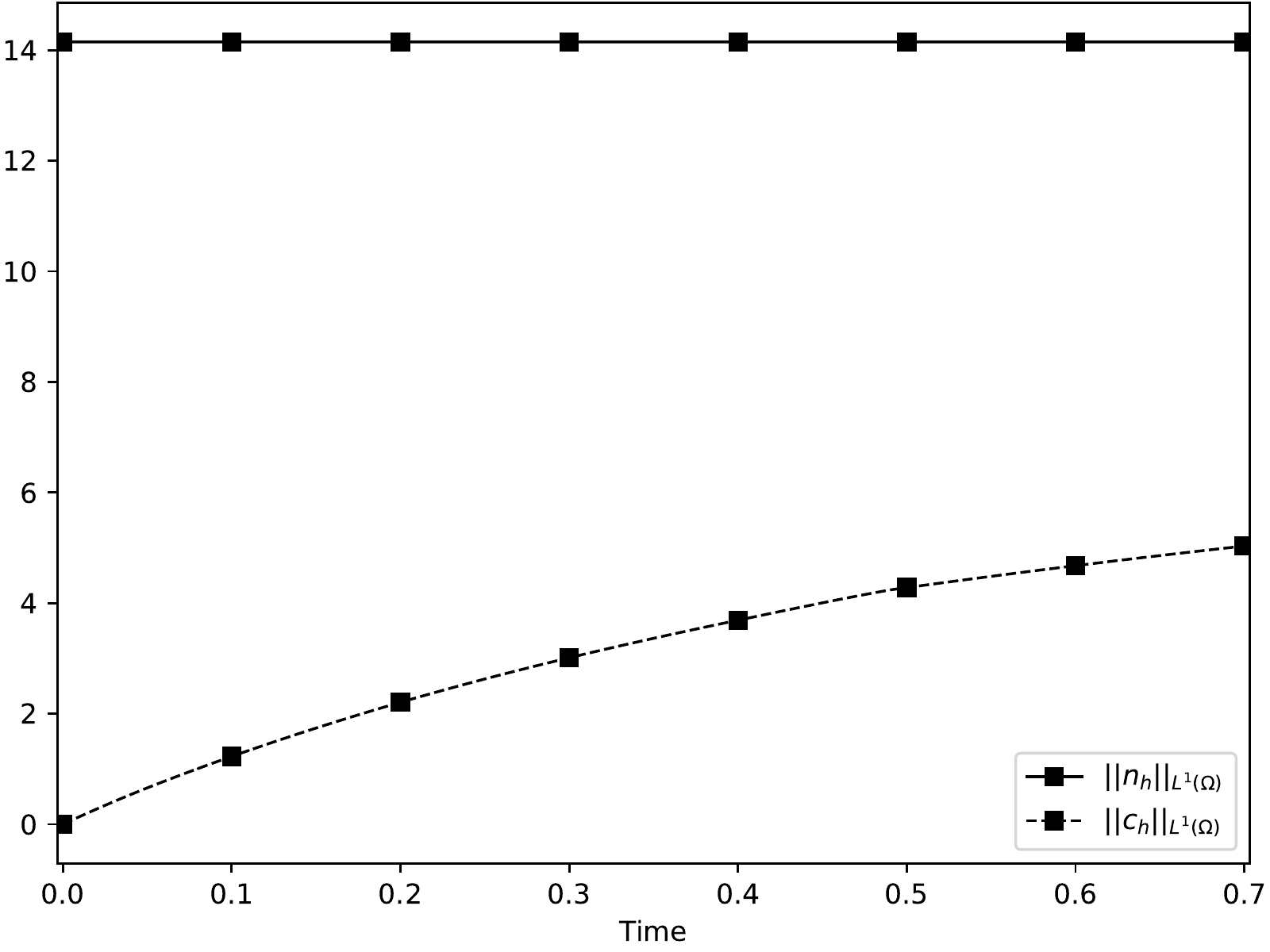}
    \end{subfigure}
    \begin{subfigure}[b]{0.23\textwidth}
        \centering
        \includegraphics[width=1.0\textwidth]{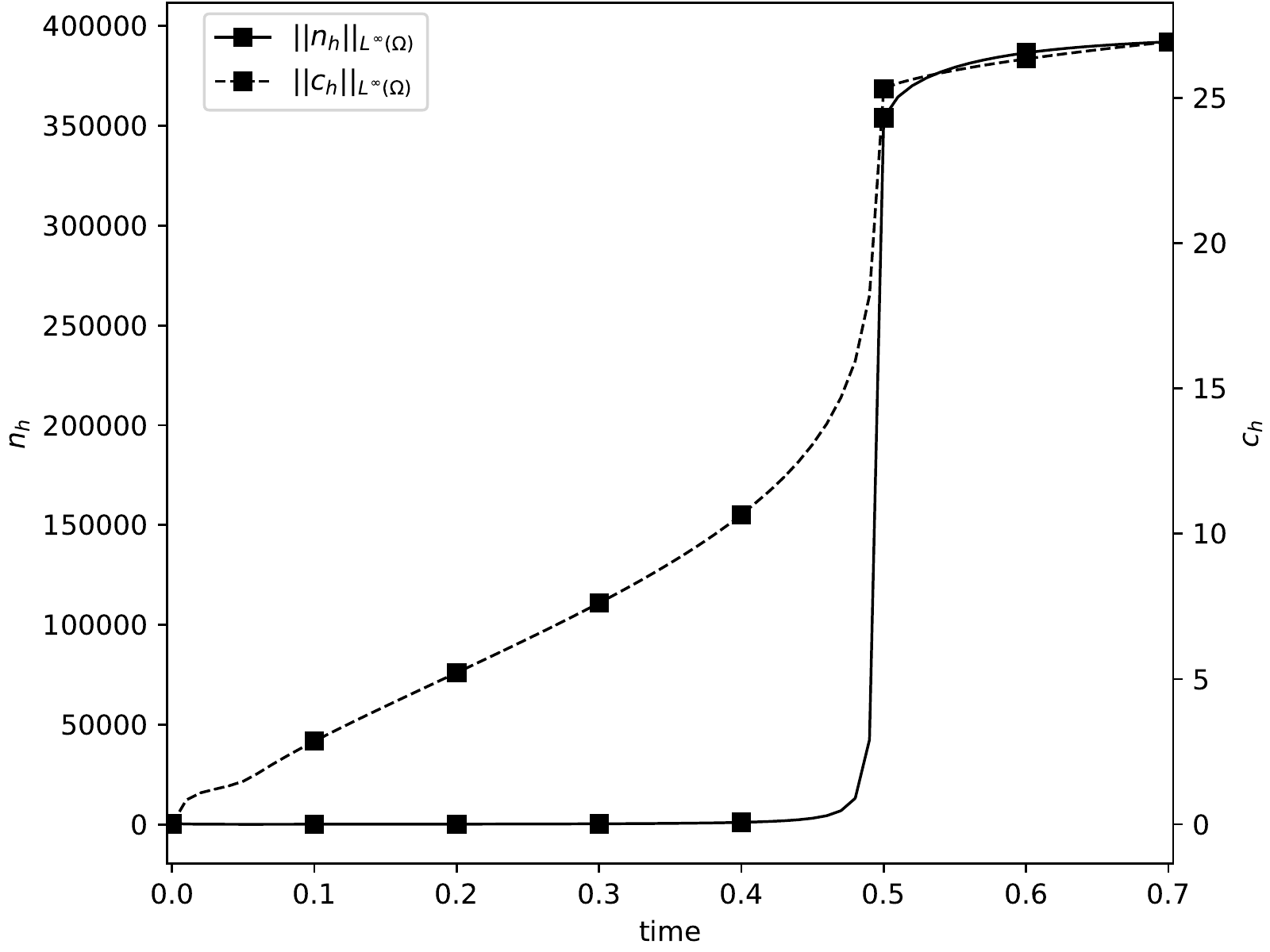}
    \end{subfigure}
    \begin{subfigure}[b]{0.23\textwidth}
        \centering
        \includegraphics[width=1.0\textwidth]{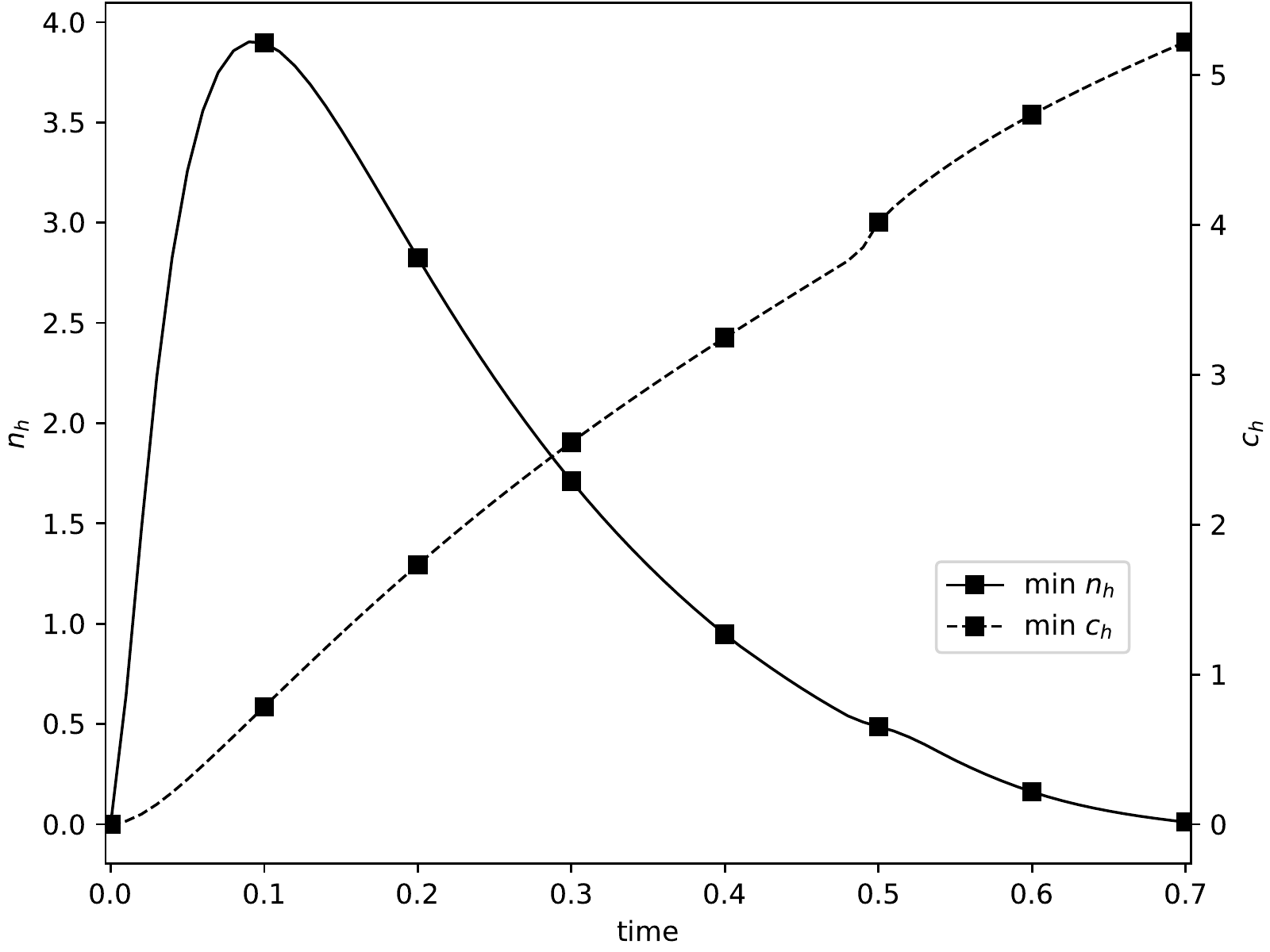}
    \end{subfigure}
            \caption{Plots of total mass, maxima and minima  of $\{n^m_h\}_m$ and $\{c_h^m\}_m$ for $\eta_0=450$.}\label{Graphs_450_finer}
\end{figure}
\subsection{Case: \texorpdfstring{$\eta_0=450$}{Lg} and \texorpdfstring{$\Phi_0=50$}{Lg}}
Now that it is known that there is a finite-time singularity at least numerically for $\eta_0=450$. We ask ourselves whether or not the fluid flow may modify this configuration. In doing so, we take $\Phi_0=50$ to speed up the fluid velocity. Surprisingly as indicated in Figure \ref{Graphs_450_50} (middle) for maxima of $\{n_h^m\}_m$ there seems that the fluid velocity kills the singularity formation, since they do not grow beyond $1200$. Furthermore, minima of $\{n_h^m\}_m$ in Figure \ref{Graphs_450_50} (right) became constant over time, but far away from $0$. For this reason chemotaxis mechanism cannot force the dredging at a single point as a consequence of the growth in velocity induced by $\Phi_0=50$. In addition, convection introduces diffusion in the system. This phenomenological description is shown in Figures \ref{Snapshots_450_50_nh}, \ref{Snapshots_450_50_ch} and \ref{Snapshots_450_50_uh}. 
\begin{figure}
    \begin{subfigure}[b]{0.23\textwidth}
        \centering
        \includegraphics[width=1.0\textwidth]{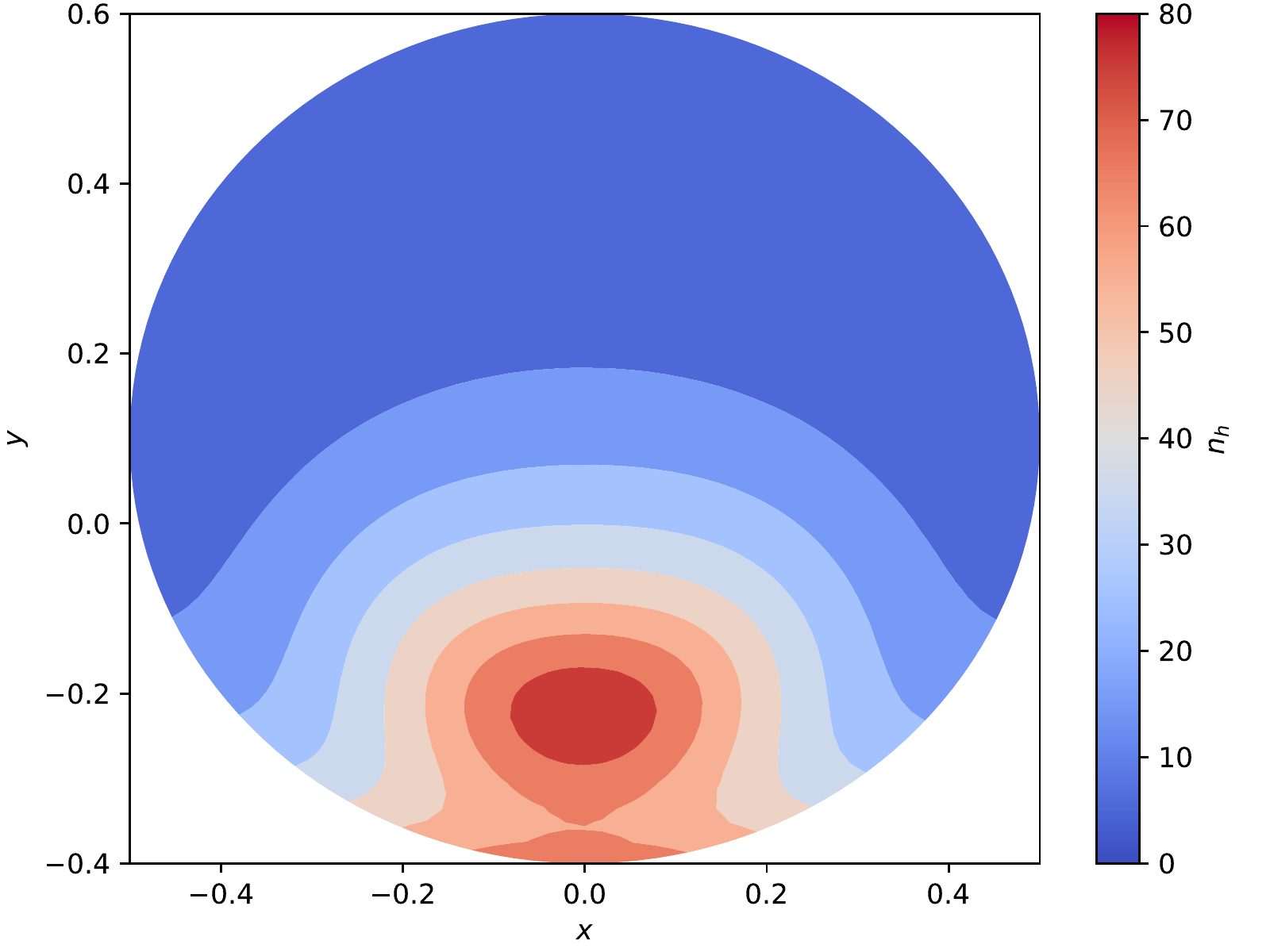}
    \end{subfigure}
    \begin{subfigure}[b]{0.23\textwidth}
        \centering
        \includegraphics[width=1.0\textwidth]{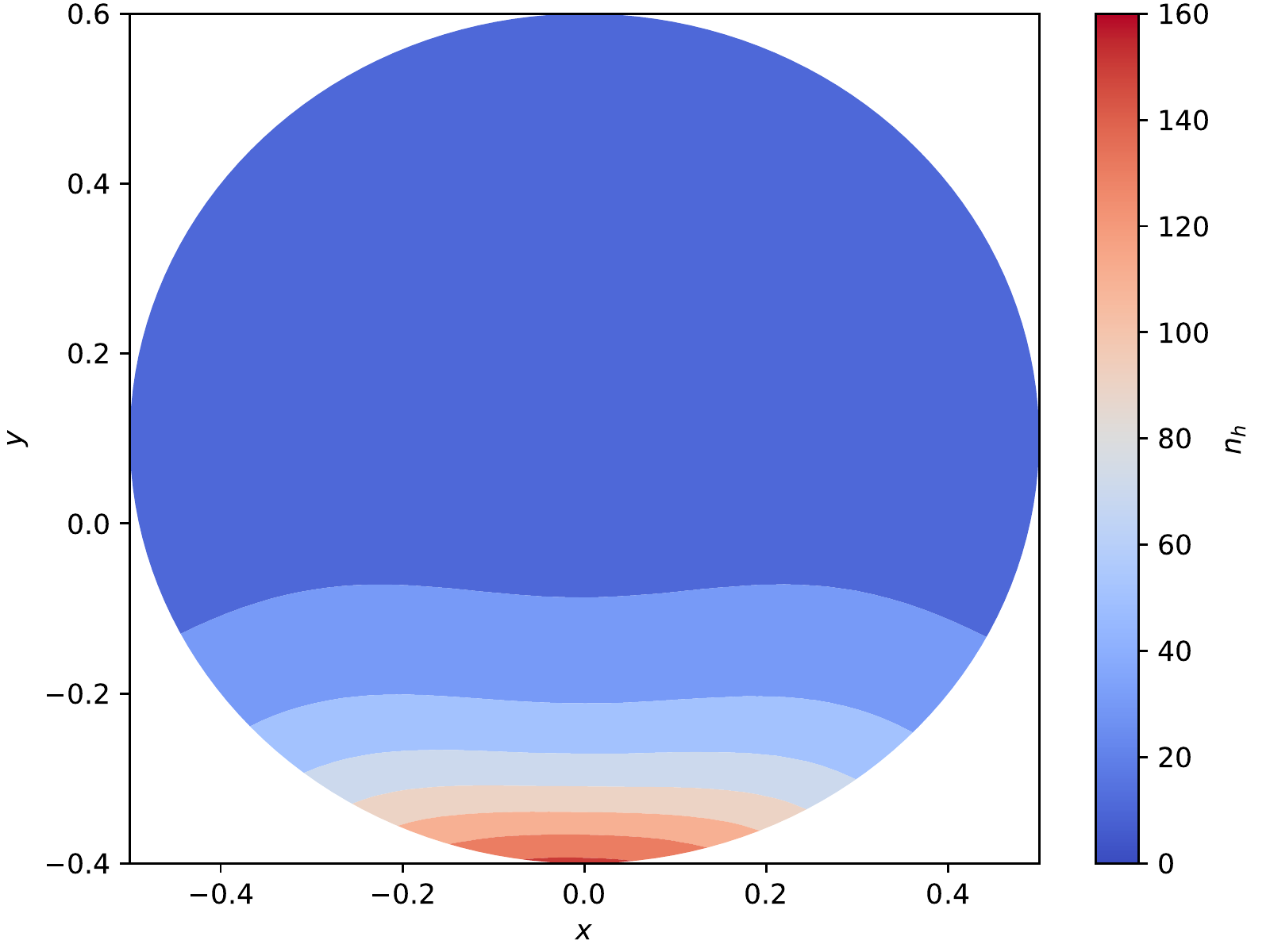}
    \end{subfigure}
    \begin{subfigure}[b]{0.23\textwidth}
        \centering
        \includegraphics[width=1.0\textwidth]{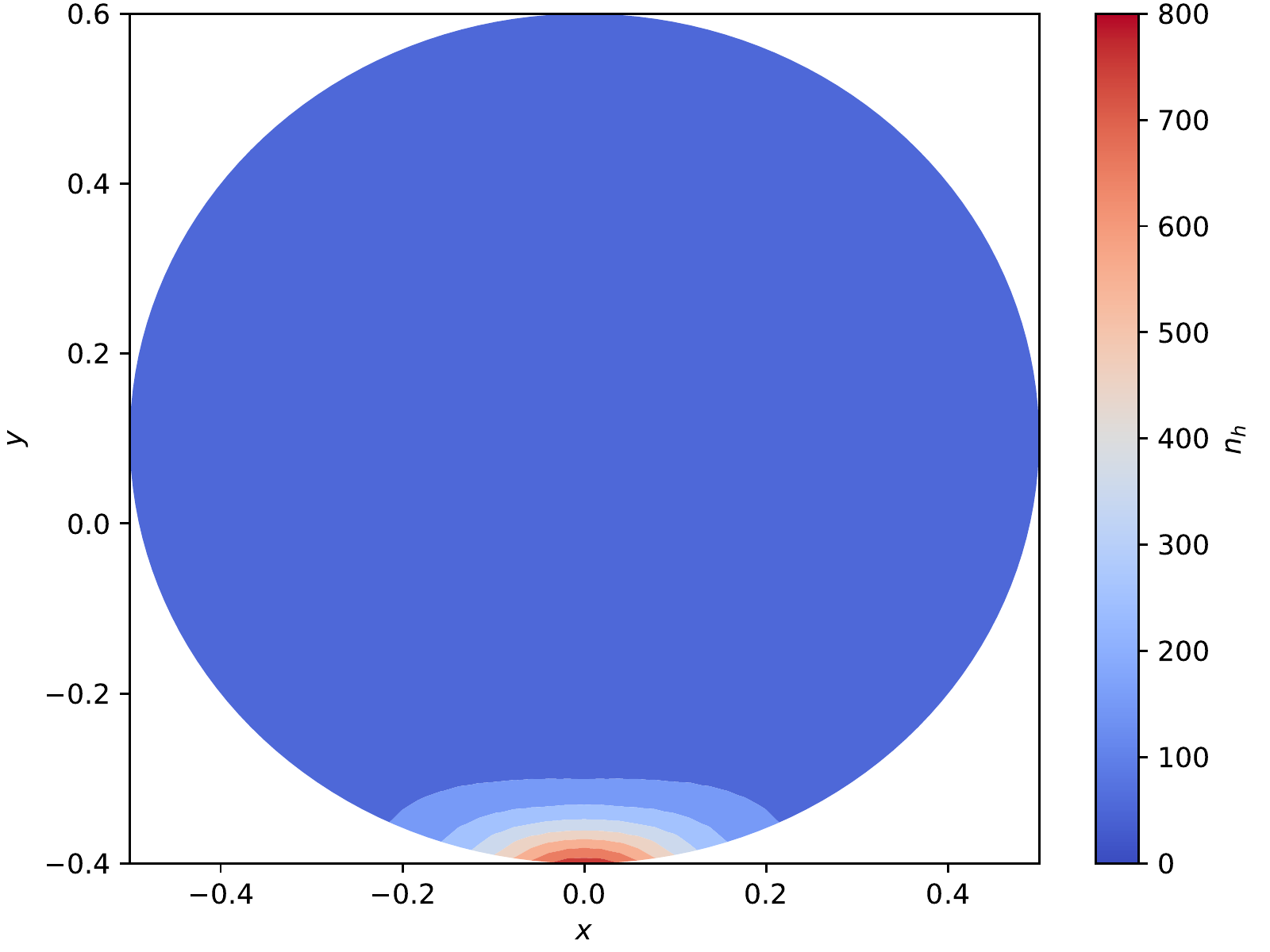}
    \end{subfigure}
    \begin{subfigure}[b]{0.23\textwidth}
        \centering
        \includegraphics[width=1.0\textwidth]{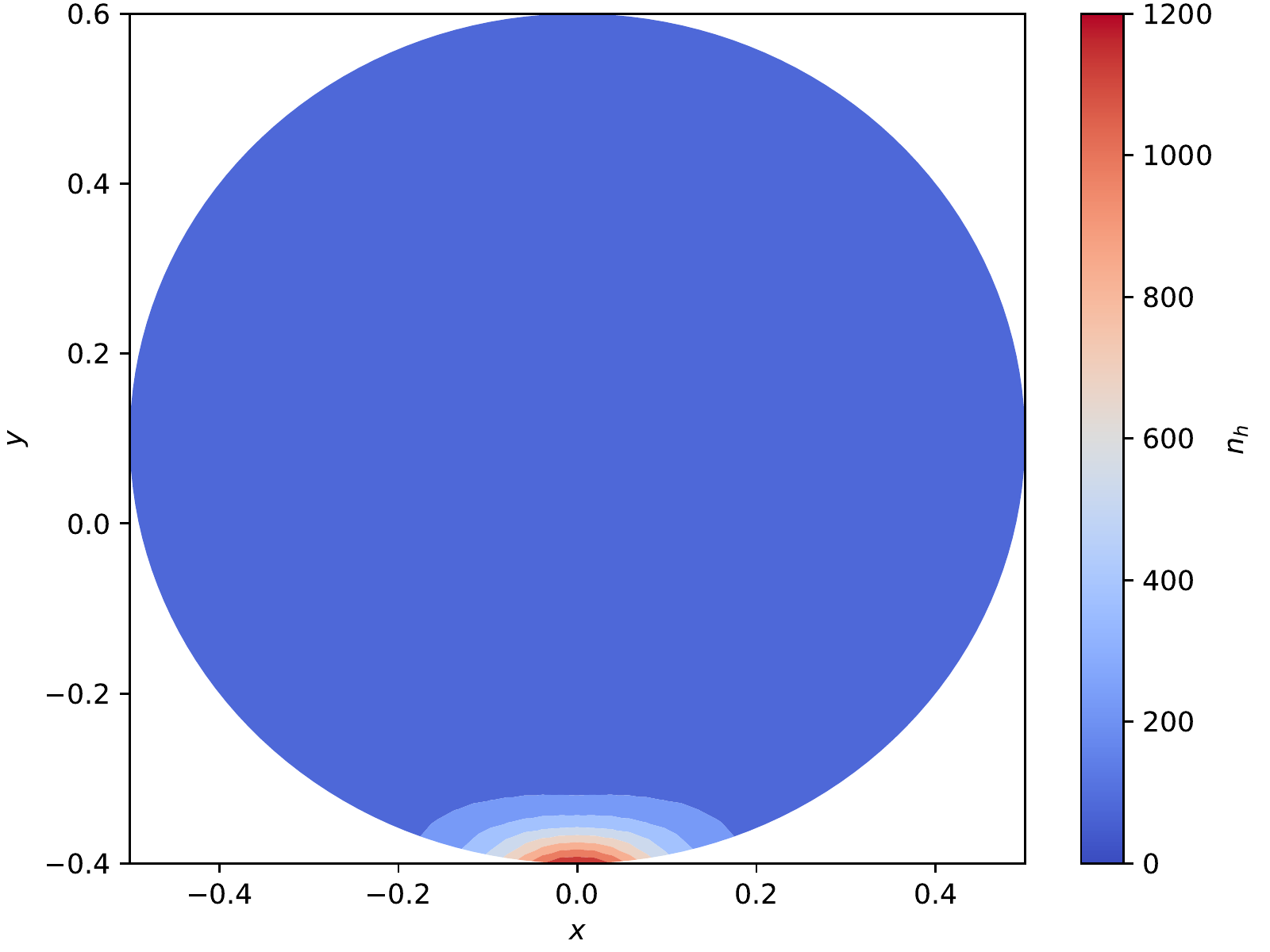}
    \end{subfigure}
            \caption{Snapshots at $t=0.02$, $t=0.04$, $0.2$ and $1.0$ of $\{n_h^m\}_m$ for $\eta_0=450$ and $\Phi_0=50$.}\label{Snapshots_450_50_nh}
            \end{figure}
    \begin{figure}
    \begin{subfigure}[b]{0.23\textwidth}
        \centering
        \includegraphics[width=1.0\textwidth]{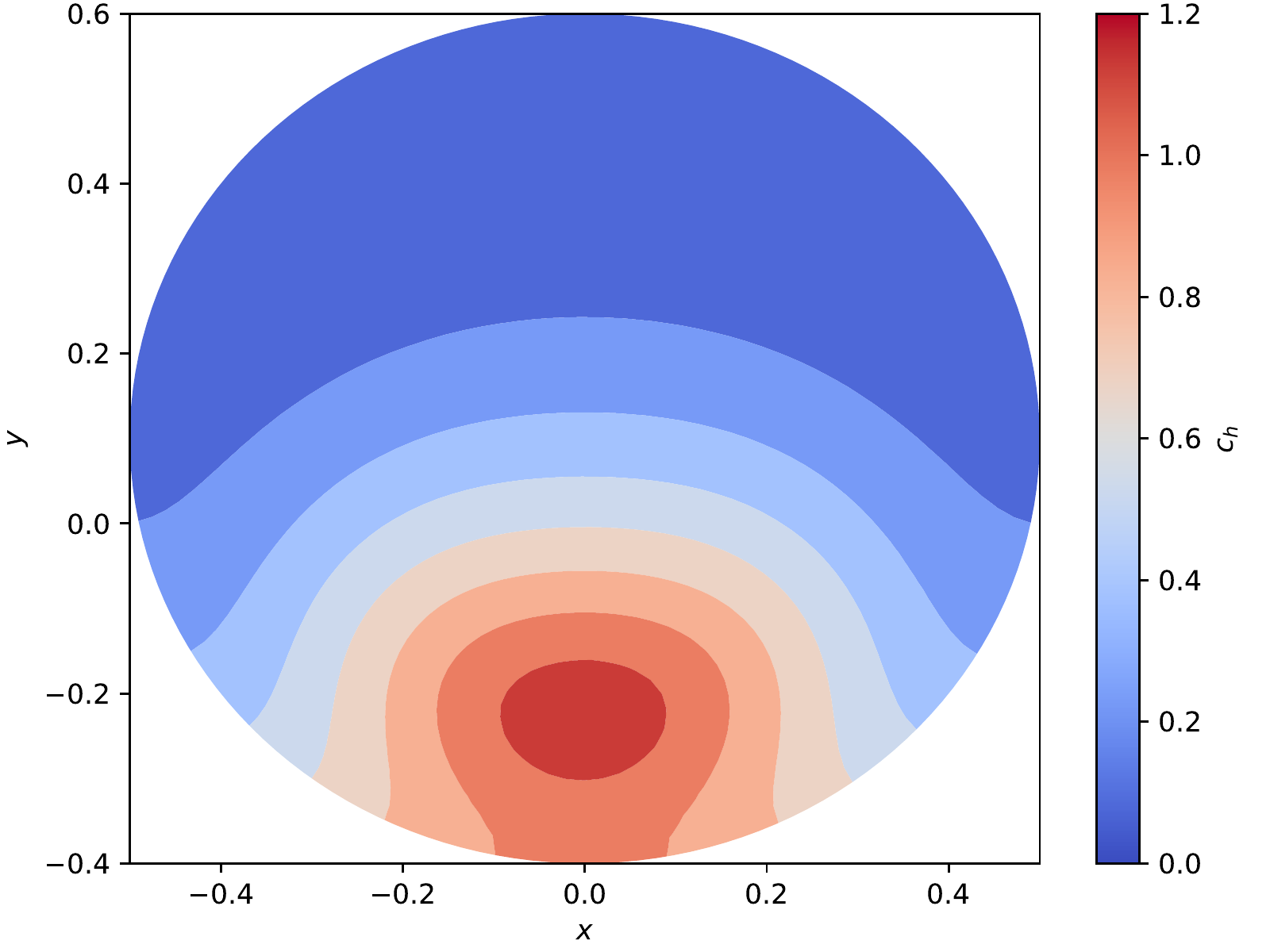}
    \end{subfigure}
    \begin{subfigure}[b]{0.23\textwidth}
        \centering
        \includegraphics[width=1.0\textwidth]{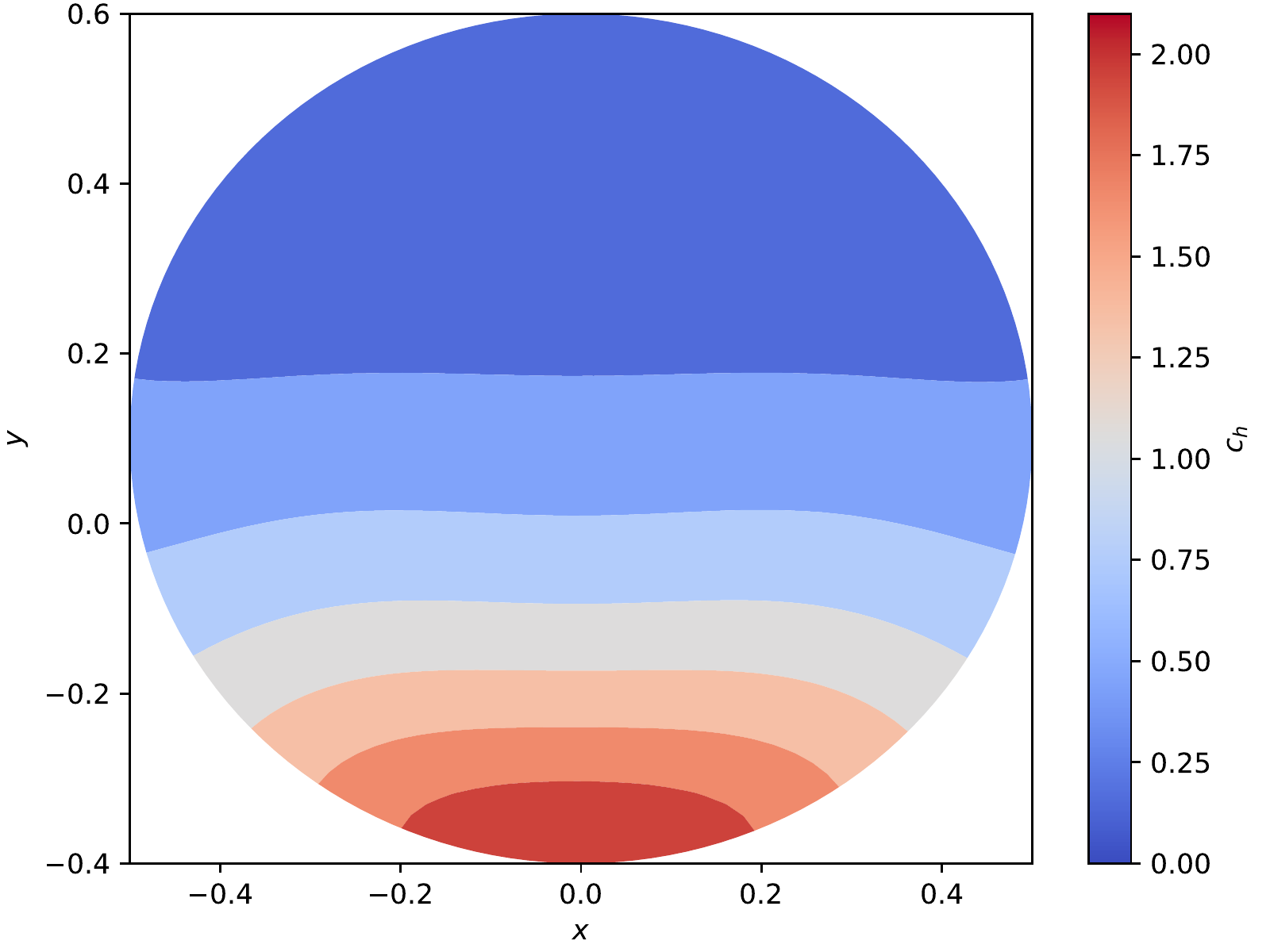}
    \end{subfigure}
    \begin{subfigure}[b]{0.23\textwidth}
        \centering
        \includegraphics[width=1.0\textwidth]{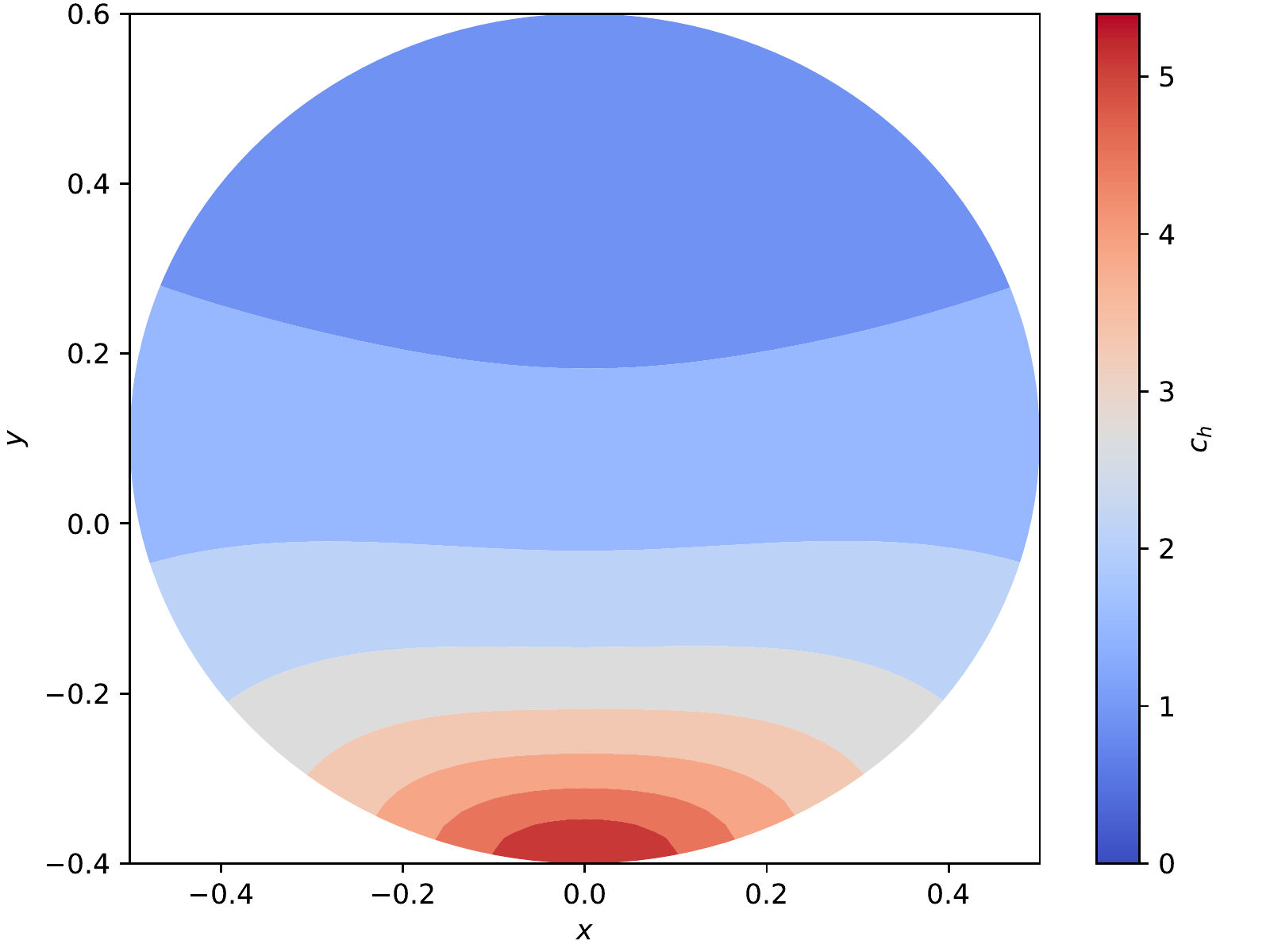}
    \end{subfigure}
    \begin{subfigure}[b]{0.23\textwidth}
        \centering
        \includegraphics[width=1.0\textwidth]{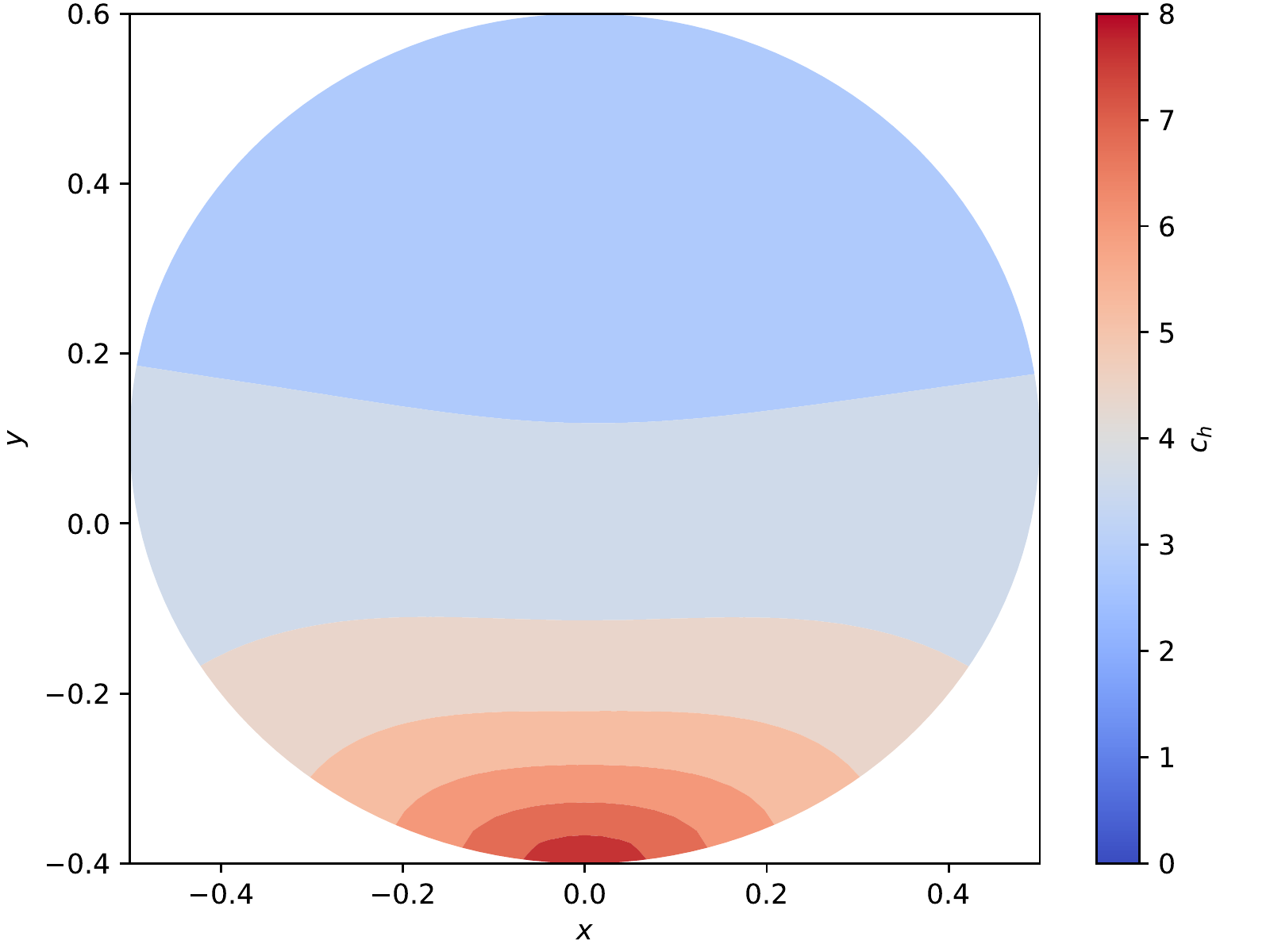}
    \end{subfigure}
    \caption{Snapshots at $t=0.02$, $t=0.04$, $0.2$ and $1.0$ of $\{c_h^m\}_m$ for $\eta_0=450$ and $\Phi_0=50$.}\label{Snapshots_450_50_ch}
            \end{figure}
    \begin{figure}
    \begin{subfigure}[b]{0.23\textwidth}
        \centering
        \includegraphics[width=1.0\textwidth]{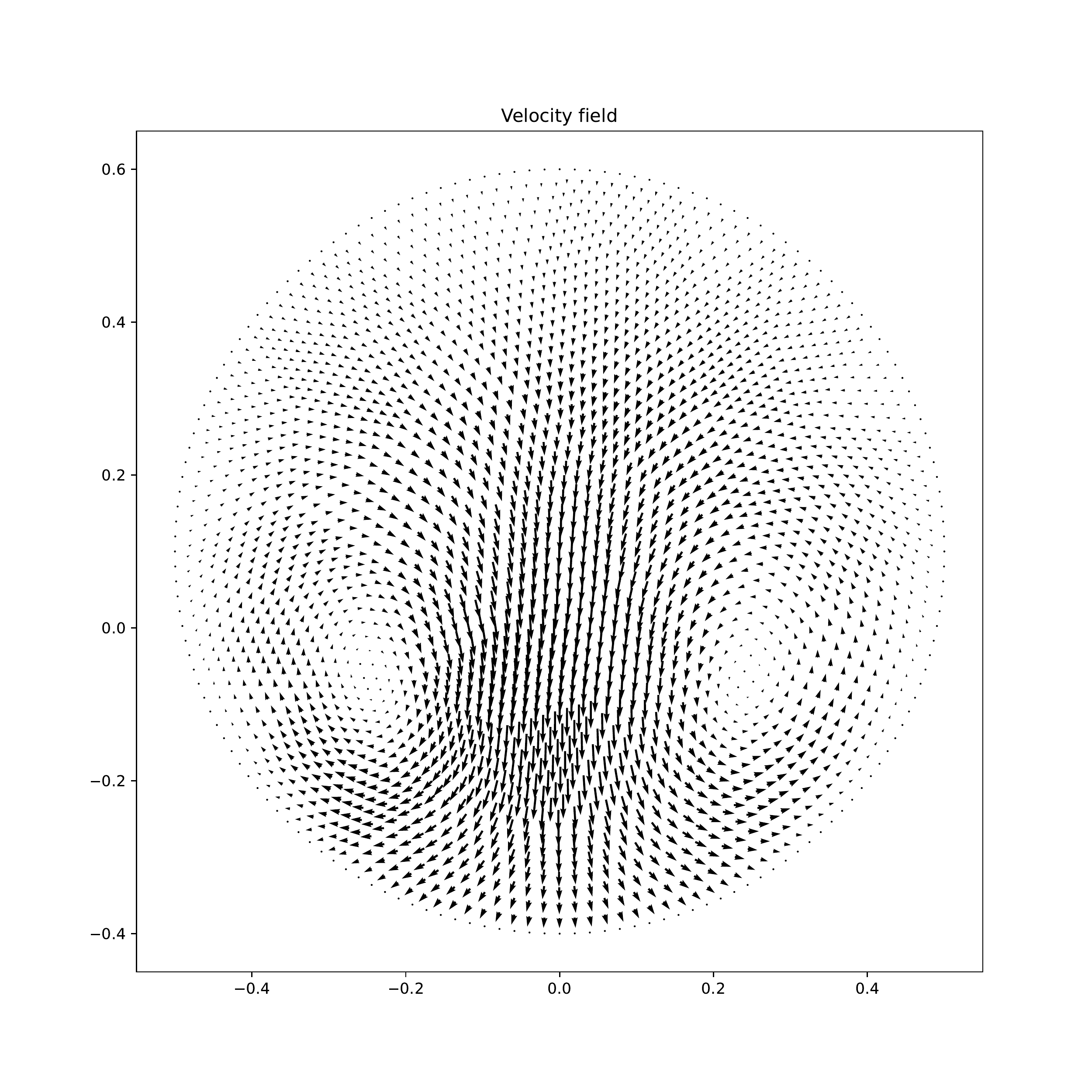}
    \end{subfigure}
    \begin{subfigure}[b]{0.23\textwidth}
        \centering
        \includegraphics[width=1.0\textwidth]{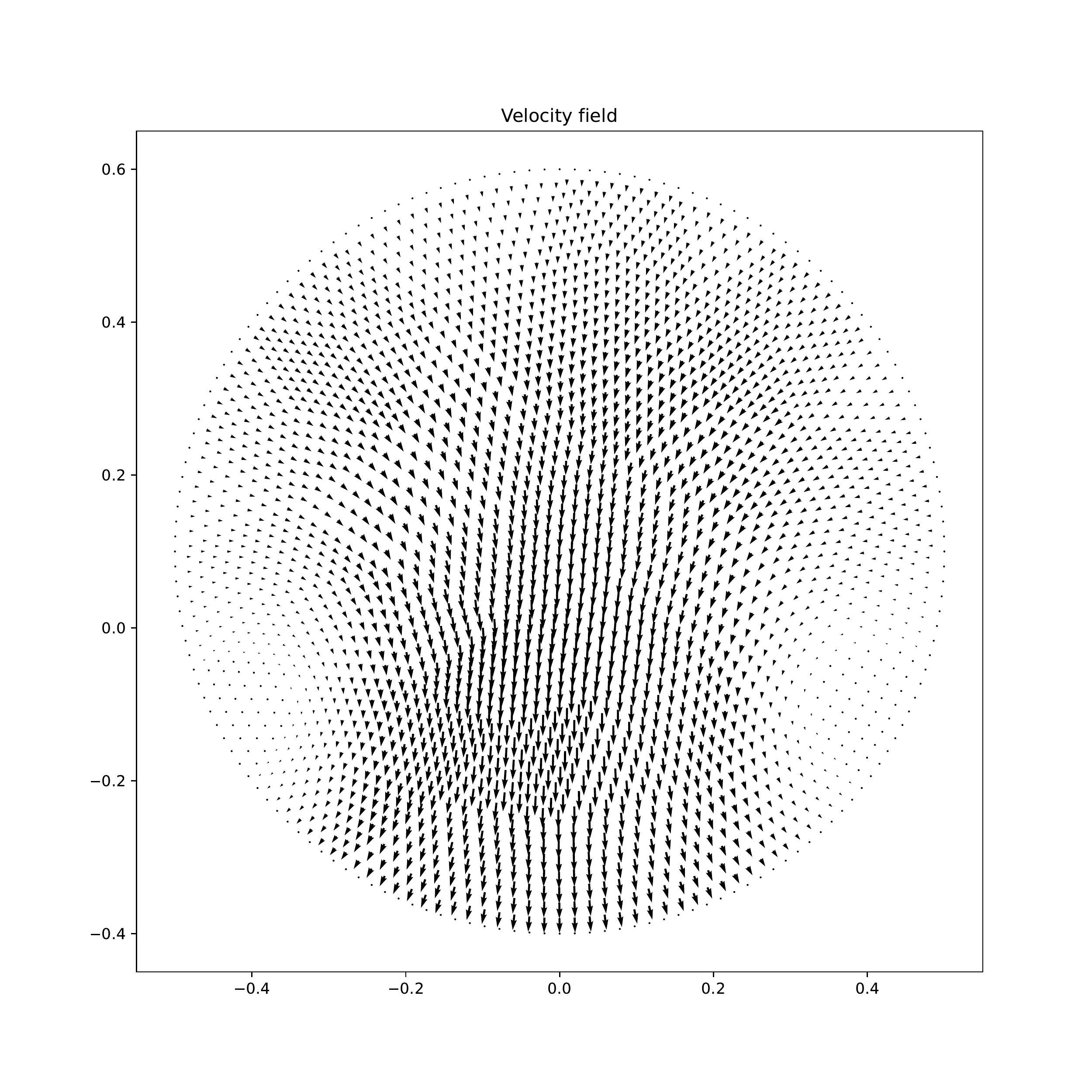}
    \end{subfigure}
    \begin{subfigure}[b]{0.23\textwidth}
        \centering
        \includegraphics[width=1.0\textwidth]{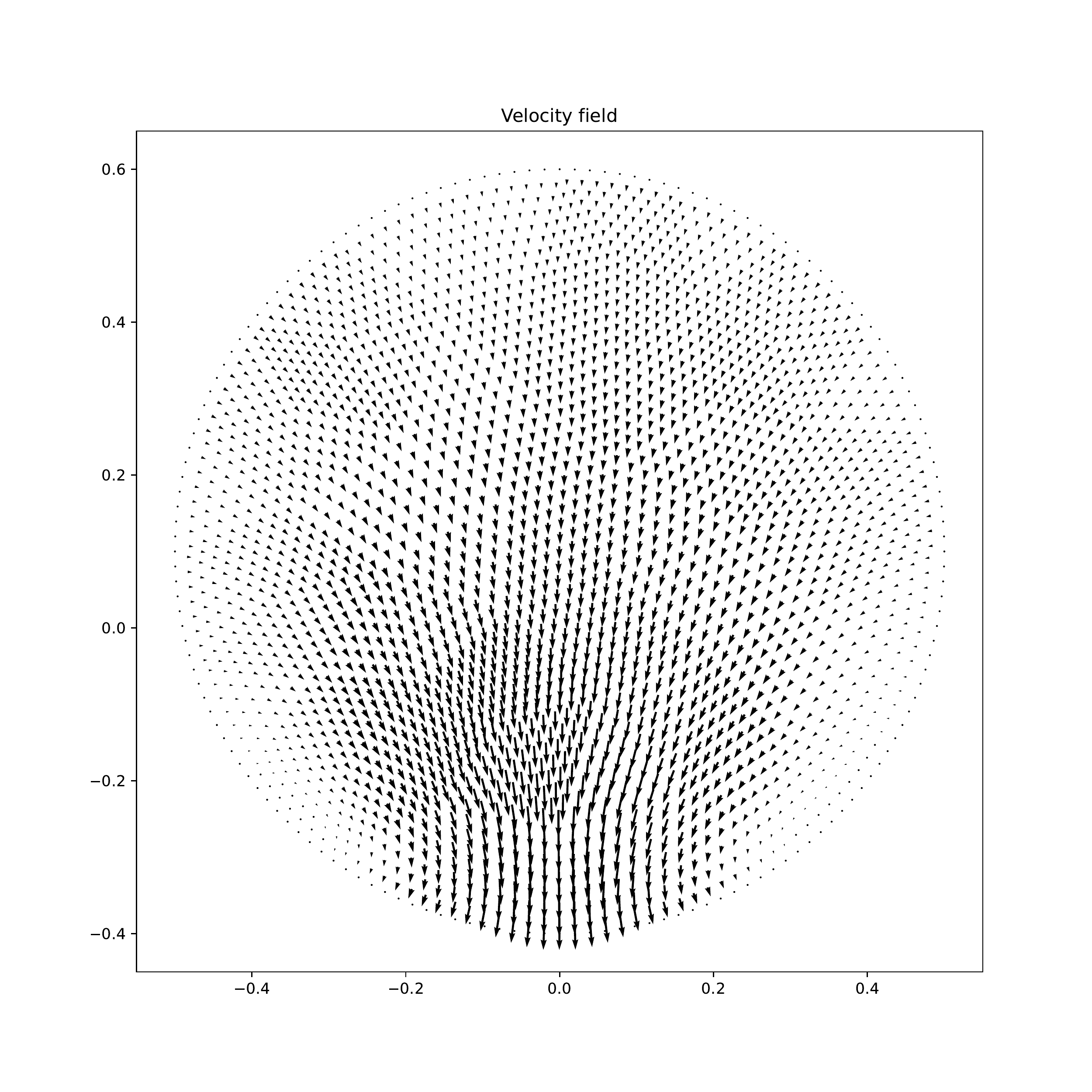}
    \end{subfigure}
    \begin{subfigure}[b]{0.23\textwidth}
        \centering
        \includegraphics[width=1.0\textwidth]{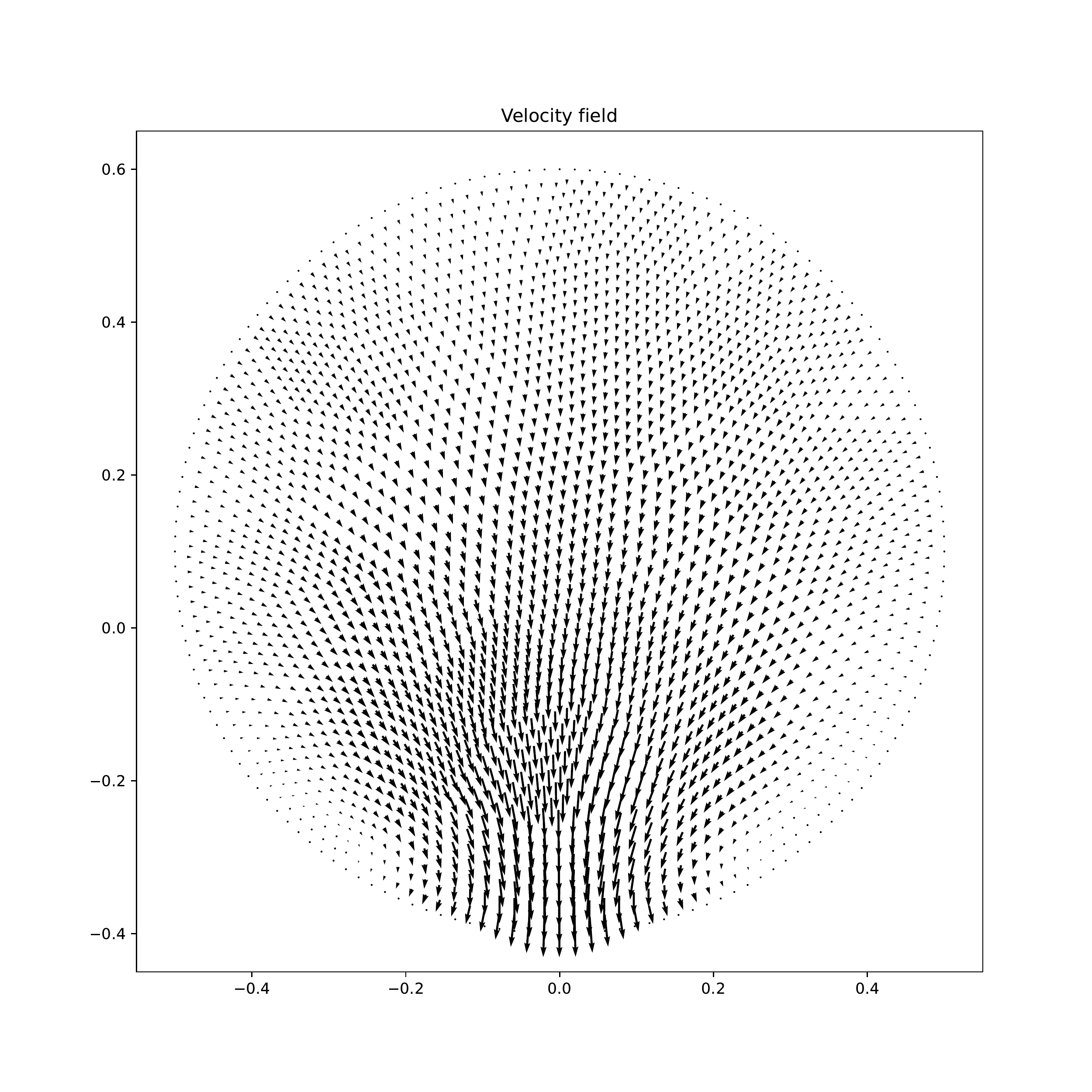}
    \end{subfigure}
    \caption{Snapshots at $t=0.02$, $t=0.04$, $0.2$ and $1.0$  of $\{\u_h^m\}_m$ for $\eta_0=450$ and $\Phi_0=50$.}\label{Snapshots_450_50_uh}
\end{figure}
\begin{figure}
    \begin{subfigure}[b]{0.23\textwidth}
        \centering
        \includegraphics[width=1.0\textwidth]{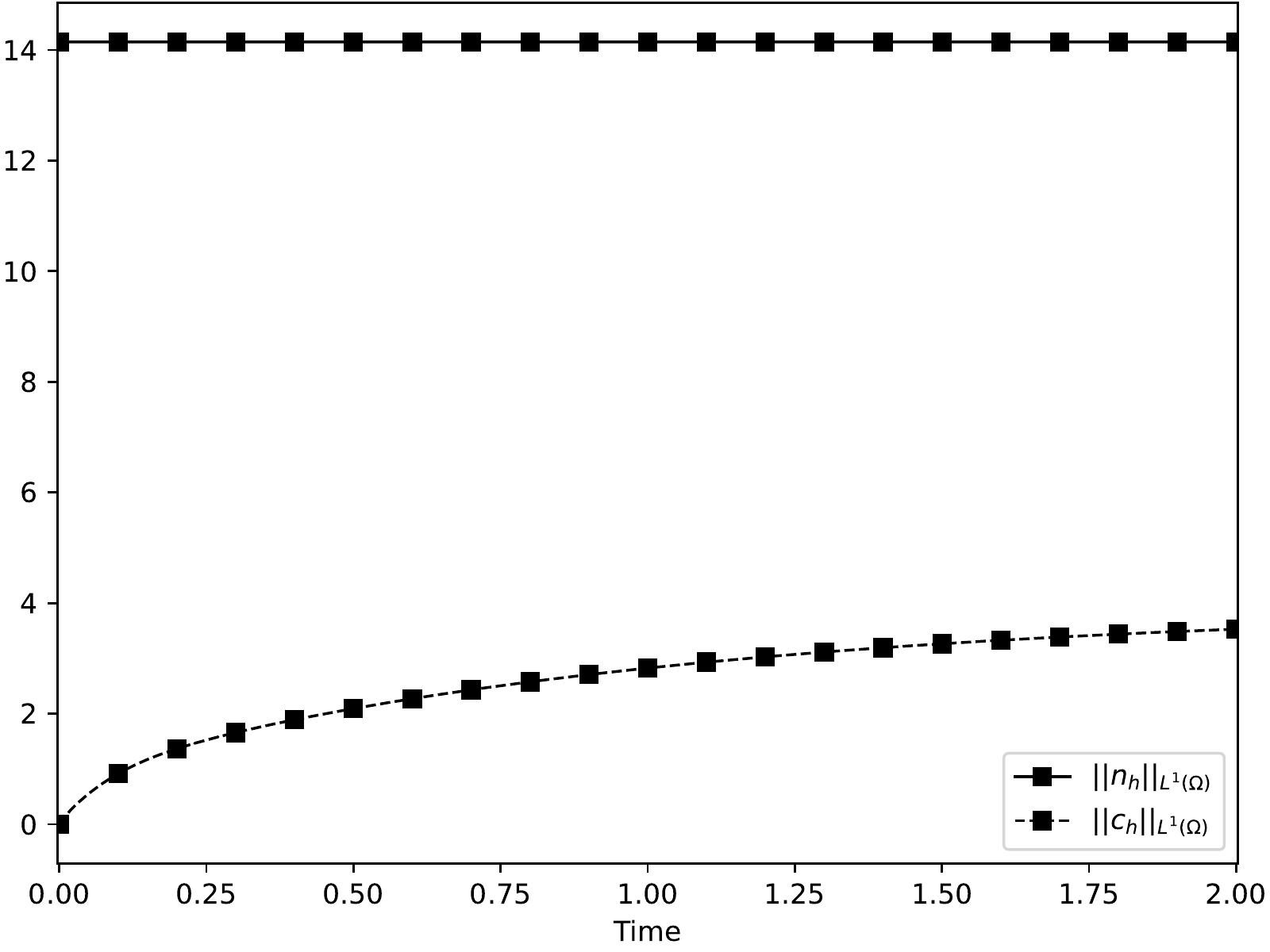}
    \end{subfigure}
    \begin{subfigure}[b]{0.23\textwidth}
        \centering
        \includegraphics[width=1.0\textwidth]{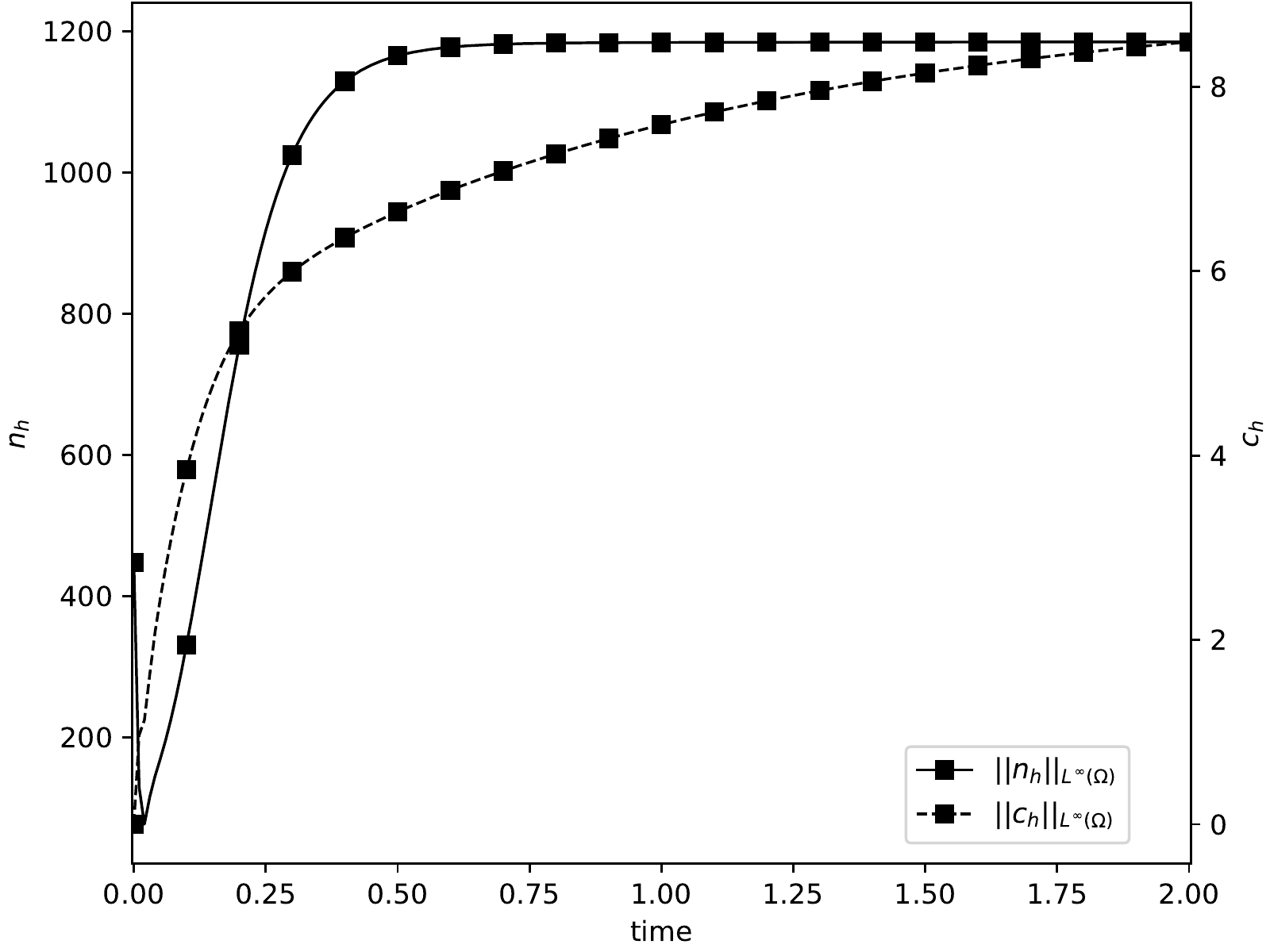}
    \end{subfigure}
    \begin{subfigure}[b]{0.23\textwidth}
        \centering
        \includegraphics[width=1.0\textwidth]{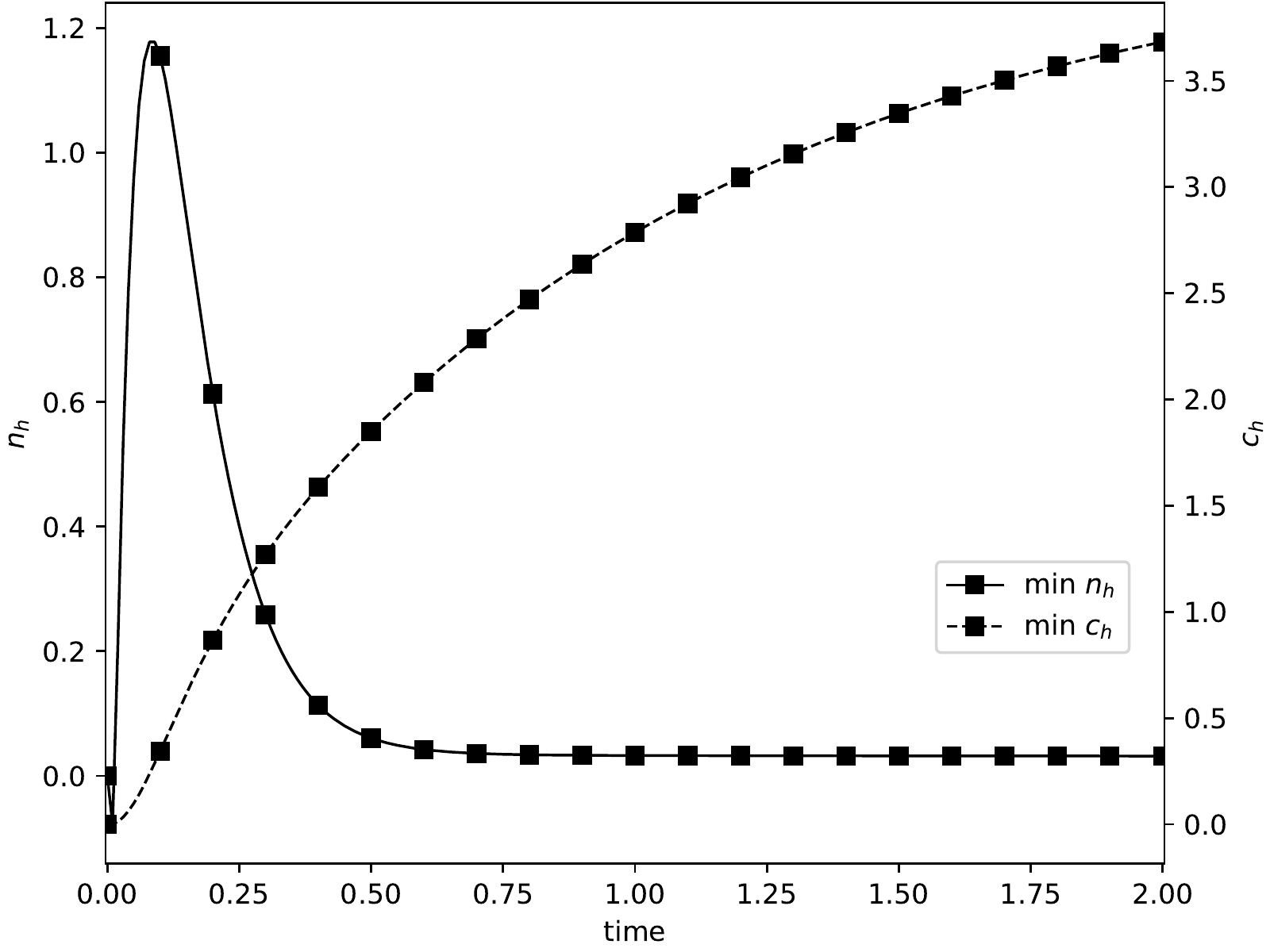}
    \end{subfigure}
            \caption{Plots of total mass, maxima and minima of $\{n_h^m\}_m$ and $\{c_h^m\}_m$ for $\eta=450$ and $\Phi_0=50$.}\label{Graphs_450_50}
\end{figure}

 \section{Conclusion}

In this paper a numerical method for approximating solutions to the Keller--Segel--Navier--Stokes system has been constructed. It consists of a finite element method together with a stabilising term, whose design is based on a shock capturing technique so as to preserve lower bounds such as positivity and non-negativity.

It is known that solutions to the Keller--Segel--Navier--Stokes system are uniformly bounded in time providing that the total mass for the organism density is below $2\pi$. Such a threshold is smaller than that for the Keller--Segel system, which corresponds to $4\pi$. Then we have made an attempt at answering the question whether or not the value $2\pi$ is critical through a set of numerical experiments.
The evidence found herein puts into new perspective the threshold value for proving boundedness solutions for the Keller--Segel--Navier--Stokes equations. We have discovered that the value $4\pi$ instead of $2\pi$ may be critical; thus inheriting it from the Keller--Segel subsystem. Furthermore, we have observed that the fluid intensification may lead to the depletion of chemotaxis and prevent possible singularity formation. Realising this possibility may be a watershed in the knowledge of the phenomenological interaction of chemotaxis in fluid scenarios. 

The above findings are relied on the fact that numerical solutions computed by the proposed algorithm satisfy lower and $L^1(\Omega)$ bounds and quasi-energy estimates. This latter property results from a new discretization of the chemotactic and convective terms and a Morse-Trudinger's inequality demonstrated for polynomial domains. All in all, we have found that  our numerical solutions are robust and  reliable in the numerical simulations.

An improvement of the numerical method that may be regarded is using stabilising techniques for the convective terms at least in the Keller--Segel subsystem, since when taking $\Phi_0$ very large numerical solutions do not fulfil lower bounds.

\end{document}